

\documentclass[11pt]{amsart}




\usepackage{amsfonts,epsfig}
\usepackage{latexsym}
\usepackage{amssymb}
\usepackage{amsmath}
\usepackage{amsthm}
\usepackage{graphics}
\usepackage{verbatim}
\usepackage{enumerate}
\usepackage[all]{xy}
\usepackage{array,booktabs,ctable}
\usepackage[backref]{hyperref}
\usepackage[capitalize]{cleveref}
\usepackage{color}
\usepackage{hhline}
\usepackage{xcolor}
\usepackage{colortbl}
\definecolor{mylinkcolor}{rgb}{0.8,0,0}
\definecolor{myurlcolor}{rgb}{0,0,0.8}
\definecolor{mycitecolor}{rgb}{0,0,0.8}
\hypersetup{colorlinks=true,urlcolor=myurlcolor,citecolor=mycitecolor,linkcolor=mylinkcolor,linktoc=page,breaklinks=true}
\usepackage{bigstrut}

 \usepackage{tabu}

\addtolength{\textwidth}{4cm} \addtolength{\hoffset}{-2cm}
\addtolength{\marginparwidth}{-2cm}

\newtheorem{defn}{Definition}[section]
\newtheorem{definition}[defn]{Definition}

\newtheorem{corollary}[defn]{Corollary}
\newtheorem{lemma}[defn]{Lemma}

\newtheorem{thm}[defn]{Theorem}

\newtheorem{cor}[defn]{Corollary}
\newtheorem{prop}[defn]{Proposition}
\newtheorem{proposition}[defn]{Proposition}

\theoremstyle{definition}

\newtheorem{remark}[defn]{Remark}
\newtheorem{question}[defn]{Question}
\newtheorem{example}[defn]{Example}


\newcommand{\QQ}{\mathbb Q}
\newcommand{\Qbar}{\overline{\QQ}}
\newcommand{\ZZ}{\mathbb Z}
\renewcommand{\NN}{\mathbb N}
\newcommand{\FF}{\mathbb F}

\newcommand{\PP}{\mathbb P}

\newcommand{\field}[1]{\mathbb{#1}}  
\newcommand{\Q}{\field{Q}} 
\newcommand{\N}{\field{N}} 
\newcommand{\Z}{\field{Z}} 
\newcommand{\F}{\field{F}} 

\newcommand{\SL}{\operatorname{SL}}

\newcommand{\Gal}{\operatorname{Gal}}

\newcommand{\GL}{\operatorname{GL}}



\newcommand{\tor}{\mathrm{tors}}
\newcommand{\GenA}{A_4^\infty}
\newcommand{\QA}{\QQ(\GenA)}

\renewcommand{\Im}{{\rm Im}\,}
\newcommand{\eval}{{\rm eval}}
\DeclareMathOperator{\aut}{Aut}
\newcommand{\set}[1]{\left\lbrace #1 \right\rbrace}

\DeclareMathOperator{\Hom}{Hom}

\begin{document}

\title[Groups of generalized $G$-type]{Groups of generalized $G$-type and applications to torsion subgroups of rational elliptic curves over infinite extensions of $\QQ$}

\author{Harris B. Daniels}
\address{Department of Mathematics and Statistics, Amherst College, Amherst, MA 01002}
\email{hdaniels@amherst.edu}
\urladdr{\url{https://hdaniels.people.amherst.edu/}}

\author{Maarten Derickx}
\address{Department of Mathematics, MIT 2-106, 77 Massachusetts Ave, Cambridge, MA 02139}
\email{maarten@mderickx.nl}
\urladdr{\url{http://www.maartenderickx.nl/}}
\thanks{The second author was supported by Simons Foundation grant 550033}

\author{Jeffrey Hatley}
\address{Department of Mathematics, Union College, Schenectady, NY 12308}
\email{hatleyj@union.edu}
\urladdr{\url{http://www.math.union.edu/~hatleyj/}}
\subjclass[2010]{Primary: 14H52, Secondary: 11G05, 11R21.}

\maketitle

\begin{abstract}
Recently there has been much interest in studying the torsion subgroups of elliptic curves base-extended to infinite extensions of $\QQ$. In this paper, given a finite group $G$, we study what happens with the torsion of an elliptic curve $E$ over $\QQ$ when changing base to the compositum of all number fields with Galois group $G$. We do this by studying a group theoretic condition called generalized $G$-type, which is a necessary condition for a number field with Galois group $H$ to be contained in that compositum. In general, group theory allows one to reduce the original problem to the question of finding rational points on finitely many modular curves. To illustrate this method we completely determine which torsion structures occur for elliptic curves defined over $\QQ$ and base-changed to the compositum of all fields whose Galois group is $A_4$.
\end{abstract}

\section{introduction}

At the foundation of the study of arithmetic geometry is the Mordell-Weil Theorem, which states that given an elliptic curve $E$ defined over a number field $K$, the set of $K$-rational points on $E$, denoted $E(K)$, forms a finitely generated abelian group. That is to say, there exists an integer $r\geq 0$ and a finite abelian group, which we will denote $E(K)_\tor$, such that
\[
E(K)\simeq \ZZ^r \oplus E(K)_\tor.
\]
The positive integer $r$ is called the rank of $E$ over $K$, while $E(K)_\tor$ is called the torsion subgroup of $E$ over $K$. Many interesting and difficult questions can be asked about the ranks of elliptic curves, but the central objects of study in this paper will be the torsion subgroups of elliptic curves.

A natural first question one can ask in this direction is whether there are restrictions on what groups can arise as the torsion subgroup of an elliptic curve. This question is brought into sharper focus by a theorem which states that, for every number field $K$, there are positive integers $a$ and $b$ such that $E(K)_\tor \simeq \ZZ/a\ZZ\oplus\ZZ/ab\ZZ$ for any elliptic curve $E$ defined over $K$. In order to obtain more precise results, one needs to put restrictions on the field of definition of the elliptic curve. The first such result is the following that restricts to the case when $K=\QQ$.

\begin{thm}[Mazur \cite{Mazur1}]\label{thm:mazur}
Let $E/\QQ$ be an elliptic curve. Then
\[
E(\QQ)_\tor \simeq \begin{cases}
 \ZZ/M\ZZ  &   1\leq M \leq 10 \hbox{ or } M=12, \hbox{ or}\\
\ZZ/2\ZZ\oplus\ZZ/2M\ZZ  &  1\leq M\leq 4.
 \end{cases}
\]
\end{thm}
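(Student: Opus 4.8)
The plan is to translate the statement into a problem about $\QQ$-rational points on modular curves. Writing $E(\QQ)_\tor \cong \ZZ/a\ZZ \oplus \ZZ/b\ZZ$ with $a \mid b$, the group $E(\QQ)$ contains the full $a$-torsion, so the Weil pairing forces $\QQ(\mu_a) = \QQ$ and hence $a \in \{1,2\}$; thus $E(\QQ)_\tor$ is either cyclic, $\ZZ/N\ZZ$, or of the form $\ZZ/2\ZZ \oplus \ZZ/2M\ZZ$. A point of exact order $N$ on $E$ is precisely a non-cuspidal $\QQ$-point of $X_1(N)$, and a point of order $2M$ together with an independent point of order $2$ is a $\QQ$-point of the modular curve, call it $X_1(2,2M)$, parametrizing such data. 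So the theorem is equivalent to the conjunction of: (i) each group in the list is realized — which holds because the associated modular curve has genus $0$ with a non-cuspidal $\QQ$-point, hence is $\PP^1_\QQ$ and yields an explicit one-parameter family of elliptic curves over $\QQ$ with (generically exactly) the prescribed torsion; and (ii) for every $N \notin \{1,\dots,10,12\}$ all $\QQ$-points of $X_1(N)$ are cusps, and likewise for $X_1(2,2M)$ with $M \ge 5$.

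For (ii) one first reduces, via the natural maps $X_1(N') \to X_1(N)$ for $N \mid N'$ and $X_1(N) \to X_0(N)$, to finitely many curves of genus $\ge 1$ together with the uniform family $X_0(p)$ for $p$ prime: every forbidden $N$ either has a proper divisor that is already forbidden, or a prime divisor $p \ge 11$, or lies in the finite set $\{11,13,14,15,16,18,20,21,24,25,27,35,49\}$; and for the second family $2M \ge 14$ is already forbidden as a cyclic order, leaving only $M = 5, 6$. Of these, $X_1(11)$, $X_1(14)$, $X_1(15)$ and the two curves $X_1(2,10)$, $X_1(2,12)$ have genus $1$, and one identifies each with an explicit elliptic curve over $\QQ$ of Mordell--Weil rank $0$, so that its finitely many $\QQ$-points are seen by inspection to be exactly the rational cusps.

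The heart of the matter — and the main obstacle — is the remaining curves of genus $\ge 2$, where finiteness of rational points is no longer elementary and, even granted, does not by itself pin the points down, and where moreover the infinitely many prime levels $p \ge 23$ must be handled uniformly; this is Mazur's theorem. For prime $p$ one passes to the \emph{Eisenstein quotient} $\widetilde{J}$ of $J_0(p)$: using the Eisenstein ideal $I \subseteq \T$ of the Hecke algebra one proves that the cuspidal class $(0)-(\infty)$ generates all of $J_0(p)(\QQ)_\tor$ — of order the numerator of $(p-1)/12$ — and, crucially, that $\widetilde{J}$ has Mordell--Weil rank $0$ over $\QQ$, by a descent governed by $I$. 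Then the \emph{formal immersion} method applies: for a suitable prime $\ell \neq p$ of good reduction, the composite $f\colon X_0(p) \to J_0(p) \twoheadrightarrow \widetilde{J}$, $P \mapsto [(P)-(\infty)]$, is a formal immersion at the rational cusp $\infty$ in characteristic $\ell$; since $\widetilde{J}(\QQ)$ is finite it injects into $\widetilde{J}(\FF_\ell)$, which after analyzing the reduction of $f$ at the cusps forces any non-cuspidal $P \in X_0(p)(\QQ)$ to reduce to $\infty$ modulo $\ell$, whereupon the formal immersion forces $P=\infty$ — a contradiction. The composite and prime-power levels $X_1(13), X_1(16), X_1(18), X_1(20), X_1(21), X_1(24), X_1(25), X_1(27), X_1(35), X_1(49)$ are dispatched by the same circle of ideas applied to $J_1(N)$, or reduced to the prime-level case through the covering maps. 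Combining the realizations of (i) with the non-existence results of (ii) produces exactly the fifteen groups in the statement.
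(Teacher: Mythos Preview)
The paper does not prove this statement at all: Theorem~\ref{thm:mazur} is quoted as Mazur's theorem with a citation to \cite{Mazur1} and is used as background input for the rest of the paper, so there is no ``paper's own proof'' to compare against. Your proposal is therefore not a reproduction of anything in the paper but rather an outline of Mazur's original argument.

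As such an outline it is broadly faithful to the real proof --- Weil pairing to get $a\in\{1,2\}$, reduction to finitely many small levels plus the family $X_0(p)$ for $p$ prime, genus-$1$ cases by rank-$0$ computations, and the Eisenstein-quotient/formal-immersion machinery for the core case --- but a couple of points are imprecise. First, the Eisenstein-quotient argument on $J_0(p)$ requires $X_0(p)$ to have positive genus, which fails for $p=13$; Mazur (and Mazur--Tate) treat $p=13$ via $J_1(13)$ instead, so your ``dispatched by the same circle of ideas'' for $X_1(13)$ is doing real work and is not a mere afterthought. Second, your finite residual list is slightly redundant (e.g.\ once primes $p\ge 11$ are excluded, levels such as $35$ and $49$ are automatic, and $20,21,24,27$ follow from their smaller divisors $11$-free or not), though this is harmless. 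Overall the sketch is a reasonable summary of a long and deep paper, but in the context of the present article no proof is expected or given.
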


More results of this nature can be found in \cite{kenmom, kamienny}, where the torsion structures that occur when $K$ is allowed to be a quadratic extension of $\QQ$ are completely classified. Since then much work has been put into classifying the torsion structures of elliptic curves defined over cubic extensions of $\QQ$ with results recently announced by Etropolski, Morrow, and Zureick-Brown, and independently by the second-named author. The case where $[K:\QQ] = 4$ is still completely wide open.

Another approach is to start with an elliptic curve defined over $\QQ$ and consider what torsion subgroups occur when it is base-extended to a field $K / \QQ$. The cases when $E$ is defined over $\QQ$ and $K$ is a number field of degree $d$ with $d= 2,3, 4 ,5, 7$ or $d$ is not divisible by $2,3,5,7$ have been completely settled in
\cite{GonJim-Tor,najman-cubic,Chou,GonJim,GonNajman}. There have been a number of papers that consider the question of what torsion structures occur over a fixed \textit{infinite} extension of $\QQ$. For example, in \cite{laska} and \cite{fujita1,fujita2} all the possible torsion structures that occur when $E/\QQ$ is base-extended to the compositum of all quadratic extensions of $\QQ$ are classified. More recently, in \cite{Q(3)} all of the torsion structures that occur when $E/\QQ$ is base-extended to the compositum of all degree 3 extensions of $\QQ$ are classified. It is worth noting at this point that when working over an infinite extension of $\QQ$, we no longer have the Mordell-Weil Theorem to ensure that the torsion subgroup remains finite. Because of this we have to make sure that we choose an infinite extension of $\QQ$ carefully.

In this paper, we provide a general framework for studying the torsion subgroups of rational elliptic curves over certain infinite extensions of $\QQ$. Before proceeding we remind the reader of the following definition.

\begin{defn}
For $n \in \N$ and groups $G_1, \ldots, G_n$, let $\pi_i\colon G_1\times \cdots \times G_n \to G_i$ be the standard projection maps. A subgroup $K$ of $G_1\times \cdots \times G_n$ is a {\bfseries subdirect product} of $G_1, \ldots, G_n$ if for all $i$, $\pi_i(K) = G_i$.
\end{defn}

\noindent
With this and inspired by \cite{Q(3),D4}, we give the following definition.

\begin{defn}\label{def:Q(G^infty)}
Let $G$ be a transitive subgroup of $S_d$ for some $d\geq 2$. We say that a finite group $H$ is of {\bfseries generalized $G$-type} if it is isomorphic to a quotient of a subdirect product of transitive subgroups of $G$. Given a number field $K/\QQ$ and its Galois closure $\widetilde{K}$, we say that $K/\QQ$ is of {\bfseries generalized $G$-type} if $\Gal(\widetilde{K}/\QQ)$ is a group of generalized $G$-type. Let $\QQ(G^\infty)$ be the compositum of all fields that are of generalized $G$-type.
\end{defn}

\noindent
Immediately from this definition we get the following lemma.
\begin{lemma}\label{lem:finite_root_of_unity_Ginfty}
Let $G$ be a finite group. The extension $\QQ(G^\infty)/\QQ$ is a Galois extension and the set $S\subseteq \mathbb{N}$ consisting of the integers $n$ such that $\QQ(\zeta_n) \subseteq \QQ(G^\infty)$ is finite.
\end{lemma}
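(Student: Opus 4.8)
The plan is to deduce both claims from a single group-theoretic fact: every finite group $H$ of generalized $G$-type satisfies $\operatorname{exp}(H) \mid \operatorname{exp}(G)$. Granting this for the moment, here is how the two assertions follow. For the first, observe that if $K/\QQ$ is of generalized $G$-type then, by Definition~\ref{def:Q(G^infty)}, $\Gal(\widetilde K/\QQ)$ is a group of generalized $G$-type, so the Galois closure $\widetilde K$ is itself a field of generalized $G$-type containing $K$; hence $\QQ(G^\infty)$ equals the compositum of all \emph{finite Galois} extensions $L/\QQ$ with $\Gal(L/\QQ)$ of generalized $G$-type, and a compositum of Galois extensions of $\QQ$ is Galois over $\QQ$.

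To prove the exponent bound, write a group $H$ of generalized $G$-type as $H \cong P/N$, where $P$ is a subdirect product of transitive subgroups $H_1,\dots,H_m$ of $G$. Each $H_i$ is in particular a subgroup of $G$, so $\operatorname{exp}(H_i)\mid \operatorname{exp}(G)$; since $\operatorname{exp}(H_1\times\cdots\times H_m)=\lcm_i\operatorname{exp}(H_i)$, and the exponent of any subgroup or quotient of a finite group divides the exponent of that group, we conclude $\operatorname{exp}(H)\mid\operatorname{exp}(H_1\times\cdots\times H_m)\mid\operatorname{exp}(G)$.

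Now let $e=\operatorname{exp}(G)$ and suppose $\QQ(\zeta_n)\subseteq\QQ(G^\infty)$. Since $\QQ(\zeta_n)/\QQ$ is finite, $\zeta_n$ lies in a compositum $M=L_1\cdots L_k$ of finitely many of the generating Galois fields from the first paragraph; then $M/\QQ$ is Galois and restriction embeds $\Gal(M/\QQ)$ into $\prod_i\Gal(L_i/\QQ)$, so $\operatorname{exp}(\Gal(M/\QQ))\mid\lcm_i\operatorname{exp}(\Gal(L_i/\QQ))\mid e$ by the previous paragraph. As $(\ZZ/n\ZZ)^\times\cong\Gal(\QQ(\zeta_n)/\QQ)$ is a quotient of $\Gal(M/\QQ)$, its exponent, namely the Carmichael function $\lambda(n)$, divides $e$. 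Finally, for every prime power $q^a$ exactly dividing $n$ one has $\lambda(q^a)\mid\lambda(n)\mid e$; for odd $q$ this gives $q-1\mid e$ (so $q\le e+1$) and $q^{a-1}\mid e$ (so $a$ is bounded in terms of $e$), and similarly $2^{\max(a-2,0)}\mid e$ when $q=2$. Hence $n$ divides a fixed integer depending only on $e$, so the set $S$ is finite.

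The only genuinely new input is the exponent bound of the second paragraph; everything else is bookkeeping, and the one point needing care is the passage from the infinite extension $\QQ(G^\infty)$ to a single finite group of generalized $G$-type (the field $M$) before that bound can be applied. One should also verify the elementary facts that the exponent of a subgroup or quotient of a finite group divides the group's exponent and that $\lambda(n)\mid e$ has only finitely many solutions $n$, but these are routine.
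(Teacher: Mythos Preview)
Your proof is correct and follows essentially the same route as the paper: both arguments hinge on the observation that if $\QQ(\zeta_n)\subseteq\QQ(G^\infty)$ then $\lambda(n)\mid\exp(G)$, which forces $n$ to be bounded. You are in fact more careful than the paper in two respects: you explicitly prove the Galois claim (which the paper's proof omits entirely), and you justify the step ``$\QQ(\zeta_n)\subseteq\QQ(G^\infty)$ implies $\lambda(n)\mid\exp(G)$'' by passing through a finite compositum $M$, whereas the paper simply asserts that $\QQ(\zeta_n)$ must itself be of generalized $G$-type (a fact that relies on closure under subdirect products and quotients, proved only later in Lemma~\ref{lem:easy_list}).
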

\begin{proof} Let $\lambda(n)$ be the exponent of $\mathrm{Gal}(\QQ(\zeta_n)/\QQ) \simeq \left( \ZZ / n \ZZ \right)^\times$; then $\lambda(2^e)=2^{e-2}$ for $e \geq 3$ and $\lambda(p^e)=(p-1)p^{e-1}$ for odd primes $p>2$ and $e \geq 1$. If $\QQ(\zeta_n) \subseteq \QQ(G^\infty)$, then $\QQ(\zeta_n)$ must be of generalized $G$-type, and hence we must have $\lambda(n) \mid \exp(G)$, where $\exp(G)$ is the exponent of $G$. As $\exp(G)$ is finite, the set $S \subseteq \set{n \in \N : \lambda(n) | \mathrm{exp}(G)}$ is clearly finite as well, and the lemma follows.
\end{proof}
With this lemma we have that for any finite group $G$ we can now apply the following theorem to to the torsion subgroups of rational elliptic curves base-extended to $\QQ(G^\infty)$.

\begin{thm}\cite[Theorem 4.1]{Q(3)} \label{thm:finite_over_F}
Let $E/\Q$ be an elliptic curve and let $F$ be a (possibly infinite) Galois extension of~$\Q$ that contains only finitely many roots of unity. Then $E(F)_\tor$ is finite.  Moreover, there is a uniform bound $B$, depending only on $F$, such that $\#E(F)_\tor\le B$ for every elliptic curve $E/\Q$.
\end{thm}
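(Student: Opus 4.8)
The plan is to prove the uniform bound $B$ directly; finiteness of $E(F)_\tor$ for a single $E$ is then immediate. Since $E(F)_\tor=\bigoplus_\ell E(F)[\ell^\infty]$, it suffices to show that only finitely many primes $\ell$ lie in the support of $E(F)_\tor$ and that $\#E(F)[\ell^\infty]$ is bounded purely in terms of $F$. Write $M_F$ for the order of the finite group $\mu(F)$ of roots of unity in $F$. I will use repeatedly that if $E[m]\subseteq E(F)$, then, since $F/\QQ$ is Galois, the Galois-equivariance and surjectivity of the Weil pairing $E[m]\times E[m]\to\mu_m$ force $\mu_m\subseteq F$, and hence $m\mid M_F$.

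The heart of the argument is the following construction, applied to a point $P\in E(F)$ of exact order $\ell^n$. Fixing a basis of $E[\ell^n]$, let $\bar\rho_{\ell^n}\colon\Gal(\Qbar/\QQ)\to\GL_2(\ZZ/\ell^n\ZZ)$ be the mod-$\ell^n$ representation attached to $E$, let $G\subseteq\GL_2(\ZZ/\ell^n\ZZ)$ be its image, and let $H$ be the image of $\Gal(\Qbar/F)$. Since $F/\QQ$ is Galois, $\Gal(\Qbar/F)\trianglelefteq\Gal(\Qbar/\QQ)$, so $H\trianglelefteq G$. Writing $v$ for the coordinate vector of $P$, we have $hv=v$ for all $h\in H$, and therefore $h(gv)=g(g^{-1}hg)v=gv$ for all $g\in G$, $h\in H$. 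Thus $H$ fixes pointwise the $\ZZ/\ell^n\ZZ$-span $W$ of the orbit $Gv$. By construction $W$ is a $\Gal(\Qbar/\QQ)$-stable subgroup of $E[\ell^n]$ contained in $E(F)$ and containing $P$, so $\exp(W)=\ell^n$. As a subgroup of $(\ZZ/\ell^n\ZZ)^2$ we have $W\cong\ZZ/\ell^n\ZZ\times\ZZ/\ell^b\ZZ$ for some $0\le b\le n$; comparing orders gives $W[\ell^b]=E[\ell^b]$, so $E[\ell^b]\subseteq W\subseteq E(F)$, while $W/E[\ell^b]$ is cyclic of order $\ell^{\,n-b}$ and $\Gal(\Qbar/\QQ)$-stable. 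Transporting along the isomorphism $E/E[\ell^b]\xrightarrow{\sim}E$ induced by $[\ell^b]$, the group $W/E[\ell^b]$ becomes the kernel of a $\QQ$-rational cyclic isogeny of $E$ of degree $\ell^{\,n-b}$.

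The two desired bounds now follow. From $E[\ell^b]\subseteq E(F)$ we get $\ell^b\mid M_F$. From the classification of $\QQ$-rational cyclic isogenies of elliptic curves (Mazur's theorem on isogenies of prime degree together with Kenku's work), there is an absolute constant $B_0$ — one may take $B_0=163$ — so that any $\QQ$-rational cyclic isogeny of prime power degree has degree at most $B_0$; hence $\ell^{\,n-b}\le B_0$. Multiplying, $\ell^n\le B_0M_F$ whenever $E(F)$ contains a point of order $\ell^n$. In particular, taking $n=1$, every prime $\ell$ in the support of $E(F)_\tor$ satisfies $\ell\le B_0M_F$, so this support is finite; and for each such $\ell$, the group $E(F)[\ell^\infty]$, being a subgroup of $(\QQ_\ell/\ZZ_\ell)^2$ of exponent at most $B_0M_F$, has order at most $(B_0M_F)^2$. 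Therefore $\#E(F)_\tor=\prod_\ell\#E(F)[\ell^\infty]\le(B_0M_F)^{2B_0M_F}=:B$, a bound depending only on $F$.

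The main obstacle is the isogeny input. It is precisely Mazur's and Kenku's theorems on rational points of the curves $X_0(N)$ that bound both the prime $\ell$ and the exponent $n$; without them the construction shows only that the relevant torsion is defined over number fields contained in $F$, which is insufficient because $F$ is infinite. The finiteness of $\mu(F)$ is used only to control the ``full level $\ell^b$'' part of $W$ via the Weil pairing, once the isogeny theorems have disposed of the cyclic quotient; and the hypothesis that $F/\QQ$ is Galois is what makes $\Gal(\Qbar/F)$ normal in $\Gal(\Qbar/\QQ)$, which is exactly what promotes the single fixed vector $v$ to the $\Gal(\Qbar/\QQ)$-stable submodule $W$.
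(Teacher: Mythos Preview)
Your argument is correct. Note, however, that the paper does not itself prove this statement: it is quoted verbatim from \cite[Theorem~4.1]{Q(3)} and used as a black box. That said, your proof is essentially the one given in \cite{Q(3)}; the key construction you describe---producing from a point $P\in E(F)$ of order $\ell^n$ a $\Gal(\overline{\QQ}/\QQ)$-stable subgroup $W\cong\ZZ/\ell^n\ZZ\oplus\ZZ/\ell^b\ZZ$ with $E[\ell^b]\subseteq E(F)$ and a rational $\ell^{\,n-b}$-isogeny---is exactly the content of \cite[Lemma~4.6]{Q(3)}, which the present paper also quotes (as Lemma~\ref{lem:torsion-implies-isogeny}). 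The remaining ingredients (the Weil pairing to bound $\ell^b$ via $\mu(F)$, and the Mazur--Kenku classification to bound $\ell^{n-b}$) are likewise the same.
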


In light of Theorem \ref{thm:finite_over_F}, it is natural to ask the following question.
\begin{question}\label{ques:main_question}
Given a finite group $G$, what groups (up to isomorphism) occur as the torsion subgroup of an elliptic curve $E/\QQ$ base extended to $\QQ(G^\infty)$?
\end{question}

We start the study of this question in Section \ref{sec:gen-type} by first introducing the notions of {\it weak} and {\it strong} generalized $G$-type. Armed with these two new concepts, we find computable necessary and sufficient conditions to determine if a given finite group $H$ is of generalized $G$-type. These necessary and sufficient conditions drastically generalize the conditions used in \cite[Lemma 3.2]{Q(3)} and \cite[Lemma 3.2]{D4} where the cases of $G=S_3$ and $D_4$ are studied.

Once these necessary and sufficient conditions are established, Question \ref{ques:main_question} reduces to classifying rational points on modular curves. In fact, since there is a computable uniform bound on $\# E(\QQ(G^\infty))_\tor$  depending only on $\QQ(G^\infty)$ (and so really depending only on $G$), Question \ref{ques:main_question} is reduced to finding the rational points on a {\it finite and computable} list of modular curves depending on $G$. To explicitly illustrate our method we fully answer Question \ref{ques:main_question} in the case when $G=A_4$ by proving the following theorem.

\begin{thm}\label{thm:main}
Let $E/\QQ$ be an elliptic curve. The torsion subgroup $E(\QQ(A_4^\infty))_\tor$ is finite and
\[
E(\QQ(A_4^\infty))_\tor\simeq
\begin{cases}
\ZZ/M\ZZ & \hbox{with }M = 1,3,5,7,9,13,15,21 \hbox{ or}\\
\ZZ/2\ZZ \oplus \ZZ/2M\ZZ & \hbox{with }1\leq M \leq 9 \hbox{ or}\\
\ZZ/3\ZZ \oplus \ZZ/3M\ZZ & \hbox{with }M = 1,3 \hbox{ or}\\
\ZZ/4\ZZ \oplus \ZZ/4M\ZZ & \hbox{with } 1 \leq M \leq 4,\ M = 7\hbox{ or}\\
\ZZ/6\ZZ \oplus \ZZ/6\ZZ &  \hbox{ or}\\
\ZZ/8\ZZ \oplus \ZZ/8\ZZ &

\end{cases}
\]
All but $4$ of the $26$ torsion structures listed above occur for infinitely many $\Qbar$-isomorphism classes of elliptic curves $E/\QQ$. The torsion structures that occur finitely often are
\[
\ZZ / 21 \ZZ, \quad \ZZ / 15\ZZ, \quad \ZZ / 2 \ZZ \oplus \ZZ / 14 \ZZ, \quad \text{and} \quad \ZZ / 3 \ZZ \oplus \ZZ / 9 \ZZ
\]
which occur for $4,2,2$, and $1$ $\Qbar$-isomorphism classes respectively.
\end{thm}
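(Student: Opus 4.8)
The plan is to bootstrap from the group-theoretic reduction of \Cref{sec:gen-type} together with a case analysis of the mod-$n$ Galois representations of elliptic curves over $\QQ$. Since $\exp(A_4)=6$, \Cref{lem:finite_root_of_unity_Ginfty} shows $\QA$ contains only finitely many roots of unity, so \Cref{thm:finite_over_F} already gives that $E(\QA)_\tor$ is finite and uniformly bounded; the Weil pairing then forces $E(\QA)_\tor\cong\ZZ/a\ZZ\oplus\ZZ/ab\ZZ$ with $\QQ(\zeta_a)\subseteq\QA$, which by the cyclotomic computation in the proof of \Cref{lem:finite_root_of_unity_Ginfty} restricts $a$ to a short explicit list. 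The fundamental translation is that a point $P$ of order $n$ lies in $E(\QA)$ if and only if $\QQ(P)$ is of generalized $A_4$-type, which by the criteria of \Cref{sec:gen-type} is equivalent to an explicit, checkable condition on the image of $\GQ$ inside $\GL_2(\ZZ/n\ZZ)$ (acting on $\langle P\rangle$ and on its Galois closure). Because every group of generalized $A_4$-type has order divisible only by $2$ and $3$ and exponent dividing $6$, this, together with Mazur's theorem on rational isogenies and the classification of mod-$\ell$ images, cuts the primes dividing $\#E(\QA)_\tor$ down to $\{2,3,5,7,13\}$ and bounds the $\ell$-primary part in each case.

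Next I would treat each surviving prime power $\ell^k$ separately. For each such $\ell^k$ I would list, up to conjugacy, the subgroups $H\le\GL_2(\ZZ/\ell^k\ZZ)$ with surjective determinant for which ``$E$ has a point of order $\ell^k$ over $\QA$'' holds---equivalently, for which the field cut out by the relevant permutation action is of generalized $A_4$-type---and likewise those $H$ realizing full $\ell^k$-torsion or $\ZZ/\ell^a\ZZ\oplus\ZZ/\ell^b\ZZ$ over $\QA$. Each such $H$ determines a modular curve $X_H/\QQ$, and $E/\QQ$ realizes the corresponding structure precisely when $j(E)\in X_H(\QQ)$, after discarding finitely many exceptional twists handled by hand. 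I would then compute $X_H(\QQ)$ for every relevant $H$: the genus-$0$ curves are parametrized rationally, while the genus-$1$ and higher-genus curves are dispatched by Mordell--Weil rank and Chabauty computations or by appealing to existing tables of modular curves of small level. To obtain the mixed structures $\ZZ/a\ZZ\oplus\ZZ/ab\ZZ$ with $a>1$---in particular to separate $\ZZ/3\ZZ\oplus\ZZ/3\ZZ$ from $\ZZ/3\ZZ\oplus\ZZ/9\ZZ$ and to rule out $\ZZ/9\ZZ\oplus\ZZ/9\ZZ$, $\ZZ/7\ZZ\oplus\ZZ/7\ZZ$, $\ZZ/12\ZZ\oplus\ZZ/12\ZZ$, etc.---I would also need the fibre products of these modular curves over the $j$-line, which encode when several torsion conditions hold simultaneously.

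Assembling the pieces, $E(\QA)_\tor$ for a given $E/\QQ$ is determined prime-by-prime by which of the finitely many modular-curve conditions $j(E)$ satisfies (subject to the simultaneity constraints), and ranging over all $E/\QQ$ produces exactly the $26$ groups in the statement. The final claim about infinitely versus finitely many $\Qbar$-isomorphism classes then follows from the genus computations above: structures arising from genus-$0$ modular curves are realized by infinitely many $j$-invariants, whereas the four structures $\ZZ/21\ZZ$, $\ZZ/15\ZZ$, $\ZZ/2\ZZ\oplus\ZZ/14\ZZ$ and $\ZZ/3\ZZ\oplus\ZZ/9\ZZ$ come from positive-genus curves (or fibre products thereof) whose finitely many rational points I would list explicitly, reading off the exact counts $4,2,2,1$.

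I expect the second step to be the crux. On the group-theory side, enumerating exactly which subgroups of $\GL_2(\ZZ/\ell^k\ZZ)$ yield generalized $A_4$-type fields is delicate, since that condition is genuinely stronger than being a $\{2,3\}$-group of exponent dividing $6$ (for instance $C_9$ fails it), and the Borel and imprimitive cases need particular care. On the geometric side, one must be certain that the lists of rational points on the higher-genus modular curves---those governing the $7$- and $13$-primary torsion and the larger $2$- and $3$-primary structures, as well as the fibre products producing the sporadic entries---are complete. Finally, some bookkeeping is required to verify that the resulting list of $26$ structures is exhaustive and that no unlisted group arises through an unexpected combination of conditions at different primes.
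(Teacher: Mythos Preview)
Your proposal is correct and follows essentially the same architecture as the paper: finiteness via \Cref{thm:finite_over_F}, restriction of primes to $\{2,3,5,7,13\}$ using the generalized $A_4$-type criterion together with the isogeny theorem, a prime-by-prime bound on the $\ell$-primary part via subgroup enumerations in $\GL_2(\ZZ/\ell^k\ZZ)$ (with the paper leaning on the Rouse--Zureick-Brown and Zywina classifications for $\ell=2$ and $\ell\in\{5,7,13\}$), separate treatment of CM curves, and then assembly via fibre products of the relevant modular curves, with the finitely-occurring structures coming from curves of positive genus with explicitly known rational points. One small slip: $C_9$ is not a good witness that ``generalized $A_4$-type'' is strictly stronger than ``$\{2,3\}$-group of exponent dividing $6$'', since $C_9$ already has exponent $9$; the example you want is $S_3$, which has exponent $6$ but fails the criterion $\lambda_2(\lambda_3(G))=\{e\}$.
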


The proof of Theorem \ref{thm:main} gives a concrete demonstration of the general strategy for determining the torsion subgroups for curves which have been base-extended to $\QQ(G^\infty)$ for more general $G$. Thus, the results of this paper provide a framework for determining the torsion subgroups for elliptic curves $E/\QQ$ base-extended to a large family of infinite extensions of $\QQ$. We note here that we do not have an algorithm to classify these groups up to isomorphism since there is not an algorithm to find all the rational points on a general modular curve, although in practice this is possible.

To finish off the classification over $\QQ(A_4^\infty)$, in Table \ref{tab:Examples} we give elliptic curves of minimal conductor with each of the 26 possible torsion structures when base-extended to $\QQ(A_4^\infty)$. Further, in Table \ref{tab:TParams} we find a complete list of (possibly constant) rational maps that classify the $j$-invariants of elliptic curves which contain a subgroup isomorphic to one of the possible 26 torsion structures. More information about Table \ref{tab:TParams} can be found in Section \ref{sec:TParam}.

\begin{table}[h!]
\begin{center}
\renewcommand{\arraystretch}{1}
\begin{tabular}{|l|l||l|l|}\hline
	$E/\QQ$ & $E(\QQ(A_4^\infty))_\tor$ & $E/\QQ$ & $E(\QQ(A_4^\infty))_\tor$\\
	\hline\bigstrut[t]
\href{http://www.lmfdb.org/EllipticCurve/Q/11a2}{\texttt{11a2}}  & $\{\mathcal{O}\}$ &
\href{http://www.lmfdb.org/EllipticCurve/Q/30a1}{\texttt{30a1}}  & $\ZZ/2\ZZ\oplus \ZZ/12\ZZ$ \\
\href{http://www.lmfdb.org/EllipticCurve/Q/44a1}{\texttt{44a1}}  & $\ZZ/3\ZZ$ &
\href{http://www.lmfdb.org/EllipticCurve/Q/49a1}{\texttt{49a1}}  & $\ZZ/2\ZZ\oplus \ZZ/14\ZZ$ \\
\href{http://www.lmfdb.org/EllipticCurve/Q/11a1}{\texttt{11a1}}  & $\ZZ/5\ZZ$ &
\href{http://www.lmfdb.org/EllipticCurve/Q/210e1}{\texttt{210e1}}  & $\ZZ/2\ZZ\oplus \ZZ/16\ZZ$ \\
\href{http://www.lmfdb.org/EllipticCurve/Q/26b1}{\texttt{26b1}}  & $\ZZ/7\ZZ$ &
\href{http://www.lmfdb.org/EllipticCurve/Q/14a3}{\texttt{14a3}}  & $\ZZ/2\ZZ\oplus \ZZ/18\ZZ$ \\
\href{http://www.lmfdb.org/EllipticCurve/Q/19a2}{\texttt{19a2}}  & $\ZZ/9\ZZ$ &
\href{http://www.lmfdb.org/EllipticCurve/Q/19a1}{\texttt{19a1}}  & $\ZZ/3\ZZ\oplus \ZZ/3\ZZ$ \\
\href{http://www.lmfdb.org/EllipticCurve/Q/147b1}{\texttt{147b1}}  & $\ZZ/13\ZZ$ &
\href{http://www.lmfdb.org/EllipticCurve/Q/27a1}{\texttt{27a1}}  & $\ZZ/3\ZZ\oplus \ZZ/9\ZZ$ \\
\href{http://www.lmfdb.org/EllipticCurve/Q/50a3}{\texttt{50a3}}  & $\ZZ/15\ZZ$ &
\href{http://www.lmfdb.org/EllipticCurve/Q/17a1}{\texttt{17a1}}  & $\ZZ/4\ZZ\oplus \ZZ/4\ZZ$ \\
\href{http://www.lmfdb.org/EllipticCurve/Q/162b1}{\texttt{162b1}}  & $\ZZ/21\ZZ$ &
\href{http://www.lmfdb.org/EllipticCurve/Q/15a2}{\texttt{15a2}}  & $\ZZ/4\ZZ\oplus \ZZ/8\ZZ$ \\
\href{http://www.lmfdb.org/EllipticCurve/Q/46a1}{\texttt{46a1}}  & $\ZZ/2\ZZ\oplus \ZZ/2\ZZ$ &
\href{http://www.lmfdb.org/EllipticCurve/Q/30a2}{\texttt{30a2}}  & $\ZZ/4\ZZ\oplus \ZZ/12\ZZ$ \\
\href{http://www.lmfdb.org/EllipticCurve/Q/17a3}{\texttt{17a3}}  & $\ZZ/2\ZZ\oplus \ZZ/4\ZZ$ &
\href{http://www.lmfdb.org/EllipticCurve/Q/210e2}{\texttt{210e2}}  & $\ZZ/4\ZZ\oplus \ZZ/16\ZZ$ \\
\href{http://www.lmfdb.org/EllipticCurve/Q/20a1}{\texttt{20a1}}  & $\ZZ/2\ZZ\oplus \ZZ/6\ZZ$ &
\href{http://www.lmfdb.org/EllipticCurve/Q/1922c1}{\texttt{1922c1}}  & $\ZZ/4\ZZ\oplus \ZZ/28\ZZ$ \\
\href{http://www.lmfdb.org/EllipticCurve/Q/15a5}{\texttt{15a5}}  & $\ZZ/2\ZZ\oplus \ZZ/8\ZZ$ &
\href{http://www.lmfdb.org/EllipticCurve/Q/14a1}{\texttt{14a1}}  & $\ZZ/6\ZZ\oplus \ZZ/6\ZZ$ \\
\href{http://www.lmfdb.org/EllipticCurve/Q/66c1}{\texttt{66c1}}  & $\ZZ/2\ZZ\oplus \ZZ/10\ZZ$ &
\href{http://www.lmfdb.org/EllipticCurve/Q/15a1}{\texttt{15a1}}  & $\ZZ/8\ZZ\oplus \ZZ/8\ZZ$ \\
	\hline
\end{tabular}
\end{center}
\caption{Examples of minimal conductor for each possible torsion structure over $\QQ(A_4^\infty)$}  \label{tab:Examples}
\end{table}

Once we have settled the classification of torsion over $\QQ(A_4^\infty)$, it is not hard to determine which torsion structures arise over the compositum of all $A_4$-extensions of $\QQ$, which we denote by $\QQ_{A_4}$. As we explain in Section \ref{sec:compositum}, since we have a proper containment $\QQ_{A_4} \subseteq \QQ(A_4^\infty)$, this comes down to determining which torsion subgroups from Theorem \ref{thm:main} still arise over the smaller field $\QQ_{A_4}$. The fact that $\QQ_{A_4}$ contains no subextensions which are quadratic over $\QQ$ plays a major role. We obtain the following theorem.

\begin{thm}\label{thm:main_strong-intro}
Let $E/\QQ$ be an elliptic curve. The torsion subgroup $E(\QQ_{A_4})_\tor$ is finite and
\[
E(\QQ_{A_4})_\tor\simeq
\begin{cases}
\ZZ/M\ZZ & \hbox{with } 1\leq M \leq 10 \hbox{ or } M = 12, 13, 14, 18, 21 \hbox{ or}\\
\ZZ/2\ZZ \oplus \ZZ/2M\ZZ & \hbox{with }1\leq M \leq 4 \hbox{ or } M = 7.\\
\end{cases}
\]
\end{thm}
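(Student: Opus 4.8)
The plan is to deduce the classification from Theorem~\ref{thm:main}. Since $\QQ_{A_4}\subseteq\QQ(A_4^\infty)$, for every elliptic curve $E/\QQ$ the group $E(\QQ_{A_4})_\tor$ is finite and isomorphic to a subgroup of one of the $26$ groups in Theorem~\ref{thm:main}, while it always contains $E(\QQ)_\tor$. So I would split the proof into an \emph{upper bound} --- ruling out every subgroup of those $26$ groups that does not appear in the statement --- and a \emph{realization} step --- exhibiting, for each group in the statement, an elliptic curve over $\QQ$ attaining it over $\QQ_{A_4}$. Both are controlled by a single question: which subfields of $\QQ(A_4^\infty)$ actually lie in $\QQ_{A_4}$?

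The decisive ingredient, to be proved in Section~\ref{sec:compositum}, is that $\QQ_{A_4}$ contains no quadratic extension of $\QQ$; in fact every finite Galois subextension $\QQ\subseteq F\subseteq\QQ_{A_4}$ has $\Gal(F/\QQ)^{\mathrm{ab}}$ an elementary abelian $3$-group, hence no subgroup of index $2$. I would prove this by pure group theory: such an $F$ lies in the compositum of finitely many $A_4$-extensions, so $\Gal(F/\QQ)$ is a quotient of a subdirect product $G\le A_4^{\,n}$; writing $V=[A_4,A_4]$ for the Klein four subgroup, one checks $[G,G]=G\cap V^n$, the nontrivial inclusion following because $V^{A_4}=1$ and $G/(G\cap V^n)$ is a $3$-group acting coprimely on the elementary abelian $2$-group $G\cap V^n$ (so $G\cap V^n=[G,G\cap V^n]$), whence $G^{\mathrm{ab}}\hookrightarrow(\ZZ/3\ZZ)^n$. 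Two consequences then drive everything. First, $\QQ(\zeta_n)\subseteq\QQ_{A_4}$ forces $(\ZZ/n\ZZ)^\times$ to have odd order, i.e.\ $n\le 2$, so by the Weil pairing $E(\QQ_{A_4})$ never contains $E[n]$ for $n\ge 3$. Second, any abelian extension of $\QQ$ inside $\QQ_{A_4}$ has odd degree, and $\QQ(E[2])\subseteq\QQ_{A_4}$ exactly when $\Gal(\QQ(E[2])/\QQ)$ is trivial or $\ZZ/3\ZZ$ (the cyclic-cubic case being genuine, since every cyclic cubic field embeds into an $A_4$-extension).

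For the upper bound, the first consequence alone kills $\ZZ/3\ZZ\oplus\ZZ/3M\ZZ$, $\ZZ/4\ZZ\oplus\ZZ/4M\ZZ$, $\ZZ/6\ZZ\oplus\ZZ/6\ZZ$ and $\ZZ/8\ZZ\oplus\ZZ/8\ZZ$, and restricts which subgroups of the $\ZZ/2\ZZ\oplus\ZZ/2M\ZZ$ groups can survive. For the rest --- the cyclic orders $15,16,28$ and the groups $\ZZ/2\ZZ\oplus\ZZ/2M\ZZ$ with $M\in\{5,6,8,9\}$ --- I would argue that a hypothetical point of order $N$ in $E(\QQ_{A_4})$ yields a $G_\QQ$-stable cyclic subgroup $\langle P'\rangle\subseteq E(\QQ_{A_4})$ (if the $G_\QQ$-orbit of the point spans a noncyclic group it contains $E[p]$ for some prime $p\ge 3$, contradicting the first consequence, or one is reduced to a torsion question over a cyclic cubic field); then $G_\QQ$ acts on $\langle P'\rangle$ through a character of odd-order image by the second consequence, so it is either trivial --- giving a rational point of order $\operatorname{ord}(P')$, impossible for these orders by Mazur's Theorem~\ref{thm:mazur} --- or of order $3$, giving a point of order $\operatorname{ord}(P')$ over a cyclic cubic field, ruled out by the classification of torsion of rational elliptic curves over cubic fields cited in the introduction (or, in the stubborn $2$-power cases, by a direct rational-points computation on an auxiliary curve as in the proof of Theorem~\ref{thm:main}). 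For realization, every group in the statement other than $\ZZ/13\ZZ$, $\ZZ/14\ZZ$, $\ZZ/18\ZZ$, $\ZZ/21\ZZ$ and $\ZZ/2\ZZ\oplus\ZZ/14\ZZ$ already occurs as $E(\QQ)_\tor$ by Mazur's theorem, and the upper-bound analysis shows such a curve gains no torsion over $\QQ_{A_4}$; the five remaining structures I would realize by curves carrying a suitable rational isogeny whose isogeny character has $3$-power order, so that the relevant point becomes rational over a cyclic cubic subfield of $\QQ_{A_4}$ (explicit curves can be read off from Table~\ref{tab:Examples} and the parametrizations of Table~\ref{tab:TParams}). The main obstacle is the upper-bound step: making the ``stable cyclic subgroup plus isogeny character'' dichotomy airtight in every one of the $26$ cases, and in particular treating curves with very little rational torsion and orbits with noncyclic stabilizer, which is exactly where Table~\ref{tab:TParams} and the cubic-field torsion classification carry the load.
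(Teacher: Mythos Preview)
Your plan is correct and its backbone coincides with the paper's: both arguments rest on the fact that every finite Galois subextension of $\QQ_{A_4}$ has abelianization an elementary abelian $3$-group (hence no quadratic subfields, and only $\pm 1$ as roots of unity), and both combine this with Mazur's theorem and Najman's cubic classification for the upper bound and with isogeny kernels defined over cyclic cubic fields for the realization. Two points of organization are worth flagging. First, your self-contained computation that a subdirect product $G\le A_4^{\,n}$ has $[G,G]=G\cap V^n$ (via coprime action of the $3$-group $G/(G\cap V^n)$ on the $2$-group $G\cap V^n$) is a clean replacement for the paper's use of the strong $D_{p,q}$-type machinery (Lemma~\ref{lem:strong_Dpq}, Corollary~\ref{cor:strong-Dpq}); both yield Corollary~\ref{cor:no_quad_subfields} and Corollary~\ref{cor:abel==>exp3}. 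Second, for the $2$-power exclusions the paper does not run your ``stable cyclic subgroup plus isogeny character'' dichotomy; instead it invokes \cite[Proposition~4.8]{GonNajman} (stated here as Proposition~\ref{prop:2_torsion_growth}) to see that halving a $2$-power point always happens inside a $2$-group extension, which together with the no-quadratic-subfields fact forces $E(\QQ_{A_4})(2)=E(\QQ)(2)$ unless $E(\QQ)[2]$ is trivial and the discriminant is a square. Your route works too (for the relevant $H\subseteq E(\QQ_{A_4})_\tor$ with $H\not\supseteq E[4]$ one checks $\Aut(H)$ has only $2$-power and $3$-power pieces, and a $2$-group image that is simultaneously strong $A_4$-type must be trivial), but the paper's use of Proposition~\ref{prop:2_torsion_growth} is shorter and avoids the case analysis. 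Finally, a small caution on the realization step: Tables~\ref{tab:Examples} and~\ref{tab:TParams} are tuned to $\QQ(A_4^\infty)$, where quadratic twisting is free; over $\QQ_{A_4}$ you need the specific models of Table~\ref{tab:strong_cyclic_groups} and Examples~\ref{ex-Strong_A4_14}--\ref{ex-Strong_A4_18}, and for the Mazur groups you must pick a curve whose discriminant is not a square and which carries no extra odd isogeny with cubic-order character, which is a genericity argument rather than an automatic consequence of the upper bound.
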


We conclude in Section \ref{sec:torsion-cubic} with an observation about torsion over cyclic cubic fields. In particular, letting $\QQ(C_3^\infty)$ denote the compositum of all cyclic cubic extensions of $\QQ$, we obtain for free a classification of all torsion subgroups arising as $E(\Q(C_3^\infty))_\tor$ for an elliptic curve $E/\QQ$; see Corollary \ref{cor:final-corollary-cubic}.

All of the computations in this article were done with the help of the computer program Magma \cite{Magma} and the code has been made available at \cite{A4Code}.

\section{Groups and fields of generalized $G$-type}\label{sec:gen-type}

Starting in Section \ref{sec:strong-and-weak-G-types}, we will make a general study of groups and fields of generalized $G$-type by introducing the concepts of weak and strong $G$-type. With these we find necessary and sufficient conditions to show a group is of generalized $G$-type. After doing this general study, in Section \ref{sec:just-A4} we will examine the particular case when $G = A_4$ in order to gain more specific information to prove Theorem \ref{thm:main}.

\subsection{Strong and weak G-types}\label{sec:strong-and-weak-G-types}

\begin{definition} Let $G$ be a group. Then a group $H$ is of weak $G$-type if there is an integer $n$ such that $H$ is isomorphic to a subquotient of $G^n$. Further, $H$ is of strong $G$-type if there is an integer $N$ such that $H$ is isomorphic to a quotient of a subdirect product of $G^N$.
\end{definition}

The notion of generalized $G$-type is dependent on a choice of an embedding of $G$ in $S_d$, while in contrast the notions of strong and weak $G$-type depend only on the group $G$ itself as an abstract group. Immediately from these definitions we get the following lemma.

\begin{lemma} Let $G,H$ and $I$ be groups with $G \subseteq S_d$ a transitive subgroup. \label{lem:easy_list}
\begin{enumerate}
\item[\rm (1)] One has the implications: \begin{center} $H$ is of strong $G$-type $\Rightarrow$ $H$ is of generalized $G$-type $\Rightarrow$ $H$ is of weak $G$-type.\end{center}

\item[\rm (2)] If all subgroups of $G$ are of generalized $G$-type then $H$ is of generalized $G$-type if and only if it is of weak $G$-type.

\item[\rm (3)] If all transitive subgroups of $G$ are of strong $G$-type then $H$ is of strong $G$-type if and only if it is of generalized $G$-type. In particular, if $G = S_d$ then the notions of strong $G$-type and generalized $G$-type agree.

\item[\rm (4)] Let $T_1, \ldots T_n$ be the transitive subgroups, (respectively all subgroups) of $G$. Then the notions of strong $T_1 \times \ldots \times T_n$-type and generalized $G$-type (respectively weak $G$-type) coincide.

\item[\rm (5)] The notion of weak $G$-type is closed under taking subgroups, quotients and products, i.e. if $H$ is of weak $G$-type then so is any subgroup of $H$ and any quotient of $H$, and if $H'$ is another subgroup of weak $G$-type then so is $H \times H'$.

\item[\rm (6)] The notions of strong and generalized $G$-type are closed under taking quotients and subdirect products, i.e. if $H$ is of strong (respectively generalized) $G$-type then so is every quotient of $H$, and if $H'$ is of strong (respecitively generalized) $G$-type then so is any subdirect product of $H \times H'$.\label{lem:easy_list6}

\item[\rm (7)] The notions of being of weak, generalized and strong $G$-type are transitive, i.e. if $H$ is of weak (resp. generalized, strong) $G$-type and $I$ is of weak (resp. generalized, strong) $H$-type then $I$ is of weak (resp. generalized, strong) $G$-type.

\item[\rm (8)] Let $I_1,\ldots,I_n$ be finite groups and $H$ a subdirect product of $I_1 \times \cdots \times I_n$. Then $H$ is of weak (resp. generalized, strong) $G$-type if and only if $I_1, \ldots, I_n$ are.
\end{enumerate}
\end{lemma}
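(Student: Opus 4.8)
\emph{Proof proposal.} The eight assertions are heavily interdependent, so the plan is to prove the closure properties first and bootstrap everything else from them: establish the closures in (5) and (6) and the ``if'' direction of (8) by elementary manipulations with subgroups, quotients and subdirect (fibre) products; read (1) off the definitions; deduce (2), (3) and (4); and finally treat (7), which requires a pull-back.

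\textbf{The engine.} Closure of weak $G$-type under subgroups, quotients and products is immediate: writing $H\cong K/M$ with $M\trianglelefteq K\le G^n$, a subgroup of $H$ is $K'/M$ with $M\le K'\le K\le G^n$, a quotient is $K/M'$ with $M\le M'$, and if also $H'\cong K'/M'$ with $K'\le G^m$ then $H\times H'\cong(K\times K')/(M\times M')$ with $K\times K'\le G^{n+m}$; in particular a subdirect product of groups of weak $G$-type is of weak $G$-type. The substantive point, needed for (6) and the ``if'' direction of (8), is the analogue for strong and generalized $G$-type: a subdirect product $L$ of $I_1\times\cdots\times I_n$ with each $I_j$ of strong (resp.\ generalized) $G$-type is again of that type. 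To prove this, write $I_j\cong K_j/M_j$, where $K_j$ is a subdirect product of $G^{N_j}$ (resp.\ of transitive subgroups $T^{(j)}_a\le G$), and form the fibre product
\[
\widetilde L:=\bigl\{(k_1,\dots,k_n)\in K_1\times\cdots\times K_n : (\overline{k_1},\dots,\overline{k_n})\in L\bigr\},
\]
where $\overline{(\,\cdot\,)}\colon K_j\to I_j$ are the quotient maps. The natural map $\widetilde L\to L$ is surjective (lift each coordinate of a given element of $L$), so $L$ is a quotient of $\widetilde L$; and $\widetilde L$ is subdirect in $K_1\times\cdots\times K_n$, since given $k_j\in K_j$ one may, using that $L$ is subdirect in $\prod_l I_l$, extend $\overline{k_j}$ to an element of $L$ and lift its remaining coordinates. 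Hence $\widetilde L$ is a subdirect product inside $\prod_j G^{N_j}$ (resp.\ $\prod_{j,a}T^{(j)}_a$), so $L$, being a quotient of $\widetilde L$, is of strong (resp.\ generalized) $G$-type. Closure under quotients is trivial (compose surjections), and the ``only if'' part of (8) is trivial since $I_j=\pi_j(H)$ is a quotient of $H$.

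\textbf{Items (1)--(4).} Since $G$ is a transitive subgroup of $G\le S_d$, a subdirect product of $G^N$ is a subdirect product of transitive subgroups of $G$ (so strong $\Rightarrow$ generalized), while a subdirect product of transitive subgroups $T_a\le G$ is a subgroup of $\prod_a T_a\le G^m$ (so generalized $\Rightarrow$ weak); this is (1). For (2): if $H\cong K/M$ with $K\le G^n$, then $K$ is a subdirect product of the subgroups $\pi_i(K)\le G$, each of generalized $G$-type by hypothesis, so $K$---and hence its quotient $H$---is of generalized $G$-type by the engine; the converse is (1). Item (3) is the same argument with ``strong'' in place of ``generalized'' and the $T_a$ transitive, the ``in particular'' statement being the case $G=S_d$ once one checks that every transitive subgroup of $S_d$ is of strong $S_d$-type. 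For (4): unwinding the definitions, $H$ is of strong $(T_1\times\cdots\times T_n)$-type iff $H$ is a quotient of a subdirect product of finitely many copies of the $T_i$ (rearrange the coordinates of $(T_1\times\cdots\times T_n)^N$, noting that subdirectness passes to the individual coordinates, and conversely pad a short product with extra $T_i$-coordinates to a common length); with $\{T_i\}$ the transitive subgroups of $G$ this is exactly generalized $G$-type, and with $\{T_i\}$ all subgroups of $G$ it is exactly weak $G$-type (a subgroup of $G^m$ is a subdirect product of its projections, which are subgroups of $G$, and conversely such a product lies in $G^m$).

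\textbf{Item (7) and the main obstacle.} For weak and strong $G$-type, transitivity follows by a pull-back: if $H\cong K_H/M_H$ with $K_H\le G^a$ and $I\cong K_I/M_I$ with $K_I\le H^b$, let $\widetilde{K_I}\le K_H^{\,b}\le G^{ab}$ be the preimage of $K_I$ under $K_H^{\,b}\twoheadrightarrow H^b$; then $I$ is a quotient of $\widetilde{K_I}$, and if $K_I$ is moreover subdirect in $H^b$ then $\widetilde{K_I}$ is subdirect in $G^{ab}$ by the coordinate-lifting argument above, which settles the strong case. The generalized case is the delicate one: the subdirect product witnessing that $I$ is of generalized $H$-type need not be subdirect in a power $H^b$, only in a product of transitive subgroups $T_j\le H$, so the pull-back does not directly apply. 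By (8) the problem reduces to showing that every transitive subgroup $T$ of $H$ is of generalized $G$-type whenever $H$ is; I would attack this using the explicit permutation structure---for a point stabilizer $H_1$ one has $H=TH_1$, so $T$ is ``large'' in $H$---together with the reformulation in (4). I expect this propagation of the ``generalized'' property to transitive subgroups to be the main technical hurdle; the remaining items are routine manipulations with subdirect products and quotient maps.
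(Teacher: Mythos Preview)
The paper gives no proof here, asserting only that the lemma follows ``immediately from these definitions,'' so there is nothing to compare against; your arguments for (1), (2), (4), (5), (6), (8) and the weak and strong cases of (7) are correct and go well beyond what the paper supplies. The two places where you hesitate are exactly the places where the \emph{statement} itself is in trouble.

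For the ``in particular'' in (3) you reduce to checking that every transitive subgroup of $S_d$ is of strong $S_d$-type, but this fails already for $d=3$: the cyclic group $C_3$ is transitive in $S_3$ (hence of generalized $S_3$-type), yet by the paper's own Lemma~\ref{lem:strong_Dpq} applied with $S_3\cong D_{2,3}$, a weak-$S_3$-type group is of strong $S_3$-type if and only if it has no quotient isomorphic to $\ZZ/3\ZZ$, which $C_3$ obviously has. So strong and generalized $S_3$-type differ, and the ``in particular'' clause is false as written. For (7) in the generalized case you correctly isolate the crux---that every transitive subgroup of $H$ should again be of generalized $G$-type---and flag it as the main obstacle; in fact it need not hold, because ``generalized $H$-type'' depends on an unspecified embedding $H\subseteq S_{d'}$. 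For instance, take $G=A_5\subseteq S_5$ and $H=A_5\subseteq S_{12}$ via the action on $A_5/C_5$: then $A_4\le H$ is transitive (since $A_4\cdot C_5=A_5$), so its quotient $C_3$ is of generalized $H$-type; but an easy induction (no member of $\{C_5,D_5,A_5\}$ has $C_3$ as a quotient, and the kernel of any coordinate projection of a subdirect product is again subdirect in normal subgroups drawn from this same list) shows that $C_3$ is not of generalized $A_5$-type for $A_5\subseteq S_5$. Your instinct that these two points are the delicate ones is exactly right; they are not gaps in your reasoning so much as defects in the lemma as stated.
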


\begin{remark}
Using part 2 of the above lemma one can show that the notions of generalized $G$-type and weak $G$-type agree for $D_4,A_4, S_4 \subseteq S_4$. Using part $3$ one can show that the notions of strong $D_4$-type and generalized $D_4$-type also agree. However, this is not true for $A_4$; see Lemma \ref{lem:strong_Dpq}.
\end{remark}

For the rest of this section we will denote the free group with $k$ generators by $F_k$. Also, if $H$ is a group and $h_1,\ldots,h_k \in H$ are elements, then the morphism $F_k \to H$ which sends the $i$-th generator of $F_k$ to $h_i$ is denoted by $\eval_{h_1,\ldots,h_k}$.

\begin{definition}
Let $G$ be a finite group and $(H,h_1,\ldots,h_k)$ be a group of weak (resp. generalized or strong) $G$-type with $k$ distinguished elements. Then $(H,h_1,\ldots,h_k)$ is called a universal group of weak (resp. generalized or strong) $G$-type if, for every other group of weak (resp. generalized or strong) $G$-type with $k$ distinguished elements $(I,i_1,\ldots,i_k)$, there exists a unique $\phi : H \to I$ such that $\phi(h_j)=i_j$ for all $1 \leq j \leq k$.
\end{definition}

\begin{lemma}\label{lem:univ-G-type} Let $G$ be a group. Define
\begin{align*}
\pi_{G,k}:  F_k & \to G^{\Hom(F_k,G)} \\
x & \mapsto (f(x))_{f \in \Hom(F_k,G)}
\end{align*}
and let $G^{univ}_k = \pi_{G,k}(F_k)$. Then $\left( G^{univ}_k,\pi_{G,k}(x_1),\ldots,\pi_{G,k}(x_k) \right)$ is a universal group of weak $G$-type with $k$ distinguished elements.
\end{lemma}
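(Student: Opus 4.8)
The plan is to verify the two requirements in the definition of a universal group: that $(G^{univ}_k,\pi_{G,k}(x_1),\dots,\pi_{G,k}(x_k))$ is itself a group of weak $G$-type with $k$ distinguished elements, and that it has the stated universal mapping property. For the first point I would observe that a homomorphism $F_k\to G$ amounts to an arbitrary choice of images for the $k$ free generators, so (as $G$ is finite, like everywhere in this section) the index set $\Hom(F_k,G)$ is finite, being in bijection with $G^k$; hence $G^{univ}_k=\pi_{G,k}(F_k)$ is a subgroup of $G^{\Hom(F_k,G)}\cong G^{|G|^k}$ and in particular is of weak $G$-type, with distinguished elements $\pi_{G,k}(x_j)$. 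I would immediately record that these elements generate $G^{univ}_k$, since the $x_j$ generate $F_k$ and $\pi_{G,k}$ surjects onto $G^{univ}_k$; this already yields the \emph{uniqueness} of $\phi$, because a homomorphism out of $G^{univ}_k$ is determined by its values on a generating set.

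For \emph{existence}, let $(I,i_1,\dots,i_k)$ be any group of weak $G$-type and unwind the definition: there are $n\in\N$, a subgroup $S\le G^n$, and a surjection $q\colon S\to I$. Choose lifts $s_j\in S$ of the $i_j$ and write $s_j=(g_j^{(1)},\dots,g_j^{(n)})$ in coordinates. The key point is that for each $m$ the rule $x_j\mapsto g_j^{(m)}$ defines a homomorphism $F_k\to G$, i.e.\ it is one of the coordinates indexing $\pi_{G,k}$; hence the coordinate projection $G^{\Hom(F_k,G)}\to G^n$ onto these $n$ homomorphisms restricts to a map $p\colon G^{univ}_k\to G^n$ with $p(\pi_{G,k}(x_j))=s_j$. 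Since the $s_j$ lie in the subgroup $S$ and generate $\im p$, the map $p$ takes values in $S$, and composing with $q$ gives the desired $\phi\colon G^{univ}_k\to I$ with $\phi(\pi_{G,k}(x_j))=i_j$. Equivalently, in kernel language: $\Ker\pi_{G,k}=\bigcap_{f\in\Hom(F_k,G)}\Ker f$ is exactly the set of words of $F_k$ that evaluate to the identity under every substitution of elements of $G$, such a word still evaluates to the identity under $x_j\mapsto s_j$ in $G^n$ (argue coordinatewise) and therefore, after applying $q$, under $x_j\mapsto i_j$ in $I$; so $\Ker\pi_{G,k}\subseteq\Ker\eval_{i_1,\dots,i_k}$ and $\eval_{i_1,\dots,i_k}$ factors through $\pi_{G,k}$, producing $\phi$.

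I do not expect a serious obstacle. The one conceptual step — the heart of the matter — is the observation that a word which is a law for $G$ remains a law on every subquotient of a power of $G$, which is precisely what being of weak $G$-type provides; everything else is bookkeeping with the universal property of the free group $F_k$. A minor point to keep in mind is that the $i_j$ need not generate $I$, so $\phi$ need not be surjective, but this is harmless since the definition of a universal group asks only for a homomorphism matching the distinguished elements, not a surjection.
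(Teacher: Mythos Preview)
The paper states this lemma without proof, so there is no argument to compare your proposal against. Your proof is correct and is essentially the standard verification: $G^{univ}_k$ is visibly a subgroup of a finite power of $G$ (hence of weak $G$-type), the distinguished elements generate it (giving uniqueness of $\phi$), and existence follows from the observation that $\Ker\pi_{G,k}$ consists precisely of the identities (laws) satisfied by $G$, which are automatically satisfied in any subquotient of $G^n$. Your parenthetical remark that $G$ is being taken finite is well placed---the lemma as stated does not assume it, but finiteness of $\Hom(F_k,G)$ is exactly what guarantees that $G^{univ}_k$ sits inside a \emph{finite} power of $G$ and hence is itself of weak $G$-type; the paper's subsequent commentary (``Notice that if $G$ is finite\ldots'') makes clear this is the intended setting.
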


Notice that if $G$ is finite then it is possible to compute $G_k^{univ}$ explicitly as a subgroup of $G^{\Hom(F_k,G)}$. In practice however one does not need a copy of $G$ for every element of $\Hom(F_k,G)$, since the kernel of a morphism does not change if we compose it with an automorphism of its range. More generally, if $f,g \in \Hom(F_k,G)$ are such that $\ker(g) \subseteq \ker(f)$, then the image of $G_k^{univ}$ maps isomorphically onto the projection where the copy of $G$ corresponding to $f$ is omitted.
\begin{corollary}\
\begin{enumerate}
\item[\rm (1)] Let $G,H$ be finite groups and suppose that $H$ is $k$-generated. Then $H$ is of weak $G$-type if and only if $H$ is isomorphic to a quotient of $G^{univ}_k$.
\item[\rm (2)] If $G$ can be generated by $k$ elements then the notion of weak $G$-type and weak $G_k^{univ}$-type agree.
\end{enumerate}
\end{corollary}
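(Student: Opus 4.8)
The plan is to deduce both parts directly from the universal property established in Lemma~\ref{lem:univ-G-type}, together with the closure properties collected in Lemma~\ref{lem:easy_list}. A preliminary observation, used throughout, is that since $G$ is finite the set $\Hom(F_k,G)$ is finite, so $G_k^{univ}=\pi_{G,k}(F_k)$ is a subgroup of the \emph{finite} product $G^{\Hom(F_k,G)}$; in particular $G_k^{univ}$ is itself of weak $G$-type.

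For part~(1), the forward implication is immediate: by the observation above $G_k^{univ}$ is of weak $G$-type, and by Lemma~\ref{lem:easy_list}(5) weak $G$-type passes to quotients, so every quotient of $G_k^{univ}$ is of weak $G$-type. For the reverse implication, suppose $H$ is $k$-generated and of weak $G$-type, and fix generators $h_1,\ldots,h_k$ of $H$. Then $(H,h_1,\ldots,h_k)$ is a group of weak $G$-type with $k$ distinguished elements, so the universal property of $(G_k^{univ},\pi_{G,k}(x_1),\ldots,\pi_{G,k}(x_k))$ supplies a homomorphism $\phi\colon G_k^{univ}\to H$ with $\phi(\pi_{G,k}(x_j))=h_j$ for all $j$. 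Since $x_1,\ldots,x_k$ generate $F_k$, the elements $\pi_{G,k}(x_j)$ generate $G_k^{univ}$, and since the $h_j$ generate $H$ it follows that $\phi$ is surjective; hence $H$ is a quotient of $G_k^{univ}$.

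For part~(2), the key point is that when $G$ is generated by $k$ elements, $G$ is a quotient of $G_k^{univ}$: a choice of $k$ generators of $G$ determines a surjection $f\in\Hom(F_k,G)$, and the projection of $G^{\Hom(F_k,G)}$ onto the $f$-coordinate restricts to a surjection $G_k^{univ}\to G$ carrying $\pi_{G,k}(x_j)$ to the $j$-th generator of $G$. (Alternatively this is part~(1) applied to $H=G$.) Consequently $G$ is of weak $G_k^{univ}$-type by Lemma~\ref{lem:easy_list}(5), while conversely $G_k^{univ}$ is of weak $G$-type. The claimed equality of the two notions now follows from the transitivity of weak type, Lemma~\ref{lem:easy_list}(7): if $H$ is of weak $G$-type then it is of weak $G_k^{univ}$-type (as $G$ is of weak $G_k^{univ}$-type), and symmetrically in the other direction.

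I do not anticipate a genuine obstacle; the only points deserving care are (i) invoking Lemma~\ref{lem:univ-G-type} with $H$ equipped with an honest generating $k$-tuple, so that the resulting map $\phi$ is forced to be surjective, and (ii) the finiteness of $\Hom(F_k,G)$, which is what makes $G_k^{univ}$ a bona fide finite subquotient of a finite power of $G$ rather than an unwieldy infinite product.
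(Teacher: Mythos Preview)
Your argument is correct and is precisely the intended deduction from Lemma~\ref{lem:univ-G-type} and Lemma~\ref{lem:easy_list}; the paper in fact states this corollary without proof, treating it as immediate, and your write-up simply spells out those routine steps.
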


\begin{remark}\label{rem:G-type-algorithm}
Since $G^{univ}_k$ is computable and it is possible to enumerate all normal subgroups of $G^{univ}_k$, this also gives a (possibly very slow) algorithm to check whether a $k$-generated group $H$ is of weak $G$-type. One just needs to check whether there is a normal subgroup $N$ of $G^{univ}_k$ such that $H \simeq G^{univ}_k/N$. How practical this algorithm is depends on how easily one can compute $G^{univ}_k$ in practice, or whether one can give a nice theoretical description of $G^{univ}_k$ for the given group $G$. For example, if it is easy to write down a finite set of generators $g_1,\ldots,g_n$ for $\ker(\pi_{G,k})$, then determining whether $H$ is of generalized $G$-type just comes down to checking whether $\eval_{h_1,\ldots,h_k}(g_i)=0$ for all $1\leq i \leq n$.
\end{remark}

\begin{remark}
Let $G$ be a group, then \cref{lem:univ-G-type} shows that a universal $k$-generated group of weak $G$-type exists, but its true power lies in that it gives a concrete description of $G_k^{univ}$. The equivalent of \cref{lem:univ-G-type} obtained by replacing weak $G$-type with strong $G$-type and replacing $\Hom(F_k,G)$ by the surjective homomorphisms does not hold. However the universal strong $G$-type group still exists. Indeed this is a quite formal consequence of the fact that the notion of strong $G$-type is closed under quotients and subdirect products, so that one can describe the universal strong $G$-type group as the image of $G_k^{univ}$ (or $F_k$) under the product of all surjective maps to strong $G$-type groups. What goes wrong with the naive generalization of \cref{lem:univ-G-type} can be seen by computing that $\Z/2\Z$ is the universal strong $G$-type group with $1$ distinguished element where $G$ is $\Z/2\Z \times \Z/2\Z$ or $\SL_2(\F_p)$ with $p \geq 5$.
\end{remark}

\begin{definition}\
\begin{enumerate}
	\item[\rm (1)] An element $x \in F_k$ in the kernel of $\pi_{G,k}$ is called a weak $G$-type relation of rank $k$, or alternatively, it is said that $G$ satisfies the relation $x$. In this case $I_{G,k} := \ker (\pi_{G,k})$ is called the group of $G$-type relations, so that $G^{univ}_k = F_k/I_{G,k}$.
	\item[\rm (2)] If $k_1,\ldots, k_n$ are integers, $x_1,\ldots,x_n$ are in $F_{k_1},\ldots F_{k_n}$ respectively and $(G,g_1,\ldots,g_k)$ is a group with $k$ distinguished elements such that $G$ is of $x_1,\ldots,x_n$-type, then the tuple $(G,g_1,\ldots,g_k)$ is called a universal group with $k$-generators satisfying relations $x_1,\ldots,x_n$ if for every other group $(H,h_1,\ldots,h_k)$ of $x_1,\ldots,x_n$-type with $k$ distinct elements there is a unique homomorphism $\phi : G \to H$ such that $\phi(g_i)=h_i$ for $1\leq i \leq k$.
	\item[\rm (3)] If $k'_1,\ldots, k'_{n'}$ is another set of integers and $x'_1,\ldots,x'_{n'} \in F_{k_1},\ldots F_{k_n}$ is another set of relations, then $x_1, \ldots, x_n$ are said to generate the relations $x'_1,\ldots x'_{n'}$ if every group $H$ of $x_1, \ldots, x_n$-type also of $x'_1,\ldots x'_{n'}$-type. The $x_1, \ldots, x_n$ and $x'_1,\ldots x'_{n'}$ are called equivalent if they both generate each other.
\end{enumerate}
\end{definition}

\begin{example}\
\begin{itemize}
\item If $G$ is a group of exponent $n$ then $x_1^n \in F_1$ is a weak $G$-type relation of rank $1$.
\item If $G$ is an abelian group then $[x_1,x_2] := x_1^{-1}x_2^{-1}x_1x_2 \in F_2 $  is a weak $G$-type relation of rank $2$.
\item More general if $G$ is of nilpotency class $k$ then $[x_1,[x_2,[x_3,\ldots[x_{k-1},x_k]]]]$ is a weak $G$-type relation.
\item If $G$ is an abelian group of exponent $n$ then the notions of strong, weak and generalized $G$-type agree, and a group $H$ is of weak $G$-type if and only if it satisfies the relations $x_1^n$ and $[x_1,x_2]$, i.e.  $x_1^n$ and $[x_1,x_2]$ generate the weak $G$-type relations.
\end{itemize}
\end{example}

The above language can be used to restate the classification of groups of generalized $D_4$-type given in \cite{D4}. There the first named author shows that a group $G$ is of generalized $D_4$ type if and only if it has exponent 4 and is of nilpotency class at most two. Since all subgroups of $D_4$ are of generalized $D_4$-type one has that the notions of generalized $D_4$-type and weak $D_4$-type agree and so a group is of generalized-$D_4$ type if and only if it satisfies the relations $x_1^4$, $[x_1,[x_1,x_2]]]$ and $[x_2,[x_1,x_2]]]$.


\subsection{Semidirect products of elementary abelian $p$ and $q$ groups.}\label{sec:semidirect}

Let $N$, $H$ be groups and $\rho : H \to \aut N$ be a group action of $H$ on $N$. Suppose that $f : N \rtimes H \to G$ is a group morphism. Then the action of $H$ on $f(N)$ via conjugation in $G$ turns $f|_N : N \to f(N)$ into a morphism of group actions of $H$. More precisely, for all $n\in N$ and $h \in H$ one has $f(\rho(h)(n)) = f(h)f(n)f(h)^{-1}$. In fact, this is the universal property of the semidirect product.

\begin{proposition}\cite[Proposition 27]{Bourbaki}
Let $N, H$ and $G$ be groups and $\rho : H \to \aut N$ be a group action of $H$ on $N$ and suppose that $f_1 : N  \to G$ and $f_2 : H \to G$ are two group homomorphisms such that $f_1(\rho(h)(n)) = f_2(h)f_1(n)f_2(h)^{-1}$. Then there exists a unique $f : N \rtimes H \to G$ such that $f|_N = f_1$ and $f|_H = f_2$.
\end{proposition}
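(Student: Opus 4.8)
The plan is to write down the map $f$ explicitly and then check, in turn, that it is a homomorphism, that it has the required restrictions, and that it is the only such homomorphism. Recall that $N \rtimes H$ has underlying set $N \times H$ with multiplication $(n_1, h_1)(n_2, h_2) = (n_1\,\rho(h_1)(n_2),\, h_1 h_2)$, with $N$ embedded as the elements $(n,e)$ and $H$ as the elements $(e,h)$, so that conjugation of $N$ by $H$ inside $N \rtimes H$ realizes the action $\rho$. Define
\[
f : N \rtimes H \to G, \qquad f(n,h) = f_1(n)\,f_2(h).
\]

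First I would verify that $f$ is multiplicative. For $(n_1,h_1),(n_2,h_2) \in N \rtimes H$ one computes
\[
f\big((n_1,h_1)(n_2,h_2)\big) = f_1\big(n_1\,\rho(h_1)(n_2)\big)\,f_2(h_1 h_2) = f_1(n_1)\,f_1\big(\rho(h_1)(n_2)\big)\,f_2(h_1)\,f_2(h_2),
\]
and the compatibility hypothesis $f_1(\rho(h_1)(n_2)) = f_2(h_1)\,f_1(n_2)\,f_2(h_1)^{-1}$ turns the two middle factors into $f_2(h_1)\,f_1(n_2)\,f_2(h_1)^{-1}f_2(h_1) = f_2(h_1)\,f_1(n_2)$, so the whole expression collapses to $f_1(n_1)\,f_2(h_1)\,f_1(n_2)\,f_2(h_2) = f(n_1,h_1)\,f(n_2,h_2)$. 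Since $f(n,e) = f_1(n)$ and $f(e,h) = f_2(h)$, the restrictions $f|_N = f_1$ and $f|_H = f_2$ hold. For uniqueness I would note that $(n,h) = (n,e)(e,h)$, so $N$ and $H$ together generate $N \rtimes H$; hence any homomorphism $g$ with $g|_N = f_1$ and $g|_H = f_2$ satisfies $g(n,h) = g(n,e)\,g(e,h) = f_1(n)\,f_2(h) = f(n,h)$, forcing $g = f$.

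I do not anticipate any real obstacle: this is the standard universal property of the semidirect product. The only point requiring care is consistency of conventions — one must fix the multiplication on $N \rtimes H$ (in particular, that the twist $\rho(h_1)$ acts on the second $N$-coordinate, matching the choice $h n h^{-1} = \rho(h)(n)$) so that the verification above uses the compatibility identity exactly as stated in the hypothesis, rather than its inverse form $f_2(h)^{-1}f_1(n)f_2(h)$. With the conventions aligned as above, everything reduces to the short telescoping computation just indicated.
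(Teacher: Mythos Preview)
Your argument is correct and is exactly the standard verification of the universal property of the semidirect product: define $f(n,h)=f_1(n)f_2(h)$, use the compatibility hypothesis to check multiplicativity, and deduce uniqueness from the factorization $(n,h)=(n,e)(e,h)$. There is nothing to compare against here, since the paper does not supply its own proof of this proposition but simply cites it from Bourbaki; your write-up would serve perfectly well as the omitted proof.
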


To simplify the notion that is to come we give the following definition.

\begin{defn}
Let $G$ be a finite group and $p$ a prime. Then we let $\lambda_p(G)$ be the 1st term in the lower central $p$-series of $G$; that is,
\[
\lambda_p(G) = [G,G]G^p.
\]
\end{defn}

Let $p$ and $q$ be distinct primes, let $\zeta_p$ be a $p$-th root of unity, let $\rho_{p,q} : \ZZ/p\ZZ \to \F_q(\zeta_p)^*$ denote the action of $\ZZ/p\ZZ$ on $\F_q(\zeta_p)$ defined by $\rho_{p,q}(i) = \zeta_p^i$, and let $D_{p,q} :=  \F_q(\zeta_p) \rtimes_\rho \ZZ/p\ZZ$ denote the corresponding semidirect product of $\ZZ/p\ZZ$ and $\F_q(\zeta_p)$ as groups.
The following is a generalization of Proposition \ref{prop:A4_Classification} and also of \cite[Lemma 3.2]{Q(3)}.
\begin{prop}\label{prop:weak_Dpq} Let $p,q$ be distinct primes and $H$ a finite group. Then the following are equivalent:
	\begin{enumerate}
		\item[\rm (1)] $H$ is isomorphic to a subgroup of $D_{p,q}^k$ for some integer $k$.
		\item[\rm (2)] $H$ is of weak $D_{p,q}$-type.
		\item[\rm (3)] $\lambda_q(\lambda_p(H)) = \set{e}$
		\item[\rm (4)] $H$ is isomorphic to $(\ZZ/q\ZZ)^n \rtimes_\rho (\ZZ/p\ZZ)^m$ for some action $\rho : (\ZZ/p\ZZ)^m \to \GL_n( \ZZ/q\ZZ)$.
	\end{enumerate}
\end{prop}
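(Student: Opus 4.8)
The plan is to prove the equivalence of the four statements by establishing a cycle of implications, exploiting the universal property machinery developed earlier together with the structure of $D_{p,q}$.

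\medskip

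\textbf{Strategy and order of implications.} I would prove $(1)\Rightarrow(2)\Rightarrow(3)\Rightarrow(4)\Rightarrow(1)$. The implication $(1)\Rightarrow(2)$ is essentially a tautology: a subgroup of $D_{p,q}^k$ is certainly a subquotient of $D_{p,q}^k$, so it is of weak $D_{p,q}$-type by definition. For $(4)\Rightarrow(1)$, suppose $H\simeq(\ZZ/q\ZZ)^n\rtimes_\rho(\ZZ/p\ZZ)^m$. I would decompose the $(\ZZ/p\ZZ)^m$-representation $(\ZZ/q\ZZ)^n$ over $\F_q$ into irreducible pieces; since $(\ZZ/p\ZZ)^m$ is abelian of exponent $p$, each irreducible $\F_q[(\ZZ/p\ZZ)^m]$-module is one-dimensional over $\F_q(\zeta_p)$ (a character of $(\ZZ/p\ZZ)^m$ with values in $\mu_p$), hence of the form on which a single cyclic factor $\ZZ/p\ZZ$ acts through $\rho_{p,q}$ and the others act trivially. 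Each such piece, together with a surjection $(\ZZ/p\ZZ)^m\twoheadrightarrow\ZZ/p\ZZ$ onto the relevant factor, embeds into $D_{p,q}$; taking the product over all the irreducible constituents and using that $H$ sits diagonally, one realizes $H$ as a subgroup of a suitable $D_{p,q}^k$. (One must be a little careful that the $(\ZZ/p\ZZ)^m$-action is recovered exactly on the nose, but this is just linear algebra over $\F_q$.)

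\medskip

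\textbf{The two substantive implications.} The implication $(2)\Rightarrow(3)$ is where the universal-property language pays off. First I would compute $\lambda_q(\lambda_p(D_{p,q}))=\{e\}$ directly: $D_{p,q}=\F_q(\zeta_p)\rtimes\ZZ/p\ZZ$ has $[D_{p,q},D_{p,q}]\subseteq\F_q(\zeta_p)$ (the abelianization is $\ZZ/p\ZZ$ once $\zeta_p-1$ is a unit, which holds since $p\neq q$), and $D_{p,q}^p$ also lands in $\F_q(\zeta_p)$, so $\lambda_p(D_{p,q})\subseteq\F_q(\zeta_p)$, which is elementary abelian of exponent $q$; hence $\lambda_q$ of it is trivial. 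The condition ``$\lambda_q(\lambda_p(H))=\{e\}$'' is equivalent to a finite set of identities in $H$ (products of $q$-th powers and commutators of products of $p$-th powers and commutators equal $e$), i.e.\ $H$ satisfies a certain finite list of weak $G$-type relations that $D_{p,q}$ satisfies. Since the class of groups satisfying a fixed relation is closed under subquotients and products (Lemma~\ref{lem:easy_list}(5)), and since $D_{p,q}$ satisfies these relations, every group of weak $D_{p,q}$-type does too; this gives $(3)$. Finally, $(3)\Rightarrow(4)$ is the structural heart. Assume $\lambda_q(\lambda_p(H))=\{e\}$. Set $N:=\lambda_p(H)=[H,H]H^p$. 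Then $\lambda_q(N)=\{e\}$ means $N$ is abelian of exponent $q$, i.e.\ $N\simeq(\ZZ/q\ZZ)^n$. Moreover $H/N$ is abelian of exponent $p$ (by definition of $\lambda_p$), so $H/N\simeq(\ZZ/p\ZZ)^m$. It remains to show the extension $1\to N\to H\to H/N\to 1$ splits; this follows from the Schur–Zassenhaus theorem since $\gcd(|N|,|H/N|)=\gcd(q^n,p^m)=1$. A splitting exhibits $H\simeq N\rtimes_\rho(H/N)=(\ZZ/q\ZZ)^n\rtimes_\rho(\ZZ/p\ZZ)^m$ with $\rho$ the conjugation action, which is $(4)$.

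\medskip

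\textbf{Main obstacle.} I expect the main friction point to be $(4)\Rightarrow(1)$: turning an arbitrary $\F_q$-linear action of $(\ZZ/p\ZZ)^m$ into a concrete embedding into a power of $D_{p,q}$ requires decomposing the module into one-dimensional $\F_q(\zeta_p)$-pieces on which exactly one $\ZZ/p\ZZ$-factor acts by $\rho_{p,q}$, and then bookkeeping to check the full action is faithfully reproduced diagonally. The representation theory is elementary (abelian $p$-group, coefficients $\F_q$ with $p\neq q$ so the group algebra is semisimple, and $\F_q(\zeta_p)$ is exactly the field cut out by a nontrivial $\mu_p$-character), but assembling the embedding cleanly is the part that needs care. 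Everything else is either definitional or a direct appeal to Schur–Zassenhaus and the closure properties in Lemma~\ref{lem:easy_list}.
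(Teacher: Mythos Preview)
Your proposal is correct and follows essentially the same route as the paper: the same cycle $(1)\Rightarrow(2)\Rightarrow(3)\Rightarrow(4)\Rightarrow(1)$, with $(2)\Rightarrow(3)$ coming from the direct computation $\lambda_q(\lambda_p(D_{p,q}))=\{e\}$ plus closure under subquotients and products, $(3)\Rightarrow(4)$ from Schur--Zassenhaus, and $(4)\Rightarrow(1)$ via decomposing the $\F_q[(\ZZ/p\ZZ)^m]$-module into irreducibles and mapping each piece to a copy of $D_{p,q}$ using the universal property of the semidirect product.

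Two small points where the paper is more careful than your sketch, exactly in the place you flag as needing care: (i) the trivial irreducible summands are one-dimensional over $\F_q$, not over $\F_q(\zeta_p)$, so the paper separates off $V_{\mathrm{triv}}\simeq(\ZZ/q\ZZ)^s$ and embeds it directly into $D_{p,q}^s$; (ii) the product of the maps $\phi_i:H\to D_{p,q}$ built from the nontrivial characters $\pi_i$ need not be injective on the $(\ZZ/p\ZZ)^m$ factor, so the paper appends the quotient map $H\to H/W\simeq(\ZZ/p\ZZ)^m\hookrightarrow D_{p,q}^m$ to force global injectivity. With these two adjustments your argument is exactly the paper's.
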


\begin{proof}
The implication $(1) \Rightarrow (2)$  is by definition, $(2) \Rightarrow (3)$ follows because $\lambda_q(\lambda_p(D_{p,q})) = \set{e}$, and $(3) \Rightarrow (4)$ follows from the Schur-Zassenhaus Theorem, so the only thing left to prove is $(4) \Rightarrow (1)$.

	View $V := (\Z/q\Z)^n$ as a vector space over $\F_q$ so that $\rho$ can be seen as an $\F_q$-linear representation. Since the order of $(\Z/p\Z)^m$ is coprime to $q$, $V$ decomposes into a direct sum of irreducible representations of $(\Z/p\Z)^m$. Let $V_{triv}\simeq (\Z/q\Z)^s$ be the part on which $(\Z/p\Z)^m$ acts trivially, write $V = V_{triv} \oplus W$, and let $G := W \rtimes (\Z/p\Z)^m$. Then $H$ decomposes as a direct product $H = V_{triv} \times G$, and since $V_{triv} \simeq (\Z/q\Z)^s \subseteq D_{p,q}^s$ it suffices to show that $G$ is isomorphic to a subgroup of $D_{p,q}^r$ for some $r$. Now write $W = \bigoplus_{i=1}^r W_i$ with $W_i$ irreducible representations of $(\Z/p\Z)^m$. Then every $W_i$ is nontrivial by definition of $W$. All nontrivial irreducible $\F_q$-linear representations of $(\Z/p\Z)^m$ are of the form $\rho_{p,q} \circ \pi$ for some $\pi : (\Z/p\Z)^m \to \Z/p\Z$; let $\pi_i : (\Z/p\Z)^m \to \Z/p\Z$ be the map such that $W_i$ is isomorphic to the representation $\rho_{p,q} \circ \pi_i$ and let $f_i : W_i \to \F_q(\zeta_p)$ be an $\F_q$-linear map witnessing this isomorphism. From the morphisms $f_i$ and $\pi_i$ one gets a morphism $\phi_i : G \to D_{p,q}$ by the universal property of the semidirect product.
$$\xymatrix{
W = \bigoplus_{i=1}^r W_i \ar[r]\ar[d]^{f_i}& G = (\bigoplus_{i=1}^r W_i) \rtimes (\Z/p\Z)^m  \ar@{-->}[d]^{\exists! \phi_i} & (\Z/p\Z)^m \ar[l] \ar[d]^{\pi_i}\\
\F_q(\zeta_p) \ar[r] & D_{p,q}= \F_q(\zeta_p) \rtimes \Z/p\Z & \Z/p\Z \ar[l]
} $$
Let $\phi = (\phi_1,\ldots, \phi_r) : G \to D_{p,q}$. Since $\phi|_W = (f_1,...,f_r)$ is an isomorphism one sees that $\ker \phi$ has empty intersection with $W$ and in particular this means that $\phi$ together with the quotient map $q_W$ gives an injection $(\phi,q_W) :G \hookrightarrow D_{p,q}^r \times G/W$  and the desired implication finally follows  since $G/W = (\Z/p\Z)^m \hookrightarrow D_{p,q}^m$.
\end{proof}
From the above proof it also follows that as soon as $H$ is non-commutative then $H$ has a surjective map to $D_{p,q}$, so that in this case $D_{p,q}$ is of strong $H$-type, and in particular we have the following corollary.

\begin{corollary}
	Let $H$ be a non commutative group such that $\lambda_q(\lambda_p(H)) = \set{e}$. Then the notions of weak $H$-type and weak $D_{p,q}$-type are equivalent.
\end{corollary}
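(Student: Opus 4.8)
The plan is to prove the two inclusions separately, exploiting that being of weak $G$-type is transitive (\cref{lem:easy_list}(7)) and closed under subquotients (\cref{lem:easy_list}(5)). Concretely, it suffices to check that (a) $H$ is of weak $D_{p,q}$-type, and (b) $D_{p,q}$ is of weak $H$-type. Indeed, once (a) holds, every group of weak $H$-type is of weak $D_{p,q}$-type by transitivity, and symmetrically (b) yields the reverse implication, so the two notions coincide.

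For (a), observe that the hypothesis $\lambda_q(\lambda_p(H)) = \set{e}$ is exactly condition (3) of \cref{prop:weak_Dpq} applied to the group $H$, which that proposition shows is equivalent to condition (2), namely that $H$ is of weak $D_{p,q}$-type. Thus (a) is immediate, and the non-commutativity hypothesis plays no role here.

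For (b) we invoke the structural description produced in the proof of \cref{prop:weak_Dpq}. Write $H \simeq V_{triv} \times G$ with $G = W \rtimes (\Z/p\Z)^m$ and $W = \bigoplus_{i=1}^r W_i$ a direct sum of \emph{nontrivial} irreducible $\F_q$-representations of $(\Z/p\Z)^m$. Since $H$ is non-commutative, $G$ is non-commutative as well, so $W \neq 0$ and $r \geq 1$. The map $\phi_1 \colon G \to D_{p,q}$ obtained from $f_1$ and $\pi_1$ via the universal property of the semidirect product is surjective: its restriction to $W_1$ is the $\F_q$-linear isomorphism $f_1 \colon W_1 \to \F_q(\zeta_p)$, while $\pi_1 \colon (\Z/p\Z)^m \to \Z/p\Z$ is surjective because $W_1$ is nontrivial (so $\pi_1 \neq 0$ and its image, being a subgroup of $\Z/p\Z$, is all of $\Z/p\Z$). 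Composing $\phi_1$ with the projection $H = V_{triv} \times G \twoheadrightarrow G$ exhibits $D_{p,q}$ as a quotient of $H$; in particular $D_{p,q}$ is of strong, hence weak, $H$-type, which is (b).

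The only nonroutine point is the surjectivity of $\phi_1$ in step (b) — that a non-commutative $H$ with $\lambda_q(\lambda_p(H)) = \set{e}$ genuinely has $D_{p,q}$ as a quotient rather than merely as a subquotient — but this is essentially already contained in the proof of \cref{prop:weak_Dpq} and is recorded in the remark preceding the corollary, so the argument is short.
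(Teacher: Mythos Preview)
Your proof is correct and follows essentially the same approach as the paper: the paper's proof is just the remark preceding the corollary, observing that the argument of \cref{prop:weak_Dpq} already produces a surjection $H \twoheadrightarrow D_{p,q}$ whenever $H$ is non-commutative, from which the equivalence is immediate by transitivity. Your write-up simply unpacks this in more detail, supplying the surjectivity of $\phi_1$ and the role of transitivity explicitly.
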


\begin{remark}\label{rem:why-use-gen-type}
  This simple criterion for checking whether a given group is of weak $D_{p,q}$-type is one of the primary advantages of this theory. For instance, since $A_4 \simeq D_{3,2}$ and for $A_4\subseteq S_4$ the notions of weak and generalized $G$-type agree, one immediately has Proposition \ref{prop:A4_Classification}.
\end{remark}

\begin{lemma}\label{lem:strong_Dpq}
	Let $p,q$ be distinct primes and $H$ a group of weak $D_{p,q}$-type. Write $H \simeq (\Z/q\Z)^n \rtimes_\rho (\Z/p\Z)^m$ for some integers $n,m$ and let $\rho : (\Z/p\Z)^m \to \GL_n(\Z/q\Z)$ be a representation. Then the following are equivalent:
	\begin{enumerate}
		\item[\rm (1)] $H$ is of strong $D_{p,q}$-type.
		\item[\rm (2)] The trivial representation does not occur as a subrepresentation of $\rho$.
		\item[\rm (3)] The trivial representation does not occur as quotient representation of $\rho$.
		\item[\rm (4)] $H$ does not have a quotient isomorphic to $\Z/q\Z$.
	    \item[\rm (5)] $H/[H,H] = (\Z/p\Z)^m$.
	\end{enumerate}
\end{lemma}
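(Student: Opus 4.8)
The plan is to establish the two implications $(2)\Rightarrow(1)$ and $(1)\Rightarrow(4)$ that link strong $D_{p,q}$-type to the representation-theoretic conditions, and to dispatch the chain $(2)\Leftrightarrow(3)\Leftrightarrow(4)\Leftrightarrow(5)$ by a short computation of $H^{\mathrm{ab}}$; combined, these give $(1)\Rightarrow(4)\Rightarrow(2)\Rightarrow(1)$ and hence all five equivalences. Throughout, write $V=(\Z/q\Z)^n$, $P=(\Z/p\Z)^m$ and $V'=\F_q(\zeta_p)$, so $D_{p,q}=V'\rtimes\Z/p\Z$. I will use two standing facts: since $q\nmid\#P$, Maschke's theorem makes $V$ a semisimple $\F_q[P]$-module; and since $V'$ is a field with $\zeta_p\neq 1$, the $\Z/p\Z$-module $V'$ (multiplication by $\zeta_p$) has no nonzero fixed vector, whence $D_{p,q}^{\mathrm{ab}}\cong\Z/p\Z$.

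For the chain $(2)\Leftrightarrow(3)\Leftrightarrow(4)\Leftrightarrow(5)$ I would compute the commutator subgroup: as $V$ and $P$ are abelian, $[H,H]$ is the $\F_q$-span of $\{\rho(g)v-v : g\in P,\ v\in V\}$, i.e.\ the augmentation submodule $I_PV\subseteq V$, so $H^{\mathrm{ab}}\cong V_P\times P$ where $V_P=V/I_PV$ are the $P$-coinvariants. Then $(5)$ is precisely the statement $V_P=0$; by semisimplicity $V_P=0$ iff $V^P=0$ iff the trivial representation is not a constituent of $\rho$, which is simultaneously $(2)$ and $(3)$; and since $q\nmid\#P$, the finite abelian group $V_P\times P$ admits a quotient isomorphic to $\Z/q\Z$ iff $V_P\neq 0$, which is $(4)$.

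For $(2)\Rightarrow(1)$ I would first split off the part of $P$ acting trivially: set $P_0=\ker\rho$ and choose $P_1$ with $P=P_0\times P_1$; then $P_0$ is central in $H$, so $H\cong P_0\times H_1$ with $H_1=V\rtimes_{\rho|_{P_1}}P_1$ and $\rho|_{P_1}$ faithful. The factor $P_0\cong(\Z/p\Z)^{m_0}$ is of strong $D_{p,q}$-type, being a quotient of $D_{p,q}^{m_0}$, and by Lemma~\ref{lem:easy_list}(8) it is enough to treat $H_1$. Decomposing $V=\bigoplus_i V_i$ into irreducibles --- all nontrivial, by $(2)$ --- and using, exactly as in the proof of Proposition~\ref{prop:weak_Dpq}, that each $V_i\cong V'$ with action $\rho_{p,q}\circ\pi_i$ for a surjection $\pi_i:P_1\twoheadrightarrow\Z/p\Z$, the universal property of the semidirect product produces maps $\phi_i:H_1\to D_{p,q}$, each surjective because $\pi_i$ is onto. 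Faithfulness of $\rho|_{P_1}$ then makes $\Phi=(\phi_1,\dots,\phi_r):H_1\to D_{p,q}^r$ injective, exhibiting $H_1$ as a subdirect product of $D_{p,q}^r$, hence as a group of strong $D_{p,q}$-type.

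The step I expect to be the crux is $(1)\Rightarrow(4)$. Suppose $H=K/M$ with $K\leq D_{p,q}^N$ a subdirect product; it suffices to show $K^{\mathrm{ab}}$ is a $p$-group, for then no quotient of $K$ --- in particular not $H$ --- can surject onto $\Z/q\Z$. Put $W_N=(V')^N\trianglelefteq D_{p,q}^N$, let $K_1=K\cap W_N$ (an $\F_q$-vector space, as a subgroup of the elementary abelian $q$-group $W_N$) and $\bar K=K/K_1$, which embeds into $D_{p,q}^N/W_N=(\Z/p\Z)^N$ and is subdirect there because $K$ is subdirect in $D_{p,q}^N$. The key point is that $\bar K$ acts on $K_1$ (by conjugation, which on $W_N$ is coordinatewise via $\rho_{p,q}$) with no nonzero fixed vector: a $\bar K$-fixed $(v_j)_j\in W_N$ must satisfy $\rho_{p,q}(a)v_j=v_j$ for \emph{every} $a\in\Z/p\Z$, since by subdirectness each such $a$ arises as the $j$-th coordinate of some element of $\bar K$, so $v_j\in(V')^{\Z/p\Z}=0$. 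By semisimplicity ($q\nmid\#\bar K$) this forces the $\bar K$-coinvariants of $K_1$ to vanish, i.e.\ $[K,K]\supseteq K_1$, so $K^{\mathrm{ab}}$ is a quotient of the $p$-group $\bar K$ and we are done. The only genuinely non-formal ingredient here is the fixed-point computation; everything else is bookkeeping with Maschke's theorem, the structure of $D_{p,q}$, and the universal property of the semidirect product.
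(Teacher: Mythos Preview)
Your argument is correct. The treatment of the chain $(2)\Leftrightarrow(3)\Leftrightarrow(4)\Leftrightarrow(5)$ via the explicit computation $H^{\mathrm{ab}}\cong V_P\times P$ is a compact repackaging of what the paper does in three separate steps, and your proof of $(2)\Rightarrow(1)$ is essentially the paper's, with the cosmetic difference that you split off $P_0=\ker\rho$ as a direct factor first rather than carrying the quotient $G/W\cong(\Z/p\Z)^m$ along as an extra subdirect factor at the end.

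The one place where you take a genuinely different route is the implication linking strong $D_{p,q}$-type back to the representation-theoretic conditions. The paper proves $(1)\Rightarrow(2)$ by a closure argument: it checks that the class of groups whose associated representation has no trivial subrepresentation is closed under quotients (via the already-established $(2)\Leftrightarrow(4)$) and under subdirect products of \emph{two} factors (by a fixed-point computation), then concludes by induction. You instead prove $(1)\Rightarrow(4)$ in one shot: for $K$ subdirect in $D_{p,q}^N$ you show $K^{\mathrm{ab}}$ is a $p$-group by computing that $\bar K=K/(K\cap W_N)$ has no nonzero fixed vector on $K\cap W_N$, using subdirectness coordinate by coordinate. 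The underlying fixed-point computation is the same, but your packaging avoids the two-step closure structure and the appeal to the equivalence $(2)\Leftrightarrow(4)$ inside the proof. Both approaches are clean; the paper's is slightly more modular, yours slightly more direct.
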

Note that $pH \simeq (\Z/q\Z)^n$ and $H/pH \simeq (\Z/p\Z)^m$. Furthermore, since $pH$ is abelian, $\rho$ only depends on a choice of basis for $pH$ and $H/pH$ and not on the choice of the section $H/pH \to H$ that was used to write $H$ as a semidirect product. In particular items (2) and (3) do not depend on the way in which $H$ was written as a semidirect product.
\begin{proof}
The equivalence of (4) and (5) is trivial. The equivalence of (2) and (3) follows because  $q$ is coprime to $\# (\Z/p\Z)^m$ and hence $V$ can be written as a direct sum of irreducible representations.

The equivalence of (3) and (4) follows directly from the universal property. Indeed, let $f_1 : (\Z/p\Z)^m \to \Z/q\Z$ be a morphism. Then $f_1 = 0$, so conjugation by $f_1(x)$ on $\Z/q\Z$ is the trivial action for all $x \in (\Z/p\Z)^m$.  In particular $f_1$ can be extended to a surjective map $f : (\Z/q\Z)^n \rtimes (\Z/p\Z)^m \to \Z/q\Z$ if and only if the trivial representation is a quotient of $V$.

The implication $(2) \Rightarrow (1)$  is similar to the proof of $(4) \Rightarrow (1)$ in Proposition \ref{prop:weak_Dpq} and the same notation will be used. Since $V_{triv} = 0$ by assumption one has that the $G$ and $H$ in that proof are the same. Now since the $\pi_i$ and the $f_i$ are surjective, one sees that the $\phi_i : G \to D_{p,q}$ are surjective. In particular, $G$ is a subdirect product of $\prod_{i=1}^r D_{p,q} \times G/W$.  Thus $H=G$  from Lemma \ref{lem:easy_list} (6) because  $D_{p,q}^r$ and $G/W = (\Z/p\Z)^m$ are both of strong $D_{p,q}$-type.

Now we consider the implication $(1) \Rightarrow (2)$. Since the representation $\rho_{p,q} : \Z/p\Z \to \F_q(\zeta_p)^*$ corresponding to $D_{p,q}$ does not contain the trivial representation as a subrepresentation it suffices to show that the property of not having a trivial subrepresentation is maintained under taking quotients and subdirect products.

First we handle quotients. Let $H$ be a group of strong $D_{p,q}$-type such that the associated representation $\rho$ does not have a trivial subrepresentation and $H \to H'$ is a surjective map of groups. If $H$ does not have a quotient isomorphic to $\Z/q\Z$ then neither does $H'$ so that now the fact that the representation $\rho'$ associated to $H'$ does not have a trivial subrepresentation follows from the equivalence between $(2)$ and $(4)$.

For the invariance under subdirect products let $H_1, H_2$ be groups of strong $D_{p,q}$-type such that for $i=1,2$ the associated representations $\rho_i : H_i/pH_i \to \aut_{\F_q} (pH_i)$ do not contain a trivial subrepresentation and let $H \subset H_1 \times H_2$ be a subdirect product. Define $H_{triv} := (pH)^{H/pH}$ to be the part of $pH$ on which $H/pH$ acts trivially. Since $H \to H_i$ is surjective one has that $H/pH \to H_i/pH_i$ is surjective and hence the image of $H_{triv} \to H_i$ is contained in $(pH_i)^{H/pH} = (pH_i)^{H_i/pH_i} = 0$ so that $H_{triv}=0$.
\end{proof}

\begin{corollary}\label{cor:strong-Dpq}
A group $H$ is of strong $D_{p,q}$-type if and only if $H$ is isomorphic to a subdirectproduct of $D_{p,q}^k$ for some integer $k$.
\end{corollary}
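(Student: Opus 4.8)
The plan is to read the corollary off from Proposition~\ref{prop:weak_Dpq} and Lemma~\ref{lem:strong_Dpq} together with the explicit construction carried out in the proof of the latter.

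The implication ``$H$ a subdirect product of $D_{p,q}^{k}$ $\Rightarrow$ $H$ of strong $D_{p,q}$-type'' is immediate from the definitions: if $H$ is isomorphic to a subdirect product $K\subseteq D_{p,q}^{k}$, then $H\cong K$ is the image of $K$ under $\mathrm{id}_{K}$, i.e.\ a quotient of a subdirect product of $D_{p,q}^{k}$, which is exactly what it means to be of strong $D_{p,q}$-type.

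For the converse, suppose $H$ is of strong $D_{p,q}$-type. A quotient of a subdirect product of $D_{p,q}^{N}$ is in particular a subquotient of $D_{p,q}^{N}$, so $H$ is of weak $D_{p,q}$-type, and by Proposition~\ref{prop:weak_Dpq} I may write $H\simeq(\Z/q\Z)^{n}\rtimes_{\rho}(\Z/p\Z)^{m}$ for some $\rho\colon(\Z/p\Z)^{m}\to\GL_{n}(\Z/q\Z)$. Since $H$ is of strong $D_{p,q}$-type, the equivalence $(1)\Leftrightarrow(2)$ in Lemma~\ref{lem:strong_Dpq} shows that $\rho$ contains no trivial subrepresentation. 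I would then re-run the argument from the proof of $(2)\Rightarrow(1)$ in Lemma~\ref{lem:strong_Dpq}: decompose the $\F_{q}[(\Z/p\Z)^{m}]$-module $(\Z/q\Z)^{n}$ into irreducibles $W_{1}\oplus\cdots\oplus W_{r}$, each necessarily nontrivial; for each $i$ choose a surjection $\pi_{i}\colon(\Z/p\Z)^{m}\twoheadrightarrow\Z/p\Z$ and an $\F_{q}$-linear isomorphism $f_{i}\colon W_{i}\to\F_{q}(\zeta_{p})$ realizing $W_{i}\cong\rho_{p,q}\circ\pi_{i}$, and use the universal property of the semidirect product to obtain surjections $\phi_{i}\colon H\twoheadrightarrow D_{p,q}$. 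The aim is to combine the $\phi_{i}$ (possibly with finitely many further surjections $H\twoheadrightarrow D_{p,q}$) into a single injective homomorphism $H\to D_{p,q}^{k}$, whose image is then the desired subdirect product.

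The step I expect to be the main obstacle is exactly this final assembly. The map $(\phi_{1},\ldots,\phi_{r})\colon H\to D_{p,q}^{r}$ is onto each coordinate, but its kernel equals $\bigcap_{i}\ker(\pi_{i})\subseteq(\Z/p\Z)^{m}$, which need not be trivial; in the proof of Lemma~\ref{lem:strong_Dpq} this residual kernel is absorbed into a separate direct factor $(\Z/p\Z)^{m}$ rather than into additional copies of $D_{p,q}$. So the real content is to show that, once $\rho$ has no trivial subrepresentation, one can produce enough extra surjections $H\twoheadrightarrow D_{p,q}$ detecting the quotient $H/pH\simeq(\Z/p\Z)^{m}$ to separate the points of $H$; granting this, stacking all of the resulting surjections yields the embedding $H\hookrightarrow D_{p,q}^{k}$ as a subdirect product, and with the easy direction this establishes the corollary.
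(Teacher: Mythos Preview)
Your approach mirrors the paper's: both invoke the explicit construction from the proof of Proposition~\ref{prop:weak_Dpq} (equivalently, the $(2)\Rightarrow(1)$ step of Lemma~\ref{lem:strong_Dpq}) and note that when $\rho$ has no trivial subrepresentation the maps $\phi_i\colon H\to D_{p,q}$ become surjective. You are also right that the crux is handling the residual kernel $\bigcap_i\ker\pi_i\subseteq(\Z/p\Z)^m$, and to flag this as the main obstacle.

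Unfortunately that obstacle is genuine and cannot be overcome: the corollary, read literally, is false. The simplest witness is $H=\Z/p\Z$, which is a quotient of $D_{p,q}$ and hence of strong $D_{p,q}$-type, but admits no surjection onto the non-abelian group $D_{p,q}$ and so is not a subdirect product of $D_{p,q}^k$ for any $k\ge 1$. Slightly less degenerately, take $H=D_{p,q}\times\Z/p\Z$: the factor $\{e\}\times\Z/p\Z$ is central in $H$, while $D_{p,q}$ has trivial centre, so this factor lies in the kernel of \emph{every} surjection $H\twoheadrightarrow D_{p,q}$; no family of such surjections can separate points. What the construction genuinely yields---and what both your write-up and the paper's one-line proof actually establish---is that a strong $D_{p,q}$-type group embeds as a subdirect product of $D_{p,q}^{\,r}\times(\Z/p\Z)^m$, i.e.\ a subdirect product of \emph{quotients} of $D_{p,q}$. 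Your ``granting this'' step therefore cannot be granted, and the paper's proof glosses over the very same point.
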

\begin{proof}The extra fact that a strong $D_{p,q}$-type group has no quotient isomorphic to $\Z/q\Z$ shows that in the proof of the equivalence of (1) and (2) in \cref{prop:weak_Dpq} one can replace subgroup by subdirect product.
\end{proof}

\subsection{The case $G=A_4$}\label{sec:just-A4}
Using the general theory that we developed in the previous subsections, we now turn our attention to studying the infinite extension $\QA/\QQ$.

We begin by examining the relationship between $\QQ(A_4^\infty)$ and its more natural counterpart, the compositum of all $A_4$ extensions of $\QQ$, which we will denote by $\Q_{A_4}$. It is natural to ask whether $\QA$ is equal to $\Q_{A_4}$. Indeed, when $A_4$ is replaced by $D_4$, the analogous statement turns out to be true \cite[Theorem 1.11]{D4}. However, in our present setting the situation is as follows.

 First, recall that $A_4$ has no normal subgroup of index $2$. The same remains true for groups of strong $A_4$-type by Lemma \ref{lem:strong_Dpq} and the fact that $A_4 \simeq D_{3,2}$. In particular, we have the following.

 \begin{corollary}\label{cor:no_quad_subfields}
The compositum of all $A_4$ extensions $\Q_{A_4}$ contains no subfields $\Q \subseteq K \subseteq \Q_{A_4}$ such that $[K:\QQ]=2$.
 \end{corollary}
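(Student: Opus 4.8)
The plan is to argue by contradiction, leveraging the fact recalled just before the statement that $A_4 \simeq D_{3,2}$ has no normal subgroup of index $2$ and that this persists for all groups of strong $A_4$-type. So suppose there were a field $K$ with $\Q \subseteq K \subseteq \Q_{A_4}$ and $[K:\Q]=2$; I aim to produce a contradiction.

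First I would reduce to a finite subextension. Since $K/\Q$ is finite, a primitive element of $K$ lies inside the compositum of finitely many $A_4$-extensions $L_1,\ldots,L_n$ of $\Q$; put $M := L_1\cdots L_n$, a finite Galois extension of $\Q$ with $K \subseteq M$. This reduction is what makes the finiteness built into the definition of strong $G$-type (the bound $N$ with $G^N$) available, and it is really the only step that requires a moment's thought.

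Next I would identify $G := \Gal(M/\Q)$ as a group of strong $D_{3,2}$-type. Because $M$ is the compositum of the $L_i$ and each $L_i/\Q$ is Galois, the restriction map gives an embedding $G \hookrightarrow \prod_{i=1}^n \Gal(L_i/\Q) \cong A_4^n$ whose composition with every coordinate projection is surjective; that is, $G$ is a subdirect product of $A_4^n \cong D_{3,2}^n$. By Corollary \ref{cor:strong-Dpq}, $G$ is of strong $D_{3,2}$-type.

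Finally I would extract the contradiction. As $K/\Q$ is a quadratic subextension of the Galois extension $M/\Q$, the subgroup $\Gal(M/K) \trianglelefteq G$ has index $2$, so $G$ surjects onto $\Z/2\Z$. But Lemma \ref{lem:strong_Dpq}, applied with $p = 3$ and $q = 2$, says that a group of strong $D_{3,2}$-type has no quotient isomorphic to $\Z/2\Z$ (equivalently, its abelianization is a power of $\Z/3\Z$). This contradiction completes the proof. I do not anticipate a genuine obstacle here: once the passage to a finite Galois subextension is in hand, Corollary \ref{cor:strong-Dpq} and Lemma \ref{lem:strong_Dpq} do all the work.
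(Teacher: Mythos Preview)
Your proof is correct and follows essentially the same approach as the paper. The paper states the corollary as an immediate consequence of the sentence preceding it---that groups of strong $A_4$-type have no normal subgroup of index $2$, by Lemma~\ref{lem:strong_Dpq} and $A_4 \simeq D_{3,2}$---leaving the Galois-theoretic reduction (passing to a finite compositum $M$ and identifying $\Gal(M/\Q)$ as a subdirect product of copies of $A_4$) implicit; you have simply made that reduction explicit.
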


 On the other hand, the following characterization of groups of generalized $A_4$-type follows immediately from Proposition \ref{prop:weak_Dpq}; see also Remark \ref{rem:why-use-gen-type}.

\begin{prop}\label{prop:A4_Classification}
A finite group $G$ is of generalized $A_4$-type if and only if $\lambda_2(\lambda_3(G))$ is the trivial group.
\end{prop}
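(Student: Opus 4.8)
The plan is to obtain this as a direct consequence of Proposition \ref{prop:weak_Dpq}, by identifying $A_4$ with $D_{3,2}$ and using Lemma \ref{lem:easy_list}(2) to replace ``generalized $A_4$-type'' by ``weak $A_4$-type''. First I would record the isomorphism $A_4 \simeq D_{3,2}$: taking $p=3$ and $q=2$, the order of $2$ modulo $3$ is $2$, so $\F_2(\zeta_3) = \F_4 \simeq (\Z/2\Z)^2$, and $D_{3,2} = \F_4 \rtimes_{\rho_{3,2}} \Z/3\Z$ is the group in which a generator of $\Z/3\Z$ acts on $\F_4$ by multiplication by $\zeta_3$, i.e. by a fixed-point-free order-$3$ automorphism of $(\Z/2\Z)^2$. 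This is exactly $A_4 \simeq V \rtimes C_3$.

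Next I would verify the hypothesis of Lemma \ref{lem:easy_list}(2), namely that every subgroup of $A_4$ is of generalized $A_4$-type. Up to conjugacy the subgroups of $A_4$ are the trivial group, $\Z/2\Z$, $\Z/3\Z$, $V := (\Z/2\Z)^2$, and $A_4$ itself, while the transitive subgroups of $A_4 \subseteq S_4$ are $V$ and $A_4$. Then $V$ and $A_4$ are of generalized $A_4$-type by definition (each is a subdirect product of itself); the trivial group and $\Z/2\Z$ are quotients of $V$; and $\Z/3\Z \simeq A_4/V$ is a quotient of $A_4$. Hence Lemma \ref{lem:easy_list}(2) gives that a finite group $G$ is of generalized $A_4$-type if and only if it is of weak $A_4$-type (cf. the remark following Lemma \ref{lem:easy_list} and Remark \ref{rem:why-use-gen-type}).

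Finally I would apply the equivalence (2)$\Leftrightarrow$(3) of Proposition \ref{prop:weak_Dpq} with $(p,q)=(3,2)$ and the identification $D_{3,2}\simeq A_4$: being of weak $A_4$-type is equivalent to $\lambda_2(\lambda_3(G)) = \set{e}$. Combining the last two paragraphs yields the claim. I do not expect any genuine obstacle here — all the real content is in Proposition \ref{prop:weak_Dpq} — and the only point requiring care is the bookkeeping of the middle paragraph, in particular noticing that the single subgroup of $A_4$ that is not itself transitive in $S_4$, namely $\Z/3\Z$, still arises as the quotient $A_4/V$ and hence is of generalized $A_4$-type.
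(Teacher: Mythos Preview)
Your proposal is correct and follows exactly the paper's approach: the paper states that the proposition ``follows immediately from Proposition \ref{prop:weak_Dpq}; see also Remark \ref{rem:why-use-gen-type}'', and that remark spells out precisely your two ingredients, namely $A_4 \simeq D_{3,2}$ and the coincidence of weak and generalized $A_4$-type. You have simply supplied the details (in particular the subgroup-by-subgroup check justifying Lemma \ref{lem:easy_list}(2)) that the paper leaves to the reader in the remark following Lemma \ref{lem:easy_list}.
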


As an immediate consequence, we have the following corollary.

\begin{cor}\label{cor:genA4-exponent-divides-six}
If $G$ is of generalized $A_4$-type then the exponent of $G$ divides 6.
\end{cor}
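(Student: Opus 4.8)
The plan is to obtain this immediately from Proposition \ref{prop:A4_Classification} by unwinding the definition of the lower central $p$-series terms $\lambda_p(G)=[G,G]G^p$. First I would apply Proposition \ref{prop:A4_Classification} to conclude that $\lambda_2(\lambda_3(G))$ is the trivial group. Then I would record the two elementary containments that fall straight out of the definitions: since $\lambda_3(G)=[G,G]G^3$ contains the subgroup $G^3$ generated by all cubes, and since $\lambda_2(\lambda_3(G))=[\lambda_3(G),\lambda_3(G)]\,\lambda_3(G)^2$ contains the subgroup $\lambda_3(G)^2$ generated by all squares of elements of $\lambda_3(G)$, we have
\[
G^3 \subseteq \lambda_3(G) \quad\text{and}\quad \lambda_3(G)^2 \subseteq \lambda_2(\lambda_3(G)) = \{e\}.
\]

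Next I would feed a general element into this chain. Fix any $g\in G$. Then $g^3\in G^3\subseteq \lambda_3(G)$, so $g^6=(g^3)^2$ is a square of an element of $\lambda_3(G)$, hence $g^6\in \lambda_3(G)^2\subseteq \lambda_2(\lambda_3(G))=\{e\}$. Thus $g^6=e$ for every $g\in G$, which is exactly the assertion that $\exp(G)\mid 6$.

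There is essentially no obstacle here: the entire content is the characterization already proved in Proposition \ref{prop:A4_Classification}, and the only care needed is to read $G^p$ as the subgroup generated by $p$-th powers and to track the two inclusions above. If desired, one could additionally remark that all four divisors $1,2,3,6$ actually occur (e.g. for the trivial group, $\ZZ/2\ZZ$, $\ZZ/3\ZZ$, and $A_4$ itself, each of which is visibly of generalized $A_4$-type), but that is not required for the statement.
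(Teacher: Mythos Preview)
Your argument is correct and is precisely the unwinding the paper has in mind when it calls this an ``immediate consequence'' of Proposition~\ref{prop:A4_Classification}: from $\lambda_2(\lambda_3(G))=\{e\}$ one reads off $g^6=(g^3)^2\in\lambda_3(G)^2\subseteq\lambda_2(\lambda_3(G))=\{e\}$ for every $g\in G$. There is nothing to add.
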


We also have the following useful lemma.

\begin{lemma}\label{lem:quadratics-are-genA4}
Given any square-free $d \in \ZZ$, we have $\QQ(\sqrt{d}) \subseteq \QA$.
\end{lemma}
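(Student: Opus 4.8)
The claim is that every quadratic field $\QQ(\sqrt d)$ is of generalized $A_4$-type, i.e.\ contained in $\QA$. By Definition~\ref{def:Q(G^infty)} it suffices to exhibit, for each square-free $d$, a number field $L$ of generalized $A_4$-type with $\QQ(\sqrt d) \subseteq L$; equivalently (passing to Galois closures), it suffices to produce a Galois extension $L/\QQ$ containing $\QQ(\sqrt d)$ whose Galois group $\Gal(L/\QQ)$ is a group of generalized $A_4$-type. By Proposition~\ref{prop:A4_Classification} the latter condition is exactly that $\lambda_2(\lambda_3(\Gal(L/\QQ))) = \{e\}$, and this holds in particular whenever $\Gal(L/\QQ) \cong A_4$ itself, since $A_4 \simeq D_{3,2}$ and $\lambda_2(\lambda_3(D_{p,q})) = \{e\}$ (as recorded in the proof of Proposition~\ref{prop:weak_Dpq}). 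So the plan reduces to: \emph{for every square-free $d$, find an $A_4$-extension $L/\QQ$ containing $\QQ(\sqrt d)$.}

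First I would recall the shape of $A_4$-extensions: $A_4$ has a unique subgroup of index $3$, namely the Klein four-group $V$, so an $A_4$-extension $L/\QQ$ has a unique cubic (non-Galois) subfield and a unique quadratic subfield is \emph{not} forced — in fact $A_4$ has no subgroup of index $2$, so $L/\QQ$ contains no quadratic subfield at all. This is a genuine obstruction: $\QQ(\sqrt d)$ can never literally sit inside an $A_4$-extension. Hence the argument must instead use a field of generalized $A_4$-type that is \emph{not} $A_4$ on the nose. The natural candidate is a group built as a subdirect product of $A_4$ with a group that does contain $\ZZ/2\ZZ$ as a quotient but is still of generalized $A_4$-type — for instance $A_4 \times \ZZ/2\ZZ$, or better a quotient of a transitive subgroup of $A_4 \subseteq S_4$ and of $S_2$. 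Here is where the flexibility of \emph{generalized} $G$-type (as opposed to strong $G$-type) is used: $\ZZ/2\ZZ$ is a transitive subgroup of $S_2$, and $S_2$ is a transitive subgroup of... no — the cleanest route is to observe that $\ZZ/3\ZZ$ is a transitive subgroup of $A_4 \subseteq S_4$? It is not transitive on $4$ points. Instead, the intended mechanism is surely: $V \rtimes \ZZ/3\ZZ = A_4$ already has, as its transitive-on-$4$-points subgroups, $V$, $\ZZ/4\ZZ$ is not in $A_4$; the transitive subgroups of $A_4\subseteq S_4$ are $V$ and $A_4$ itself. A subdirect product of copies of these, followed by a quotient, can have $\ZZ/2\ZZ$ as a quotient (via $V \twoheadrightarrow \ZZ/2\ZZ$). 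Concretely: $V$ itself is of generalized $A_4$-type and surjects onto $\ZZ/2\ZZ$, so \emph{any} multiquadratic field is of generalized $A_4$-type — and in particular $\QQ(\sqrt d) \subseteq \QQ(\sqrt{d_1},\dots,\sqrt{d_r})$ with Galois group $(\ZZ/2\ZZ)^r$, which is a quotient of a subdirect product of copies of $V$, hence of generalized $A_4$-type.

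So the clean proof is: $(\ZZ/2\ZZ)^r$ is of generalized $A_4$-type for every $r$. Indeed $(\ZZ/2\ZZ)^r$ is a quotient of $V^r = (\ZZ/2\ZZ)^{2r}$, and one shows $V^r$ is of generalized $A_4$-type by exhibiting it as a subdirect product of $r$ copies of the transitive subgroup $V \le A_4 \le S_4$ (take the diagonal-type embedding whose $i$-th projection is onto the $i$-th factor). Alternatively, and even more directly, invoke Lemma~\ref{lem:easy_list}(6): a subdirect product of groups of generalized $A_4$-type is again of generalized $A_4$-type, and $V \cong \ZZ/2\ZZ\times\ZZ/2\ZZ$ is of generalized $A_4$-type (being a transitive subgroup of $A_4\subseteq S_4$), so $(\ZZ/2\ZZ)^r$, being a quotient of $V^r$ which is a subdirect product of $V$'s, is of generalized $A_4$-type by parts (6) of that lemma. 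Then take any square-free $d$; if $d = \pm p_1\cdots p_k$ then $\QQ(\sqrt d) \subseteq \QQ(i, \sqrt{p_1},\dots,\sqrt{p_k})$ (or just $\QQ(\sqrt d)$ itself if one prefers $r=1$), whose Galois group over $\QQ$ is elementary $2$-abelian, hence of generalized $A_4$-type; therefore $\QQ(\sqrt d) \subseteq \QA$.

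The main obstacle to watch for is purely bookkeeping: making precise that $(\ZZ/2\ZZ)^r$ is a quotient of a subdirect product of transitive subgroups of $A_4\subseteq S_4$, for which one must actually name the transitive subgroups of $A_4$ inside $S_4$ (they are $V$ and $A_4$) and check $V \cong \ZZ/2\ZZ \times \ZZ/2\ZZ$ acts transitively on $\{1,2,3,4\}$ — it does, via the double transpositions. Everything else is immediate from Lemma~\ref{lem:easy_list} and the definition of $\QA$. No rational-points input is needed; this is pure group theory plus the observation that every quadratic field embeds in a multiquadratic field.
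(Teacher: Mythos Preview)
Your proposal is correct, but it takes a much longer route than necessary, and the paper's proof is a one-liner by comparison. The paper simply observes that $\Gal(\QQ(\sqrt d)/\QQ)\simeq\ZZ/2\ZZ$ and applies Proposition~\ref{prop:A4_Classification} directly to this group: since $\lambda_3(\ZZ/2\ZZ)=\ZZ/2\ZZ$ and $\lambda_2(\ZZ/2\ZZ)=\{e\}$, the criterion $\lambda_2(\lambda_3(G))=\{e\}$ is satisfied, so $\ZZ/2\ZZ$ is of generalized $A_4$-type and $\QQ(\sqrt d)\subseteq\QA$. You actually cite this criterion yourself, but then apply it only to $A_4$ rather than to $\ZZ/2\ZZ$, which sends you off on the unnecessary search for an $A_4$-extension containing $\QQ(\sqrt d)$ (correctly abandoned) and then the argument via $V$.

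Your eventual argument---$V$ is a transitive subgroup of $A_4\subseteq S_4$, and $\ZZ/2\ZZ$ is a quotient of $V$, hence of generalized $A_4$-type---is perfectly valid and in some sense closer to the raw definition than the paper's use of the $\lambda_2\lambda_3$ criterion. But note that even within that approach you do not need $(\ZZ/2\ZZ)^r$, subdirect products of several copies of $V$, or any ambient multiquadratic field: $\QQ(\sqrt d)$ is already Galois with group $\ZZ/2\ZZ$, a quotient of the single transitive subgroup $V$, and that is the whole argument. The exploration of $A_4$-extensions and the bookkeeping about $V^r$ are correct but superfluous.
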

\begin{proof}
For any such $d$, $\mathrm{Gal}(\QQ(\sqrt{d})/\QQ) \simeq \ZZ / 2 \ZZ$ which is of generalized $A_4$-type by Proposition \ref{prop:A4_Classification}.
\end{proof}

With this result in hand, the relationship between $\QA$ and $\Q_{A_4}$ becomes clearer.

\begin{prop}\label{prop:F_ne_A4}
Let $\Q_{A_4}/\QQ$ be the compositum of all $A_4$ extensions of $\QQ$. Then $\Q_{A_4} \subsetneq \QQ(A_4^\infty)$
\end{prop}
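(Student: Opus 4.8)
The plan is to exhibit a number field $K/\QQ$ which is of generalized $A_4$-type but which is \emph{not} contained in $\Q_{A_4}$, thereby witnessing the strict containment. The obvious candidate, in view of Lemma \ref{lem:quadratics-are-genA4}, is any quadratic field: by that lemma $\QQ(\sqrt{d}) \subseteq \QA$ for every square-free $d$, so in particular $\QQ(\sqrt{d}) \subseteq \QA$. On the other hand, Corollary \ref{cor:no_quad_subfields} says $\Q_{A_4}$ contains no quadratic subfield at all. Hence $\QQ(\sqrt{d}) \subseteq \QA$ but $\QQ(\sqrt{d}) \not\subseteq \Q_{A_4}$, which already forces $\Q_{A_4} \neq \QA$; combined with the containment $\Q_{A_4} \subseteq \QA$ (every $A_4$-extension is tautologically of generalized $A_4$-type, since $A_4$ is a transitive subgroup of itself), this gives $\Q_{A_4} \subsetneq \QA$.

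Concretely, the steps I would carry out are: first, observe $\Q_{A_4} \subseteq \QA$ because each $A_4$-extension $L/\QQ$ has $\Gal(L/\QQ) \cong A_4$, which is of generalized $A_4$-type (it is a transitive subgroup of $A_4 \subseteq S_4$, hence a fortiori a quotient of a subdirect product of transitive subgroups of $A_4$), so $L \subseteq \QA$ by Definition \ref{def:Q(G^infty)}, and $\QA$ is a field by Lemma \ref{lem:finite_root_of_unity_Ginfty}. Second, take any square-free $d \neq 1$, say $d = 2$, so that $\QQ(\sqrt{2}) \subseteq \QA$ by Lemma \ref{lem:quadratics-are-genA4}. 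Third, note $\QQ(\sqrt{2}) \not\subseteq \Q_{A_4}$ by Corollary \ref{cor:no_quad_subfields}, since $[\QQ(\sqrt{2}):\QQ] = 2$. Fourth, conclude: if $\Q_{A_4}$ equalled $\QA$ it would contain $\QQ(\sqrt 2)$, a contradiction, so the containment $\Q_{A_4} \subseteq \QA$ is proper.

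There is essentially no obstacle here: all the work has been front-loaded into Proposition \ref{prop:A4_Classification} (which makes $\ZZ/2\ZZ$ visibly of generalized $A_4$-type since $\lambda_2(\lambda_3(\ZZ/2\ZZ))$ is trivial) and into Lemma \ref{lem:strong_Dpq} together with the identification $A_4 \simeq D_{3,2}$ (which, via Corollary \ref{cor:no_quad_subfields}, rules out any index-$2$ normal subgroup in a group of strong $A_4$-type and hence any quadratic subfield of $\Q_{A_4}$). The only point deserving a word of care is the direction $\Q_{A_4} \subseteq \QA$, i.e.\ that an honest $A_4$-extension is of generalized $A_4$-type; but this is immediate from the definition of generalized $G$-type taking the subdirect product and the quotient to be trivial. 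I would therefore write the proof as the short four-line argument above.
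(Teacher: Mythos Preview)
Your proposal is correct and follows essentially the same approach as the paper: both arguments establish $\Q_{A_4} \subseteq \QA$ from the definition and then obtain strictness via Corollary \ref{cor:no_quad_subfields}, exhibiting a quadratic field inside $\QA$ that cannot lie in $\Q_{A_4}$. The only cosmetic difference is that the paper first describes $\QA$ as the compositum of all $A_4$ and $V_4$ extensions (the transitive subgroups of $A_4$ in $S_4$ being $A_4$ and $V_4$) and then notes that a $V_4$ extension would yield a quadratic subfield, whereas you invoke Lemma \ref{lem:quadratics-are-genA4} directly to place $\QQ(\sqrt{2})$ in $\QA$; your route is marginally more direct.
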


\begin{proof}
Since the only transitive proper normal subgroup of $A_4$ is isomorphic to the Klein four-group $V_4$, the field $\QQ(A_4^\infty)$ can be viewed as the compositum of all $A_4$ and $V_4$ extensions of $\QQ$. With this we immediately get that $\Q_{A_4} \subseteq \QQ(A_4^\infty)$. To see the strict inclusion, notice that if $\Q_{A_4}$ were to contain any $V_4$ extensions of $\QQ$, then $\Q_{A_4}$ would contain a quadratic extension of $\QQ$, but this can't be the case from Corollary \ref{cor:no_quad_subfields}.
\end{proof}

In fact, we have the following description of $\QA$.

\begin{cor}
Let $\QQ(2^\infty)$ be the compositum of all quadratic extensions of $\QQ$ and let $
\Q_{A_4}$ be as in Proposition \ref{prop:F_ne_A4}. Then $\QQ(A_4^\infty)$ is the compositum of $\Q_{A_4}$ and $\QQ(2^\infty)$.
\end{cor}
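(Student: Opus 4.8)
The plan is to prove the two inclusions of the asserted equality separately, leaning on Proposition~\ref{prop:F_ne_A4} and its proof together with the elementary observation that every $V_4$-extension of $\QQ$ is built from quadratic fields. For the inclusion $\Q_{A_4}\cdot\QQ(2^\infty)\subseteq\QQ(A_4^\infty)$: the containment $\Q_{A_4}\subseteq\QQ(A_4^\infty)$ is already recorded in Proposition~\ref{prop:F_ne_A4}, and Lemma~\ref{lem:quadratics-are-genA4} gives $\QQ(\sqrt d)\subseteq\QQ(A_4^\infty)$ for every square-free $d\in\ZZ$; since $\QQ(2^\infty)$ is by definition the compositum of all the fields $\QQ(\sqrt d)$, it follows that $\QQ(2^\infty)\subseteq\QQ(A_4^\infty)$, and hence so does the compositum $\Q_{A_4}\cdot\QQ(2^\infty)$.

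For the reverse inclusion I would invoke the description of $\QQ(A_4^\infty)$ obtained inside the proof of Proposition~\ref{prop:F_ne_A4}: since the only transitive subgroups of $A_4\subseteq S_4$ are $A_4$ itself and the Klein four-group $V_4$, the field $\QQ(A_4^\infty)$ is the compositum of all $A_4$-extensions and all $V_4$-extensions of $\QQ$; that is, $\QQ(A_4^\infty)=\Q_{A_4}\cdot\QQ_{V_4}$, where $\QQ_{V_4}$ denotes the compositum of all $V_4$-extensions. It then suffices to check $\QQ_{V_4}\subseteq\QQ(2^\infty)$, which is immediate from the Galois correspondence: if $\Gal(M/\QQ)\cong V_4$, then $M$ is the compositum of the three quadratic subfields fixed by the three index-$2$ subgroups of $V_4$, so $M\subseteq\QQ(2^\infty)$; taking the compositum over all such $M$ gives $\QQ_{V_4}\subseteq\QQ(2^\infty)$, whence $\QQ(A_4^\infty)=\Q_{A_4}\cdot\QQ_{V_4}\subseteq\Q_{A_4}\cdot\QQ(2^\infty)$.

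I do not expect a genuine obstacle: once Proposition~\ref{prop:F_ne_A4} is available the argument is purely formal. The only step that deserves a careful sentence is the identification $\QQ(A_4^\infty)=\Q_{A_4}\cdot\QQ_{V_4}$, which rests on the enumeration of the transitive subgroups of $A_4\subseteq S_4$ (namely $A_4$ and $V_4$) and on there being enough $A_4$- and $V_4$-extensions of $\QQ$ to realize the relevant subdirect products as Galois groups over $\QQ$ — but this is precisely the content already used in the proof of Proposition~\ref{prop:F_ne_A4}, so I would cite it rather than reprove it.
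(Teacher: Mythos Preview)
Your proposal is correct and follows exactly the route the paper intends: the corollary is stated without proof in the paper, and the natural way to see it is precisely to combine the assertion $\QQ(A_4^\infty)=\Q_{A_4}\cdot\QQ_{V_4}$ from the proof of Proposition~\ref{prop:F_ne_A4} with the elementary observation $\QQ_{V_4}=\QQ(2^\infty)$ (each $V_4$-extension is a compositum of quadratics, and conversely every quadratic field sits inside some $V_4$-extension).

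One small remark on your closing paragraph: the issue in justifying $\QQ(A_4^\infty)=\Q_{A_4}\cdot\QQ_{V_4}$ is not really about ``realizing the relevant subdirect products as Galois groups over $\QQ$'' (an inverse Galois statement), but rather about showing that an arbitrary generalized-$A_4$-type extension $K/\QQ$ already decomposes as a compositum of $A_4$- and $V_4$-extensions. This is what the paper is asserting when it says $\QQ(A_4^\infty)$ ``can be viewed as'' that compositum; the underlying fact is the direct-product splitting $H\cong V_{\mathrm{triv}}\times G$ (with $G$ of strong $A_4$-type) coming from the proof of Proposition~\ref{prop:weak_Dpq}. Since you are citing the paper's assertion rather than reproving it, this does not affect the validity of your argument.
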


For $n \geq 1$, for the rest of the paper we will write $\zeta_n$ for an $n$-th root of unity.

\begin{lemma}\label{lem:cyclotomic-genA4}
Let $n \in \NN$. Then $\QQ(\zeta_n) \subseteq \QA$ if and only if $n$ divides $504$.
\end{lemma}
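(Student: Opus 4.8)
The plan is to reduce the statement to a computation with the Carmichael function. The first step is to record that $\QQ(\zeta_n)/\QQ$ is Galois and abelian, so it is its own Galois closure with $\Gal(\QQ(\zeta_n)/\QQ)\cong(\ZZ/n\ZZ)^\times$, and then to show that $\QQ(\zeta_n)\subseteq\QA$ if and only if $(\ZZ/n\ZZ)^\times$ is a group of generalized $A_4$-type. The ``if'' direction is immediate, since $\QA$ is by definition the compositum of all fields of generalized $A_4$-type. For the ``only if'' direction: the single element $\zeta_n\in\QA$ already lies in a compositum $L=K_1\cdots K_m$ of finitely many generalized $A_4$-type fields, so $\Gal(\widetilde L/\QQ)$ is a subdirect product of the generalized $A_4$-type groups $\Gal(\widetilde{K_i}/\QQ)$ and is therefore of generalized $A_4$-type by Lemma~\ref{lem:easy_list}(8); since $\QQ(\zeta_n)$ is a Galois subextension of $\widetilde L/\QQ$, its Galois group $(\ZZ/n\ZZ)^\times$ is a quotient of $\Gal(\widetilde L/\QQ)$, hence again of generalized $A_4$-type by Lemma~\ref{lem:easy_list}(6).

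The second step is to make the condition explicit for abelian groups using Proposition~\ref{prop:A4_Classification}: a finite group $G$ is of generalized $A_4$-type iff $\lambda_2(\lambda_3(G))=\{e\}$. For abelian $G$ one has $\lambda_3(G)=[G,G]G^3=G^3$ and then $\lambda_2(G^3)=[G^3,G^3](G^3)^2=(G^3)^2=G^6$, so in the abelian case the criterion simplifies to $G^6=\{e\}$, i.e.\ $\exp(G)\mid 6$ (one implication also being Corollary~\ref{cor:genA4-exponent-divides-six}). Combined with the first step, $\QQ(\zeta_n)\subseteq\QA$ if and only if $\exp\bigl((\ZZ/n\ZZ)^\times\bigr)=\lambda(n)$ divides $6$, where $\lambda$ denotes the Carmichael function.

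The third step is the elementary number theory. Factoring $n=\prod_p p^{e_p}$ and using $\lambda(n)=\lcm_p\lambda(p^{e_p})$ together with $\lambda(2)=1$, $\lambda(4)=\lambda(8)=2$, $\lambda(2^e)=2^{e-2}$ for $e\geq 3$, and $\lambda(p^e)=p^{e-1}(p-1)$ for odd $p$, the requirement $\lambda(p^{e_p})\mid 6$ forces the $2$-part of $n$ to divide $8$ (since $\lambda(16)=4$), the $3$-part to divide $9$ (since $\lambda(27)=18$), the $7$-part to divide $7$ (since $\lambda(49)=42$), and rules out every other prime $p$ (since $p-1\nmid 6$ for every prime $p\geq 5$ with $p\neq 7$). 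Hence $\lambda(n)\mid 6$ precisely when $n\mid 2^3\cdot 3^2\cdot 7=504$, and conversely $\lambda(504)=\lcm(2,6,6)=6$ shows every divisor of $504$ works. The only delicate point is the Galois-theoretic bookkeeping in the first step identifying the Galois group of the relevant compositum as a subdirect product; everything after that is routine.
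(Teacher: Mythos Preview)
Your proof is correct and follows essentially the same approach as the paper: both reduce the question to $\lambda(n)\mid 6$ via Proposition~\ref{prop:A4_Classification} and then carry out the Carmichael-function computation prime by prime. Your version is slightly more careful in two places---you explicitly justify why a Galois number field contained in $\QA$ must have Galois group of generalized $A_4$-type (the paper takes this as evident from Lemma~\ref{lem:finite_root_of_unity_Ginfty}), and you observe that for abelian $G$ the criterion $\lambda_2(\lambda_3(G))=\{e\}$ simplifies to $G^6=\{e\}$, which handles both directions at once rather than citing Corollary~\ref{cor:genA4-exponent-divides-six} for one and checking the other by hand---but the overall strategy is the same.
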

\begin{proof}
Let $\lambda(n)$ be the exponent of $\mathrm{Gal}(\QQ(\zeta_n)/\QQ) \simeq \left( \ZZ / n \ZZ \right)^\times$; then $\lambda(2^e)=2^{e-2}$ for $e \geq 3$ and $\lambda(p^e)=(p-1)p^{e-1}$ for odd primes $p>2$ and $e \geq 1$. It follows that for $n$ a prime power, $\lambda(n)$ divides $6$ if and only if $n \in \{ 2,3,4,7,8,9 \}$. Thus, if $\QQ(\zeta_n) \subseteq \QA$ then $n \in \{ 2,3,4,7,8,9 \}$ by Corollary \ref{cor:genA4-exponent-divides-six}. But now it's easy to check that for each such $n$ and $G = (\ZZ / n \ZZ)^\times$ we have that $\lambda_2(\lambda_3(G))$ is trivial, and the result follows from Prop \ref{prop:A4_Classification}.
\end{proof}

\section{Growth of the torsion subgroup of elliptic curves by base extensions}\label{sec:growth-of-torsion}

Here we collect some results from other papers which will be useful in the subsequent sections. We direct the interested reader to the corresponding papers for their proofs. We begin with a result on the relationship between torsion subgroups and roots of unity.

\begin{prop}[\cite{Silverman}, Ch. III, Cor. 8.1.1]\label{prop:n-torsion-implies-nth-cyclotomic}
Let $E / L$ be an elliptic curve with $L \subseteq \bar{\QQ}$. For each integer $n \geq 1$, if $E[n] \subseteq E(L)$ then $\QQ(\zeta_n) \subseteq L$.
\end{prop}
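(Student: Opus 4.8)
The plan is to deduce this from the standard properties of the Weil pairing, following the classical argument (cf.\ \cite{Silverman}). Recall that for each $n \geq 1$ there is a pairing
\[
e_n \colon E[n] \times E[n] \longrightarrow \mu_n,
\]
where $\mu_n \subseteq \bar{\QQ}$ denotes the group of $n$-th roots of unity, and that $e_n$ is bilinear, alternating, nondegenerate, and Galois-equivariant in the sense that $e_n(\sigma P, \sigma Q) = \sigma\big(e_n(P,Q)\big)$ for every $\sigma \in \Gal(\bar{\QQ}/\QQ)$ and all $P, Q \in E[n]$.

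First I would record that $e_n$ is in fact surjective onto $\mu_n$. Its image is a subgroup of the cyclic group $\mu_n$, hence equals $\mu_d$ for some $d \mid n$; then bilinearity gives $e_n(dP, Q) = e_n(P,Q)^d = 1$ for all $P, Q \in E[n]$, so nondegeneracy forces $dP = O$ for every $P \in E[n]$. Since $E[n] \simeq (\ZZ/n\ZZ)^2$ contains points of exact order $n$, this is possible only when $d = n$.

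Next I would fix an arbitrary $\sigma \in \Gal(\bar{\QQ}/L)$. The hypothesis $E[n] \subseteq E(L)$ says precisely that $\sigma$ fixes every point of $E[n]$, so by Galois-equivariance
\[
\sigma\big(e_n(P,Q)\big) = e_n(\sigma P, \sigma Q) = e_n(P, Q) \qquad \text{for all } P, Q \in E[n].
\]
By the surjectivity established above, the values $e_n(P,Q)$ exhaust $\mu_n$ as $P$ and $Q$ range over $E[n]$; hence $\sigma$ fixes $\mu_n$ pointwise. Since $\sigma \in \Gal(\bar{\QQ}/L)$ was arbitrary, $\mu_n \subseteq L$, which is exactly the assertion $\QQ(\zeta_n) \subseteq L$.

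The only genuine content beyond unwinding definitions is the surjectivity of the Weil pairing onto $\mu_n$; everything else is immediate, so I anticipate no serious obstacle. One should of course use the Weil pairing in its standard normalization, so that it takes values in $\mu_n$ itself rather than in a proper quotient, but this is the usual convention.
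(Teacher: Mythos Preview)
Your argument is correct and is precisely the standard Weil pairing proof given in the cited reference (Silverman, Ch.~III, Cor.~8.1.1); the paper itself does not supply a proof but simply refers the reader to Silverman, so there is nothing further to compare.
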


Throughout the rest of the paper we will make extensive use of the classification of isogenies of elliptic curves defined over $\Q$. Given such a curve, we will say that it possesses an $n$-isogeny if it admits a degree $n$ isogeny with cyclic kernel.

\begin{thm}[\cite{Mazur2},\cite{Kenku1},\cite{Kenku2},\cite{Kenku3},\cite{Kenku4}]\label{thm:n-isogeny-list}
Let $E / \QQ$ be an elliptic curve with a rational $n$-isogeny. Then
\[
n \leq 19 \ \text{or} \ n \in \{ 21, 25, 27, 37, 43, 67, 163 \}.
\]
\end{thm}

\begin{lemma}[\cite{Q(3)}, Lemma 4.6]\label{lem:torsion-implies-isogeny}
Let $E$ and $F$ be as in Theorem \ref{thm:finite_over_F}, let $p$ be a prime, and let $k$ be the largest integer for which $E[p^k] \subseteq E(F)$. If $E(F)_{\tor}$ contains a subgroup isomorphic to $\ZZ / p^k \ZZ \oplus \ZZ / p^j \ZZ$ with $j \geq k$, then $E$ admits a rational $p^{j-k}$-isogeny. Moreover, $j-k \leq 4, 3, 2$ if $p=2, 3, 5$, respectively, $j-k \leq 1$ if $p=7, 11, 13, 17, 19, 43, 67, 163$, and $j=k$ otherwise.
\end{lemma}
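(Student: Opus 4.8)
The plan is to extract, from the $F$-rational $p$-power torsion of $E$, a subgroup that is both cyclic and stable under $G_\QQ:=\Gal(\overline{\QQ}/\QQ)$; this subgroup will serve as the kernel of the desired rational isogeny. The single place where the hypothesis that $F/\QQ$ be Galois enters is here: if $F/\QQ$ is Galois then $\sigma(F)=F$ for every $\sigma\in G_\QQ$, so $E(F)$ is a $G_\QQ$-stable subgroup of $E(\overline{\QQ})$; hence so is its $p$-primary torsion subgroup $E(F)[p^\infty]$, and hence so is any subgroup of $E(F)[p^\infty]$ that admits an intrinsic description.

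First I would pin down the structure of $E(F)[p^\infty]$, which is finite by Theorem \ref{thm:finite_over_F}; write $E(F)[p^\infty]\simeq\ZZ/p^a\ZZ\oplus\ZZ/p^b\ZZ$ with $a\le b$. On one hand, the $p^a$-torsion of $E(F)[p^\infty]$ has order $p^{2a}=\#E[p^a]$, so $E[p^a]\subseteq E(F)$ and therefore $a\le k$ by maximality of $k$. On the other hand, the hypothesis $\ZZ/p^k\ZZ\oplus\ZZ/p^j\ZZ\subseteq E(F)_\tor$ with $j\ge k$ gives $E[p^k]\subseteq E(F)$ (the $p^k$-torsion of $\ZZ/p^k\ZZ\oplus\ZZ/p^j\ZZ$ already has order $p^{2k}$), so $a\ge k$; hence $a=k$ and $b\ge j$. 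Now set $C:=p^k\,E(F)[p^\infty]$. Since $a=k$, multiplication by $p^k$ annihilates the first summand and leaves a \emph{cyclic} group $C\simeq\ZZ/p^{b-k}\ZZ$; this description does not depend on the choice of decomposition, so $C$ is $G_\QQ$-stable. Consequently $E\to E/C$ is a rational isogeny with cyclic kernel of degree $p^{b-k}$, and replacing $C$ by its subgroup $p^{b-j}C$ (cyclic of order $p^{j-k}$ and still $G_\QQ$-stable) exhibits a rational $p^{j-k}$-isogeny on $E$.

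It remains to bound $j-k$, which is pure bookkeeping against Theorem \ref{thm:n-isogeny-list}: a rational cyclic $n$-isogeny on $E/\QQ$ forces $n\le 19$ or $n\in\{21,25,27,37,43,67,163\}$. Restricting to prime powers $n=p^{j-k}$, the highest powers of $2,3,5$ occurring are $16,27,25$, yielding $j-k\le 4,3,2$ for $p=2,3,5$ respectively; for every other prime that can occur as an isogeny degree only the first power appears, so $j-k\le 1$; and for all remaining primes no rational $p$-isogeny exists, forcing $j=k$.

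The argument is essentially soft, and I foresee no real obstacle; the two steps meriting care are (i) using the Galois hypothesis to make $E(F)$ a $G_\QQ$-module, and (ii) deducing from the maximality of $k$ that the smaller invariant $a$ of $E(F)[p^\infty]$ equals $k$, which is exactly what forces $p^k E(F)[p^\infty]$ to be cyclic (so that it is the kernel of a \emph{cyclic} isogeny) rather than a product of two nontrivial cyclic groups. Note that nothing in the soft part bounds the larger invariant $b$ from above — that bound comes solely from the isogeny classification in Theorem \ref{thm:n-isogeny-list}.
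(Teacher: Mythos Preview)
Your argument is correct. Note, however, that the paper itself does not supply a proof of this lemma; it is quoted verbatim from \cite{Q(3)} (Lemma~4.6 there), with the surrounding remark ``We direct the interested reader to the corresponding papers for their proofs.'' So there is no in-paper proof to compare against.

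That said, your approach is the standard one and is essentially what appears in \cite{Q(3)}: use the Galois-stability of $E(F)[p^\infty]$ to produce a $G_\QQ$-stable cyclic subgroup $p^kE(F)[p^\infty]$, observe that the maximality of $k$ forces the smaller invariant of $E(F)[p^\infty]$ to equal $k$ (so that the subgroup is indeed cyclic), and then read off the bounds on $j-k$ from the classification of rational cyclic isogeny degrees. Your write-up handles both subtle points (i) and (ii) cleanly.
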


\begin{lemma}[\cite{Q(3)}, Lemma 5.12]\label{lem:two-distinct-p-isogenies}
Let $E / \QQ$ be an elliptic curve and let $p > 2$ be a prime for which $E$ admits a rational $p$-isogeny. Then $[\QQ(E[p]):\QQ]$ is relatively prime to $p$ if and only if $E$ admits two rational $p$-isogenies (with distinct kernels). For $p>5$ the latter is impossible and hence $p \mid [\QQ(E[p]):\QQ]$.
\end{lemma}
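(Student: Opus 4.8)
Let $E/\QQ$ be an elliptic curve and let $p>2$ be a prime for which $E$ admits a rational $p$-isogeny. Then $[\QQ(E[p]):\QQ]$ is relatively prime to $p$ if and only if $E$ admits two rational $p$-isogenies (with distinct kernels). For $p>5$ the latter is impossible and hence $p\mid [\QQ(E[p]):\QQ]$.

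---

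The plan is to work with the mod-$p$ Galois representation $\bar\rho_{E,p}\colon G_\QQ\to\GL_2(\F_p)$ and its image $G=\im\bar\rho_{E,p}\subseteq\GL_2(\F_p)$, noting that $[\QQ(E[p]):\QQ]=\#G$. The hypothesis that $E$ has a rational $p$-isogeny means $G$ stabilizes a line in $\F_p^2$, so after a change of basis $G$ is contained in the Borel subgroup of upper-triangular matrices. I would first observe that $E$ admits \emph{two} distinct rational $p$-isogenies precisely when $G$ fixes two distinct lines, equivalently $G$ is contained in the split Cartan (diagonal) subgroup $C$, i.e. $G$ is conjugate to a subgroup of the diagonal torus. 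This is the key translation: "two isogenies" $\iff$ "$G$ is diagonalizable over $\F_p$."

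Next I would prove the equivalence $p\nmid\#G\iff G$ diagonal. For the forward direction: if $p\nmid\#G$ then $G$ is a $p'$-subgroup of the Borel $B$, and since the unipotent radical $U\cong\F_p$ of $B$ is the unique $p$-Sylow of $B$, by Schur–Zassenhaus (or just by considering $G\cap U=\{e\}$ and the fact that $G$ maps injectively to the torus $B/U\cong(\F_p^\times)^2$) the group $G$ is conjugate into a complement of $U$ in $B$, namely a split Cartan. Hence $G$ fixes a second line and $E$ has a second $p$-isogeny. Conversely, if $G$ lies in the diagonal torus then $\#G\mid (p-1)^2$, which is coprime to $p$. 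I should be a little careful here: the determinant of $\bar\rho_{E,p}$ is the mod-$p$ cyclotomic character, which is surjective, so $\det G=\F_p^\times$; this forces $\#G$ to be divisible by (a divisor of order) $p-1$ but never by $p$ in the diagonal case, and it also shows $G$ is \emph{strictly} larger than a single homothety — but this extra structure is not needed for the stated equivalence, only for bookkeeping.

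For the final clause, I would invoke the classification of rational isogenies. If $E/\QQ$ admitted two independent rational $p$-isogenies with distinct kernels $C_1,C_2$, then the quotient isogeny $E\to E/(C_1+C_2)=E/E[p]$ factors, and more usefully $E/C_1$ would itself possess a rational $p$-isogeny with cyclic kernel, or alternatively $E$ possesses a rational cyclic $p^2$-isogeny: the composition $E\to E/C_1\to E/(C_1+C_2)$ — wait, I would instead argue directly that two distinct rational $p$-isogenies on $E$ give, via $E\to E/C_1$, a curve with a rational $p$-isogeny whose kernel together with the image of $C_2$ produces a cyclic rational $p^2$-isogeny on $E$ (the standard "dual plus new" argument). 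By Theorem~\ref{thm:n-isogeny-list} a rational cyclic $p^2$-isogeny forces $p^2\le 27$, so $p\in\{2,3,5\}$, and since $p>2$ we get $p\in\{3,5\}$; thus for $p>5$ this is impossible. Combined with the equivalence above, $p>5$ forces $G$ non-diagonal, hence $p\mid\#G=[\QQ(E[p]):\QQ]$.

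The main obstacle I anticipate is making the "two isogenies $\Rightarrow$ cyclic $p^2$-isogeny" step fully rigorous: one must check that the kernel of the composed isogeny $E\to E/C_1\to (E/C_1)/\overline{C_2}$ is genuinely cyclic of order $p^2$ (i.e. that $\overline{C_2}$, the image of $C_2$ in $E/C_1$, is nonzero and not equal to the kernel of the dual of $E\to E/C_1$), which uses $C_1\ne C_2$. Everything else is linear algebra over $\F_p$ plus Schur–Zassenhaus, which is routine. I would likely just cite a standard reference (e.g. Mazur's work or the relevant lemma in the isogeny literature) for the $p^2$-isogeny step rather than reprove it.
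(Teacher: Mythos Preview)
The paper itself does not give a proof of this lemma; it is simply imported from \cite{Q(3)} with a citation. So there is no ``paper's own proof'' to compare against, and your argument stands or falls on its own merits.

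Your treatment of the equivalence is correct and is the standard one: with $G=\im\bar\rho_{E,p}$ inside a Borel $B=U\rtimes T$, the condition $p\nmid\#G$ forces $G\cap U=\{1\}$, and Schur--Zassenhaus (applied inside $GU$) conjugates $G$ into the torus $T$; conversely $G\subseteq T$ gives $\#G\mid(p-1)^2$. That part is fine.

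The genuine slip is in your $p^2$-isogeny construction. The composition you wrote, $E\to E/C_1\to (E/C_1)/\overline{C_2}$, has kernel $\phi_1^{-1}(\overline{C_2})=C_1+C_2=E[p]$, so it is just multiplication by $p$, not a cyclic $p^2$-isogeny. Indeed, the check you proposed fails: $\overline{C_2}=\phi_1(C_2)=\phi_1(E[p])$ \emph{is} exactly the kernel of the dual isogeny $\hat\phi_1$. The correct composition goes the other way: set $E_1=E/C_1$, $E_2=E/C_2$, and consider
\[
\psi=\phi_2\circ\hat\phi_1\colon E_1\longrightarrow E\longrightarrow E_2.
\]
Then $\ker\psi=\hat\phi_1^{-1}(C_2)$ has order $p^2$, and it equals $E_1[p]=\hat\phi_1^{-1}(C_1)$ if and only if $C_1=C_2$. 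Since $C_1\neq C_2$, the kernel is cyclic of order $p^2$ and Galois-stable, so $E_1$ (not $E$) carries a rational cyclic $p^2$-isogeny. Now Theorem~\ref{thm:n-isogeny-list} forces $p^2\in\{4,9,25\}$, i.e.\ $p\le 5$. With this correction your argument is complete and essentially the standard proof.
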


\begin{lemma}[\cite{Q(3)}, Lemma 4.8]\label{lem:isogeny-field-cyclic}
Let $E / \QQ$ be an elliptic curve that admits a rational $n$-isogeny $\phi$, and let $R \in E[n]$ be a point of order $n$ in the kernel of $\phi$, and write $\QQ(R)=\QQ(x(R),y(R))$ for the field of definition of $R$. The field extension $\QQ(R) / \QQ$ is Galois with Galois group isomorphic to a subgroup of $(\ZZ / n \ZZ)^\times$. In particular, if $n$ is prime then $\mathrm{Gal}(\QQ(R)/\QQ)$ is cyclic and its order divides $n-1$.
\end{lemma}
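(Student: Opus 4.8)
The plan is to realize $\Gal(\QQ(R)/\QQ)$ explicitly as the image of a character valued in $(\ZZ/n\ZZ)^\times$, exactly as one builds the mod-$n$ cyclotomic character. Set $G_\QQ := \Gal(\Qbar/\QQ)$ and let $C := \ker\phi \subseteq E[n]$. Because $\phi$ is defined over $\QQ$, the subgroup $C$ is stable under the $G_\QQ$-action on $E(\Qbar)$; since $\phi$ has cyclic kernel of order $n$ and $R\in C$ has order $n$, we get $C = \langle R\rangle$. Hence for each $\sigma\in G_\QQ$ the point $\sigma(R)$ again lies in $C$ and has order $n$ (each $\sigma$ restricts to a group automorphism of $E[n]$, so it preserves the order of a torsion point), and therefore $\sigma(R) = \chi(\sigma)\,R$ for a unique $\chi(\sigma)\in(\ZZ/n\ZZ)^\times$.

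Next I would check that $\chi\colon G_\QQ \to (\ZZ/n\ZZ)^\times$ is a group homomorphism, which uses only the $\ZZ$-linearity of the Galois action on $E[n]$: for $\sigma,\tau\in G_\QQ$ one has $(\sigma\tau)(R) = \sigma(\chi(\tau)R) = \chi(\tau)\,\sigma(R) = \chi(\tau)\chi(\sigma)R$, and since $(\ZZ/n\ZZ)^\times$ is abelian this gives $\chi(\sigma\tau)=\chi(\sigma)\chi(\tau)$. Then I would identify $\QQ(R)$ with the fixed field of $\ker\chi$: writing $R=(x(R),y(R))$ and noting that $\sigma$ acts coordinate-wise, $\sigma$ fixes $\QQ(R)=\QQ(x(R),y(R))$ pointwise if and only if $\sigma(R)=R$, i.e. if and only if $\chi(\sigma)=1$. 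Thus the stabilizer of $\QQ(R)$ in $G_\QQ$ equals $\ker\chi$, which is normal because $\chi$ is a homomorphism; by the Galois correspondence $\QQ(R)/\QQ$ is Galois, and $\chi$ induces an injection $\Gal(\QQ(R)/\QQ)\cong G_\QQ/\ker\chi \hookrightarrow (\ZZ/n\ZZ)^\times$. For the final assertion, when $n=p$ is prime the group $(\ZZ/p\ZZ)^\times$ is cyclic of order $p-1$, and any subgroup of a cyclic group is cyclic with order dividing that of the ambient group.

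I do not expect a genuine obstacle: the lemma is a direct packaging of the mod-$n$ cyclotomic character construction applied to the rational cyclic subgroup $\ker\phi$. The only points requiring a little care are bookkeeping: that one must record both coordinates of $R$ (fixing $x(R)$ alone determines $R$ only up to sign), and that the Galois property of $\QQ(R)/\QQ$ rests on $\ker\chi$ being normal, which is automatic once $\chi$ is seen to be a homomorphism.
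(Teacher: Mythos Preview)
The paper does not supply its own proof of this lemma; it is one of several results collected in Section~3 from the literature, with the reader directed to \cite{Q(3)} for details. Your argument is correct and is exactly the standard proof: realize $\Gal(\QQ(R)/\QQ)$ as the image of the character $\chi\colon G_\QQ\to(\ZZ/n\ZZ)^\times$ attached to the Galois-stable cyclic kernel of $\phi$, so that $\QQ(R)$ is the fixed field of $\ker\chi$.
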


\begin{prop}[\cite{L-R13}, Theorem 2.1]\label{prop:torsion-field-degree-lower-bound}
Let $E / \QQ$ be an elliptic curve and let $p \geq 11$ be a prime other than $13$. Let $R \in E[p]$ be a torsion point of exact order $p$ and let $\QQ(R)$ be as in Lemma \ref{lem:isogeny-field-cyclic}. Then
\[
[\QQ(R):\QQ] \geq \frac{p-1}{2}
\]
unless $j(E)=-7 \cdot 11^3$ and $p=37$, in which case $[\QQ(R):\QQ] \geq (p-1)/3 = 12.$
\end{prop}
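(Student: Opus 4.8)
This is \cite[Theorem~2.1]{L-R13}; here is the strategy I would follow. Since we are in the situation of Lemma~\ref{lem:isogeny-field-cyclic}, $E$ admits a rational $p$-isogeny $\phi$ and $R$ is a point of exact order $p$ in $\ker\phi$, so $\langle R\rangle$ is a $\Gal(\Qbar/\QQ)$-stable line and the Galois action on it is given by a character $\lambda\colon\Gal(\Qbar/\QQ)\to\F_p^\times$. As $p\ge 3$ we have $R\ne -R$, so an element $\sigma$ fixes both $x(R)$ and $y(R)$ precisely when $\lambda(\sigma)=1$; hence $\Gal(\QQ(R)/\QQ)\cong\im\lambda$ and $[\QQ(R):\QQ]=\#\im\lambda$. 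The statement to prove thus becomes: the index of $\im\lambda$ in $\F_p^\times$ is at most $2$, except that it equals $3$ when $p=37$ and $j(E)=-7\cdot 11^3$.

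The first reduction is on the prime: by Theorem~\ref{thm:n-isogeny-list}, an elliptic curve over $\QQ$ with a rational $p$-isogeny and $p\ge 11$, $p\ne 13$, must have $p\in\{11,17,19,37,43,67,163\}$. For the three largest, $p\in\{43,67,163\}$, the only $j$-invariants admitting a rational $p$-isogeny are those with complex multiplication by the order of discriminant $-p$, which has class number one; the $p$-isogeny is multiplication by the ramified prime above $p$, and reading off $\lambda$ from the associated Hecke character gives $\im\lambda=\F_p^\times$, i.e.\ $[\QQ(R):\QQ]=p-1$, so the bound holds with room to spare.

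For $p\in\{11,17,19\}$ the modular curve $X_0(p)$ is a rank-zero elliptic curve over $\QQ$, and $X_0(37)$ has exactly four rational points, namely two cusps and two non-cuspidal points with $j$-invariants $-7\cdot 11^3$ and $-7\cdot137^3\cdot2083^3$; in each of these four cases there are thus only finitely many $j$-invariants to examine. For each such $j$ one fixes a model $E/\QQ$ and determines $\lambda$ explicitly: it ramifies only at $p$ and the primes of bad reduction, it is unramified at primes of multiplicative reduction (by the Tate parametrization), and on inertia at the remaining bad primes and at $p$ it is, up to a finite-order twist, a power of the mod-$p$ cyclotomic character with exponent in a bounded set, so its remaining ambiguity is pinned down by checking a few Frobenius elements (or by factoring the $p$-division polynomial). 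The result is $[\QQ(R):\QQ]=\#\im\lambda\ge(p-1)/2$ in every case, except that when $p=37$ and $j(E)=-7\cdot 11^3$ one has $\im\lambda$ of index $3$ and $[\QQ(R):\QQ]=12=(p-1)/3$, while the other $37$-isogeny $j$-invariant gives $[\QQ(R):\QQ]\ge 18=(p-1)/2$. Assembling the cases proves the proposition.

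The step I expect to be the real obstacle is this last one. The sharp bound is not a formal consequence of the general shape of $\lambda$, so one must (i) invoke the nontrivial determination of $X_0(37)(\QQ)$ to be sure the list of relevant $j$-invariants is as short as claimed, and (ii) compute the isogeny character precisely for each of those finitely many $j$-invariants — in particular, confirming that the curve of $j$-invariant $-7\cdot 11^3$ genuinely has an isogeny character of index $3$ in $\F_{37}^\times$, which is exactly the source of the exceptional bound $(p-1)/3$. By contrast the complex-multiplication case $p\in\{43,67,163\}$, and the rank-zero cases $p\in\{11,17,19\}$, involve only routine computation.
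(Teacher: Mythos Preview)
The paper gives no proof of this proposition; it is simply cited from \cite{L-R13}. So there is nothing to compare against except the original source and the correctness of your sketch.

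Your sketch has a genuine gap at the very first step. You write ``Since we are in the situation of Lemma~\ref{lem:isogeny-field-cyclic}, $E$ admits a rational $p$-isogeny $\phi$ and $R$ is a point of exact order $p$ in $\ker\phi$.'' But the proposition does \emph{not} assume this. The phrase ``let $\QQ(R)$ be as in Lemma~\ref{lem:isogeny-field-cyclic}'' is only borrowing the notation $\QQ(R)=\QQ(x(R),y(R))$; it does not import the hypothesis that $\langle R\rangle$ is Galois-stable. The statement (and Lozano-Robledo's theorem) concerns an \emph{arbitrary} point $R\in E[p]$ of exact order $p$. Your argument therefore covers only the Borel case, where the mod-$p$ image lies in $\smallmat{*}{*}{0}{*}$, and says nothing when $\langle R\rangle$ is not $\GQ$-stable. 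In that remaining case one must rule out small orbits of $R$ under each of the other possible images of $\bar\rho_{E,p}$ --- normalizers of split and non-split Cartan, the exceptional images with projective image $A_4,S_4,A_5$, and images containing $\SL_2(\F_p)$ --- and this case analysis is a substantial part of the proof in \cite{L-R13}, not a formality.

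Two further remarks. First, even within the Borel case your claim that $\im\lambda=\F_p^\times$ for the CM primes $p\in\{43,67,163\}$ is slightly too strong: depending on the quadratic twist one obtains index $1$ or $2$, so only $[\QQ(R):\QQ]\ge(p-1)/2$ is guaranteed (which is, of course, enough). Second, it is worth noting that in the paper's single application of this proposition (the proof of Proposition~\ref{prop:eligible-p-torsion}), one has already reduced to $E(\QA)[p]=\langle R\rangle$ with $\QA/\QQ$ Galois, so $\langle R\rangle$ \emph{is} Galois-stable there; thus your Borel-only argument would suffice for the paper's needs, but it does not prove the proposition as stated.
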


Finally, we observe that torsion structures for $E(\QA)$ are almost entirely determined by the $j$-invariant $j(E)$.

\begin{prop}\label{prop:j-invariant}
Let $E / \QQ$ be an elliptic curve with $j(E) \neq 0,1728$. The isomorphism type of $E(\QA)_{\mathrm{tors}}$ depends only on the $\bar{\QQ}$-isomorphism class of $E$, equivalently, on $j(E)$.
\end{prop}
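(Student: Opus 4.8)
The plan is to reduce the statement to the standard fact that the torsion subgroup of an elliptic curve over a field $L \supseteq \QQ$ is a quadratic twist invariant once we know that the relevant roots of unity lie in $L$. Concretely, suppose $E/\QQ$ and $E'/\QQ$ have the same $j$-invariant with $j(E) \neq 0, 1728$; then $E'$ is a quadratic twist $E^{(d)}$ of $E$ by some square-free $d \in \ZZ$. By Lemma \ref{lem:quadratics-are-genA4} we have $\QQ(\sqrt{d}) \subseteq \QA$, and since $\QA$ is Galois over $\QQ$ it is closed under all quadratic twisting fields; in particular $\QA(\sqrt{d}) = \QA$. The standard isomorphism between $E$ and its twist $E^{(d)}$ is defined over $\QQ(\sqrt{d})$, hence over $\QA$, and it carries $E(\QA)$ isomorphically onto $E^{(d)}(\QA)$. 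Therefore $E(\QA)_\tor \simeq E^{(d)}(\QA)_\tor$, which is exactly the claim: the isomorphism type depends only on $j(E)$.

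The key steps, in order, are: (1) recall that two elliptic curves over $\QQ$ with equal $j$-invariant, $j \neq 0, 1728$, are quadratic twists of one another by a square-free integer $d$ (this is classical, e.g. Silverman Chapter X); (2) invoke Lemma \ref{lem:quadratics-are-genA4} to get $\QQ(\sqrt d) \subseteq \QA$; (3) note that $\QA/\QQ$ being Galois (this is part of the setup, and follows as in Lemma \ref{lem:finite_root_of_unity_Ginfty}) gives $\QA(\sqrt d) = \QA$; (4) use the explicit $\QQ(\sqrt d)$-rational isomorphism $E \xrightarrow{\sim} E^{(d)}$ to conclude $E(\QA) \simeq E^{(d)}(\QA)$ as groups, hence the torsion subgroups agree. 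Finally, observe that "isomorphism type depends only on the $\bar\QQ$-isomorphism class" and "depends only on $j(E)$" are equivalent statements for elliptic curves, which gives the parenthetical equivalence in the proposition.

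I do not expect any real obstacle here: the only mild point to be careful about is that the twisting isomorphism is genuinely defined over $\QQ(\sqrt d)$ and not just over some larger field, but for $j \neq 0, 1728$ the twist is purely quadratic and this is standard. The hypothesis $j \neq 0, 1728$ is exactly what is needed so that the automorphism group is $\{\pm 1\}$ and all twists are quadratic; at $j = 0$ or $1728$ one would have cubic or quartic twists and would instead need $\QQ(\zeta_3)$ or $\QQ(i)$ (and more) inside $\QA$, which is why those $j$-invariants are excluded. So the proof is essentially a two-line argument once Lemma \ref{lem:quadratics-are-genA4} and the Galois property of $\QA$ are in hand.
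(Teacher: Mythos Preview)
Your proposal is correct and follows essentially the same approach as the paper: both argue that for $j(E)\neq 0,1728$ any two curves with the same $j$-invariant are quadratic twists, hence become isomorphic over a quadratic field, and then invoke Lemma~\ref{lem:quadratics-are-genA4} to conclude this quadratic field lies in $\QA$. Your write-up is simply a more detailed unpacking of the paper's one-sentence proof.
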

\begin{proof}
Recall that for $j(E)\neq 0$ or $1728$, if $j(E)=j(E')$ for some $E'/\QQ$ then $E'$ is a quadratic twist of $E$, hence isomorphic to $E$ over an extension of $\QQ$ of degree at most $2$, and quadratic fields are of generalized $A_4$-type.
\end{proof}

Putting these results together, we obtain a bound on the primes which may divide $\#E(\QA)_{\mathrm{tors}}$.

\begin{prop}\label{prop:eligible-p-torsion}
Let $E / \QQ$ be an elliptic curve, and let $p$ be a prime dividing the cardinality of $E(\QA)_{\tor}$. Then $p \in \{ 2,3,5,7,13 \}$.
\end{prop}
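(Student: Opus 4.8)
The plan is to combine the description of the cyclotomic subfields of $\QA$ (Lemma~\ref{lem:cyclotomic-genA4}) with the isogeny results collected in Section~\ref{sec:growth-of-torsion}. Suppose $p$ is a prime dividing $\#E(\QA)_{\tor}$ with $p\notin\{2,3,7\}$; it suffices to prove $p\in\{5,13\}$. Since $p\nmid 504$, Lemma~\ref{lem:cyclotomic-genA4} shows $\QQ(\zeta_p)\not\subseteq\QA$, so by Proposition~\ref{prop:n-torsion-implies-nth-cyclotomic} the curve $E$ has no full $p$-torsion over $\QA$; thus the $p$-primary part of $E(\QA)_{\tor}$ is cyclic, say isomorphic to $\ZZ/p^{j}\ZZ$ with $j\geq 1$. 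The field $\QA$ is a Galois extension of $\QQ$ containing only finitely many roots of unity (Lemmas~\ref{lem:finite_root_of_unity_Ginfty} and \ref{lem:cyclotomic-genA4}), so Lemma~\ref{lem:torsion-implies-isogeny}, applied with $F=\QA$ and $k=0$, produces a rational $p^{j}$-isogeny on $E$; taking the unique order-$p$ subgroup of its (cyclic, Galois-stable) kernel gives a rational $p$-isogeny $\phi\colon E\to E'$ with $\#\ker\phi=p$.

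Next I would manufacture a curve over $\QQ$ carrying \emph{both} a rational $p$-isogeny and a $\QA$-rational point of order $p$ lying in the kernel of that isogeny. Let $R$ generate $E(\QA)[p]\cong\ZZ/p\ZZ$ and put $C=\ker\phi$. If $\langle R\rangle=C$, take $E''=E$, the isogeny $\phi$, and $S=R$. Otherwise $E[p]=C\oplus\langle R\rangle$; writing $\widehat{\phi}\colon E'\to E$ for the dual isogeny, the relation $\widehat{\phi}\circ\phi=[p]$ shows $\phi(R)\in\ker\widehat{\phi}$, while $\phi(R)$ has exact order $p$ since $R\notin C$, and $\phi(R)\in E'(\QA)$ because $\phi$ is defined over $\QQ$; so take $E''=E'$, the isogeny $\widehat{\phi}$, and $S=\phi(R)$. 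In either case $E''/\QQ$ admits a rational $p$-isogeny and $S\in E''(\QA)$ is a point of exact order $p$ in its kernel.

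Now Lemma~\ref{lem:isogeny-field-cyclic} tells us $\QQ(S)/\QQ$ is cyclic of degree dividing $p-1$. On the other hand $\QQ(S)\subseteq\QA$, and every number field contained in $\QA$ is of generalized $A_4$-type: its Galois closure again lies in $\QA$ and hence has Galois group a quotient of a subdirect product of groups of generalized $A_4$-type, so is of generalized $A_4$-type by Lemma~\ref{lem:easy_list}. In particular the cyclic group $\Gal(\QQ(S)/\QQ)$ is of generalized $A_4$-type, so Corollary~\ref{cor:genA4-exponent-divides-six} forces $[\QQ(S):\QQ]\mid 6$, whence $[\QQ(S):\QQ]$ divides $\gcd(p-1,6)$. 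Assume now $p\geq 11$ and $p\neq 13$. By Proposition~\ref{prop:torsion-field-degree-lower-bound} applied to $E''$ we get $[\QQ(S):\QQ]\geq (p-1)/2$, except when $j(E'')=-7\cdot 11^3$ and $p=37$, in which case $[\QQ(S):\QQ]\geq 12$. In the first case $(p-1)/2\leq\gcd(p-1,6)\leq 6$ forces $p\leq 13$, hence $p=11$; but then $[\QQ(S):\QQ]$ divides $\gcd(10,6)=2$, contradicting $[\QQ(S):\QQ]\geq 5$. In the exceptional case $12\leq\gcd(36,6)=6$ is absurd. Hence no prime $p\geq 11$ with $p\neq 13$ occurs, so $p\in\{5,13\}$, and together with the excluded set $\{2,3,7\}$ this gives $p\in\{2,3,5,7,13\}$.

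The main obstacle is the passage from the $\QA$-rational $p$-torsion point to a $p$-torsion point sitting inside the kernel of a rational $p$-isogeny: a priori the line $\langle R\rangle$ need not be Galois-stable over $\QQ$, so Lemma~\ref{lem:isogeny-field-cyclic} and Proposition~\ref{prop:torsion-field-degree-lower-bound} cannot be applied to $R$ directly. The dual-isogeny maneuver resolves this, at the cost of possibly replacing $E$ by its $p$-isogenous curve. One should also pause to confirm that subextensions of $\QA$ are genuinely of generalized $A_4$-type, so that Corollary~\ref{cor:genA4-exponent-divides-six} is available; the remaining numerical checks are routine.
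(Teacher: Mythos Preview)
Your argument is correct and follows the same strategy as the paper's proof: reduce to the case where $E(\QA)[p]$ is cyclic (via Lemma~\ref{lem:cyclotomic-genA4} and Proposition~\ref{prop:n-torsion-implies-nth-cyclotomic}), deduce a rational $p$-isogeny, then play the cyclicity of $\QQ(R)/\QQ$ from Lemma~\ref{lem:isogeny-field-cyclic} and the exponent bound of Corollary~\ref{cor:genA4-exponent-divides-six} against the lower bound of Proposition~\ref{prop:torsion-field-degree-lower-bound}.

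The one place where you work harder than necessary is the dual-isogeny maneuver. You flag as the ``main obstacle'' that $\langle R\rangle$ need not be Galois-stable over $\QQ$, but in fact it always is: since $\QA/\QQ$ is Galois, the absolute Galois group of $\QQ$ preserves $E(\QA)$ setwise, hence it preserves $E(\QA)[p]=\langle R\rangle$. Thus $\langle R\rangle$ is itself the kernel of a rational $p$-isogeny, and Lemma~\ref{lem:isogeny-field-cyclic} applies directly to $R$ on $E$; the paper simply uses this without comment. Your detour through $E'$ and $\widehat{\phi}$ is harmless but avoidable.
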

\begin{proof}
First, by Proposition \ref{prop:n-torsion-implies-nth-cyclotomic} and Lemma \ref{lem:torsion-implies-isogeny}, the nontrivialtiy of the $p$-primary component $E(\QA)(p)$ of the torsion subgroup of $E(\QA)$ implies that either $\QQ(\zeta_p) \subseteq \QA$ or that $E$ has a rational $p$-isogeny. Thus, by Lemma \ref{lem:cyclotomic-genA4} and Theorem \ref{thm:n-isogeny-list}, the only primes which may divide the cardinality of $E(\QA)_{\mathrm{tors}}$ are the primes in the set
\[
S=\{ 2,3,5,7,11,13,17,19,37,43, 67, 163 \}.
\]
By Lemma \ref{lem:isogeny-field-cyclic}, if $E(\QA)[p] = \langle R \rangle \simeq \ZZ / p \ZZ$, then $\QQ(R)/\QQ$ is a cyclic extension, and it must also be of generalized $A_4$-type, hence it is an extension of degree dividing $6$. Combining this with Proposition \ref{prop:torsion-field-degree-lower-bound} completes the proof.
\end{proof}

\section{The maximal $p$-primary components of $E(\QA)_{\tor}$}\label{sec:pPrimary}

In this section we obtain bounds on the $p$-primary components of $E(\QA)_{\tor}$ for elliptic curves $E$ defined over $\QQ$. Taken together, the results of this section establish the following theorem.

\begin{thm}\label{thm:non-CM-torsion-bound}
Let $E / \QQ$ be an elliptic curve. Then $E(\QA)_{\tor}$ is isomorphic to a subgroup of
\[
T_{\text{max}} = (\ZZ/8\ZZ\oplus\ZZ/16\ZZ) \oplus (\ZZ / 3 \ZZ \oplus \ZZ / 9 \ZZ) \oplus \ZZ / 5\ZZ \oplus \ZZ / 7 \ZZ \oplus \ZZ / 13 \ZZ
\]
and $T_{\text{max}}$ is the smallest group with this property.
\end{thm}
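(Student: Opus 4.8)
The plan is to bound each $p$-primary component of $E(\QA)_{\tor}$ separately, for $p \in \{2,3,5,7,13\}$ (the only eligible primes, by Proposition \ref{prop:eligible-p-torsion}), and then to exhibit, for each claimed maximal structure, at least one elliptic curve $E/\QQ$ realizing it — together these two halves give both that $E(\QA)_{\tor} \hookrightarrow T_{\text{max}}$ and that $T_{\text{max}}$ is minimal. For the upper bounds, the key constraint is that if $E[p^k] \subseteq E(\QA)$ then $\QQ(\zeta_{p^k}) \subseteq \QA$, so by Lemma \ref{lem:cyclotomic-genA4} we get $p^k \mid 504$; this immediately forces $k \leq 3$ for $p=2$, $k \leq 2$ for $p=3$, and $k \leq 1$ for $p = 5,7,13$. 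Then, writing $E(\QA)(p) \simeq \ZZ/p^k\ZZ \oplus \ZZ/p^j\ZZ$ with $j \geq k$, Lemma \ref{lem:torsion-implies-isogeny} says $E$ admits a rational $p^{j-k}$-isogeny with $j - k \leq 4,3,2$ for $p=2,3,5$ and $j-k \leq 1$ for $p = 7,13$, and Theorem \ref{thm:n-isogeny-list} restricts which isogeny degrees actually occur over $\QQ$.

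Next I would tighten these crude bounds using the degree constraints on the fields of definition of isogeny kernels. For the full $p^k$-torsion we need $\QQ(\zeta_{p^k}) \subseteq \QA$; for the extra cyclic factor $\ZZ/p^j\ZZ$ we have a point $R$ generating the kernel of the $p^{j-k}$-isogeny, and $\QQ(R)/\QQ$ is abelian (Lemma \ref{lem:isogeny-field-cyclic}) of exponent dividing $6$ since it must be of generalized $A_4$-type. For $p = 2$: $E[8] \subseteq E(\QA)$ is allowed since $8 \mid 504$, and an extra $\ZZ/16\ZZ$ requires a rational $2$-isogeny, which exists, so $\ZZ/8\ZZ \oplus \ZZ/16\ZZ$ is the ceiling; one must rule out $\ZZ/8\ZZ \oplus \ZZ/32\ZZ$ by noting a rational $4$-isogeny whose kernel point has the right field-of-definition properties cannot push the $2$-adic valuation that high — here I would invoke the detailed $2$-adic image analysis (this is the kind of argument that in practice gets reduced to checking finitely many modular curves). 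For $p = 3$: $E[9]$ requires $\QQ(\zeta_9) \subseteq \QA$, which holds since $9 \mid 504$, giving the $\ZZ/3\ZZ \oplus \ZZ/9\ZZ$ piece; a larger factor would need a rational $9$- or $27$-isogeny, and one checks the field of definition of such a kernel point is not of generalized $A_4$-type. For $p = 5, 7, 13$: full $p$-torsion requires $\QQ(\zeta_p) \subseteq \QA$, i.e. $p - 1 \mid 6$, which fails for all three, so no $\ZZ/p\ZZ \oplus \ZZ/p\ZZ$; and $j - k \leq 1$ with $j = k$ for $p = 5$ (since $5 \nmid $ the relevant data) leaves only $\ZZ/5\ZZ$, while for $p = 7, 13$ an extra factor would require a rational $49$- or $169$-isogeny, which do not exist by Theorem \ref{thm:n-isogeny-list}, leaving only $\ZZ/7\ZZ$ and $\ZZ/13\ZZ$. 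Combining the per-prime ceilings via the Chinese Remainder Theorem gives $E(\QA)_{\tor} \hookrightarrow T_{\text{max}}$.

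For the minimality of $T_{\text{max}}$, I would produce explicit witnesses: curves over $\QQ$ whose base change to $\QA$ attains $\ZZ/8\ZZ \oplus \ZZ/16\ZZ$, $\ZZ/3\ZZ \oplus \ZZ/9\ZZ$, $\ZZ/5\ZZ$, $\ZZ/7\ZZ$, and $\ZZ/13\ZZ$ respectively in their $p$-primary parts (drawing on the examples compiled in Table \ref{tab:Examples}, e.g. the curves realizing $\ZZ/2\ZZ \oplus \ZZ/16\ZZ$, $\ZZ/8\ZZ \oplus \ZZ/8\ZZ$, $\ZZ/3\ZZ \oplus \ZZ/9\ZZ$, $\ZZ/5\ZZ$, $\ZZ/7\ZZ$, $\ZZ/13\ZZ$). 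Strictly, for $2$-primary minimality one wants a single curve (or a short list) whose $2$-part is as large as $\ZZ/8\ZZ \oplus \ZZ/16\ZZ$ even if no single curve achieves all of $T_{\text{max}}$ at once; since minimality of $T_{\text{max}}$ only asks that no proper subgroup contains every $E(\QA)_{\tor}$, it suffices that each cyclic-factor bound be individually sharp. I expect the genuine obstacle to be the $p = 2$ upper bound: ruling out $\ZZ/16\ZZ$ as the second factor being replaced by $\ZZ/32\ZZ$ (and more generally pinning down exactly how large the $2$-primary torsion can get over an extension containing $\QQ(2^\infty)$) requires a careful analysis of the mod-$2^n$ Galois image and which $2$-power isogenies are compatible with the generalized $A_4$-type condition on all the relevant subfields — this is where one leans hardest on the modular-curve computations rather than on soft group theory. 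The odd-prime cases, by contrast, fall out cleanly from Lemma \ref{lem:cyclotomic-genA4}, Theorem \ref{thm:n-isogeny-list}, and Lemma \ref{lem:torsion-implies-isogeny}.
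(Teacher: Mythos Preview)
Your overall strategy matches the paper's: bound each $p$-primary component separately and then exhibit witnesses for sharpness. However, there is a genuine error in the $p=7$ case and related gaps for $p=3$ and $p=5$.

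For $p=7$ you write that ``full $p$-torsion requires $\QQ(\zeta_p) \subseteq \QA$, i.e.\ $p-1 \mid 6$, which fails for all three [$p=5,7,13$].'' But $7-1=6$ certainly divides $6$, and indeed $7 \mid 504$, so $\QQ(\zeta_7) \subseteq \QA$ by Lemma~\ref{lem:cyclotomic-genA4}. The cyclotomic obstruction therefore does \emph{not} exclude $E[7]\subseteq E(\QA)$. The paper's Proposition~\ref{prop:case-p-7} needs a substantially harder argument here: if $E[7]\subseteq E(\QA)$ then $\Gal(\QQ(E[7])/\QQ)$ has exponent dividing $6$, which forces $E$ to admit a rational $7$-isogeny locally everywhere; by Sutherland's local--global result this pins $j(E)$ to a single value, and one then checks (in Magma) that the two possible mod-$7$ images for that $j$ are not of generalized $A_4$-type. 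Your soft argument cannot reach this conclusion.

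Two smaller but real gaps: for $p=3$ you say that $E[9]\subseteq E(\QA)$ ``gives the $\ZZ/3\ZZ\oplus\ZZ/9\ZZ$ piece,'' but $E[9]\simeq\ZZ/9\ZZ\oplus\ZZ/9\ZZ$, so this would overshoot the claimed bound; the paper has to prove separately (Lemma~\ref{lem:full-9-torsion}) that $E[9]\not\subseteq E(\QA)$ by showing the relevant mod-$9$ image sits in the split Cartan, forcing an impossible rational $81$-isogeny, and then (Lemma~\ref{lem:3x9}) that $\ZZ/3\ZZ\oplus\ZZ/27\ZZ$ is blocked by the finiteness of $X_0(27)(\QQ)$. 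For $p=5$ you assert $j=k$ ``since $5\nmid$ the relevant data,'' but Lemma~\ref{lem:torsion-implies-isogeny} allows $j-k\le 2$, so you must still rule out $\ZZ/25\ZZ$; the paper does this by observing that a point of order $25$ in $E(\QA)$ would have cyclic $\QQ(R)/\QQ$ of degree dividing $\gcd(20,6)=2$, hence be quadratic, contradicting Mazur's theorem after a twist. Your sketch omits this step.
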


\subsection{Primes without the possibility of full $p$-torsion ($p=5,13$)}

By Lemma \ref{lem:cyclotomic-genA4} and Proposition \ref{prop:n-torsion-implies-nth-cyclotomic}, we see that if $p \in \{ 5, 13\}$ then the full $p$-torsion $E[p]$ cannot be defined over $\QA$. We now study each of these cases in detail.

\subsubsection{The case when $p=13$}

\begin{prop}\label{prop:case-p-13}
Suppose that $E / \QQ$ is an elliptic curve such that $13$ divides $\# E(\QA)_{\mathrm{tors}}$. Then $E(\QA)(13) \simeq \ZZ / 13 \ZZ$ and there exists $t \in \QQ$ such that
\begin{equation}\label{eq:j13}
j(E) = \frac{(t^4 - t^3 + 5t^2 + t + 1)P(t)^3}{t^{13}(t^2-3t-1)}
\end{equation}
where $P(t)=t^8 - 5t^7 + 7t^6 - 5t^5 + 5t^3 + 7t^2 + 5t + 1$.
\end{prop}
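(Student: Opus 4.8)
The plan is to first pin down the $13$-primary part of $E(\QA)_{\tor}$ from the isogeny and cyclotomic bounds already assembled, then translate the constraint coming from $\QA$ into membership of the isogeny character in an explicit index-$2$ subgroup of $(\ZZ/13\ZZ)^\times$, and finally recognize the resulting locus of $j$-invariants as the image of the $j$-map on a genus-zero modular curve.

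\emph{Step 1: the shape of the $13$-part.} Since $504 = 2^3\cdot 3^2\cdot 7$ is not divisible by $13$, Lemma~\ref{lem:cyclotomic-genA4} gives $\QQ(\zeta_{13})\not\subseteq\QA$, so by Proposition~\ref{prop:n-torsion-implies-nth-cyclotomic} the full torsion $E[13]$ is not defined over $\QA$; thus the largest $k$ with $E[13^k]\subseteq E(\QA)$ is $k=0$. Hence if $E(\QA)$ has a point of order $13^j$ then $E(\QA)_{\tor}\supseteq\ZZ/13^j\ZZ$ with $j\ge k=0$, and Lemma~\ref{lem:torsion-implies-isogeny} forces a rational $13^j$-isogeny with $j\le 1$. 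As $13\mid\#E(\QA)_{\tor}$, we get $j=1$ and $E(\QA)(13)\simeq\ZZ/13\ZZ$; write $E(\QA)(13)=\langle R\rangle$. Moreover $\langle R\rangle$ is $G_\QQ$-stable, since $\QA/\QQ$ is Galois and $\langle R\rangle$ is the unique subgroup of order $13$ in $E(\QA)$, so $E$ admits a rational $13$-isogeny $\phi$ with kernel $\langle R\rangle$.

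\emph{Step 2: the isogeny character lands in the index-$2$ subgroup.} Let $\chi\colon G_\QQ\to\Aut(\langle R\rangle)\cong(\ZZ/13\ZZ)^\times$ record the Galois action on $\ker\phi$, so $\QQ(R)=\overline{\QQ}^{\ker\chi}$. By Lemma~\ref{lem:isogeny-field-cyclic}, $\QQ(R)/\QQ$ is cyclic of degree dividing $12$. On the other hand $\QQ(R)\subseteq\QA$ is a Galois subextension, so---expressing $\QA$ as the compositum of finitely many generalized-$A_4$-type fields and using closure of generalized $A_4$-type under subdirect products and quotients (Lemma~\ref{lem:easy_list})---the group $\Gal(\QQ(R)/\QQ)$ is of generalized $A_4$-type, whence its order, being the exponent of a cyclic group, divides $6$ by Corollary~\ref{cor:genA4-exponent-divides-six}. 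Thus $[\QQ(R):\QQ]\mid 6$, i.e.\ $\im\chi$ lies in the unique index-$2$ subgroup $H\le(\ZZ/13\ZZ)^\times$. This condition depends only on $j(E)$: a quadratic twist multiplies $\chi$ by a character valued in $\{\pm1\}$, and $-1$ is a square mod $13$, so $\{\pm1\}\subseteq H$, in agreement with Proposition~\ref{prop:j-invariant}.

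\emph{Step 3: the modular curve and the $j$-map.} Pairs $(E',C)$ with $C$ a cyclic $13$-subgroup are parametrized by the genus-zero curve $X_0(13)$; remembering in addition the $H$-orbit of a generator of $C$---the extra datum needed to impose $\im\chi\subseteq H$---corresponds to the intermediate quotient $X$ of $X_1(13)\to X_0(13)$ attached to $H$, a double cover of $X_0(13)$. One checks that $X$ has genus $0$ (by Riemann--Hurwitz: the order-$3$ diamond action on $X_1(13)$ has fixed points only over $j=0$, producing exactly four ramification points) and that $X$ has $\QQ$-rational cusps, so $X\cong\PP^1_\QQ$. Fixing a Hauptmodul $t$, the forgetful map $j\colon X\to X(1)$ is a rational function of degree $28$ (twice the degree $14$ of $X_0(13)\to X(1)$), and after the appropriate normalization of $t$ it is precisely the right-hand side of \eqref{eq:j13}, as one verifies by direct computation in Magma; the pole pattern is a useful check (order $13$ at the two rational cusps $t=0,\infty$ lying over the width-$13$ cusp of $X_0(13)$, and simple poles at $t=\tfrac{3\pm\sqrt{13}}{2}$ over the width-$1$ cusp). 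Conversely, every non-cuspidal $\QQ$-point of $X$ gives, up to quadratic twist, an $E/\QQ$ with a rational $13$-isogeny whose kernel field is cyclic of degree dividing $6$ and hence lies in $\QA$ by Proposition~\ref{prop:A4_Classification}; for such $E$ one has $E(\QA)(13)=\ZZ/13\ZZ$, so \eqref{eq:j13} describes exactly the $j$-invariants with $13\mid\#E(\QA)_{\tor}$.

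\emph{Expected obstacle.} Steps 1 and 2 are essentially formal given the results already quoted. The substance is entirely in Step 3: identifying the correct double cover $X$ of $X_0(13)$, confirming it has genus $0$, and computing the degree-$28$ map $j\colon X\to X(1)$ in closed form. In practice one does this from an explicit model and $q$-expansions for $X_1(13)$, quotienting by $H$ and matching cusps and pole orders, but extracting the clean formula \eqref{eq:j13}---and a coordinate $t$ that makes it this simple---is the delicate part.
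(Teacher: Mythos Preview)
Your proposal is correct and follows essentially the same route as the paper: both arguments use Lemma~\ref{lem:cyclotomic-genA4} and Proposition~\ref{prop:n-torsion-implies-nth-cyclotomic} to rule out full $13$-torsion, invoke Lemma~\ref{lem:torsion-implies-isogeny} to bound the $13$-primary part, and then use Corollary~\ref{cor:genA4-exponent-divides-six} to force the isogeny character into the index-$2$ (square) subgroup of $(\ZZ/13\ZZ)^\times$, landing on the Borel subgroup $\left(\begin{smallmatrix} a^2 & * \\ 0 & * \end{smallmatrix}\right)$. The only difference is in Step~3: the paper simply cites \cite{Zywina} for the classification of such curves and the explicit $j$-map, whereas you unpack this by describing the relevant modular curve as the genus-zero double cover of $X_0(13)$ attached to the squares and sketching the Riemann--Hurwitz and cusp analysis---your added remark that the condition is twist-invariant because $-1$ is a square mod $13$ is a nice touch not made explicit in the paper.
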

\begin{proof}
Since $\QQ(\zeta_{13}) \not\subseteq \QA)$ by Lemma \ref{lem:cyclotomic-genA4}, Proposition \ref{prop:n-torsion-implies-nth-cyclotomic} implies that $E(\QA)[13] \simeq \ZZ / 13 \ZZ$. Suppose that $R$ is a point that generates $E(\QA)[13]$. By Lemmas \ref{lem:torsion-implies-isogeny} and \ref{lem:isogeny-field-cyclic}, we see that $\QQ(R)/ \QQ$ must be a cyclic extension of degree dividing $12$. Furthermore, since $R$ is defined over $\QA$, Corollary \ref{cor:genA4-exponent-divides-six} implies that this cyclic extension must have degree dividing $6$. Thus, identifying $E[13]$ with column vectors, the image of $\bar{\rho}_{E,13}$ is conjugate to a subgroup of $\mathrm{GL}_2(\ZZ / 13 \ZZ)$ contained inside the matrices of the form $\left( \begin{smallmatrix} a^2 & \ast \\ 0 & \ast  \end{smallmatrix} \right)$.

The elliptic curves defined over $\QQ$ with this property have been completely classified in \cite{Zywina}, and they correspond to the curves with $j$-invariant of the form given in \eqref{eq:j13}. Further, since there are no elliptic curves defined over $\QQ$ with a cyclic $169$-isogeny, it is not possible for the $13$-primary component of $E(\QA)$ to be any larger.
\end{proof}

\subsubsection{The case when $p=5$}

\begin{prop}\label{prop:case-p-5}
Suppose that $E / \QQ$ is an elliptic curve such that $5$ divides $\# E (\QA)_{\tor}$. Then $E(\QA)(5) \simeq \ZZ / 5 \ZZ$ and there exists $t \in \QQ$ such that
\begin{equation}\label{eq:j5}
j(E)= \frac{(t^4 - 12t^3 + 14t^2 + 12t + 1)^3}{t^5 (t^2-11t-1)}.
\end{equation}
\end{prop}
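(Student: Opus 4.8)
The plan is to mirror the proof of Proposition \ref{prop:case-p-13}, since the case $p=5$ is structurally identical. First I would invoke Proposition \ref{prop:n-torsion-implies-nth-cyclotomic} together with Lemma \ref{lem:cyclotomic-genA4}: since $5 \mid 504$ is false (indeed $\QQ(\zeta_5)/\QQ$ has degree $4$ and exponent $4$, which does not divide $6$, so $\QQ(\zeta_5) \not\subseteq \QA$), the full $5$-torsion $E[5]$ cannot be defined over $\QA$, and hence $E(\QA)(5)$ is cyclic, generated by some point $R$ of order $5$ (a priori of order $5^k$, but see below). By Lemmas \ref{lem:torsion-implies-isogeny} and \ref{lem:isogeny-field-cyclic}, the existence of a point of order $5$ over $\QA$ forces $E$ to admit a rational $5$-isogeny and $\QQ(R)/\QQ$ to be cyclic of degree dividing $4$; since $R \in E(\QA)$ and $\QA$ has exponent dividing $6$ over $\QQ$ by Corollary \ref{cor:genA4-exponent-divides-six}, the degree $[\QQ(R):\QQ]$ must in fact divide $\gcd(4,6)=2$.

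Next I would translate this into a statement about the mod-$5$ image: identifying $E[5]$ with column vectors so that the $5$-isogeny kernel is spanned by $\left(\begin{smallmatrix}1\\0\end{smallmatrix}\right)$, the image of $\bar\rho_{E,5}$ is conjugate into the Borel subgroup, and the constraint that the Galois action on $\langle R\rangle$ factors through a group of order dividing $2$ means the image lies in the subgroup of matrices $\left(\begin{smallmatrix} a & \ast \\ 0 & \ast\end{smallmatrix}\right)$ with $a \in \{\pm 1\} = \left((\ZZ/5\ZZ)^\times\right)^2$, i.e. $a$ is a square. This is exactly a modular-curve condition. The curves $E/\QQ$ whose mod-$5$ image is contained in this group are classified (for instance in Zywina's tables, as cited for the $p=13$ case), and the corresponding $j$-invariants are parametrized by the rational function in \eqref{eq:j5}; I would cite that classification directly rather than rederive the parametrization. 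Finally, to rule out a larger $5$-primary component, I would note that by Lemma \ref{lem:torsion-implies-isogeny} a point of order $25$ over $\QA$ would force a rational $5$-isogeny together with $j-k\le 2$, but combined with the degree constraint a point of order $25$ would require $\QQ(\zeta_{25})\subseteq\QA$ or at least push the relevant cyclotomic/isogeny field degree out of range — more simply, one checks there is no elliptic curve over $\QQ$ with the requisite $25$-isogeny-type structure compatible with exponent $6$, so $E(\QA)(5)\simeq\ZZ/5\ZZ$.

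The main obstacle, as in the $p=13$ case, is the appeal to the explicit classification of mod-$5$ Galois images over $\QQ$ and the verification that the group cut out by ``upper-triangular with square diagonal entry in the top-left'' corresponds precisely to the modular curve whose $j$-map is \eqref{eq:j5}. This is not a computation one wants to do by hand; the cleanest route is to cite the relevant entry of the classification of rational points on modular curves of level $5$ (the same source used for Proposition \ref{prop:case-p-13}) and match the subgroup. Everything else — the cyclicity of $E(\QA)(5)$, the degree bound on $\QQ(R)$, and the exclusion of $25$-torsion — is a routine repackaging of the lemmas already assembled in Section \ref{sec:growth-of-torsion}.
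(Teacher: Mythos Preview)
Your outline tracks the paper's argument closely for the first half: both start by excluding full $5$-torsion via $\QQ(\zeta_5)\not\subseteq\QA$, then use Lemmas \ref{lem:torsion-implies-isogeny} and \ref{lem:isogeny-field-cyclic} together with Corollary \ref{cor:genA4-exponent-divides-six} to conclude that a point $R$ of order $5$ is defined over an at-most-quadratic extension. Where you pass to the mod-$5$ Galois image and invoke Zywina's classification, the paper takes the slightly more direct shortcut of observing that ``$R$ defined over a quadratic extension'' is the same as ``some quadratic twist of $E$ has a rational point of order $5$,'' which is exactly the condition parametrized by $X_1(5)$; the $j$-map \eqref{eq:j5} is then just the $j$-line of $X_1(5)$. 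Your route via the Borel-with-square-upper-left subgroup lands on the same modular curve, so this is a cosmetic difference.

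The genuine gap is in your exclusion of $25$-torsion. Your sketch wanders (a rational $5$-isogeny rather than a $25$-isogeny, an appeal to $\QQ(\zeta_{25})\subseteq\QA$ which is irrelevant since you do not have full $25$-torsion) and ends with ``one checks'' without an argument. The paper's step is short and you already have all the ingredients: a point of order $25$ in $E(\QA)$ forces a rational $25$-isogeny (Lemma \ref{lem:torsion-implies-isogeny} with $k=0$), and then Lemma \ref{lem:isogeny-field-cyclic} gives $[\QQ(R):\QQ]\mid\#(\ZZ/25\ZZ)^\times=20$, while Corollary \ref{cor:genA4-exponent-divides-six} forces this degree to divide $6$; hence $[\QQ(R):\QQ]\mid 2$, so a quadratic twist of $E$ has a rational point of order $25$, contradicting Theorem \ref{thm:mazur}. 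Replacing your last sentence with this argument closes the proof.
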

\begin{proof}
As in the previous proposition, since $\QQ(\zeta_5) \not\subseteq \QA$ by Lemma \ref{lem:cyclotomic-genA4}, it follows from Proposition \ref{prop:n-torsion-implies-nth-cyclotomic} that $E(\QA)[5]$ is cyclic of order $5^j$ for some $j$, and by Lemma \ref{lem:torsion-implies-isogeny} and Theorem \ref{thm:n-isogeny-list} we must have $j \leq 2$.

Suppose $j=1$, so $E(\QA)(5)=\ZZ / 5\ZZ$, and let $R$ be a point of order $5$. Then by Lemma \ref{lem:torsion-implies-isogeny}, $E$ admits a rational $5$-isogeny, and by Lemma \ref{lem:isogeny-field-cyclic} its field of definition $\QQ(R)$ is an extension of $\QQ$ of degree at most $4$. It follows from Corollary \ref{cor:genA4-exponent-divides-six} that $R$ is defined over a quadratic extension of $\QQ$, or in other words, there is a quadratic twist of $E$ that has a rational point of order $5$. Such curves are parametrized by the genus $0$ modular curve $X_1(5)$, and the $j$-map for this curve is given in \cite{Zywina} as \eqref{eq:j5}.

Now suppose $E(\QA)$ contains a point $R$ of order $25$; then $E$ admits a rational $25$-isogeny, and by Lemma \ref{lem:isogeny-field-cyclic} the field of definition $\QQ(R)$ is Galois with Galois group isomorphic to a subgroup of $(\ZZ / 25 \ZZ)^\times$, which has order $20$. But since $\QQ(R) \subseteq \QA$, by Corollary \ref{cor:genA4-exponent-divides-six} we must have that $\QQ(R)/\QQ$ is a field extension of degree dividing $6$. These facts together imply $R$ is defined over a quadratic extension of $\QQ$. If $E$ has a point of order 25 defined over a quadratic extension of $\QQ$, then there is a quadratic twist of $E$ with a rational point of order 25, but this cannot happen by Theorem \ref{thm:mazur}.
\end{proof}

\subsection{Primes with the possibility of full $p$-torsion ($p=2,3,7$)}

We now consider the primes $p$ for which the full $p$-torsion $E[p]$ may be defined over $\QA$.

\subsubsection{The case when $p=7$}




\begin{prop}\label{prop:case-p-7}
Suppose that $E / \QQ$ is an elliptic curve such that $7$ divides $\# E (\QA)_{\tor}$. Then $E(\QA)_{\mathrm{tors}} \simeq \ZZ / 7 \ZZ$ and there exists some $t \in \QQ^\times$ such that
\begin{equation}\label{eq:j7}
j(E)= \frac{(t^2 + 13t + 49)(t^2 + 5t + 1)^3}{t}.
\end{equation}
\end{prop}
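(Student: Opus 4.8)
The plan is to first pin down the possible shapes of the $7$-primary and prime-to-$7$ parts of $E(\QA)_{\tor}$, then to show the group must be exactly $\ZZ/7\ZZ$, and finally to record the $j$-line parametrization. Since $\QQ(\zeta_7)\not\subseteq\QA$ by Lemma \ref{lem:cyclotomic-genA4}, Proposition \ref{prop:n-torsion-implies-nth-cyclotomic} shows the full $7$-torsion is not defined over $\QA$, so $E(\QA)(7)$ is cyclic of order $7^j$; by Lemma \ref{lem:torsion-implies-isogeny} and Theorem \ref{thm:n-isogeny-list} (no rational $49$-isogeny) we get $j=1$, so $E(\QA)(7)\simeq\ZZ/7\ZZ$. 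Let $R$ be a point of order $7$. By Lemma \ref{lem:torsion-implies-isogeny}, $E$ admits a rational $7$-isogeny with kernel $\langle R\rangle$, and by Lemma \ref{lem:isogeny-field-cyclic} the field $\QQ(R)/\QQ$ is cyclic of order dividing $6$; combined with Corollary \ref{cor:genA4-exponent-divides-six} this is automatic. The key structural point is that $E$ must then also have a rational point of order $7$ up to a quadratic twist: indeed $\QQ(R)$ is cyclic of order dividing $6$, but the Galois action on $\langle R\rangle$ is via a character into $(\ZZ/7\ZZ)^\times\simeq\ZZ/6\ZZ$, and since $\QQ(R)\subseteq\QA$ the image must land in the unique index-$2$ (equivalently, squares) subgroup — more precisely, by the argument in Proposition \ref{prop:case-p-13} the image of $\bar\rho_{E,7}$ is conjugate into the Borel subgroup whose diagonal has the form $\smallmat{a^2}{\ast}{0}{\ast}$. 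After a quadratic twist we may take the character on $\langle R\rangle$ to be trivial, so a quadratic twist of $E$ has a rational $7$-torsion point; such curves are parametrized by $X_1(7)$, a genus-$0$ curve, whose $j$-map is \eqref{eq:j7} (with $t\in\QQ^\times$ because $X_1(7)$ has cusps).

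It then remains to show no other prime can contribute, i.e.\ that $7 \mid \#E(\QA)_{\tor}$ forces $E(\QA)_{\tor}\simeq\ZZ/7\ZZ$ with nothing else. By Proposition \ref{prop:eligible-p-torsion} the only other primes that could divide $\#E(\QA)_{\tor}$ are $2,3,5,13$. So I would argue that each of these is excluded once $E$ has a rational $7$-isogeny: an elliptic curve over $\QQ$ with a rational $7$-isogeny and a rational $n$-isogeny for $n\in\{2,3,5,13\}$ would have a rational $7n$-isogeny, and by Theorem \ref{thm:n-isogeny-list} a rational $m$-isogeny with $m>19$, $m\notin\{21,25,27,37,43,67,163\}$ is impossible; this kills $7\cdot 2$, $7\cdot 5$, $7\cdot 13$ outright, while $7\cdot 3=21$ survives the isogeny bound and needs the extra input that the modular curve $X_0(21)$ (or rather the relevant fiber-product / genus considerations forcing the $3$-part into $\QA$) has no suitable rational points giving $3$-torsion or full $3$-torsion over $\QA$ simultaneously with the $7$-isogeny — concretely, one checks against Table \ref{tab:TParams} / the $X_1$-type curves that the $j$-invariants of \eqref{eq:j7} never coincide with those admitting the relevant $3$-structure over $\QA$, except possibly for finitely many curves which can be checked directly. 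The cleanest route is probably: $7 \mid \#E(\QA)_{\tor}$ $\Rightarrow$ $E$ has a rational $7$-isogeny $\Rightarrow$ by the isogeny classification $E$ has no rational $2$-, $5$-, or $13$-isogeny, and at most one would need to rule out a simultaneous $3$-isogeny; then invoke Proposition \ref{prop:n-torsion-implies-nth-cyclotomic} and Corollary \ref{cor:genA4-exponent-divides-six} to see that the $2$- and $3$-torsion growth over $\QA$ is itself controlled by isogenies, closing the loop.

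The main obstacle I expect is the $p=3$ case inside this argument: unlike $2,5,13$, the value $7\cdot 3 = 21$ is on the allowed isogeny list, so one cannot dismiss it by Theorem \ref{thm:n-isogeny-list} alone. Handling it requires either a direct analysis of $X_0(21)$ (it has genus $1$, rank $0$ over $\QQ$, with finitely many rational points) together with a check of which of those points give $3$-power torsion over $\QA$ after the $7$-isogeny is imposed, or a cross-reference to the global classification being assembled in the paper (the curves with $\ZZ/21\ZZ$ torsion over $\QA$ are exactly the four $\Qbar$-classes singled out in Theorem \ref{thm:main}, and for those the torsion is $\ZZ/21\ZZ$, not anything containing $7$ together with more $3$-torsion). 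I would therefore state Proposition \ref{prop:case-p-7} as giving the $7$-primary bound $E(\QA)(7)\simeq\ZZ/7\ZZ$ plus the $j$-parametrization \eqref{eq:j7}, and defer the interaction with the prime $3$ (the $\ZZ/21\ZZ$ case) to the section where all $p$-primary bounds are assembled into Theorem \ref{thm:non-CM-torsion-bound} and then combined — that is where one genuinely needs to intersect the parametrizations rather than reason one prime at a time.
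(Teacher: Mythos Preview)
Your argument has a critical error at the outset: you claim $\QQ(\zeta_7)\not\subseteq\QA$, but Lemma~\ref{lem:cyclotomic-genA4} says $\QQ(\zeta_n)\subseteq\QA$ if and only if $n\mid 504=2^3\cdot 3^2\cdot 7$, so in fact $\QQ(\zeta_7)\subseteq\QA$ (indeed, the paper places $p=7$ in the subsection ``Primes with the possibility of full $p$-torsion''). Hence Proposition~\ref{prop:n-torsion-implies-nth-cyclotomic} does \emph{not} rule out $E[7]\subseteq E(\QA)$, and your conclusion that $E(\QA)(7)$ is cyclic is unjustified. The paper must work harder: writing $E(\QA)(7)\simeq\ZZ/7^k\ZZ\oplus\ZZ/7^j\ZZ$ with $k\le 1$ (from $\QQ(\zeta_{49})\not\subseteq\QA$) and $j\le k+1$, it eliminates $k=1$ in two stages. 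If $E$ admits a rational $7$-isogeny, Lemma~\ref{lem:two-distinct-p-isogenies} forces $7\mid[\QQ(E[7]):\QQ]$, so the exponent of $\Gal(\QQ(E[7])/\QQ)$ cannot divide $6$ and $\QQ(E[7])\not\subseteq\QA$. If $E$ has no rational $7$-isogeny but nonetheless $E[7]\subseteq E(\QA)$, then $E$ has a $7$-isogeny locally everywhere but not globally; Sutherland's local--global theorem pins this to $j(E)=2268945/128$, and a direct check shows the two possible images of $\bar\rho_{E,7}$ for that $j$-invariant are not of generalized $A_4$-type. You have no substitute for either ingredient.

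There is a second, independent error in your identification of the modular curve. Your analogy with Proposition~\ref{prop:case-p-13} fails: for $p=13$ the group $(\ZZ/13\ZZ)^\times$ has order $12$, and Corollary~\ref{cor:genA4-exponent-divides-six} genuinely cuts the isogeny character down to the order-$6$ subgroup of squares. For $p=7$ the group $(\ZZ/7\ZZ)^\times$ already has order $6$, so the generalized-$A_4$-type condition imposes nothing further---the character may well have order $3$ or $6$, in which case no quadratic twist of $E$ acquires a rational $7$-torsion point. The correct statement is that $E(\QA)(7)\simeq\ZZ/7\ZZ$ if and only if $E$ admits a rational $7$-isogeny (any such isogeny has kernel defined over a cyclic extension of degree dividing $6$, automatically inside $\QA$), so the parametrization is via $X_0(7)$, not $X_1(7)$; the map \eqref{eq:j7} has degree $8$, matching $X_0(7)$.

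You are, however, right that the displayed conclusion should read $E(\QA)(7)\simeq\ZZ/7\ZZ$ rather than $E(\QA)_{\tor}\simeq\ZZ/7\ZZ$; the later occurrence of $\ZZ/21\ZZ$, $\ZZ/2\ZZ\oplus\ZZ/14\ZZ$, and $\ZZ/4\ZZ\oplus\ZZ/28\ZZ$ confirms this is a typo, and your instinct to defer the interaction with other primes is exactly how the paper proceeds.
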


\begin{proof}
The proof is essentially the same as that of \cite[Lemma 5.13]{Q(3)}, replacing $\QQ(3^\infty)$ by $\QA$. We begin by noting that Lemma \ref{lem:cyclotomic-genA4} and Proposition \ref{prop:n-torsion-implies-nth-cyclotomic} imply that $E[49] \not\subseteq \QA$, so by Lemma \ref{lem:torsion-implies-isogeny} we have $E(\QA) \simeq \ZZ / 7^k\ZZ \oplus \ZZ / 7^j \ZZ$ with $k \leq 1$ and $k \leq j \leq k+1$.

If $j > k$ then Lemma \ref{lem:torsion-implies-isogeny} implies that $E$ admits a rational $7$-isogeny, and then Lemma \ref{lem:two-distinct-p-isogenies} implies that $[\QQ(E[7]):\QQ]$ is divisible by $7$. But then the exponent of $\mathrm{Gal}(\QQ(E[7])/\QQ)$ does not divide $6$, so by Corollary \ref{cor:genA4-exponent-divides-six} we see that $\QQ(E[7]) \not\subseteq \QA$. Therefore $k=0, j=1$, and $E(\QA) \simeq \ZZ / 7 \ZZ$. This also rules out the case $k=1$ and $j=2$ proving the first statement in the theorem.

If $j=k$, then $E$ cannot admit a rational $7$-isogeny; to see this, assume a rational $7$-isogeny does exist, and let $R$ be a non-trivial point in its kernel. Then by Lemma \ref{lem:isogeny-field-cyclic}, $\mathrm{Gal}(\QQ(R)/ \QQ)$ is cyclic of order dividing $6$, hence $\QQ(R) \subseteq \QA$. But then we must have $j=k=1$, so $\QQ(E[7]) \subseteq \QA$, and so by Lemma \ref{lem:two-distinct-p-isogenies} the degree $[\QQ(E[7]):\QQ]$ is divisible by $7$, contradicting the fact that $\QQ(E[7]) \subseteq \QA$. Thus $k=0$ and $j=1$ if and only if $E$ admits a rational $7$-isogeny, and such elliptic curves correspond to points on the modular curve $X_0(7)$, whose $j$-map is given in \cite[Table 3]{L-R13} as \eqref{eq:j7}.

Now suppose $j=k=1$. Then $\QQ(E[7]) \subseteq \QA$, so $\mathrm{Gal}(\QQ(E[7])/\QQ)$ has exponent dividing $6$ by Corollary \ref{cor:genA4-exponent-divides-six}. This implies that for every prime $p \neq 7$ of good reduction for $E$, the elliptic curve $E_p / \FF_p$ obtained by reducing $E$ modulo $p$ has its $7$-torsion defined over an $\FF_p$-extension of degree dividing $6$, and in particular, admits an $\FF_p$-rational $7$-isogeny. Thus $E / \QQ$ admits a rational $7$-isogeny locally everywhere but not globally, and as proved in \cite{Sutherland12}, this implies $j(E)=2268945/128$. It is also proved in {\it loc.~cit.}~ that, conversely, for every elliptic curve $E / \QQ$ with this $j$-invariant the group $\mathrm{Gal}(\QQ(E[7])/\QQ)$ is isomorphic to a subgroup of $\mathrm{GL}_2(\FF_7)$ with surjective determinant map whose image in $\mathrm{PGL}_2(\FF_7)$ is isomorphic to $S_3$; up to conjugacy there are exactly two such groups (labeled 7NS.2.1 and 7NS.3.1 in \cite{Sutherland16}). However, one checks using Magma that neither of these groups is of generalized $A_4$-type, and so the case $j=k=1$ is impossible.
\end{proof}

\subsubsection{The case when $p=3$}

\begin{lemma}\label{lem:full-3-torsion}
Let $E / \QQ$ be an elliptic curve. Then the 3-primary component $E(\QA)(3)$ is nontrivial if and only if $E$ admits a rational $3$-isogeny. Furthermore, if $E[3]$ is defined over $\QA$, then $E$ admits two distinct rational $3$-isogenies.
\end{lemma}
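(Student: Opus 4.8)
The plan is to prove the equivalence in both directions and then refine the harder direction to obtain the ``furthermore'' clause; the real content in each case is a finite classification of the subgroups of $\GL_2(\FF_3)$.

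The implication ``$E$ admits a rational $3$-isogeny $\Rightarrow E(\QA)(3)$ is nontrivial'' is the easy one. If $\langle R\rangle$ is the kernel of such an isogeny then $R$ has order $3$, and by Lemma \ref{lem:isogeny-field-cyclic} the field $\QQ(R)$ is cyclic over $\QQ$ of degree dividing $3-1=2$; so $\QQ(R)$ is $\QQ$ or a quadratic field, hence of generalized $A_4$-type (Lemma \ref{lem:quadratics-are-genA4}), and therefore $R\in E(\QA)$.

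For the converse, put $H:=E(\QA)(3)$. Since $\QA/\QQ$ is Galois (Lemma \ref{lem:finite_root_of_unity_Ginfty}), $H$ is a nontrivial $\GQ$-stable subgroup of $E(\overline{\QQ})(3)$, so $H[3]:=H\cap E[3]$ is either $\cong\ZZ/3\ZZ$ or all of $E[3]$. In the first case $H[3]$ is the unique subgroup of $H$ of order $3$; it is $\GQ$-stable, hence is the kernel of a rational $3$-isogeny. In the second case $E[3]\subseteq E(\QA)$, so $G:=\overline{\rho}_{E,3}(\GQ)=\Gal(\QQ(E[3])/\QQ)\le\GL_2(\FF_3)$ is of generalized $A_4$-type; since a $G$-stable line of $\FF_3^2$ is exactly the kernel of a rational $3$-isogeny, it then suffices to prove the group-theoretic claim: \emph{every subgroup of $\GL_2(\FF_3)$ of generalized $A_4$-type stabilizes a line of $\FF_3^2$.} To prove this, write $G\simeq(\ZZ/2\ZZ)^n\rtimes_\rho(\ZZ/3\ZZ)^m$ via Proposition \ref{prop:weak_Dpq}, with the normal factor $N:=(\ZZ/2\ZZ)^n$ the (elementary abelian) Sylow $2$-subgroup of $G$; a Sylow $3$-subgroup of $\GL_2(\FF_3)$ has order $3$, so $m\le1$. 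The characteristic $3$ enters through two facts: an element of order $2$ has squarefree minimal polynomial, hence is diagonalizable over $\FF_3$; and, since $x^2+x+1=(x-1)^2$ in $\FF_3[x]$, an element $u$ of order $3$ satisfies $(u-I)^3=0$ and is therefore unipotent with exactly one fixed line $L$. Now $N$, a commuting set of diagonalizable matrices, is simultaneously diagonalizable and so stabilizes a line; if $m=0$ this finishes the claim. If $m=1$, a generator $u$ of a complement to $N$ normalizes $N$ and hence permutes the set of $N$-stable lines; since $u$ has odd order and a unique fixed line $L$, while $N$ is either scalar or has exactly two stable lines, one is forced to conclude $N\subseteq\{\pm I\}$, and then $G$ stabilizes $L$. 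This proves the claim and with it the equivalence.

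For the ``furthermore'' clause, assume $E[3]\subseteq E(\QA)$, so $G$ is as above. The map $\det\colon G\to\FF_3^\times$ is surjective, because $\det\circ\overline{\rho}_{E,3}$ is the mod-$3$ cyclotomic character and $[\QQ(\zeta_3):\QQ]=2$. This excludes $m=1$: there $G$ is $\langle u\rangle$ or $\langle-I\rangle\times\langle u\rangle$, on which $\det$ is trivial. So $m=0$, and an elementary abelian $2$-group with surjective determinant contains an element $g$ with $\det g=-1$, necessarily with the two distinct eigenvalues $1$ and $-1$. Since the product of any two such ``reflections'' has determinant $1$ and order dividing $2$, it equals $\pm I$; hence $G=\langle g\rangle$ or $G=\langle-I,g\rangle$, and in either case $G$ stabilizes exactly the two distinct lines $\ker(g-I)$ and $\ker(g+I)$, the kernels of two distinct rational $3$-isogenies. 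I expect the finite case analysis over $\GL_2(\FF_3)$ to be the only delicate point; alternatively it can be dispatched by enumerating the subgroups of $\GL_2(\FF_3)$ in Magma, selecting those of generalized $A_4$-type via Proposition \ref{prop:A4_Classification} (the condition $\lambda_2(\lambda_3(G))=\{e\}$), and reading off the stabilized lines.
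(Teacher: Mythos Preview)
Your proof is correct and complete. It follows essentially the same outline as the paper's, but with two differences worth noting.

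First, you explicitly prove the forward implication (rational $3$-isogeny $\Rightarrow$ nontrivial $3$-primary part) via Lemma~\ref{lem:isogeny-field-cyclic}; the paper's proof leaves this direction implicit.

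Second, and more substantially, where the paper simply says ``enumerating these groups'' (in Magma, imposing the three conditions of generalized $A_4$-type, surjective determinant, and the presence of a trace-$0$, determinant-$(-1)$ element), you carry out the classification by hand. You use Proposition~\ref{prop:weak_Dpq} to write $G\simeq(\ZZ/2\ZZ)^n\rtimes(\ZZ/3\ZZ)^m$, bound $m\le1$ via the Sylow $3$-subgroup of $\GL_2(\FF_3)$, and then argue with eigenspaces: the $2$-part is simultaneously diagonalizable, while any order-$3$ element is unipotent with a unique fixed line, and a short orbit argument forces the $2$-part to be scalar when $m=1$. For the ``furthermore'' clause you observe that surjectivity of the determinant alone already kills the $m=1$ case (since $\det(-I)=\det(u)=1$), and then pin down $G\subseteq\langle -I,g\rangle$ for a single reflection $g$. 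This is a clean conceptual argument and in fact uses one hypothesis fewer than the paper: you never invoke the complex-conjugation constraint (trace $0$, determinant $-1$), because surjective determinant already suffices. The trade-off is that the paper's Magma enumeration is shorter to state and scales to the larger $\GL_2(\ZZ/9\ZZ)$ computations needed later, whereas your argument is self-contained and explains \emph{why} the image lands in the split Cartan.
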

\begin{proof}
Suppose that $E(\QA)(3)$ is nontrivial. If $\QQ(E[3]) \not \subseteq \QA$, then by Lemma \ref{lem:torsion-implies-isogeny} $E(\QA)(3)$ can only be nontrivial if $E$ admits a rational $3$-isogeny. On the other hand, suppose $E(\QA)[3]=E[3]$, or equivalently $\QQ(E[3]) \subseteq \QA$. Since $\mathrm{Gal}(\QQ(E[3])/\QQ)\simeq \Im \bar{\rho}_{E,3}$, this implies $\Im \bar{\rho}_{E,3} \subseteq \mathrm{GL}_2(\ZZ / 3 \ZZ)$ must
\begin{enumerate}[(i)]
\item be of generalized $A_4$-type,
\item have surjective determinant map, and
\item contain an element of trace $0$ and determinant $-1$.
\end{enumerate}
The first condition comes from the assumption that $E(\QA)[3]=E[3]$, while conditions (ii) and (iii) are always satisfied by the image of a Galois representation coming from an elliptic curve. Enumerating these groups, one finds that the addition of conditions (ii) and (iii) forces $G$ to be conjugate to a subgroup of the split Cartan subgroup, proving the last statement of the lemma.
\end{proof}

\begin{lemma}\label{lem:full-9-torsion}
Let $E / \QQ$ be an elliptic curve. Then $E[9] \not\subseteq E(\QA)$.
\end{lemma}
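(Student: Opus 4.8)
The plan is to show that $\QQ(E[9]) \subseteq \QA$ leads to a contradiction by analyzing the image of $\bar\rho_{E,9}$ in $\GL_2(\ZZ/9\ZZ)$. First I would suppose for contradiction that $E[9] \subseteq E(\QA)$, so that $\QQ(E[9]) \subseteq \QA$. Then $G := \Im \bar\rho_{E,9} \simeq \Gal(\QQ(E[9])/\QQ)$ must satisfy three constraints: (i) $G$ is of generalized $A_4$-type, (ii) the determinant map $G \to (\ZZ/9\ZZ)^\times$ is surjective (since $\QQ(\zeta_9) \subseteq \QQ(E[9]) \subseteq \QA$, which is consistent with Lemma \ref{lem:cyclotomic-genA4}), and (iii) $G$ contains an element of trace $0$ and determinant $-1$ (complex conjugation). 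By Corollary \ref{cor:genA4-exponent-divides-six}, condition (i) forces the exponent of $G$ to divide $6$; in particular $G$ has exponent dividing $6$ as a subgroup of $\GL_2(\ZZ/9\ZZ)$.

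The key observation is that since $E[3] \subseteq E[9]$, reduction mod $3$ gives a surjection $G \twoheadrightarrow \Im\bar\rho_{E,3} =: \bar G \subseteq \GL_2(\ZZ/3\ZZ)$, and by Lemma \ref{lem:full-3-torsion} (applied to the fact that $\QQ(E[3]) \subseteq \QQ(E[9]) \subseteq \QA$), $E$ admits two distinct rational $3$-isogenies, so $\bar G$ is conjugate to a subgroup of the split Cartan subgroup of $\GL_2(\ZZ/3\ZZ)$, i.e.\ the diagonal matrices. This means we may assume $G$ is contained in the preimage of the diagonal matrices under $\GL_2(\ZZ/9\ZZ) \to \GL_2(\ZZ/3\ZZ)$. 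Next I would combine this with condition (iii): the element of determinant $-1$ and trace $0$ reduces mod $3$ to a diagonal matrix with entries $\{1,-1\}$, so $E$ in fact has two independent rational $3$-isogenies with the full $3$-torsion rational. The curves with $\QQ(E[3])\subseteq\QQ$-abelian and split Cartan mod-$3$ image are parametrized by a known modular curve (the curve $X_{\mathrm{sp}}(3)$, or equivalently the curve parametrizing pairs of $3$-isogenies with rational points), and one needs the $3$-adic image information for such curves at level $9$.

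The main obstacle — and the step I would expect to require the real work — is ruling out that the mod-$9$ image can simultaneously lie in this split-Cartan-mod-$3$ preimage, have exponent dividing $6$, and have surjective determinant. I would attack this by a direct group-theoretic enumeration: list all subgroups $G$ of $\GL_2(\ZZ/9\ZZ)$ of exponent dividing $6$ with surjective determinant whose mod-$3$ reduction lies in the split Cartan and contains an element of trace $0$, determinant $-1$, and check (using Magma, as elsewhere in the paper, cf.\ the treatment of $p=7$) that none of them is of generalized $A_4$-type; equivalently, that each such $G$ has $\lambda_2(\lambda_3(G))$ nontrivial by Proposition \ref{prop:A4_Classification}. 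A cleaner alternative would be to note that full $9$-torsion over $\QA$ would force $\QQ(\zeta_9)\subseteq\QQ(E[9])$ and that the relevant Galois group contains $\SL_2$-type elements of order incompatible with exponent $6$: any $G$ mapping onto the full determinant $(\ZZ/9\ZZ)^\times$ (which has order $6$) while containing the scalar-free part forces an element of order $9$ or an element whose commutator/power structure violates $\lambda_2(\lambda_3(G)) = \{e\}$. I would make the bound on which $j$-invariants can occur explicit first (using Theorem \ref{thm:n-isogeny-list}, since $E$ would need a rational $9$-isogeny at minimum, hence finitely many relevant modular curves $X_0(9)$, $X_0(27)$, etc.), then finish with the Magma check that is already the paper's established technique.
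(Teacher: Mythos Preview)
Your setup is essentially the same as the paper's: impose the three constraints on $G=\Im\bar\rho_{E,9}$ and enumerate. The genuine gap is in what you expect the enumeration to return. You write that you will ``check \ldots\ that none of them is of generalized $A_4$-type,'' but this is false: the enumeration does \emph{not} come up empty. There is (up to conjugacy) exactly one maximal subgroup $H\subseteq\GL_2(\ZZ/9\ZZ)$ meeting all of your conditions, namely the full diagonal (split Cartan) subgroup, which is isomorphic to $(\ZZ/9\ZZ)^\times\times(\ZZ/9\ZZ)^\times\simeq(\ZZ/6\ZZ)^2$. This group is abelian of exponent $6$, and one checks directly that $\lambda_2(\lambda_3(H))$ is trivial, so it \emph{is} of generalized $A_4$-type. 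Your alternative heuristic (``forces an element of order $9$'') fails for the same reason: $H$ has no elements of order $9$.

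The missing idea is what to do once the enumeration hands you $H$. Because $H$ lies in the split Cartan mod $9$ (not merely mod $3$), the curve $E$ admits two \emph{independent rational $9$-isogenies}, not just two $3$-isogenies. Composing through $E$ gives a rational cyclic $81$-isogeny between the two image curves, which is impossible by Theorem~\ref{thm:n-isogeny-list}. Your closing remark that ``$E$ would need a rational $9$-isogeny at minimum'' and an appeal to $X_0(9)$, $X_0(27)$ does not salvage this: $X_0(9)$ has genus $0$ and infinitely many rational points, so a single $9$-isogeny gives no contradiction. It is precisely the pair of independent $9$-isogenies, forced by the split Cartan structure at level $9$, that finishes the proof.
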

\begin{proof}
Suppose that $E(\QA)[9]=E[9]$; then the subgroup  $G=\Im \bar{\rho}_{E,9} \subseteq \mathrm{GL}_2(\ZZ /9 \ZZ)$ must satisfy the following conditions:
\begin{enumerate}[(i)]
\item $G$ has a surjective determinant map and an element with trace $0$ and determinant $-1$, and
\item $G$ is of generalized $A_4$-type.
\end{enumerate}
Enumerating these groups, one finds that, up to conjugation, there is exactly one maximal subgroup $H \subseteq \mathrm{GL}_2(\ZZ / 9 \ZZ)$ with this property, and it has the following generating set:
\[
H = \left\langle  \left( \begin{matrix} 1 & 0 \\ 0 & 8 \end{matrix} \right), \left( \begin{matrix} 1 & 0 \\ 0 & 4 \end{matrix} \right), \left( \begin{matrix} 8 & 0 \\ 0 & 8 \end{matrix} \right), \left( \begin{matrix} 7 & 0 \\ 0 & 4 \end{matrix} \right) \right\rangle.
\]
Thus $H$ is contained in the split Cartan subgroup, and so $E$ has two distinct rational $9$-isogenies. We thus have elliptic curves $E_1$ and $E_2$, both defined over $\QQ$, and the isogeny graph of $E$ contains a subgraph of the form
\[
E_1 \stackrel{9}{\longleftrightarrow} E \stackrel{9}{\longleftrightarrow} E_2
\]
where the number over the arrow indicates the degree of the isogeny. But this implies the existence of a rational $81$-isogeny between $E_1$ and $E_2$, and this is impossible by Theorem \ref{thm:n-isogeny-list}.
\end{proof}

\begin{lemma}\label{lem:3x9}
Suppose that $E / \QQ$ is an elliptic curve such that $3$ divides $\# E(\QA)_\tor$. Then $E(\QA)(3)$ is isomorphic to a subgroup of
\[
\ZZ / 3 \ZZ \oplus \ZZ / 9 \ZZ.
\]
\end{lemma}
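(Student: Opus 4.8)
The plan is to reduce the statement to the classification of rational cyclic $27$-isogenies, treating separately the two possible ``shapes'' of $E(\QA)(3)$. Write $E(\QA)(3)\simeq \ZZ/3^k\ZZ\oplus\ZZ/3^j\ZZ$ with $k\le j$. By Lemma~\ref{lem:full-9-torsion} we have $E[9]\not\subseteq E(\QA)$, so $k\le 1$, and it remains to show $j\le 2$. Suppose instead that $j\ge 3$, so $E$ has a point $P\in E(\QA)$ of order exactly $27$. Since $\QA/\QQ$ is Galois (Lemma~\ref{lem:finite_root_of_unity_Ginfty}), the group $E(\QA)[27]$ is $G_\QQ$-stable and, by the bound on $k$, is isomorphic to $\ZZ/3^k\ZZ\oplus\ZZ/27\ZZ$ with $k\in\{0,1\}$. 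On this group, multiplication by $9$ (resp.\ by $3$) kills the $\ZZ/3^k\ZZ$-summand, hence carries every point of order $27$ into one fixed subgroup of order $3$ (resp.\ $9$); applied to $\sigma(P)$ for $\sigma\in G_\QQ$ this shows that $\langle 9P\rangle$ and $\langle 3P\rangle$ are $G_\QQ$-stable, so $E$ admits rational cyclic isogenies of degrees $3$ and $9$, and by Lemma~\ref{lem:isogeny-field-cyclic} the group $\Gal(\QQ(3P)/\QQ)$ embeds into $(\ZZ/9\ZZ)^\times\simeq\ZZ/6\ZZ$. When $k=0$ the group $E(\QA)(3)$ is cyclic, so $\sigma(P)\in\langle P\rangle$ for all $\sigma$ and $\langle P\rangle$ is itself $G_\QQ$-stable; thus in that case $E$ already admits a rational cyclic $27$-isogeny.

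To finish the case $k=0$ I would invoke the classification of curves with a rational cyclic $27$-isogeny: $X_0(27)$ has genus one and Mordell--Weil rank zero over $\QQ$, so $X_0(27)(\QQ)$ is finite, and one checks that its only non-cuspidal rational point corresponds to $j=-12288000$. For a curve $E/\QQ$ with this $j$-invariant (CM by the order of discriminant $-27$; note $j\neq 0,1728$, so $E(\QA)_{\tor}$ depends only on $j$ by Proposition~\ref{prop:j-invariant}) one computes from the explicit image of Galois in the normalizer of a Cartan subgroup that $E(\QA)(3)\simeq\ZZ/3\ZZ\oplus\ZZ/9\ZZ$; in particular $E(\QA)(3)$ has no point of order $27$, contradicting $j\ge 3$.

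The remaining case $k=1$ is the main obstacle. Here $E[3]\subseteq E(\QA)$, so by Lemma~\ref{lem:full-3-torsion} the image $\Im\bar\rho_{E,3}$ lies in a split Cartan subgroup; but Lemma~\ref{lem:torsion-implies-isogeny} only produces a rational cyclic $9$-isogeny, which is far too weak, so one is forced to work at level $27$ in the spirit of the proof of Lemma~\ref{lem:full-9-torsion}. I would enumerate, up to conjugacy, the subgroups $H\subseteq\GL_2(\ZZ/27\ZZ)$ that can occur as $\Im\bar\rho_{E,27}$ for an elliptic curve over $\QQ$ (surjective determinant, an element of trace $0$ and determinant $-1$, reduction modulo $3$ inside a split Cartan). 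For each such $H$, the image of $\Gal(\overline{\QQ}/\QA)$ in $H$ is the smallest normal subgroup $N\trianglelefteq H$ with $H/N$ of generalized $A_4$-type; this $N$ exists and is computable since, by Proposition~\ref{prop:A4_Classification}, generalized $A_4$-type is detected by the condition $\lambda_2(\lambda_3(\,\cdot\,))=\{e\}$, and, by Lemma~\ref{lem:easy_list}, generalized $A_4$-type is stable under subdirect products (so the intersection of two admissible $N$'s is again admissible). One then computes $E(\QA)[27]=E[27]^{N}$ and checks that it contains a point of order $27$ only when $H$ lies in a split Cartan of $\GL_2(\ZZ/27\ZZ)$; in that exceptional situation $E$ has a rational cyclic $27$-isogeny (with kernel a rational line in $E[27]$), hence $j(E)=-12288000$, and we conclude as in the case $k=0$. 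This finite level-$27$ computation is the crux of the argument and is carried out with Magma \cite{Magma} (see \cite{A4Code}).
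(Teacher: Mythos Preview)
Your overall strategy is sound: reduce to $k\le 1$ via Lemma~\ref{lem:full-9-torsion}, then rule out a point of order $27$. The case $k=0$ you handle correctly by directly producing a rational $27$-isogeny and hence $j(E)=-12288000$; one small slip is that for this $j$-invariant the paper's own CM computation (Table~\ref{table:CM-not-special}) gives $E(\QA)(3)\simeq\ZZ/9\ZZ$, not $\ZZ/3\ZZ\oplus\ZZ/9\ZZ$, but either way there is no point of order $27$ and the contradiction stands.

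Your treatment of $k=1$, however, is correct in principle but far more laborious than necessary. You say that Lemma~\ref{lem:torsion-implies-isogeny} ``only produces a rational cyclic $9$-isogeny, which is far too weak,'' and then propose a full subgroup enumeration inside $\GL_2(\ZZ/27\ZZ)$. The paper avoids this entirely by combining two pieces of information you already have at hand. Since $k=1$ means $E[3]\subseteq E(\QA)$, Lemma~\ref{lem:full-3-torsion} shows that $E$ admits two \emph{independent} rational $3$-isogenies; one of these has kernel not contained in the kernel of the rational $9$-isogeny supplied by Lemma~\ref{lem:torsion-implies-isogeny}. Passing to the image of $E$ under that independent $3$-isogeny yields a curve $E'/\QQ$ on which the dual $3$-isogeny composes with the $9$-isogeny to give a rational cyclic $27$-isogeny. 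Since $X_0(27)$ has genus $1$ and rank $0$ over $\QQ$, there are only finitely many $\Qbar$-isomorphism classes of such $E'$, and hence finitely many candidates for $E$ itself; one checks this short explicit list and finds that none acquire a point of order $27$ over $\QA$. This replaces your level-$27$ subgroup search with a direct inspection of a handful of curves and makes the proof considerably shorter.
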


\begin{proof}
By Lemma \ref{lem:full-3-torsion}, if $E(\QA)(3)$ contains a subgroup isomorphic to $\ZZ / 3 \ZZ \oplus \ZZ / 3 \ZZ$, then $E$ must admit two distinct rational $3$-isogenies. If $E(\QA)(3)$ contained a subgroup isomorphic to $\ZZ / 3 \ZZ \oplus \ZZ / 27 \ZZ$, then by Lemmas \ref{lem:torsion-implies-isogeny} and \ref{lem:full-3-torsion} we see that $E$ would admit a $3$- and a $9$-isogeny where the kernel of the 3-isogeny is not contained in the kernel of the 9-isogeny (by abuse of terminology we call such isogenies \textit{independent}). So in fact $E$ would be isogenous to a curve which admits a rational $27$-isogeny. Since $X_0(27)$ is a rank 0 genus 1 curve, there are only finitely many $\Qbar$-isomorphism classes of elliptic curves that are 3-isogenous to an elliptic curve with a 27-isogeny. Checking the field of definition of these points of order 27, we see that none of them have a point of order 27 defined over $\QA$. Thus, in light of Lemma \ref{lem:full-9-torsion}, $E(\QA)(3)$ is indeed a subgroup of $\ZZ / 3 \ZZ \oplus \ZZ / 9 \ZZ.$
\end{proof}

The next lemma shows that this bound is sharp. See also Proposition \ref{prop:CMj0}.

\begin{lemma}\label{lem:z3-z9-example} If $E(\QA)(3) \simeq \ZZ / 3 \ZZ \oplus \ZZ / 9 \ZZ$, then $E$ is isogenous to the elliptic curve with Cremona label \href{http://www.lmfdb.org/EllipticCurve/Q/54b3}{\tt 27a1}. \end{lemma}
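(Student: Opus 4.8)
The plan is to convert the hypothesis into structural information about the isogeny graph of $E$ (equivalently, about the mod-$9$ image $\bar\rho_{E,9}$), and thereby reduce the statement to a finite check of the same flavour as the one carried out in the proof of Lemma~\ref{lem:3x9}: one shows $E$ must be isogenous to a curve admitting a rational cyclic $27$-isogeny, of which there are only finitely many $\Qbar$-isomorphism classes (since $X_0(27)$ has rank $0$ and genus $1$), and then one inspects the list.

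First I would unpack the hypothesis. Since $E(\QA)(3)\simeq\ZZ/3\ZZ\oplus\ZZ/9\ZZ$ contains $E[3]$, we have $E[3]\subseteq E(\QA)$, so by Lemma~\ref{lem:full-3-torsion} the curve $E$ carries two independent rational $3$-isogenies $\alpha\colon E\to A$ and $\beta\colon E\to B$; equivalently, the composite $\beta\circ\widehat{\alpha}\colon A\to B$ is cyclic of degree $9$, so $E$ is the midpoint of a rational cyclic $9$-isogeny $A\to B$, and $\bar\rho_{E,3}$ has image in a split Cartan subgroup. Also $E$ has a point $P$ of exact order $9$ defined over $\QA$. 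Because every element of order $9$ in $\ZZ/3\ZZ\oplus\ZZ/9\ZZ$ generates a cyclic subgroup with the same $3$-torsion, and $E(\QA)(3)$ is $\Gal(\Qbar/\QQ)$-stable, the line $\langle 3P\rangle$ is $\Gal(\Qbar/\QQ)$-stable; lying in $E[3]$, it is $\ker\alpha$ or $\ker\beta$, say $\langle 3P\rangle=\ker\beta$.

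Next I would manufacture a rational cyclic $27$-isogeny. Let $\Phi$ be the $\Gal(\Qbar/\QQ)$-stable subgroup generated by the orbit of $P$; then $\langle P\rangle\subseteq\Phi\subseteq E(\QA)(3)$, so $\Phi$ is either $\langle P\rangle$ or all of $E(\QA)(3)$. In either case $\beta(\Phi)$ is a $\Gal(\Qbar/\QQ)$-stable subgroup of $B$ contained in $B(\QA)$, and a short computation with $\ker\beta=\langle 3P\rangle$ shows that $\beta(\Phi)$ furnishes $B$ with a rational $3$-isogeny whose kernel differs from that of $\widehat{\beta}\colon B\to E$; hence $B$ also has two independent rational $3$-isogenies, and the $3$-isogeny graph of $E$ contains a chain $A-E-B-C$ with $C\neq E$. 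Provided $A,E,B,C$ are pairwise nonisomorphic — which fails only in the CM case, to be treated separately (cf.\ Proposition~\ref{prop:CMj0}) — the composite $A\to C$ is a rational cyclic $27$-isogeny. The delicate point here is precisely this non-degeneration: verifying that the chain genuinely has length $4$ and does not close up, i.e.\ that $\bar\rho_{E,9}$ cannot be too small; this is the step I expect to be the main obstacle, and it is handled most cleanly on the group-theoretic side.

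Finally I would finitize and conclude. By Theorem~\ref{thm:n-isogeny-list} and the classification of isogeny graphs of elliptic curves over $\QQ$, only finitely many $\Qbar$-isomorphism classes of curves over $\QQ$ are isogenous to one with a rational $27$-isogeny (exactly the input used in the proof of Lemma~\ref{lem:3x9}); together with the finitely many CM classes, this leaves a finite list of candidates. For each one checks directly — computing the fields of definition of the relevant points of order $3$ and $9$, applying Proposition~\ref{prop:A4_Classification}, and using Lemmas~\ref{lem:3x9} and \ref{lem:full-9-torsion} to bound $E(\QA)(3)$ from above — whether $E(\QA)(3)\simeq\ZZ/3\ZZ\oplus\ZZ/9\ZZ$; this holds only for curves in the isogeny class of $\texttt{27a1}$, which contains the $j=0$ curve (cf.\ Proposition~\ref{prop:CMj0}). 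In practice (and in the accompanying Magma code) one would short-circuit the middle two steps by directly enumerating the subgroups of $\GL_2(\ZZ/9\ZZ)$ compatible with the three conditions ``$E[3]\subseteq E(\QA)$'', ``$E$ has a $9$-torsion point over $\QA$'', and ``$\bar\rho_{E,9}$ has surjective determinant and contains an element of trace $0$ and determinant $-1$'', and computing the rational points on the finitely many associated modular curves; in either approach the crux is making the relevant list genuinely finite and then verifying that no curve outside the class $\texttt{27a}$ survives.
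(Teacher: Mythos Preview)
Your proposal is correct and lands at the same endgame as the paper --- show that $E$ lies in the isogeny class of a curve with a rational cyclic $27$-isogeny, invoke the finiteness of $X_0(27)(\QQ)$, and inspect the resulting short list --- but the route you take to that key step differs from the paper's.

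The paper does not attempt your geometric chain argument $A-E-B-C$ at all. Instead it goes straight to the group-theoretic enumeration you relegate to your final ``in practice'' remark: it lists the subgroups $G\subseteq\GL_2(\ZZ/9\ZZ)$ which (i) have surjective determinant and contain an element of trace $0$, determinant $-1$, and (ii) contain a normal subgroup $N$ acting trivially on a submodule isomorphic to $\ZZ/3\ZZ\oplus\ZZ/9\ZZ$ with $G/N$ of generalized $A_4$-type. There turns out to be a unique maximal such $H$, given explicitly by five generators; one reads off from its shape (upper-triangular, and diagonal modulo $3$) that $E$ carries independent rational $3$- and $9$-isogenies, whence $E$ is $\QQ$-isogenous to a curve $E'$ with a rational $27$-isogeny. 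Since such $E'$ have $j(E')=-12288000$, a short computation yields $j(E)=0$, and the Magma check for \texttt{27a1} finishes. This sidesteps exactly the ``non-degeneration'' issue you flag: your dichotomy on $\Phi=\langle P\rangle$ versus $\Phi=E(\QA)(3)$ is fine, but in the second branch $\beta(\Phi)$ can be all of $B[3]$ rather than a single line, so one must loop back through Lemma~\ref{lem:full-3-torsion} for $B$, and then still argue that the chain does not fold up --- precisely the step you identify as the main obstacle. The paper's enumeration buys you that step for free, at the cost of a short Magma search; your isogeny-graph argument is more conceptual but needs extra care to close. Either way the crux is the same finiteness reduction via $X_0(27)$.
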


  \begin{proof}
    Suppose that $E(\QA)(3) \simeq \ZZ / 3 \ZZ \oplus \ZZ / 9 \ZZ$; then the subgroup  $G=\Im \bar{\rho}_{E,9} \subseteq \mathrm{GL}_2(\ZZ /9 \ZZ)$ must satisfy the following conditions:
    \begin{enumerate}[(i)]
    \item $G$ has a surjective determinant map and an element with trace $0$ and determinant $-1$, and
    \item $G$ contains a normal subgroup $N$ that acts trivially on a $\ZZ / 9 \ZZ$-submodule of $\ZZ / 9 \ZZ \oplus \ZZ / 9 \ZZ$ isomorphic to $\ZZ / 3 \ZZ \oplus \ZZ / 9 \ZZ$ for which $G / N$ is of generalized $A_4$-type.
    \end{enumerate}
    Enumerating these groups, one finds that, up to conjugation, there is exactly one maximal subgroup $H \subseteq \mathrm{GL}_2(\ZZ / 9 \ZZ)$ with this property, and it has the following generating set:
    \[
    H = \left\langle  \left( \begin{matrix} 1 & 0 \\ 0 & 8 \end{matrix} \right), \left( \begin{matrix} 1 & 0 \\ 0 & 4 \end{matrix} \right), \left( \begin{matrix} 8 & 0 \\ 0 & 8 \end{matrix} \right), \left( \begin{matrix} 7 & 0 \\ 0 & 4 \end{matrix}\right), \left( \begin{matrix} 4 & 3 \\ 0 & 4 \end{matrix} \right) \right\rangle.
    \]
  Thus $E$ admits independent $3$- and $9$-isogenies, so in fact it is isogenous to a curve $E'$ which admits a $27$-isogeny. From \cite[Table 3]{L-R13} we see $j(E') = -12288000$, and a simple computation shows that $j(E) = 0$. Magma confirms that if $E$ is the elliptic curve with Cremona label \href{http://www.lmfdb.org/EllipticCurve/Q/54b3}{\tt 27a1}, then $E(\QA)(3) \simeq \ZZ / 3 \ZZ \oplus \ZZ / 9 \ZZ$.
  \end{proof}

\begin{remark}\label{rmk:z9-example}
If $E$ is the curve with Cremona label \href{http://www.lmfdb.org/EllipticCurve/Q/54b3}{\tt 54b3}, then $E(\QA)_\tor \simeq \ZZ / 9 \ZZ$.
\end{remark}

\subsubsection{The case when $p=2$}

The last case we must is when $p=2$. In this section alone, in order to use the results of \cite{RZB}, we will make the assumption that $E$ does not have complex multiplication; since we will deal with CM elliptic curves in a separate section (see Section \ref{sec:CM}), this assumption will not ultimately impede our progress.

Recall that by Lemma \ref{lem:cyclotomic-genA4} and Proposition \ref{prop:n-torsion-implies-nth-cyclotomic}, the largest full $2$-power torsion that can be contained in $E(\QA)$ is $E[8]$. Thus, by Theorem \ref{thm:n-isogeny-list} and Lemma \ref{lem:torsion-implies-isogeny}, $E(\QA)[2]$ may be as big as $\ZZ / 8 \ZZ \oplus \ZZ / 128 \ZZ$. To determine the torsion structures that actually appear, we make use of the results of Rouse and Zureick-Brown \cite{RZB}. Using Magma, we are able to search through their data and classify all $2$-torsion structures that can occur over $\QA$, yielding the following proposition.

\begin{prop}\label{prop:2-tors}
Let $E/\QQ$ be an elliptic curve such that $2$ divides $\# E(\QA)_{\mathrm{tors}}$. Then $E(\QA)(2)$ is isomorphic to one of the following 8 groups:

\begin{center}
$\{\mathcal{O}\},\ \  \ZZ/2\ZZ\oplus\ZZ/2\ZZ,\ \  \ZZ/2\ZZ\oplus\ZZ/4\ZZ,\ \  \ZZ/2\ZZ\oplus\ZZ/8\ZZ,\ \  \ZZ/2\ZZ\oplus\ZZ/16\ZZ,$\\
$ \ZZ/4\ZZ\oplus\ZZ/4\ZZ,\ \  \ZZ/4\ZZ\oplus\ZZ/8\ZZ,\ \  \ZZ/4\ZZ\oplus\ZZ/16\ZZ, \ \  \ZZ/8\ZZ\oplus\ZZ/8\ZZ$
\end{center}

\begin{corollary}\label{cor:RZBtable}
Let $E/\QQ$ be an elliptic curve. Using the notation in \cite{RZB}, for each $T$ listed in Proposition \ref{prop:2-tors} the group $E(\QA)$ contains a subgroup isomorphic to $T$ if and only if $E$ corresponds to a rational point on the modular curve given in Table \ref{table:2-primary}.

\end{corollary}
\begin{center}
\begin{table}
\begin{tabular}{|c|c||c|c|}\hline
T  & \hbox{\rm Modular Curve}  & T  & \hbox{\rm Modular Curve}    \\\hline\hline
$\ZZ/2\ZZ\oplus\ZZ/2\ZZ$  &\href{http://users.wfu.edu/rouseja/2adic/X6.html}{$X_6$} &
  $\ZZ/4\ZZ\oplus\ZZ/4\ZZ$  & \href{http://users.wfu.edu/rouseja/2adic/X2.html}{$X_{2}$}, \href{http://users.wfu.edu/rouseja/2adic/X27.html}{$X_{27}$}  \\\hline
$\ZZ/2\ZZ\oplus\ZZ/4\ZZ$  & \href{http://users.wfu.edu/rouseja/2adic/X13.html}{$X_{13}$} &
$\ZZ/4\ZZ\oplus\ZZ/8\ZZ$  & \href{http://users.wfu.edu/rouseja/2adic/X25.html}{$X_{25}$}, \href{http://users.wfu.edu/rouseja/2adic/X92.html}{$X_{92}$} \\\hline
$\ZZ/2\ZZ\oplus\ZZ/8\ZZ$  & \href{http://users.wfu.edu/rouseja/2adic/X36.html}{$X_{36}$} &
 $\ZZ / 4 \ZZ \oplus \ZZ / 16 \ZZ$ & \href{http://users.wfu.edu/rouseja/2adic/X193.html}{$X_{193}$} \\\hline
$\ZZ/2\ZZ\oplus\ZZ/16\ZZ$  & \href{http://users.wfu.edu/rouseja/2adic/X235.html}{$X_{235}$} &
 $\ZZ/8\ZZ\oplus\ZZ/8\ZZ$ & \href{http://users.wfu.edu/rouseja/2adic/X58.html}{$X_{58}$} \\\hline
\end{tabular}
\caption{Parameterizations of the possible nontrivial $2$-primary components}
\label{table:2-primary}
\end{table}
\end{center}

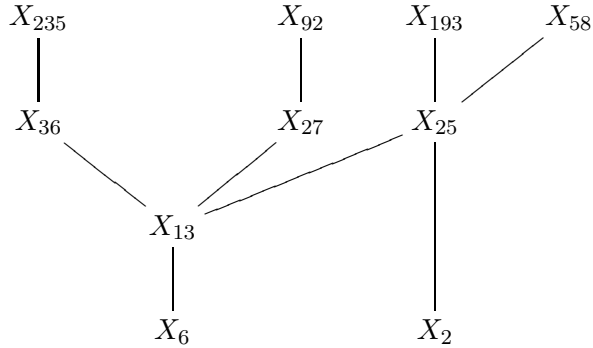
\begin{figure}
$$\xymatrix{
X_{235} \ar@{-}[d]     &   & X_{92}\ar@{-}[d]  & X_{193}\ar@{-}[d]  & X_{58}\ar@{-}[ld]  \\
  X_{36}\ar@{-}[rd] &   &X_{27}\ar@{-}[ld]   & X_{25}\ar@{-}[dd]\ar@{-}[lld]  &   \\
   &  X_{13}\ar@{-}[d] &   &   &   \\
   &  X_6 &   & X_2  &   \\
}$$
\caption{Covering relationships between the curves in Corollary \ref{cor:RZBtable}}\label{fig:RZB}
\end{figure}

\end{prop}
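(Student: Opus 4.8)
The plan is to reduce the statement to a finite search over the classification of $2$-adic Galois images for non-CM elliptic curves over $\QQ$ due to Rouse and Zureick-Brown \cite{RZB}; recall that $E$ is assumed non-CM throughout this subsection. I would first record the a priori bound already observed above: by Proposition \ref{prop:n-torsion-implies-nth-cyclotomic} and Lemma \ref{lem:cyclotomic-genA4} the largest full $2$-power torsion contained in $E(\QA)$ is $E[8]$, and then Lemma \ref{lem:torsion-implies-isogeny} together with Theorem \ref{thm:n-isogeny-list} forces $E(\QA)(2)$ to be a subgroup of $\ZZ/8\ZZ\oplus\ZZ/128\ZZ$, so that only finitely many candidate groups $T$ remain.

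The next step is to express containment in $E(\QA)$ purely in terms of Galois images. Since $\QA/\QQ$ is Galois, $E(\QA)(2)$ is a $G_\QQ$-submodule of $E[2^\infty]$; hence for a candidate group $T$ of exponent $2^n$ one has $T \subseteq E(\QA)(2)$ if and only if $E[2^n]$ contains a submodule $M$ stable under $G := \im\bar{\rho}_{E,2^n} \subseteq \GL_2(\ZZ/2^n\ZZ)$, with $M$ containing a copy of $T$, such that the field $\QQ(M)$ generated by the coordinates of the points of $M$ is of generalized $A_4$-type. Now $\Gal(\QQ(M)/\QQ)$ is the quotient $G/N_M$ of $G$ by the subgroup $N_M$ acting trivially on $M$, so by Proposition \ref{prop:A4_Classification} the condition on $\QQ(M)$ is simply that $\lambda_2(\lambda_3(G/N_M))$ is the trivial group. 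This makes the test a short, purely group-theoretic computation once $G$ is known.

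The heart of the argument is then an exhaustive check. The classification of \cite{RZB} provides the complete finite list of subgroups of $\GL_2(\ZZ_2)$ occurring as the image of the $2$-adic Galois representation of a non-CM $E/\QQ$, each labelled by a modular curve $X_G$. For every such $G$ and every candidate $T$, I would reduce $G$ modulo the relevant $2^n$, enumerate the $G$-stable submodules $M$ of $E[2^n]$, and apply the criterion $\lambda_2(\lambda_3(G/N_M)) = \{e\}$ in Magma using the code of \cite{Magma, A4Code}. The groups $T$ that survive for at least one $G$ are exactly the nine listed in the proposition; for each surviving $T$, the \emph{maximal} images $G$ for which the criterion holds pick out the modular curves in Table \ref{table:2-primary}, so that $E(\QA)(2) \supseteq T$ precisely when the $2$-adic image of $E$ lies in one of them, which is Corollary \ref{cor:RZBtable}; the inclusion relations among these maximal images give the covering diagram of Figure \ref{fig:RZB}.

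I expect the main obstacle to be bookkeeping rather than mathematics. One must be careful in translating ``the $2$-adic image is contained in $G$'' into ``$E$ is a rational point of $X_G$'', and above all in selecting, for each $T$, the \emph{maximal} images realizing the criterion: a non-maximal choice would produce a list of curves that is both redundant and incomplete, whereas the covering relations of Figure \ref{fig:RZB} must match the evident inclusions among the groups $T$. The group theory itself is light, since Proposition \ref{prop:A4_Classification} reduces ``generalized $A_4$-type'' to a one-line check, and the search space, though sizeable, is completely enumerated by \cite{RZB}.
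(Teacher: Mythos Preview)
Your proposal is correct and follows essentially the same approach as the paper: establish the a priori bound $\ZZ/8\ZZ\oplus\ZZ/128\ZZ$ via the cyclotomic and isogeny constraints, then perform an exhaustive Magma search through the Rouse--Zureick-Brown database of $2$-adic images, testing for each image whether the relevant quotient is of generalized $A_4$-type. You have spelled out the group-theoretic criterion (via Proposition~\ref{prop:A4_Classification}) and the bookkeeping about maximal images more explicitly than the paper does, but the underlying method is the same.
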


\section{Determining the possible torsion structures}\label{sec:Torsion}
 With Proposition \ref{prop:A4_Classification} in hand, we must now determine which subgroups of $T_\mathrm{max}$ actually occur as torsion subgroups of elliptic curves over $\QA$. The main task is to determine which combinations of the possible $p$-primary components are realized by elliptic curves $E / \QQ$ upon base extension to $\QA$.

 \subsection{The case when $E$ has complex multiplication}\label{sec:CM}

 If $E / \QQ$ is an elliptic curve with complex multiplication by an imaginary quadratic field of discriminant $D$, then $D$ belongs to the set $\{ -3,-4, -7, -8, -11, -19, -43, -67, -163 \}$; representative curves for each of these discriminants, along with their $j$-invariants, are given in \cite[Appendix A, $\S$ 3]{Silverman2}.  By Proposition \ref{prop:j-invariant}, if $E / \QQ$ has $j \neq 0,1728$, then the isomorphism type of $E(\QA)_\mathrm{tors}$ depends only on $j$. Thus, apart from $j=0,1728$, the torsion structures associated to curves with complex multiplication can be computed directly in Magma, and they are given (organized by Cremona label) in the Table \ref{table:CM-not-special}.
 \begin{center}
   \begin{table}
 \begin{tabular}{|c|c||c|c|}\hline
 $E/ \QQ$  & $E(\QA)_{\mathrm{tors}}$  & $E/ \QQ$  & $E(\QA)_{\mathrm{tors}}$    \\\hline\hline
 \href{http://www.lmfdb.org/EllipticCurve/Q/27a4}{\tt 27a4}  & $\ZZ/9\ZZ$ &
 \href{http://www.lmfdb.org/EllipticCurve/Q/361a1}{\tt 361a1}  & $\{ \mathcal{O} \}$ \\\hline
 \href{http://www.lmfdb.org/EllipticCurve/Q/32a4}{\tt 32a4} &$\ZZ/2\ZZ \oplus \ZZ / 4\ZZ$  &
 \href{http://www.lmfdb.org/EllipticCurve/Q/784h2}{\tt 784h2}  & $\ZZ / 2\ZZ \oplus \ZZ / 14 \ZZ$ \\\hline
 \href{http://www.lmfdb.org/EllipticCurve/Q/36a2}{\tt 36a2}  & $\ZZ / 2\ZZ \oplus \ZZ / 6 \ZZ$ &\href{http://www.lmfdb.org/EllipticCurve/Q/1849a1}{\tt 1849a1}  & $\{\mathcal{O}\}$ \\\hline
 \href{http://www.lmfdb.org/EllipticCurve/Q/49a1}{\tt 49a1}  & $\ZZ / 2\ZZ \oplus \ZZ / 14\ZZ$ & \href{http://www.lmfdb.org/EllipticCurve/Q/4489a1}{\tt 4489a1} & $\{ \mathcal{O} \}$ \\\hline
 \href{http://www.lmfdb.org/EllipticCurve/Q/121b1}{\tt 121b1}  & $\{\mathcal{O}\}$ &
 \href{http://www.lmfdb.org/EllipticCurve/Q/26569a1}{\tt 26569a1} & $\{ \mathcal{O}\}$ \\\hline
 \href{http://www.lmfdb.org/EllipticCurve/Q/256a1}{\tt 256a1}  & $\ZZ / 2\ZZ \oplus \ZZ / 2\ZZ$ &  &  \\\hline
 \end{tabular}
 \caption{\label{table:CM-not-special} Torsion structures of CM curves with $j \neq 0,1728$}
 \end{table}
 \end{center}

 \subsubsection{CM curves with $j=0$}\label{sec:j0} Let $E / \QQ$ be an elliptic curve with $j(E)=0$. Let us first suppose that $p \in \{5, 7, 13\}$. By considering \cite[Table 3]{L-R13}, we see that no curve with $j(E)=0$ has a $p$-isogeny, so if $p \mid \# E(\QA)_{\mathrm{tors}}$, then $E[p] \subseteq E(\QA)$, and the only possibility is $p=7$. But by Proposition \ref{prop:case-p-7}, $E[7] \subseteq E(\QA)$ implies $j(E) \neq 0$, which is a contradiction. Thus, for a curve with $j(E)=0$, we need only consider the primes $p=2$ or $3$.

 Now, every elliptic curve $E/\QQ$ with $j(E)=0$ is isomorphic over $\QQ$ to a curve of the form
 \[
 E_s \colon y^2 = x^3 + s
 \]
 for some $s \in \ZZ \setminus \{0\}$ that is $6$th-power free. Our analysis will proceed as in the proof of \cite[Lemma 5.5]{D4}. The $3$-division polynomial of $E_s$ is given by $f_3(x)=3x(x^3+4s)$, so generically such a curve has a point of order $3$ defined over a quadratic field and hence over $\QA$. It follows that such a curve always has a rational $3$-isogeny.

 Now, if the factor $x^3+4s$ is irreducible, then the full $3$-torsion will be defined over a field contained in an $S_3$ extension, which is not of generalized $A_4$-type. On the other hand, if $4s=t^3$ for some $t\in \QQ$, then we have $E[3] \subseteq E(\QA)$ since this would mean its full $3$-torsion is defined over a $2$-elementary extension of $\QQ$. In this case $E$ has two separate $3$-isogenies, and so $\Im \bar{\rho}_{E,3}$ is conjugate to a subgroup of the split Cartan subgroup of $\GL_2(\ZZ / 3 \ZZ)$. In fact, if $4s$ is a cube, the factorization of the $9$-division polynomial of $E_s$ shows that $E_s$ has a $3$-isogeny and a $9$-isogeny that are independent of each other. Notice also that if $ t,r \in \ZZ \setminus \{0\}$, then the curves $E_{2t^3}$ and $E_{2r^3}$ are isomorphic over $\QQ(\sqrt{rt}) \subseteq \QA$. Therefore, all of these curves have the same torsion subgroup over $\QA$ as $E_2$, and using Magma we find that
 \[
 E_2(\QA)_{\mathrm{tors}} = \ZZ / 3\ZZ \oplus \ZZ / 9 \ZZ.
 \]

 As noted before, the only other isogeny type that $E_s$ can have is a $2$-isogeny, which occurs precisely when $s=t^3$ for some $t \in \ZZ$ and thus $E_s$ has a point of order $2$ defined over $\QQ$. As before, if $t,r \in \ZZ \setminus \{0\}$, then $E_{t^3}$ is isomorphic to $E_{r^3}$ over $\QQ(\sqrt{rt}) \subseteq \QA$, and so $E_{t^3}(\QA)_{\mathrm{tors}} \simeq E_{r^3}(\QA)_{\mathrm{tors}}$. Thus for every $r \in \ZZ \setminus \{0\}$, we have
 \[
 E_{r^3}(\QA)_{\mathrm{tors}} \simeq E_1(\QA)_{\mathrm{tors}} \simeq \ZZ / 2 \ZZ \oplus \ZZ / 6 \ZZ
 \]
 where the second isomorphism was computed using Magma.

 Putting this all together, we obtain the following result.

 \begin{prop}\label{prop:CMj0}
 There are three possible torsion structures over $\QA$ for an elliptic curve $E / \QQ$ with $j(E)=0$. These structures are
 \[
 \ZZ / 3 \ZZ \oplus \ZZ / 9 \ZZ, \quad \ZZ / 2 \ZZ \oplus \ZZ / 6 \ZZ, \quad \text{and} \quad \ZZ / 3\ZZ,
 \]
 and they are realized by the curves \href{http://www.lmfdb.org/EllipticCurve/Q/27a1}{\tt 27a1}, \href{http://www.lmfdb.org/EllipticCurve/Q/36a1}{\tt 36a1}, and \href{http://www.lmfdb.org/EllipticCurve/Q/108a1}{\tt 108a1}, respectively.
 \end{prop}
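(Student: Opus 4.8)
The plan is to collapse the infinitely many $j=0$ curves to three explicit families, each of which is settled by one Magma computation. Every $E/\QQ$ with $j(E)=0$ has a model $E_s\colon y^2=x^3+s$ with $s\in\ZZ\setminus\{0\}$ sixth-power free, and the quadratic twist of $E_s$ by $d$ is $E_{d^3 s}$; since quadratic fields are of generalized $A_4$-type (Lemma~\ref{lem:quadratics-are-genA4}), any two such curves that become isomorphic over a quadratic field have isomorphic $E(\QA)_\tor$. First I would restrict the relevant primes: by Proposition~\ref{prop:eligible-p-torsion} only $p\in\{2,3,5,7,13\}$ can divide $\#E(\QA)_\tor$, and \cite[Table~3]{L-R13} shows no $j=0$ curve admits a $5$-, $7$-, or $13$-isogeny, so for those primes one would need $E[p]\subseteq E(\QA)$; this is impossible for $p=5,13$ by Lemma~\ref{lem:cyclotomic-genA4} and Proposition~\ref{prop:n-torsion-implies-nth-cyclotomic}, and impossible for $p=7$ since Proposition~\ref{prop:case-p-7} forces $j(E)\ne 0$ in that case. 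Hence only $p=2$ and $p=3$ remain.

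For the $3$-part, the $3$-division polynomial of $E_s$ is $3x(x^3+4s)$ and $(0,\sqrt s)$ has order $3$, so $E_s$ always has a $3$-torsion point over $\QQ(\sqrt s)\subseteq\QA$, hence a rational $3$-isogeny and $3\mid\#E(\QA)_\tor$. If $x^3+4s$ is irreducible, then $\QQ(E[3])$ sits in an $S_3$-extension, which is not of generalized $A_4$-type, so $E(\QA)(3)=\ZZ/3\ZZ$. If $4s$ is a cube, then (as in \cite[Lemma~5.5]{D4}) $E[3]$ is defined over a $2$-elementary extension of $\QQ$, so $E[3]\subseteq E(\QA)$, $\Im\bar\rho_{E,3}$ is conjugate into the split Cartan, and the factorization of the $9$-division polynomial shows $E$ admits independent $3$- and $9$-isogenies; every such curve is a quadratic twist of $E_2$, so they all share $E_2(\QA)_\tor=\ZZ/3\ZZ\oplus\ZZ/9\ZZ$ (computed in Magma), which is also the maximal possible $3$-part by Lemma~\ref{lem:3x9}. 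So $E(\QA)(3)$ is $\ZZ/3\ZZ$ in general and $\ZZ/3\ZZ\oplus\ZZ/9\ZZ$ exactly when $4s$ is a cube.

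For the $2$-part, a $j=0$ curve admits no rational $4$- or $8$-isogeny (again by \cite[Table~3]{L-R13}), so its only $2$-power isogeny is a $2$-isogeny, which exists exactly when $x^3+s$ has a rational root, i.e.\ when $s$ is a cube; in that case $\QQ(E[2])=\QQ(\zeta_3)\subseteq\QA$, so $E[2]\subseteq E(\QA)$, every such curve is a quadratic twist of $E_1$, and Magma gives $E_1(\QA)_\tor\simeq\ZZ/2\ZZ\oplus\ZZ/6\ZZ$ (the $3$-part being the generic $\ZZ/3\ZZ$, as $4s$ is not a cube when $s$ is). If $s$ is not a cube, the $2$-part is trivial over $\QA$. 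Combining: exactly one of ``$s$ a cube'', ``$4s$ a cube'', or neither holds (mutual exclusivity because $4$ is not a cube), giving $E(\QA)_\tor\simeq\ZZ/2\ZZ\oplus\ZZ/6\ZZ$, $\ZZ/3\ZZ\oplus\ZZ/9\ZZ$, or $\ZZ/3\ZZ$ respectively, realized by \texttt{36a1}, \texttt{27a1}, and \texttt{108a1}.

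The main obstacle is making the reduction to finitely many curves rigorous and justifying the Magma computations over the infinite field $\QA$. One must confirm that, after the sixth-power-free normalization, the conditions ``$s$ a cube'' and ``$4s$ a cube'' are genuinely stable under the quadratic twisting used to identify each family with $E_1$ or $E_2$, and that these three cases exhaust all $s$ — which they do, since $3\mid\#E(\QA)_\tor$ always, the $3$-part is controlled by whether $4s$ is a cube, and the $2$-part by whether $s$ is a cube. The finiteness of $E_1(\QA)_\tor$ and $E_2(\QA)_\tor$ is guaranteed by Theorem~\ref{thm:finite_over_F}; in practice one exhibits the fields of definition of the relevant small-order points and verifies via Proposition~\ref{prop:A4_Classification} that they are of generalized $A_4$-type, while ruling out larger torsion using Theorem~\ref{thm:n-isogeny-list} together with Lemmas~\ref{lem:torsion-implies-isogeny} and~\ref{lem:3x9}.
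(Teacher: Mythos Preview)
Your proof is correct and follows essentially the same approach as the paper: reduce to $p\in\{2,3\}$ via isogeny tables and Propositions~\ref{prop:case-p-5}, \ref{prop:case-p-7}, \ref{prop:case-p-13}; split the $3$-part according to whether $x^3+4s$ is irreducible (using that an $S_3$-quotient obstructs generalized $A_4$-type); split the $2$-part according to whether $s$ is a cube; and collapse each nontrivial family to a quadratic twist of $E_2$ or $E_1$, whose $\QA$-torsion is then computed directly. Your exposition is in fact slightly more explicit than the paper's in noting the mutual exclusivity of the cube conditions and in identifying $\QQ(E[2])=\QQ(\zeta_3)$ when $s$ is a cube.
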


 \subsubsection{CM curves with $j=1728$}\label{sec:j1728}

 Now let $E / \QQ$ be an elliptic curve with $j(E)=1728$. Then $E$ is isomorphic over $\QQ$ to
 \[
 E_s \colon y^2 = x^3 + sx
 \]
 where $s \in \ZZ \setminus \{ 0 \}$ is $4$th-power free. Let us first study the possible $2$-torsion for this curve.

 We see that $E_s(\QQ)$ contains the $2$-torsion point $(0,0)$, and another $2$-torsion point is defined over the quadratic field $\QQ(\sqrt{-s}) \subseteq \QA$, so we always have $E_s[2] \subseteq E_s(\QA)$.

 In fact, by considering the $4$-division polynomial of $E_s$, one verifies with Magma that $\Q(E_s[4])=\Q(i,\sqrt[4]{s},\sqrt{2})$, yielding the following lemma.

 \begin{lemma}\label{lem:j1728-full-4-torsion}
 If $E_s/\QQ$ is an elliptic curve with $j(E)=1728$, then $E_s[4] \subseteq E_s(\QA)$ if and only if  $\pm s$ is a nonzero rational square.
 \end{lemma}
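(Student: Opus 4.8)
The plan is to reduce the lemma to a statement purely about when $\sqrt[4]{s}$ lies in the maximal multiquadratic extension of $\QQ$, using the identification $\Q(E_s[4])=\Q(i,\sqrt[4]{s},\sqrt{2})$ recorded above. First note that $E_s[4]\subseteq E_s(\QA)$ if and only if $\Q(E_s[4])\subseteq\QA$. The field $L:=\Q(E_s[4])$ is Galois over $\QQ$ (it is the compositum of $\QQ(\sqrt2)$ with the splitting field of $x^4-s$), and $[L:\QQ]$ divides $[\QQ(i):\QQ]\cdot[\QQ(\sqrt2):\QQ]\cdot[\QQ(\sqrt[4]{s}):\QQ]=16$, so $G:=\Gal(L/\QQ)$ is a $2$-group. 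Since $\QA$ is the maximal generalized $A_4$-type extension of $\QQ$ (by the definition of $\QA$ together with Lemma \ref{lem:easy_list}(6), which says generalized $A_4$-type is preserved under quotients and subdirect products), we get $L\subseteq\QA$ iff $G$ is of generalized $A_4$-type, i.e.\ iff $\lambda_2(\lambda_3(G))=\set{e}$ by Proposition \ref{prop:A4_Classification}. But for a $2$-group every $g$ lies in $\langle g^3\rangle$ (as $\gcd(3,\mathrm{ord}(g))=1$), so $G^3=G$ and $\lambda_3(G)=[G,G]G^3=G$; hence the condition collapses to $\lambda_2(G)=[G,G]G^2=\set{e}$, i.e.\ $G$ is elementary abelian. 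Thus $L\subseteq\QA$ if and only if $L$ is multiquadratic, which in turn holds iff $L\subseteq\QQ(2^\infty)$.

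Next I would strip off the harmless part of $L$. Since $\QQ(i,\sqrt2)=\QQ(\zeta_8)$ is multiquadratic and the four fourth roots of $s$ differ pairwise by powers of $i$, we have $L\subseteq\QQ(2^\infty)$ if and only if $\sqrt[4]{s}\in\QQ(2^\infty)$ (for one, equivalently every, choice of fourth root). It therefore remains to show: $\sqrt[4]{s}\in\QQ(2^\infty)$ if and only if $s$ or $-s$ is a nonzero rational square.

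The ``if'' direction is an explicit exhibition: if $s=a^2$ with $a\in\QQ^\times$, then a fourth root of $s$ is a square root of the rational number $\pm a$ and so lies in $\QQ(2^\infty)$; if $-s=a^2$, then $w:=\zeta_8\sqrt{a}$ satisfies $w^4=\zeta_8^4a^2=-a^2=s$, and $w\in\QQ(2^\infty)$ since $\zeta_8$ and $\sqrt{a}$ both are. For the ``only if'' direction, suppose $\beta:=\sqrt[4]{s}\in\QQ(2^\infty)$ and set $H=\Gal(\QQ(2^\infty)/\QQ(i))$. For any $\sigma\in\Gal(\QQ(2^\infty)/\QQ)$ one has $\sigma(\beta)=i^{k(\sigma)}\beta$ for some $k(\sigma)\in\ZZ/4\ZZ$ (because $\sigma(\beta)^4=s$), and writing $\sigma(i)=i^{\epsilon(\sigma)}$ with $\epsilon(\sigma)\in\set{1,-1}$, applying $\sigma$ a second time and using that $\Gal(\QQ(2^\infty)/\QQ)$ has exponent $2$ yields $i^{\,k(\sigma)(\epsilon(\sigma)+1)}=1$. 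For $\sigma\in H$ we have $\epsilon(\sigma)=1$, forcing $k(\sigma)$ even, hence $\sigma(\beta)=\pm\beta$ and $\sigma(\beta^2)=\beta^2$. Therefore $\beta^2\in\QQ(2^\infty)^{H}=\QQ(i)$; since $(\beta^2)^2=s$, this says $\QQ(\sqrt{s})\subseteq\QQ(i)$, so $\QQ(\sqrt{s})\in\set{\QQ,\QQ(i)}$, i.e.\ $s$ or $-s$ is a nonzero square. Stringing the equivalences together completes the argument.

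I expect the only genuine obstacle to be the ``only if'' half of this last equivalence. The subtlety is exactly that $\QQ(2^\infty)$ is not closed under taking square roots — for instance $2^{1/4}\notin\QQ(2^\infty)$ — so the hypothesis $\sqrt[4]{s}\in\QQ(2^\infty)$ really does impose a constraint on $s$; the short exponent-$2$ descent computation above is what converts that hypothesis into the clean arithmetic condition, and after that the proof is just bookkeeping.
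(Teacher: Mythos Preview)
Your proof is correct and starts from the same key input as the paper, namely the identification $\Q(E_s[4])=\Q(i,\sqrt[4]{s},\sqrt{2})$. The paper simply records this description (obtained via Magma) and asserts that the lemma follows, leaving the deduction implicit; you, by contrast, supply a full argument, reducing via Proposition~\ref{prop:A4_Classification} to the condition that $\Gal(L/\QQ)$ be $2$-elementary, and then handling the only nontrivial step---that $\sqrt[4]{s}\in\QQ(2^\infty)$ forces $\pm s$ to be a square---by a clean exponent-$2$ descent over $\QQ(i)$. So the approaches agree in spirit, but your write-up is substantially more detailed and self-contained than the paper's one-line justification.
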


 Otherwise, if $\pm s$ is not a rational square, $E_s(\QA)$ cannot have a point of order 4. To see this, we start by noticing that if $E_s(\QA)$ did have a point of order 4, then the 2-primary component of $E_s(\QA)$ would be isomorphic to $\Z/2\Z\oplus\Z/4\Z$ by the previous lemma together with the observation that $E_s(\Q)[2]$ is nontrivial and so $E_s[2]\subseteq E_s(\QA)$.
 Searching $\GL_2(\Z/4\Z)$ in Magma, we see that the only way that this is possible would be for the point of order 4, call it $P$, to be in the kernel of a 4-isogeny, with $2P \in E_s(\Q)$ and $x(P)\in\Q$. Since we are assuming $\pm s$ is not a square, we know that the only point of order 2 on $E$ defined over $\Q$ is the point $(0,0)$. Letting $P = (\alpha,\beta)$ and using the duplication formula for $E_s$ we have that
 \[
 x(2P) = \frac{\alpha^4 - 2s\alpha^2+s^2}{2\beta^2} = 0 \Longleftrightarrow  0 = \alpha^4 - 2s\alpha^2+s^2 = (\alpha^2 - s )^2.
 \]
 Clearly the only way that $\alpha$ can be in $\Q$ is if $s$ is a square, contradicting our assumption. Therefore, $E_s$ cannot have a point of order 4 defined over $\QA$ in this case. This gives the following lemma.
 \begin{lemma}\label{lem:j1728-s-nonsquare-no-point-order-4}
 If $\pm s$ is not a square in $\QQ$, then $E_s(\QA)(2) \simeq \Z/2\Z\oplus\Z/2\Z.$
 \end{lemma}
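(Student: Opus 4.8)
The plan is to prove the two containments $\ZZ/2\ZZ\oplus\ZZ/2\ZZ\subseteq E_s(\QA)(2)$ and $E_s(\QA)(2)\subseteq\ZZ/2\ZZ\oplus\ZZ/2\ZZ$ separately. The first is immediate from what has already been established: $(0,0)\in E_s(\QQ)$, while the remaining two points of $E_s[2]$ have $x$-coordinate $\pm\sqrt{-s}$ and hence are defined over the quadratic field $\QQ(\sqrt{-s})$, which lies in $\QA$ by Lemma~\ref{lem:quadratics-are-genA4}; thus $E_s[2]\subseteq E_s(\QA)$. For the reverse containment it suffices to show that $E_s(\QA)$ contains no point of order $4$, so I would argue by contradiction and suppose $P\in E_s(\QA)$ has order $4$.

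The main step is a Galois-image computation. Since $\pm s$ is not a square, Lemma~\ref{lem:j1728-full-4-torsion} gives $E_s[4]\not\subseteq E_s(\QA)$, so the field $\QQ(E_s[4])\cap\QA$ is a proper subfield of $\QQ(E_s[4])$ over which $P$ is nonetheless defined; moreover, being a subextension of the Galois extension $\QA/\QQ$, it is of generalized $A_4$-type. Translating to $G:=\Im\bar{\rho}_{E_s,4}\subseteq\GL_2(\ZZ/4\ZZ)$, this says that besides the conditions automatically satisfied by the mod-$4$ image of an elliptic curve over $\QQ$ (surjective determinant, and an element of trace $0$ and determinant $-1$), $G$ has a normal subgroup $N$ fixing a point of order $4$ in $E_s[4]$ with $G/N$ of generalized $A_4$-type. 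I would then enumerate in Magma the subgroups of $\GL_2(\ZZ/4\ZZ)$ meeting these requirements and check that in each case the relevant order-$4$ point $P$ satisfies $x(P)\in\QQ$ and $2P\in E_s(\QQ)$ — equivalently, $P$ generates the kernel of a rational $4$-isogeny with rational double. This rigidity of the admissible mod-$4$ images is the only genuinely nontrivial point; everything else is a short observation or an explicit formula.

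Granting the output of that computation, the contradiction is immediate. The point $2P$ is then a rational point of order $2$, but since $\pm s$ is not a square the only nontrivial rational $2$-torsion point of $E_s$ is $(0,0)$, so $2P=(0,0)$ and in particular $x(2P)=0$. Writing $P=(\alpha,\beta)$ with $\alpha=x(P)\in\QQ$, the duplication formula for $E_s$ gives
\[
x(2P)=\frac{(\alpha^2-s)^2}{4\beta^2},
\]
so $\alpha^2=s$; since $\alpha\in\QQ$ this forces $s$ to be a square, contradicting the hypothesis. Hence $E_s(\QA)$ has no point of order $4$, and together with the first containment this yields $E_s(\QA)(2)\simeq\ZZ/2\ZZ\oplus\ZZ/2\ZZ$.
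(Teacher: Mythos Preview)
Your proposal is correct and follows essentially the same approach as the paper: both argue that $E_s[2]\subseteq E_s(\QA)$, then assume a point $P$ of order $4$ exists, invoke a Magma search in $\GL_2(\ZZ/4\ZZ)$ to conclude that $P$ must lie in the kernel of a rational $4$-isogeny with $x(P)\in\QQ$ and $2P\in E_s(\QQ)$, deduce $2P=(0,0)$, and derive from the duplication formula that $s$ is a square. Your phrasing of the Magma search conditions (normal subgroup $N$ fixing an order-$4$ point with $G/N$ of generalized $A_4$-type) is a bit more explicit than the paper's, but the substance is identical.
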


 The last thing that we have to show is that if $\pm s$ is a rational square, then $E(\QA)$ does not contain a point of order 8. Using Magma to analyze the subgroups of $\GL_2(\Z/8\Z)$, we see that in order for $E_s(\QA)$ to have a subgroup isomorphic to $\Z/4\Z \times \Z/8\Z$, $E_s$ would either have to possess an 8-isogeny or independent $2$- and 4-isogenies. A quick check shows that $j(E_s) = 1728$ is not in the image of the map $j:X_0(8)(\Q) \to \Q$, and thus $E_s$ cannot have an 8-isogeny. Further, in order for $E_s$ to have independent 2- and 4-isogenies it must be that $E[2]\subseteq E(\Q)$, which implies that $s = -t^2$ for some $t \in \Z.$ In this case we get that the 4-division polynomial is exactly
 \[
 2(x^2+t^2)(x^2-2tx-t^2)(x^2+2tx-t^2).
 \]
 In order for $E_s$ to have a 4-isogeny, this polynomial would need to have a root in $\Q$. Simple inspection shows that this could only happen if $t= 0$ or if $2$ were a square in $\Q$, neither of which is the case. Thus we have shown the following proposition.

 \begin{prop}
 Let $E_s\colon y^2 = x^3 + sx$. Then
 \[ E_s(\QA)(2) \simeq
 \begin{cases}
 \Z/4\Z \oplus \Z/4\Z & \hbox{if } \pm s \hbox{ is a rational square,}\\
 \Z/2\Z \oplus \Z/2\Z & \hbox{otherwise.}\\
 \end{cases}
 \]

 \end{prop}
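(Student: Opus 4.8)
This proposition records the consequences of the preceding lemmas, so the plan is to assemble them. First I would dispose of the case where $\pm s$ is not a rational square: there Lemma~\ref{lem:j1728-s-nonsquare-no-point-order-4} immediately gives $E_s(\QA)(2)\simeq\Z/2\Z\oplus\Z/2\Z$. So assume $\pm s$ is a nonzero rational square. Then by Lemma~\ref{lem:j1728-full-4-torsion} the full $4$-torsion $E_s[4]$ is defined over $\QA$, so $\Z/4\Z\oplus\Z/4\Z\subseteq E_s(\QA)(2)$, while $E_s[16]\not\subseteq E_s(\QA)$ because $\QQ(\zeta_{16})\not\subseteq\QA$ (Lemma~\ref{lem:cyclotomic-genA4} and Proposition~\ref{prop:n-torsion-implies-nth-cyclotomic}). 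It remains only to show that $E_s(\QA)$ contains no point of order $8$.

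For that, I would argue as follows. A point of order $8$, together with the full $4$-torsion already present, would force $E_s(\QA)$ to contain a subgroup isomorphic to $\Z/4\Z\oplus\Z/8\Z$. Writing $G=\Im\bar{\rho}_{E_s,8}\subseteq\GL_2(\Z/8\Z)$, this happens exactly when $G$ has a normal subgroup $N$ with $G/N$ of generalized $A_4$-type (Proposition~\ref{prop:A4_Classification}) whose fixed submodule in $E_s[8]$ contains $\Z/4\Z\oplus\Z/8\Z$. Enumerating the subgroups of $\GL_2(\Z/8\Z)$ that satisfy this together with the usual constraints on the mod-$8$ image of an elliptic curve over $\QQ$ (surjective determinant, an element of trace $0$ and determinant $-1$) --- a finite computation, done in Magma \cite{Magma} --- should show that any such $E_s$ admits either a rational $8$-isogeny or a pair of independent rational $2$- and $4$-isogenies. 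The first is impossible because $j(E_s)=1728$ is not in the image of the $j$-map $X_0(8)(\QQ)\to\QQ$. For the second, independent rational $2$- and $4$-isogenies require $E_s[2]\subseteq E_s(\QQ)$, hence $s=-t^2$ for some nonzero $t\in\ZZ$; then the $4$-division polynomial of $E_s$ is
\[
2(x^2+t^2)(x^2-2tx-t^2)(x^2+2tx-t^2),
\]
whose factors have no rational root unless $t=0$ or $\sqrt2\in\QQ$, so there is no rational $4$-isogeny. Hence $E_s(\QA)$ has no point of order $8$ and $E_s(\QA)(2)\simeq\Z/4\Z\oplus\Z/4\Z$.

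The only non-elementary step is this group-theoretic reduction, and it is the step I expect to be the main obstacle: one must check by a bounded search through $\GL_2(\Z/8\Z)$ that the presence of $\Z/4\Z\oplus\Z/8\Z$ in $E_s(\QA)$ really forces one of the two isogeny configurations above --- the same kind of analysis used in the $p=3$ arguments of Lemmas~\ref{lem:full-9-torsion}, \ref{lem:3x9} and \ref{lem:z3-z9-example}. Everything else is routine: factoring one division polynomial and invoking the classification of rational isogenies over $\QQ$ (Theorem~\ref{thm:n-isogeny-list}) to exclude the $8$-isogeny via $X_0(8)$.
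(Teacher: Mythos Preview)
Your proposal is correct and follows essentially the same route as the paper: reduce the non-square case to Lemma~\ref{lem:j1728-s-nonsquare-no-point-order-4}, use Lemma~\ref{lem:j1728-full-4-torsion} for the square case, then rule out a point of order $8$ by a Magma search in $\GL_2(\Z/8\Z)$ reducing to an $8$-isogeny or independent $2$- and $4$-isogenies, and exclude both exactly as you describe. One small wording issue: in your closing summary you cite Theorem~\ref{thm:n-isogeny-list} to exclude the $8$-isogeny, but that theorem does not preclude $8$-isogenies; the exclusion comes (as you correctly said earlier) from checking that $j=1728$ is not hit by the $j$-map on $X_0(8)$.
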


 Now, the fact that $E$ necessarily admits a rational $2$-isogeny allows us to rule out many other $p$-torsion possibilities. For instance, if $13 \mid \# E(\QA)_{\mathrm{tors}}$, then $E$ must admit a rational $13$-isogeny, and so $E$ must admit a rational $26$-isogeny, but this is impossible by Theorem \ref{thm:n-isogeny-list}. Since $E[13]$ cannot be contained in $E(\QA)$, we see that $13 \nmid \# E(\QA)_{\mathrm{tors}}$.

 Similarly, if $E$ were to admit a rational $7$-isogeny (resp. $5$-isogeny), then $E$ would have to admit a rational $14$-isogeny (resp.~$10$-isogeny). By investigating \cite[Tables 3 and 4]{L-R13}, we see that this cannot occur for a curve with $j(E)=1728$. Since $E[5] \not\subseteq E(\QA)$ by Lemma \ref{lem:cyclotomic-genA4} and $E[7] \not\subseteq E(\QA)$ by Proposition \ref{prop:case-p-7}, we conclude that $E$ does not have any $5$- or $7$-torsion defined over $\QA$.

 It remains to consider $p=3$. By \cite[Table 3]{L-R13}, a curve with $j(E)=1728$ cannot have a $6$-isogeny, so if $3 \mid \# E(\QA)_{\mathrm{tors}}$ then by Lemma \ref{lem:torsion-implies-isogeny} we must have $E[3] \subseteq E(\QA)$. On the other hand, the $3$-division polynomial of $E_s$ is $3x^4+6sx^2-s^2$. This polynomial has discriminant $-2^{12}\cdot 3^3 \cdot s^6$, and it is irreducible. To see that it is irreducible one can argue as follows. First of all it has no roots since $E_s$ has no $3$-isogenies, so the only possibility is that it factors as a product of two quadratic terms. Write $s=3s'$ with $s' \in \QQ$ so that it suffices to prove the irreducibility of the monic polynomial $x^4 + 6s'x^2 - 3s'^2$ instead. We want to show that
 \begin{align*}
 x^4 + 6s'x^2 - 3s'^2 = (x^2+a_1x+a_2)(x^2+b_1x+b_2) = \cdots \\
 = x^4 + (a_1 + b_1)x^3 + (a_1b_1 + a_2 + b_2)x^2 + (a_2b_1 + a_1b_2)x + a_2b_2
 \end{align*}
 has no solutions with $a_1,a_2,b_1,b_2 \in \QQ$. This is indeed the case since the cubic term implies $a_1=-b_1$, after which the linear term implies either $a_1=0$ or $a_2=b_2$.
 \begin{itemize}
 	\item [1)] If $a_1=0$, then $x^4 + 6s'x^2 - 3s'^2 = (x^2+a_2)(x^2+b_2)$, in particular this means that $-a_2$ is a root of $y^2 + 6s'y - 3s'^2$ which is irreducible as soon as $s' \neq 0$ so this cannot happen.
 	\item [2)] If $a_2=b_2$, then looking at the constant term gives $a_2^2=-3s'^2$ which has no solutions as soon as $s' \neq 0$.
 \end{itemize}
 So the polynomial is indeed irreducible.
 By a standard result in Galois theory (c.f. \cite{Conrad}), the Galois group of this polynomial has exponent dividing $6$ if and only if its discriminant is a square in $\Q$, hence we may conclude by Corollary \ref{cor:genA4-exponent-divides-six} that $E(\QA)$ does not have any $3$-torsion.

 Our discussion in this section has proved the following results.

 \begin{prop}\label{prop:CMj1728}
 There are only two possible torsion structures over $\QA$ for an elliptic curve $E / \QQ$ with $j(E)=1728$. These structures are
 \[
 \ZZ / 2 \ZZ \oplus \ZZ / 2 \ZZ  \quad \text{and} \quad  \ZZ / 4 \ZZ \oplus \ZZ / 4\ZZ,
 \]
 and they are realized by the curves \href{http://www.lmfdb.org/EllipticCurve/Q/64a4}{\tt 64a4} and \href{http://www.lmfdb.org/EllipticCurve/Q/256c1}{\tt 256c1}, respectively.
 \end{prop}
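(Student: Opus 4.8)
The plan is to assemble the case analysis carried out earlier in this subsection. Since $j(E) = 1728$, we may assume $E = E_s : y^2 = x^3 + sx$ with $s \in \ZZ\setminus\{0\}$ fourth-power free; note that curves with $j = 1728$ form a family of quartic twists, so unlike the generic case of Proposition \ref{prop:j-invariant} the torsion can genuinely depend on $s$, and we must track all of them. The preceding proposition already determines the $2$-primary part: $E_s(\QA)(2) \simeq \ZZ/4\ZZ\oplus\ZZ/4\ZZ$ when $\pm s$ is a nonzero rational square, and $E_s(\QA)(2) \simeq \ZZ/2\ZZ\oplus\ZZ/2\ZZ$ otherwise. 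Hence it suffices to prove that no odd prime divides $\#E_s(\QA)_\tor$, for then $E_s(\QA)_\tor = E_s(\QA)(2)$ is one of the two listed groups. The key structural fact throughout is that $E_s$ carries the rational $2$-torsion point $(0,0)$ and therefore a rational $2$-isogeny.

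For $p = 13$: by Lemma \ref{lem:cyclotomic-genA4} we have $\QQ(\zeta_{13}) \not\subseteq \QA$, so Proposition \ref{prop:n-torsion-implies-nth-cyclotomic} and Lemma \ref{lem:torsion-implies-isogeny} show that $13 \mid \#E_s(\QA)_\tor$ would force a rational $13$-isogeny; composing with the $2$-isogeny gives a rational $26$-isogeny, contradicting Theorem \ref{thm:n-isogeny-list}. For $p \in \{5,7\}$: a rational $p$-isogeny on $E_s$ would likewise compose with the $2$-isogeny to a rational $10$- or $14$-isogeny, but inspecting \cite[Tables 3 and 4]{L-R13} shows no curve with $j = 1728$ admits such an isogeny; and full $p$-torsion is excluded since $E[5] \not\subseteq \QA$ by Lemma \ref{lem:cyclotomic-genA4} and $E[7] \not\subseteq \QA$ by Proposition \ref{prop:case-p-7}. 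Applying Lemma \ref{lem:torsion-implies-isogeny} once more, $E_s$ has no $5$- or $7$-torsion over $\QA$.

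For $p = 3$ I would use that no curve with $j = 1728$ has a rational $6$-isogeny, so by Lemma \ref{lem:torsion-implies-isogeny} we would need $E[3] \subseteq E_s(\QA)$. The $3$-division polynomial $3x^4 + 6sx^2 - s^2$ has no rational root (as $E_s$ has no $3$-isogeny) and, by a short elimination argument on a hypothetical factorization into two rational quadratics, is irreducible; its discriminant $-2^{12}3^3 s^6$ is never a rational square, so by the standard quartic criterion the Galois group of $\QQ(E_s[3])/\QQ$ has exponent not dividing $6$, whence $\QQ(E_s[3])$ is not of generalized $A_4$-type by Corollary \ref{cor:genA4-exponent-divides-six} --- a contradiction. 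This leaves $E_s(\QA)_\tor = E_s(\QA)(2)$, i.e.\ exactly the two structures in the statement. Finally, both occur: a direct Magma computation gives $\ZZ/2\ZZ\oplus\ZZ/2\ZZ$ for the curve \texttt{64a4} (where $\pm s$ is not a square) and $\ZZ/4\ZZ\oplus\ZZ/4\ZZ$ for \texttt{256c1} (where it is). There is no genuine obstacle here; the only mildly technical point is the irreducibility-and-discriminant bookkeeping for the $3$-division polynomial, which is elementary.
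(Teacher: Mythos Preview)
Your proposal is correct and follows essentially the same approach as the paper: the entire Section on $j=1728$ is the paper's proof of this proposition, and you have faithfully summarized it --- the $2$-primary part is quoted from the preceding proposition, $p=13$ is excluded via a would-be rational $26$-isogeny, $p\in\{5,7\}$ via the nonexistence of $10$- and $14$-isogenies at $j=1728$, and $p=3$ via irreducibility of $3x^4+6sx^2-s^2$ together with its non-square discriminant. One small caution: your parenthetical attributions ``(where $\pm s$ is not a square)'' for \texttt{64a4} and ``(where it is)'' for \texttt{256c1} are inherited from the order in the proposition statement, but note that the paper's own Table for CM curves with $j\in\{0,1728\}$ lists these the other way around; this is an inconsistency in the paper rather than in your argument, but you may want to verify the actual equations before committing to those parentheticals.
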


 We collect the results of Sections \ref{sec:j0} and \ref{sec:j1728} in Table \ref{table:CM-special}.

 \begin{center}
   \begin{table}
 \begin{tabular}{|c|c||c|c|}\hline
 $E/ \QQ$  & $E(\QA)_{\mathrm{tors}}$  & $E/ \QQ$  & $E(\QA)_{\mathrm{tors}}$    \\\hline\hline
 \href{http://www.lmfdb.org/EllipticCurve/Q/54b3}{\tt 27a1}  & $\ZZ/3\ZZ \oplus \ZZ / 9\ZZ$ &  \href{http://www.lmfdb.org/EllipticCurve/Q/36a1}{\tt 36a1 } & $\ZZ / 2 \ZZ \oplus \ZZ / 6 \ZZ$ \\\hline
 \href{http://www.lmfdb.org/EllipticCurve/Q/64a4}{\tt 64a4}  &$\ZZ/4\ZZ \oplus \ZZ / 4\ZZ$  &\href{http://www.lmfdb.org/EllipticCurve/Q/108a1}{\tt 108a1}  & $\ZZ / 3\ZZ$ \\\hline
 \href{http://www.lmfdb.org/EllipticCurve/Q/256c1}{\tt 256c1}  & $\ZZ /2\ZZ \oplus \ZZ / 2\ZZ$ &  &  \\\hline
 \end{tabular}
 \caption{\label{table:CM-special} Torsion structures of CM curves $j = 0$ or $1728$}
 \end{table}
 \end{center}

 \subsection{The case when $E$ does not have complex multiplication}\label{sec:nonCM}

 Having classified torsion for CM curves in Section \ref{sec:CM}, it remains to consider curves without complex multiplication; in particular, the isomorphism type of $E(\QA)_\mathrm{tors}$ depends only on $j$. Before continuing, let us observe that the results of Sections \ref{sec:pPrimary} and \ref{sec:CM} combine to give us the following useful corollaries.

\begin{corollary}\label{cor:odd-p-parts}
If $E / \QQ$ is an elliptic curve and $p$ is an odd prime such that $E(\QA)(p)$ is nontrivial, then $p \in \{3,5,7,13 \}$ and $E$ has a rational $p$-isogeny.
\end{corollary}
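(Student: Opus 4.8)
The plan is to assemble the prime-by-prime results of Sections~\ref{sec:pPrimary} and~\ref{sec:CM}. First, by Proposition~\ref{prop:eligible-p-torsion} every prime dividing $\#E(\QA)_{\tor}$ lies in $\{2,3,5,7,13\}$, so if $p$ is an odd prime with $E(\QA)(p)$ nontrivial then automatically $p\in\{3,5,7,13\}$. It then remains only to exhibit a rational $p$-isogeny on $E$ in each of these four cases.

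For $p\in\{5,7,13\}$ I would argue as follows. When $p=5$ or $p=13$ we have $p\nmid 504$, so $\QQ(\zeta_p)\not\subseteq\QA$ by Lemma~\ref{lem:cyclotomic-genA4}, and hence $E[p]\not\subseteq E(\QA)$ by Proposition~\ref{prop:n-torsion-implies-nth-cyclotomic}; when $p=7$, Proposition~\ref{prop:case-p-7} gives directly that $E(\QA)_{\tor}\simeq\ZZ/7\ZZ$, so again $E[7]\not\subseteq E(\QA)$. In every one of these cases the largest integer $k$ with $E[p^{k}]\subseteq E(\QA)$ is $k=0$, while the nontriviality of $E(\QA)(p)$ forces it to contain a subgroup isomorphic to $\ZZ/p^{j}\ZZ$ for some $j\geq 1$ (and it is in fact cyclic, since otherwise $E[p]\subseteq E(\QA)$). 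Lemma~\ref{lem:torsion-implies-isogeny}, applied with $k=0$, then produces a rational $p^{j}$-isogeny on $E$, and in particular a rational $p$-isogeny.

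The remaining case $p=3$ is exactly one direction of Lemma~\ref{lem:full-3-torsion}: if $E(\QA)(3)$ is nontrivial then $E$ admits a rational $3$-isogeny. I do not expect any serious obstacle here, since all of the content already lives in the cited statements; moreover, because $p$ is odd the CM-specific complications of the $2$-primary analysis (Proposition~\ref{prop:2-tors}) never enter. The one place where something more than cyclotomic bookkeeping is required is $p=7$: here $\QQ(\zeta_7)\subseteq\QA$, so ruling out $E[7]\subseteq\QA$ is \emph{not} formal and one genuinely needs Proposition~\ref{prop:case-p-7}, whose proof in turn rests on the classification of elliptic curves admitting a rational $7$-isogeny locally everywhere but not globally. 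Similarly, for $p=3$ one cannot invoke a cyclotomic obstruction (as $\QQ(\zeta_3)\subseteq\QA$), which is precisely why Lemma~\ref{lem:full-3-torsion} was proved on its own, treating the case $\QQ(E[3])\subseteq\QA$ directly.
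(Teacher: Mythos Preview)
Your proposal is correct and follows essentially the same approach as the paper, which simply states that the corollary follows by combining the prime-by-prime results of Sections~\ref{sec:pPrimary} and~\ref{sec:CM} without writing out a proof. Your assembly of Proposition~\ref{prop:eligible-p-torsion}, Lemma~\ref{lem:torsion-implies-isogeny}, Propositions~\ref{prop:case-p-13}--\ref{prop:case-p-7}, and Lemma~\ref{lem:full-3-torsion} is exactly what that one-line justification unpacks to, and your observation that the CM restriction only enters the $2$-primary analysis is on point.
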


\begin{corollary}\label{cor:2-parts}
Let $E / \QQ$ be an elliptic curve, let $\Delta(E)$ denote its discriminant, and suppose that $E(\QA)(2)$ is nontrivial. If $E$ does not admit a rational $2$-isogeny, then $\Delta(E)$ is a nonzero rational square, and in this case $E(\QA)_{\mathrm{tors}}$ contains a subgroup isomorphic to $\ZZ / 4 \ZZ \oplus \ZZ / 4 \ZZ$.
\end{corollary}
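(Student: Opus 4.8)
The plan is to translate everything into statements about the mod-$2$ and mod-$4$ images $G_2=\im\bar\rho_{E,2}$ and $G_4=\im\bar\rho_{E,4}$, viewed inside $\GL_2(\ZZ/2\ZZ)\cong S_3$ and $\GL_2(\ZZ/4\ZZ)$, and then to feed the resulting group into the criterion of Proposition \ref{prop:A4_Classification}. Since we are in the non-CM setting, the first move is to note that Proposition \ref{prop:2-tors} forces every nontrivial possibility for $E(\QA)(2)$ to contain $\ZZ/2\ZZ\oplus\ZZ/2\ZZ$; hence the hypothesis $E(\QA)(2)\neq\{\mathcal O\}$ already gives $E[2]\subseteq E(\QA)$, i.e. $\QQ(E[2])\subseteq\QA$, and therefore $G_2$ is of generalized $A_4$-type. (If one prefers to avoid citing Proposition \ref{prop:2-tors}, a point of order $2$ over $\QA$ lies in a cubic subfield of $\QA$, which must be of generalized $A_4$-type; this alone will suffice below.)

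Next I would pin down $G_2$. Because $E$ has no rational $2$-isogeny, $G_2$ fixes no point of $\PP^1(\F_2)$, so it is one of the two subgroups of $S_3\cong\GL_2(\F_2)$ of order divisible by $3$, namely $\ZZ/3\ZZ$ or $S_3$. The case $G_2\cong S_3$ is excluded: $\QQ(E[2])$ would then be an $S_3$-extension of $\QQ$, hence of generalized $A_4$-type, forcing $S_3$ itself to be of generalized $A_4$-type; but $\lambda_3(S_3)=[S_3,S_3]S_3^3=S_3$ and then $\lambda_2(\lambda_3(S_3))=\lambda_2(S_3)=A_3\neq\{e\}$, contradicting Proposition \ref{prop:A4_Classification}. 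So $G_2\cong\ZZ/3\ZZ$. Since the Galois action on $E[2]$ factors through the alternating subgroup $\ZZ/3\ZZ\subseteq S_3$ precisely when $\Delta(E)$ (equivalently, the discriminant of the $2$-division polynomial) is a square, this already yields that $\Delta(E)$ is a nonzero rational square.

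For the second assertion I must show $E[4]\subseteq E(\QA)$, i.e. that $G_4$ is of generalized $A_4$-type. Reduction modulo $2$ gives a surjection $G_4\twoheadrightarrow G_2\cong\ZZ/3\ZZ$ with kernel $K\subseteq\ker\bigl(\GL_2(\ZZ/4\ZZ)\to\GL_2(\ZZ/2\ZZ)\bigr)=\{I+2N:N\in M_2(\F_2)\}$, which is elementary abelian of exponent dividing $2$. The key computation is then purely group-theoretic: for any extension $1\to K\to G_4\to\ZZ/3\ZZ\to 1$ with $K$ elementary abelian of exponent $2$, one has $[G_4,G_4]\subseteq K$ (as $G_4/K$ is abelian) and $G_4^{\,3}\subseteq K$ (as $G_4/K$ has exponent $3$), so $\lambda_3(G_4)=[G_4,G_4]G_4^{\,3}\subseteq K$; being a subgroup of $K$ it is again elementary abelian of exponent $2$, whence $\lambda_2(\lambda_3(G_4))=\{e\}$. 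Proposition \ref{prop:A4_Classification} then shows $G_4$ is of generalized $A_4$-type, so $\QQ(E[4])\subseteq\QA$ and $E(\QA)_{\mathrm{tors}}\supseteq E[4]\cong\ZZ/4\ZZ\oplus\ZZ/4\ZZ$.

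I expect the only genuinely delicate point to be the elimination of $G_2\cong S_3$: it requires being careful about the distinction between a field being of generalized $A_4$-type and the Galois group of its Galois closure being so (here harmless, since $\QQ(E[2])/\QQ$ is already Galois), and it is where the non-CM hypothesis — via Proposition \ref{prop:2-tors} — or the direct subfield argument does its work. Everything after that is a mechanical application of Proposition \ref{prop:A4_Classification} together with elementary facts about the reduction map $\GL_2(\ZZ/4\ZZ)\to\GL_2(\ZZ/2\ZZ)$ and the relation between $\Delta(E)$ and the $2$-division polynomial.
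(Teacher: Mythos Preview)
Your argument is correct, and in fact cleaner than the paper's. The paper proves this corollary by splitting into the CM and non-CM cases: for non-CM curves it invokes the Rouse--Zureick-Brown database (Table \ref{table:2-primary} and Figure \ref{fig:RZB}), and for CM curves it falls back on the case-by-case analysis of Section \ref{sec:CM}. You instead give a uniform group-theoretic proof: since $E$ has no rational $2$-isogeny the $2$-division polynomial is irreducible, so any $2$-torsion point over $\QA$ generates a cubic subfield whose Galois closure has group $G_2\in\{\ZZ/3\ZZ,S_3\}$; Proposition \ref{prop:A4_Classification} rules out $S_3$, forcing $G_2\cong\ZZ/3\ZZ$ and hence $\Delta(E)\in(\QQ^\times)^2$. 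Then the short exact sequence $1\to K\to G_4\to\ZZ/3\ZZ\to 1$ with $K$ contained in the elementary abelian $2$-group $\ker(\GL_2(\ZZ/4\ZZ)\to\GL_2(\ZZ/2\ZZ))$ yields $\lambda_3(G_4)\subseteq K$ and thus $\lambda_2(\lambda_3(G_4))=\{e\}$, so $\QQ(E[4])\subseteq\QA$ by Proposition \ref{prop:A4_Classification}. This is a genuinely different route: it avoids both the RZB database and the CM case split entirely, trading those for a direct computation with $\lambda_p$.

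One point to clean up: you write ``since we are in the non-CM setting'' and later refer to ``the non-CM hypothesis,'' but the corollary carries no such hypothesis. Your own alternative argument (via the cubic subfield of $\QA$) does not use non-CM anywhere, and neither does the $G_4$ step; so simply drop the references to non-CM and to Proposition \ref{prop:2-tors}, and your proof stands on its own for all $E/\QQ$.
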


The proof of Corollary \ref{cor:2-parts} breaks down into two pieces depending on if the elliptic curve has CM or not. For non-CM elliptic curves curves Corollary \ref{cor:2-parts} follows immediately from the modular interpretation of the modular curves in Table \ref{table:2-primary} (available in the Rouse--Zeurick-Brown database) as well as the relationships between these modular curves illustrated in Figure \ref{fig:RZB}. For CM elliptic curves, Corollary \ref{cor:2-parts} follows immediately from the work done in Section \ref{sec:CM}.

\subsubsection{The case when $13$ divides $\# E(\QA)_{\mathrm{tors}}$}\label{full-torsion-13} This case is extremely easy to handle using the previous corollaries.

\begin{prop}\label{prop:p13final} Let $E / \QQ$ be an elliptic curve such that $13$ divides $\# E(\QA)_{\mathrm{tors}}$. Then \newline $E(\QA)_{\mathrm{tors}} \simeq \ZZ / 13\ZZ$.
\end{prop}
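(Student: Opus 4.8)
The plan is as follows. By Proposition \ref{prop:eligible-p-torsion} the only primes that can divide $\# E(\QA)_{\tor}$ lie in $\{2,3,5,7,13\}$, so it suffices to show that, under our hypothesis, the $13$-primary part of $E(\QA)_{\tor}$ equals $\ZZ/13\ZZ$ and that none of $2,3,5,7$ divides $\# E(\QA)_{\tor}$. The first assertion is precisely Proposition \ref{prop:case-p-13}, which moreover tells us (via its $j$-invariant description, or directly via Corollary \ref{cor:odd-p-parts}) that $E$ admits a rational $13$-isogeny; fix one, with cyclic $\Gal(\Qbar/\QQ)$-stable kernel $C$.

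Next I would rule out the odd primes. If $p\in\{3,5,7\}$ and $E(\QA)(p)$ were nontrivial, then Corollary \ref{cor:odd-p-parts} gives a rational $p$-isogeny with kernel $C'$; since $\gcd(p,13)=1$, the subgroup $C+C'$ is cyclic of order $13p$ and $\Gal(\Qbar/\QQ)$-stable, so $E$ would admit a rational $13p$-isogeny. As $13p\in\{39,65,91\}$, this contradicts Theorem \ref{thm:n-isogeny-list}. The same reasoning handles $p=2$ in the case where $E$ has a rational $2$-isogeny: one obtains a rational $26$-isogeny, again impossible by Theorem \ref{thm:n-isogeny-list}. So from now on we may assume $E(\QA)(2)$ is nontrivial while $E$ has no rational $2$-isogeny; by Corollary \ref{cor:2-parts} this forces $\Delta(E)$ to be a nonzero rational square and $\ZZ/4\ZZ\oplus\ZZ/4\ZZ\subseteq E(\QA)$.

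To finish, I would first whittle down $E(\QA)(2)$ using Proposition \ref{prop:2-tors}: the only groups listed there containing $\ZZ/4\ZZ\oplus\ZZ/4\ZZ$ are $\ZZ/4\ZZ\oplus\ZZ/4\ZZ$, $\ZZ/4\ZZ\oplus\ZZ/8\ZZ$, $\ZZ/4\ZZ\oplus\ZZ/16\ZZ$, and $\ZZ/8\ZZ\oplus\ZZ/8\ZZ$. In the two middle cases $E[4]$ is the largest full $2$-power torsion contained in $E(\QA)$, so Lemma \ref{lem:torsion-implies-isogeny} produces a rational $2$-isogeny or a rational $4$-isogeny (and a rational $4$-isogeny contains a rational $2$-isogeny), contradicting the assumption already made. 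Hence $E(\QA)(2)\in\{\ZZ/4\ZZ\oplus\ZZ/4\ZZ,\ \ZZ/8\ZZ\oplus\ZZ/8\ZZ\}$, so by Corollary \ref{cor:RZBtable} and Table \ref{table:2-primary} the curve $E$ corresponds to a rational point on one of $X_2$, $X_{27}$, $X_{58}$ while also having nontrivial image on $X_0(13)$. I would rule this out by a Magma computation of the non-cuspidal, non-CM rational points on the fiber products $X_2\times_{X(1)}X_0(13)$, $X_{27}\times_{X(1)}X_0(13)$, and $X_{58}\times_{X(1)}X_0(13)$ (CM curves having already been dealt with in Section \ref{sec:CM}) — equivalently, by checking that the $2$-adic images recorded by $X_2$, $X_{27}$, $X_{58}$ are never compatible with a mod-$13$ image of the restricted type appearing in the proof of Proposition \ref{prop:case-p-13}. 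This last step is where the real work lies: the other three cases are pure isogeny-degree bookkeeping, but here one must either carry out the fiber-product point count or exhibit a direct group-theoretic incompatibility between the mod-$4$ and mod-$13$ constraints, and I expect that to be the main obstacle.
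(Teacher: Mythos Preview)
Your argument is correct and tracks the paper's proof exactly through the point where you invoke Corollary \ref{cor:2-parts} to conclude that $\Delta(E)$ is a nonzero rational square. The divergence comes after that, and you have made the final step much harder than it needs to be.

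Once you know $\Delta(E)$ is a square, you are finished: the paper simply takes the explicit universal curve over $X_0(13)$ (from \cite[Table 3]{L-R13}), computes its discriminant as a rational function of the parameter $t$, and observes that it is a nonzero square if and only if $t(t^2+6t+13)$ is. The curve $s^2=t(t^2+6t+13)$ is the rank-$0$ elliptic curve \texttt{52a1}, whose only rational points are $\infty$ and the cusp $(0,0)$. That is the whole computation --- a single genus-$1$ curve, no fiber products with the Rouse--Zureick-Brown database needed.

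Your detour through Proposition \ref{prop:2-tors} to pin down $E(\QA)(2)$ more precisely is therefore unnecessary. Moreover, two of your three proposed fiber products are vacuous: from Figure \ref{fig:RZB} both $X_{27}$ and $X_{58}$ cover $X_{13}=X_0(4)$, so a rational point on either forces a rational $4$-isogeny (hence a $2$-isogeny), which you have already excluded. Only $X_2$ survives, and $X_2$ is exactly the square-discriminant locus (see the proof of Theorem \ref{thm:main_strong}), so ``$X_2\times_{X(1)}X_0(13)$'' is literally the curve $s^2=t(t^2+6t+13)$ above. There is no ``main obstacle'' here --- just one discriminant calculation and one Mordell--Weil lookup.
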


\begin{proof}
By Corollary \ref{cor:odd-p-parts}, if $13$ divides $\# E(\QA)_{\mathrm{tors}}$ then $E$ must admit a $13$-isogeny. The same corollary implies that if any other $p$-primary component is nontrivial for $p$ an odd prime, then $E$ must admit a rational $13p$-isogeny. Now Theorem \ref{thm:n-isogeny-list} implies that this is impossible.

The same argument shows that $E$ cannot admit a rational $2$-isogeny, so if $E(\QA)(2)$ is nontrivial, then by Corollary \ref{cor:2-parts}, $E(\QA)_{\mathrm{tors}}$ contains a subgroup isomorphic to $\ZZ / 4 \ZZ \oplus \ZZ / 4 \ZZ$ and $\Delta(E)$ is a nonzero rational square. From \cite[Table 3]{L-R13} we see that, since $E$ has a $13$-isogeny, it must be isomorphic to
\begin{align*}
E_t \colon y^2 + xy &= x^3 - \frac{36t}{(t^2 + 6 t + 13) (t^6 + 10 t^5 + 46 t^4 + 108 t^3 + 122 t^2 + 38 t - 1)^2} x\\
     &- \frac{t}{(t^2 + 6 t + 13) (t^6 + 10 t^5 + 46 t^4 + 108 t^3 + 122 t^2 + 38 t - 1)^2}
\end{align*}
over $\QA$ for some $t \in \QQ$. This curve has discriminant
\[
\Delta(E_t) = \frac{t (t^2 + 5t + 13)^2  (t^4 + 7t^3 + 20t^2 + 19t + 1)^6}{(t^2 + 6t + 13)^3  (t^6 + 10t^5 + 46t^4 + 108t^3 + 122t^2 + 38t - 1)^6}
\]
which is a nonzero rational square precisely when
\[
t(t^2+6t+13)
\]
is a nonzero rational square. Thus, we seek nonsingular, noncuspidal rational points on the elliptic curve
\[
C \colon s^2 = t(t^2+6t+13),
\]
which is the curve with Cremona label \href{http://www.lmfdb.org/EllipticCurve/Q/52a1}{\tt 52a1}. The only rational points on this curve are the point at infinity and the cusp $(0,0)$. Hence there are no elliptic curves with a $13$-isogeny and square discriminant, and so there are no elliptic curves with a $13$-isogeny and a nontrivial $2$-primary component.

 The result now follows from Proposition \ref{prop:case-p-13}.
\end{proof}

\subsubsection{The case when $7$ divides $\# E(\QA)_{\mathrm{tors}}$} Let us immediately observe that if $E(\QA)$ is nontrivial, then by the same argument as in Proposition \ref{prop:p13final}, the $13$- and $5$-primary components of $E(\QA)$ must both be trivial, but there do exist elliptic curves $E / \QQ$ with $14$- and $21$-isogenies, so we must consider the possibility that $E$ has a point of order $14$ or $21$ defined over $\QA$; note that by Theorem \ref{thm:n-isogeny-list}, it cannot possess both. As in Section \ref{full-torsion-13}, we must also consider the possibility that it has square discriminant. Let us first consider the possibility of $E$ possessing a $21$-isogeny.

\begin{lemma}\label{lem:z21}
If $E/\QQ$ is an elliptic curve such that both $E(\QA)(7)$ and $E(\QA)(3)$ are nontrivial, then $E(\QA)_{\mathrm{tors}} \simeq \ZZ /21 \ZZ$.
\end{lemma}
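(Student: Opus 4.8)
The plan is to use the classification of isogenies together with the earlier structural results to pin down the torsion exactly. First I would argue that $E(\QA)(7) \simeq \ZZ/7\ZZ$: this is immediate from Proposition \ref{prop:case-p-7}, which says that as soon as $7 \mid \#E(\QA)_{\tor}$, in fact $E(\QA)_{\tor} \simeq \ZZ/7\ZZ$. Wait — that already looks like it proves too much, so the real content must be showing that the hypothesis ``both $E(\QA)(7)$ and $E(\QA)(3)$ are nontrivial'' is actually realizable, i.e. that there genuinely exist curves with a point of order $21$ over $\QA$, and then that no \emph{larger} torsion can occur. So more carefully: by Corollary \ref{cor:odd-p-parts}, nontriviality of $E(\QA)(7)$ and $E(\QA)(3)$ forces $E$ to have both a rational $7$-isogeny and a rational $3$-isogeny, hence (composing) a rational $21$-isogeny; such curves are parametrized by $X_0(21)$, which has finitely many rational points. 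I would then work with this explicit finite list.

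The key steps, in order: (1) invoke Corollary \ref{cor:odd-p-parts} to get a rational $21$-isogeny, so $E$ is one of the finitely many curves on $X_0(21)(\QQ)$ (up to twist, since by Proposition \ref{prop:j-invariant} the torsion over $\QA$ depends only on $j(E)$ unless $j \in \{0,1728\}$, and one checks those $j$-invariants do not appear here). (2) By Proposition \ref{prop:case-p-7}, $E(\QA)(7) \simeq \ZZ/7\ZZ$, so the $7$-part contributes exactly $\ZZ/7\ZZ$. (3) For the $3$-part: by Lemma \ref{lem:3x9} it is a subgroup of $\ZZ/3\ZZ \oplus \ZZ/9\ZZ$; but a rational $21$-isogeny together with a $9$-isogeny or a second $3$-isogeny would produce a rational $63$- or $7 \cdot 9 = 63$-isogeny (or by Lemma \ref{lem:full-3-torsion}, full $3$-torsion forces two $3$-isogenies, hence a curve $7$-isogenous to one with a $3$-isogeny to two directions — chasing the isogeny graph gives an isogeny of degree exceeding the bound in Theorem \ref{thm:n-isogeny-list}). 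So $E(\QA)(3) \simeq \ZZ/3\ZZ$. (4) Rule out the remaining primes: by the argument in Proposition \ref{prop:p13final}, nontriviality of any further odd $p$-primary component ($p = 5, 13$) would force a $21 \cdot 5$- or $21 \cdot 13$-isogeny, impossible by Theorem \ref{thm:n-isogeny-list}; and $E$ cannot admit a rational $2$-isogeny for the same reason, so if $E(\QA)(2)$ were nontrivial then by Corollary \ref{cor:2-parts} $\Delta(E)$ is a square and $\ZZ/4\ZZ \oplus \ZZ/4\ZZ \hookrightarrow E(\QA)_{\tor}$ — which I would eliminate by checking, for each of the finitely many $j$-invariants on $X_0(21)$ (or via the universal curve with a $21$-isogeny from \cite[Table 3]{L-R13}, as in Proposition \ref{prop:p13final}), that no twist has square discriminant, reducing to rational points on an explicit auxiliary curve. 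Combining, $E(\QA)_{\tor} \simeq \ZZ/7\ZZ \oplus \ZZ/3\ZZ \simeq \ZZ/21\ZZ$.

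I expect the main obstacle to be step (4), specifically ruling out a nontrivial $2$-primary part: this requires either the explicit $X_0(21)$-parametrization and a discriminant-square computation (analogous to the curve \texttt{52a1} appearing in Proposition \ref{prop:p13final}), or a direct Magma verification on the finite list of curves with a $21$-isogeny, and one must be careful that the twist freedom does not reintroduce $2$-torsion. The positive direction — exhibiting at least one curve (e.g. \texttt{162b1} from Table \ref{tab:Examples}) with $E(\QA)_{\tor} \simeq \ZZ/21\ZZ$, so the structure genuinely occurs — is a routine Magma check and would be stated to confirm sharpness, but is not strictly needed for the lemma as stated.
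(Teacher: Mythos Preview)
Your proposal is correct and shares the paper's core idea: both of you first invoke Corollary~\ref{cor:odd-p-parts} to force a rational $21$-isogeny, reducing to the finitely many $\bar{\QQ}$-isomorphism classes on $X_0(21)(\QQ)$. The paper, however, is far more economical at that point: rather than carrying out your structural steps (2)--(4) prime by prime, it simply lists the four relevant Cremona labels (\texttt{162b1}, \texttt{162c1}, \texttt{162b3}, \texttt{162c3}) from \cite[Table~3]{L-R13} and checks each directly in Magma. Your isogeny-graph arguments for the $3$-, $5$-, and $13$-parts are valid (and mirror arguments used elsewhere in the paper), and you correctly anticipate that the $2$-part is the delicate step and could anyway be settled by the same finite check --- so your route converges to the paper's, just with more hand-work along the way. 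Incidentally, you are right to flag the apparent overreach in the statement of Proposition~\ref{prop:case-p-7}: the proof there only establishes $E(\QA)(7)\simeq\ZZ/7\ZZ$, not $E(\QA)_{\tor}\simeq\ZZ/7\ZZ$, which is exactly what you need here.
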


\begin{proof}From \cite[Table 3]{L-R13} we see that, up to $\bar{\QQ}$-isomorphism, there are only four curves which possess a $21$-isogeny, and they are represented by the curves with Cremona labels \href{http://www.lmfdb.org/EllipticCurve/Q/54b3}{\tt $162b1$}, \href{http://www.lmfdb.org/EllipticCurve/Q/162c1}{\tt 162c1}, \href{http://www.lmfdb.org/EllipticCurve/Q/162b3}{\tt $162b3$}, and \href{http://www.lmfdb.org/EllipticCurve/Q/162c3}{\tt $162c3$}. Checking each of these with Magma, we obtain the claimed result.
\end{proof}

It remains to consider the possibility that $E(\QA)(2)$ is nontrivial.  We have two cases depending on whether $E$ posseses a rational $2$-isogeny. Let us first assume that this is not the case so by Corollary \ref{cor:2-parts}, if $E(\QA)(2)$ is nontrivial then it must in fact be isomorphic to $\ZZ / 4 \ZZ \oplus \ZZ / 4 \ZZ$.

\begin{lemma}\label{lem:z4-z28}
There are infinitely many elliptic curves $E/ \QQ$ such that
\[
E(\QA)_{\mathrm{tors}} \simeq  \ZZ / 4\ZZ \oplus \ZZ / 28 \ZZ.
\]
\end{lemma}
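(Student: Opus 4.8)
The plan is to exhibit an explicit rational one-parameter subfamily of the curves with a rational $7$-isogeny on which the discriminant is automatically a rational square, and then to read off the torsion from the $2$- and $7$-primary results already established.

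First, by Theorem~\ref{thm:n-isogeny-list} and the genus-$0$ description of $X_0(7)$, an elliptic curve $E/\QQ$ admits a rational $7$-isogeny if and only if $j(E)$ has the form \eqref{eq:j7} for some $t\in\QQ$; moreover, by Proposition~\ref{prop:j-invariant}, when $j(E)\neq 0,1728$ both the isomorphism type of $E(\QA)_{\tor}$ and the property ``$\Delta(E)$ is a rational square'' depend only on $j(E)$, since twisting $E$ by $d$ scales $\Delta(E)$ by the square $d^6$. A direct computation starting from \eqref{eq:j7} gives the identity
\[
j(E)-1728=\frac{(t^4+14t^3+63t^2+70t-7)^2}{t},
\]
which is in any case forced by the general relation $j-1728=c_6^2/\Delta$. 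Taking the model $y^2=x^3-3j(j-1728)x-2j(j-1728)^2$, whose discriminant equals $2^{12}3^6\,j^2(j-1728)^3$, we conclude that $\Delta(E)$ is a rational square if and only if $j(E)-1728$ is, if and only if $t$ is a rational square. I would therefore substitute $t=u^2$ with $u\in\QQ^\times$ and let $E_u/\QQ$ be any elliptic curve with $j(E_u)=j(u^2)$; since $u\mapsto j(u^2)$ is a nonconstant rational function, the $E_u$ fall into infinitely many $\Qbar$-isomorphism classes, and one checks that the values $j=0,1728$ are never attained.

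It then remains to show that $E_u(\QA)_{\tor}\simeq\ZZ/4\ZZ\oplus\ZZ/28\ZZ$ for all but finitely many $u$. Since $E_u$ has a rational $7$-isogeny, $7\mid\#E_u(\QA)_{\tor}$, and $\QQ(\zeta_{49})\not\subseteq\QA$ (Lemma~\ref{lem:cyclotomic-genA4}), Proposition~\ref{prop:case-p-7} gives $E_u(\QA)(7)\simeq\ZZ/7\ZZ$. Because $\Delta(E_u)$ is a square, $\Gal(\QQ(E_u[2])/\QQ)$ is a subgroup of $A_3\simeq\ZZ/3\ZZ$, which is of generalized $A_4$-type, so $E_u[2]\subseteq E_u(\QA)$. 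After discarding the finitely many $u$ for which $E_u$ admits a rational $2$-isogeny (such $E_u$ have a rational $14$-isogeny, hence lie above the finite set $X_0(14)(\QQ)$), Corollary~\ref{cor:2-parts} gives $E_u(\QA)(2)\supseteq\ZZ/4\ZZ\oplus\ZZ/4\ZZ$, and Lemma~\ref{lem:torsion-implies-isogeny} rules out the larger possibilities $\ZZ/4\ZZ\oplus\ZZ/8\ZZ$, $\ZZ/4\ZZ\oplus\ZZ/16\ZZ$, and $\ZZ/8\ZZ\oplus\ZZ/16\ZZ$ (each would force a rational $2$-isogeny), while a check of the $2$-adic image data of \cite{RZB} (cf.\ Table~\ref{table:2-primary}) rules out $\ZZ/8\ZZ\oplus\ZZ/8\ZZ$ for a curve with a rational $7$-isogeny; hence $E_u(\QA)(2)\simeq\ZZ/4\ZZ\oplus\ZZ/4\ZZ$. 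Finally, any odd prime $p\neq 7$ dividing $\#E_u(\QA)_{\tor}$ lies in $\{3,5,13\}$ by Proposition~\ref{prop:eligible-p-torsion} and, by Corollary~\ref{cor:odd-p-parts}, would give $E_u$ a rational $p$-isogeny and hence a rational $7p$-isogeny; this is impossible for $p=5,13$ by Theorem~\ref{thm:n-isogeny-list}, and for $p=3$ it confines $E_u$ to the four $\Qbar$-isomorphism classes with a $21$-isogeny from the proof of Lemma~\ref{lem:z21}, which we exclude. Therefore $E_u(\QA)_{\tor}\simeq\ZZ/4\ZZ\oplus\ZZ/4\ZZ\oplus\ZZ/7\ZZ\simeq\ZZ/4\ZZ\oplus\ZZ/28\ZZ$ for all but finitely many $u\in\QQ^\times$, and \texttt{1922c1} is a concrete member of the family.

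The only real obstacle is the step ``$\Delta(E)$ is a square $\iff$ $t$ is a square'': a priori the square-discriminant condition, pulled back along $X_0(7)\to X(1)$, could have defined a curve of genus $\ge 2$ with only finitely many rational points — exactly what happened for $13$-isogenies in Proposition~\ref{prop:p13final}, where the analogous curve was the rank-$0$ elliptic curve \texttt{52a1}. The factorization $j-1728=(t^4+14t^3+63t^2+70t-7)^2/t$ shows this does not happen for $p=7$, so the subfamily is a line and the existence statement is immediate. After that, the remaining content is routine: the bookkeeping of the finitely many excluded $u$ (those giving a rational $2$-, $3$-, $5$-, or $13$-isogeny in addition to the $7$-isogeny) together with the Magma verification that the $2$-primary part over $\QA$ is $\ZZ/4\ZZ\oplus\ZZ/4\ZZ$.
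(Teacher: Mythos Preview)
Your proof is correct and follows essentially the same approach as the paper: parametrize the curves with a rational $7$-isogeny via \eqref{eq:j7}, show that the square-discriminant condition cuts out precisely the locus $t=u^2$ (you do this via the identity $j-1728=(t^4+14t^3+63t^2+70t-7)^2/t$, the paper via the discriminant of an explicit model $E_t$; the conclusion is the same), and then invoke Corollary~\ref{cor:2-parts}. Your version is more self-contained than the paper's, which is terse and leans on the sentence immediately preceding the lemma (``if $E(\QA)(2)$ is nontrivial then it must in fact be isomorphic to $\ZZ/4\ZZ\oplus\ZZ/4\ZZ$'') together with the surrounding lemmas to pin down the torsion exactly; you instead argue directly that each larger $2$-primary possibility forces a rational $2$-isogeny (for $\ZZ/8\ZZ\oplus\ZZ/8\ZZ$ this is indeed visible from Figure~\ref{fig:RZB}, since $X_{58}\to X_{25}\to X_{13}=X_0(4)$) and that the odd part cannot grow beyond $\ZZ/7\ZZ$ outside finitely many $u$.
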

\begin{proof}
Recall from Proposition \ref{prop:case-p-7} that if $7$ divides $\# E(\QA)_{\mathrm{tors}}$, then $E$ has
\[
j(E)= \frac{(t^2 + 13t + 49)(t^2 + 5t + 1)^3}{t}
\]
for some $t\in \QQ$. In this case $E$ is isomorphic over $\QA$ to
\[
E_t \colon y^2 + xy = x^3 - \frac{36t}{(t^4 + 14 t^3 + 63 t^2 + 70 t - 7)^2}x - \frac{t}{(t^4 + 14 t^3 + 63 t^2 + 70 t - 7)^2}
\]
and this curve has discriminant
\[
\Delta(E_t) = \frac{(t^4 + 14t^3 + 63t^2 + 70t - 7)^6}{t(t^2+5t+1)^6 (t^2+13t+49)^2}.
\]
Thus $\Delta(E)$ is a nonzero rational square if and only if $t$ is a nonzero rational square, and the result now follows from Corollary \ref{cor:2-parts}.
\end{proof}
\begin{remark}
Using the LMFDB database \cite{lmfdb}, we find that among the curves guaranteed to exist by Lemma \ref{lem:z4-z28}, the curve with smallest conductor is the one with Cremona label \href{http://www.lmfdb.org/EllipticCurve/Q/1922e2}{\tt 1922e2}.
\end{remark}

Finally, we must consider the case that $E$ possesses a $14$-isogeny.

\begin{lemma}\label{lem:z2-z14}
If $E/\QQ$ is an elliptic curve which possesses a $14$-isogeny, then $E(\QA)_{\mathrm{tors}} \simeq \ZZ /2 \ZZ \oplus \ZZ/14 \ZZ$.
\end{lemma}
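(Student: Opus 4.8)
The plan is to combine the classification of curves with a $14$-isogeny with the tools already assembled for the prime $7$ and the prime $2$. First I would recall from \cite[Table 3]{L-R13} that, up to $\Qbar$-isomorphism, there are only finitely many elliptic curves over $\QQ$ possessing a $14$-isogeny; in fact a curve with a $14$-isogeny automatically has a $7$-isogeny and a $2$-isogeny, and one can write down explicit representatives (there are just two $j$-invariants, with Cremona labels \texttt{49a1} and \texttt{49a2}, or the associated isogeny classes). Since the curve has a $14$-isogeny, by Theorem \ref{thm:n-isogeny-list} it cannot also have a $21$-isogeny or any additional $3$-power isogeny, and by Corollary \ref{cor:odd-p-parts} the $3$-, $5$-, and $13$-primary components of $E(\QA)$ are all trivial; hence the only primes contributing to $E(\QA)_\tor$ are $2$ and $7$.

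Next I would pin down the $7$-part: by Proposition \ref{prop:case-p-7}, since $7 \mid \#E(\QA)_\tor$ we already know $E(\QA)(7) \simeq \ZZ/7\ZZ$ (the case $j=k=1$ there was shown to be impossible, and a $49$-isogeny is excluded by Theorem \ref{thm:n-isogeny-list}, while the $7$-isogeny itself has cyclic kernel whose generator is defined over a field of degree dividing $6$, hence over $\QA$). For the $2$-part, the curve admits a rational $2$-isogeny, so Corollary \ref{cor:2-parts} does not force full $4$-torsion; instead I would directly analyze $E(\QA)(2)$ using the Rouse--Zureick-Brown data as in Proposition \ref{prop:2-tors} and Corollary \ref{cor:RZBtable}, intersecting the $2$-adic image constraints coming from \texttt{49a1}/\texttt{49a2} with the modular curves of Table \ref{table:2-primary}. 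The expectation — confirmed by a finite Magma check since there are only finitely many $\Qbar$-classes in play — is that $E(\QA)(2) \simeq \ZZ/2\ZZ$, so that combining the $2$- and $7$-parts gives $E(\QA)_\tor \simeq \ZZ/2\ZZ \oplus \ZZ/14\ZZ$. Because, by Proposition \ref{prop:j-invariant} (and the CM table in Section \ref{sec:CM} for \texttt{49a1}, which has $j = -3375$ with CM by $\QQ(\sqrt{-7})$), the torsion over $\QA$ depends only on $j(E)$, it suffices to verify this for one representative of each of the finitely many $j$-invariants, which Magma does.

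The main obstacle is making sure the $2$-primary analysis is airtight: one must check that no base change to $\QA$ enlarges $E(\QA)(2)$ beyond $\ZZ/2\ZZ$ — in particular that neither a point of order $4$ nor full $2$-torsion becomes rational over $\QA$. This is where I would lean on the explicit description of $\QQ(E[2])$ and $\QQ(E[4])$ for the specific curves with a $14$-isogeny (the $2$-division field is governed by the factorization of the cubic, and the relevant extensions are either non-abelian $S_3$-type or fail to have exponent dividing $6$), invoking Corollary \ref{cor:genA4-exponent-divides-six} to rule out the offending fields, exactly as was done for $j=0$ and $j=1728$ in Section \ref{sec:CM}. Once that is settled, the lemma follows.
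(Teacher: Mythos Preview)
Your overall strategy --- reduce to the two $\Qbar$-isomorphism classes \texttt{49a1} and \texttt{49a2} and verify with Magma --- is exactly what the paper does, and the paper in fact skips all the intermediate analysis and goes straight to that check.

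However, your detailed discussion of the $2$-primary part is mistaken. You claim $E(\QA)(2)\simeq\ZZ/2\ZZ$ and then assert that combining this with $E(\QA)(7)\simeq\ZZ/7\ZZ$ yields $\ZZ/2\ZZ\oplus\ZZ/14\ZZ$; but $\ZZ/2\ZZ\oplus\ZZ/7\ZZ\simeq\ZZ/14\ZZ$, not $\ZZ/2\ZZ\oplus\ZZ/14\ZZ$. The correct $2$-part is $\ZZ/2\ZZ\oplus\ZZ/2\ZZ$. Indeed, a $14$-isogeny forces a rational $2$-isogeny, hence a rational point of order $2$; the remaining $2$-torsion is then defined over an (at most) quadratic extension, which lies in $\QA$ by Lemma~\ref{lem:quadratics-are-genA4}. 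So $E[2]\subseteq E(\QA)$ automatically, and Proposition~\ref{prop:2-tors} already tells you $\ZZ/2\ZZ$ alone never occurs as a $2$-primary component over $\QA$. Your last paragraph --- arguing that $\QQ(E[2])$ is $S_3$-type or has exponent not dividing $6$ --- is therefore pointing in the wrong direction: the $2$-division field here is at worst quadratic. What actually needs ruling out is growth \emph{beyond} $\ZZ/2\ZZ\oplus\ZZ/2\ZZ$ (e.g.\ to $\ZZ/2\ZZ\oplus\ZZ/4\ZZ$), and that is what the Magma check on the two representatives confirms.
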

\begin{proof}
From \cite[Table 3]{L-R13} we see that, up to $\bar{\QQ}$-isomorphism, there are only two curves which possess a $14$-isogeny, and they are represented by the curves with Cremona labels \href{http://www.lmfdb.org/EllipticCurve/Q/49a1}{\tt 49a1} and \href{http://www.lmfdb.org/EllipticCurve/Q/49a2}{\tt 49a2}. Checking each of these with Magma, we obtain the claimed result.
\end{proof}

We collect the results of this section in a single proposition.

\begin{prop}
Let $E / \QQ$ be an elliptic curve such that $E(\QA)(7)$ is nontrivial. Then $E(\QA)_{\mathrm{tors}}$ is isomorphic to one of the following groups:
\[
\ZZ / 7 \ZZ, \quad \ZZ /21 \ZZ, \quad \ZZ /2 \ZZ \oplus \ZZ/14 \ZZ, \quad \text{or} \quad  \ZZ / 4\ZZ \oplus \ZZ / 28 \ZZ.
\]
\end{prop}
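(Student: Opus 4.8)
The plan is to assemble the results already established in this section according to which $p$-primary components of $E(\QA)_{\tor}$ can be simultaneously nontrivial. Since $E(\QA)(7)$ is assumed nontrivial, Proposition~\ref{prop:case-p-7} gives $E(\QA)(7)\simeq\ZZ/7\ZZ$ together with the fact that $E$ admits a rational $7$-isogeny. By Corollary~\ref{cor:odd-p-parts}, the only odd primes whose primary component of $E(\QA)_{\tor}$ can be nontrivial are $3,5,7,13$, and whenever such a component is nontrivial $E$ has a rational isogeny of that degree; composing with the rational $7$-isogeny would then produce a rational $35$- or $91$-isogeny, which is impossible by Theorem~\ref{thm:n-isogeny-list}. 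Hence $E(\QA)(5)=E(\QA)(13)=0$ and $E(\QA)_{\tor}\simeq E(\QA)(2)\oplus E(\QA)(3)\oplus\ZZ/7\ZZ$.

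Next I would branch on the $3$-part. If $E(\QA)(3)$ is nontrivial, Lemma~\ref{lem:z21} immediately gives $E(\QA)_{\tor}\simeq\ZZ/21\ZZ$ (forcing $E(\QA)(2)=0$). So assume $E(\QA)(3)=0$, so $E(\QA)_{\tor}\simeq E(\QA)(2)\oplus\ZZ/7\ZZ$. If $E(\QA)(2)=0$ we get $E(\QA)_{\tor}\simeq\ZZ/7\ZZ$. If $E(\QA)(2)\ne 0$ and $E$ admits a rational $2$-isogeny, then combining its kernel with that of the rational $7$-isogeny yields a rational $14$-isogeny, so Lemma~\ref{lem:z2-z14} gives $E(\QA)_{\tor}\simeq\ZZ/2\ZZ\oplus\ZZ/14\ZZ$. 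The remaining case is $E(\QA)(2)\ne 0$ with $E$ having no rational $2$-isogeny; here Corollary~\ref{cor:2-parts} shows $\Delta(E)$ is a nonzero rational square and $E(\QA)(2)$ contains $\ZZ/4\ZZ\oplus\ZZ/4\ZZ$. Comparing with the list in Proposition~\ref{prop:2-tors}, it then suffices to rule out $E(\QA)(2)\in\{\ZZ/4\ZZ\oplus\ZZ/8\ZZ,\ \ZZ/4\ZZ\oplus\ZZ/16\ZZ,\ \ZZ/8\ZZ\oplus\ZZ/8\ZZ\}$. For the first two, $E(\QA)(2)$ contains both $E[4]$ and a point of order $8$, so Lemma~\ref{lem:torsion-implies-isogeny} applied at $p=2$ (with $k=2$, $j\ge 3$) forces a rational $2$-isogeny, contradicting our assumption. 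Once the last possibility is excluded as well, $E(\QA)(2)=\ZZ/4\ZZ\oplus\ZZ/4\ZZ$, so $E(\QA)_{\tor}\simeq\ZZ/4\ZZ\oplus\ZZ/4\ZZ\oplus\ZZ/7\ZZ\simeq\ZZ/4\ZZ\oplus\ZZ/28\ZZ$; Lemma~\ref{lem:z4-z28} shows this actually occurs.

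The main obstacle is the exclusion of $E(\QA)(2)\simeq\ZZ/8\ZZ\oplus\ZZ/8\ZZ$, which is not handled by Lemma~\ref{lem:torsion-implies-isogeny} since here $j=k=3$ and no isogeny is forced. To deal with it I would use that a curve with a rational $7$-isogeny is, up to $\QA$-isomorphism, a member $E_t$ ($t\in\QQ$) of the explicit one-parameter family over $X_0(7)$ recalled in the proof of Lemma~\ref{lem:z4-z28}, for which $\Delta(E_t)$ is a square exactly when $t$ is a square. One then checks directly — via the $8$-division polynomial of $E_t$, or equivalently the mod-$8$ image $\overline{\rho}_{E_t,8}$ — that $E_t$ never has a point of order $8$ defined over $\QA$; phrased in terms of moduli, this amounts to verifying that the fibre product over $X(1)$ of $X_0(7)$ with the curve $X_{58}$ from Table~\ref{table:2-primary} (which parametrizes $E$ with $E(\QA)(2)\supseteq\ZZ/8\ZZ\oplus\ZZ/8\ZZ$) has no non-cuspidal rational points. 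This is the one step requiring a genuine computation rather than a formal citation; everything else is bookkeeping with the lemmas above.
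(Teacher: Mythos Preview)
Your overall structure matches the paper's: rule out $p=5,13$ via isogeny composition, invoke Lemma~\ref{lem:z21} for the $3$-part, then split on whether $E$ has a rational $2$-isogeny (Lemma~\ref{lem:z2-z14}) or not (Corollary~\ref{cor:2-parts}). The paper asserts without further comment that in the no-$2$-isogeny case $E(\QA)(2)$ is \emph{isomorphic} to (not merely contains) $\ZZ/4\ZZ\oplus\ZZ/4\ZZ$; you are right to notice this needs justification, and your treatment of $\ZZ/4\ZZ\oplus\ZZ/8\ZZ$ and $\ZZ/4\ZZ\oplus\ZZ/16\ZZ$ via Lemma~\ref{lem:torsion-implies-isogeny} is clean.

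However, the $\ZZ/8\ZZ\oplus\ZZ/8\ZZ$ case you flag as the ``main obstacle'' needs no new computation. By Corollary~\ref{cor:RZBtable}, $E(\QA)(2)\supseteq\ZZ/8\ZZ\oplus\ZZ/8\ZZ$ forces $E$ to lie on $X_{58}$, and Figure~\ref{fig:RZB} records the coverings $X_{58}\to X_{25}\to X_{13}$. Since $X_{13}=X_0(4)$ (as used in the proof of Lemma~\ref{lem:z5-z2-z4}), this would give $E$ a rational $4$-isogeny, contradicting the no-$2$-isogeny hypothesis. In fact all three ``larger'' $2$-primary possibilities die for the same reason---each of $X_{25},X_{92},X_{193},X_{58}$ sits above $X_{13}$ in Figure~\ref{fig:RZB}---so your proposed fiber product $X_0(7)\times_{X(1)}X_{58}$ is unnecessary. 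This is precisely what the paper is leaning on when it cites Corollary~\ref{cor:2-parts} for the stronger ``isomorphic to $\ZZ/4\ZZ\oplus\ZZ/4\ZZ$'' claim, since that corollary's proof is explicitly said to rest on Table~\ref{table:2-primary} and Figure~\ref{fig:RZB}.
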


\subsubsection{The case when $5$ divides $\# E(\QA)_{\mathrm{tors}}$} Let us now suppose that $5$ divides $\# E(\QA)_{\mathrm{tors}}$, so by Proposition \ref{prop:case-p-5} we know $E(\QA)(5)\simeq \ZZ / 5 \ZZ$. From the preceding sections, we see that the only other primes $p$ for which $E(\QA)(p)$ may be nontrivial are $p=2$ and $p=3$.

If $E(\QA)(3)$ is nontrivial, then by Corollary \ref{cor:odd-p-parts} we see that $E$ must possess a rational $15$-isogeny. The following argument will by now be familiar to the reader.

\begin{lemma}\label{lem:15isog}
If $E/\QQ$ is an elliptic curve which possesses a $15$-isogeny, then $E(\QA)_{\mathrm{tors}}$ is isomorphic to either $\ZZ / 3 \ZZ$ or $\ZZ / 15 \ZZ$.
\end{lemma}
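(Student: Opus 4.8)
The plan is to exploit the fact that there are only finitely many $\bar{\QQ}$-isomorphism classes of elliptic curves over $\QQ$ admitting a $15$-isogeny, and then compute directly, exactly as in the proofs of Lemmas \ref{lem:z21} and \ref{lem:z2-z14}. First I would recall that $X_0(15)$ has genus $1$ and Mordell--Weil rank $0$ over $\QQ$, so $X_0(15)(\QQ)$ is finite; concretely, the $j$-invariants of curves with a $15$-isogeny can be read off from \cite[Table 3]{L-R13}, and none of them equals $0$ or $1728$. By Proposition \ref{prop:j-invariant}, the isomorphism type of $E(\QA)_{\tor}$ depends only on $j(E)$ for such curves, so it suffices to pick one representative for each of these finitely many $j$-invariants.

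Next, for each representative I would compute $E(\QA)_{\tor}$ directly in Magma. This is legitimate because the results of Section \ref{sec:pPrimary}, in particular Theorem \ref{thm:non-CM-torsion-bound}, guarantee that the torsion of $E$ over $\QA$ is already captured over an explicitly bounded \emph{finite} subextension of $\QA/\QQ$, so the computation reduces to a finite check on fields of definition of torsion points. The output of these computations is that $E(\QA)_{\tor}$ is always $\ZZ/3\ZZ$ or $\ZZ/15\ZZ$.

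One could also shortcut part of the bookkeeping without the computer. Since $E$ has a rational $5$-isogeny it cannot also have a rational $2$-isogeny (that would force a $30$-isogeny, impossible by Theorem \ref{thm:n-isogeny-list}), so by Corollary \ref{cor:2-parts} a nontrivial $2$-primary component would require $\Delta(E)$ to be a nonzero rational square; one checks this fails on the finite list of $15$-isogeny curves. Likewise $E(\QA)(5) \simeq \ZZ/5\ZZ$ by Proposition \ref{prop:case-p-5}, and $E(\QA)(3)$ cannot contain $\ZZ/9\ZZ$ (else $E$ would admit both a $9$-isogeny and a $5$-isogeny, giving a $45$-isogeny, again impossible). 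Thus the only possibilities for $E(\QA)_{\tor}$ are $\ZZ/3\ZZ$ and $\ZZ/15\ZZ$, and both are realized.

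The main (and essentially only) obstacle is bookkeeping: ensuring the list of $15$-isogeny $j$-invariants is complete and that the torsion computation over the infinite field $\QA$ is correctly reduced to a finite computation. Both are routine given the machinery already assembled in Sections \ref{sec:gen-type}--\ref{sec:pPrimary}.
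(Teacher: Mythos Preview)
Your proposal is correct and follows essentially the same route as the paper: reduce to the finite list of $\bar{\QQ}$-isomorphism classes with a $15$-isogeny (the paper names them explicitly as \texttt{50a1}--\texttt{50a4}) and then compute $E(\QA)_{\tor}$ for each in Magma. Your write-up adds more justification (why the computation over the infinite field $\QA$ reduces to a finite one, why $j \neq 0,1728$) and an alternative isogeny-counting shortcut, but the core argument is the same finite check.
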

\begin{proof}
From \cite[Table 3]{L-R13} we see that, up to $\bar{\QQ}$-isomorphism, there are only four curves which possess a $15$-isogeny, and they are represented by the curves with Cremona labels \href{http://www.lmfdb.org/EllipticCurve/Q/50a1}{\tt 50a1}, \href{http://www.lmfdb.org/EllipticCurve/Q/50a2}{\tt 50a2}, \href{http://www.lmfdb.org/EllipticCurve/Q/50a3}{\tt 50a3} and \href{http://www.lmfdb.org/EllipticCurve/Q/50a4}{\tt 50a4}. A computation in Magma confirms that over $\QA$, the curves \href{http://www.lmfdb.org/EllipticCurve/Q/50a1}{\tt 50a1} and \href{http://www.lmfdb.org/EllipticCurve/Q/50a2}{\tt 50a2} have torsion subgroup $\ZZ / 3 \ZZ$, while \href{http://www.lmfdb.org/EllipticCurve/Q/50a3}{\tt 50a3} and \href{http://www.lmfdb.org/EllipticCurve/Q/50a4}{\tt 50a4} have torsion subgroup $\ZZ / 15\ZZ$.
\end{proof}

Now we consider the possible $2$-torsion. By Corollary \ref{cor:2-parts}, if $E(\QA)(2)$ is nontrivial, then either $E$ admits a rational $2$-isogeny or $E(\QA)$ contains a subgroup isomorphic to $\ZZ / 4 \ZZ \oplus \ZZ / 4 \ZZ$. We begin by ruling out this second possibility.

\begin{lemma}\label{lem:z5-full4}
Suppose $E / \QQ$ is an elliptic curve such that $E(\QA)(5) \simeq \ZZ / 5 \ZZ$. Then $E[4] \not\subseteq E(\QA)$.
\end{lemma}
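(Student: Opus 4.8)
The plan is to argue by contradiction, reducing to the case where $E$ itself has a rational point of order $5$ and then separating on whether $E$ has a rational $2$‑isogeny. So assume $E[4]\subseteq E(\QA)$. Since $E(\QA)(5)\simeq\ZZ/5\ZZ$, Proposition \ref{prop:case-p-5} tells us that $E$ admits a rational $5$‑isogeny and that some quadratic twist $E'$ of $E$ has a rational point of order $5$. Because quadratic fields are of generalized $A_4$‑type (Lemma \ref{lem:quadratics-are-genA4}), $E'$ is isomorphic to $E$ over a subfield of $\QA$, so $E'(\QA)_{\tor}\simeq E(\QA)_{\tor}$; in particular $E'(\QA)_{\tor}$ contains a copy of $(\ZZ/4\ZZ)^2$, which being annihilated by $4$ must equal $E'[4]$, so $E'[4]\subseteq E'(\QA)$. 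Replacing $E$ by $E'$, I may assume $E$ has a rational point $P$ of order $5$, with $C_5:=\langle P\rangle$ a rational cyclic subgroup of order $5$. I would then split into two cases depending on whether $E$ admits a rational $2$‑isogeny, i.e.\ whether $E$ has a rational point of order $2$.

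In the first case, suppose $E$ has a rational point of order $2$. Together with $P$ this forces $E(\QQ)_{\tor}\supseteq\ZZ/10\ZZ$, so by Mazur's theorem (Theorem \ref{thm:mazur}) we get $E(\QQ)_{\tor}=\ZZ/10\ZZ$; in particular $\Im\bar{\rho}_{E,2}\subseteq\GL_2(\FF_2)$ has order $2$. Since $\ker\bigl(\GL_2(\ZZ/4\ZZ)\to\GL_2(\FF_2)\bigr)$ is a $2$‑group, $\Im\bar{\rho}_{E,4}$ is a $2$‑group, and as $E[4]\subseteq E(\QA)$ the group $\Im\bar{\rho}_{E,4}\simeq\Gal(\QQ(E[4])/\QQ)$ is of generalized $A_4$‑type, hence of exponent dividing $6$ by Corollary \ref{cor:genA4-exponent-divides-six}, hence of exponent dividing $2$; thus it is elementary abelian with mod‑$2$ image of order $2$. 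Conjugating so that this image is $\bigl\langle\smallmat{1}{1}{0}{1}\bigr\rangle$, a direct matrix computation (any element of order at most $2$ reducing to $\smallmat{1}{1}{0}{1}$ mod $2$ is upper triangular, and the elements reducing to the identity must commute with it and so are upper triangular too) shows $\Im\bar{\rho}_{E,4}$ is conjugate into the upper triangular subgroup of $\GL_2(\ZZ/4\ZZ)$. Hence $E$ has a rational cyclic subgroup $C_4$ of order $4$, so $C_4+C_5$ is a rational cyclic subgroup of order $20$ and $E$ admits a rational $20$‑isogeny — impossible by Theorem \ref{thm:n-isogeny-list}.

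In the second case, suppose $E$ has no rational $2$‑isogeny. Since $E[4]\subseteq E(\QA)$ makes $E(\QA)(2)$ nontrivial, Corollary \ref{cor:2-parts} forces $\Delta(E)$ to be a nonzero rational square. On the other hand, having a rational point of order $5$, $E$ is $\QQ$‑isomorphic to a unique Tate normal form curve $E_t\colon y^2+(1-t)xy-ty=x^3-tx^2$ with $t\in\QQ$ and $t(t^2-11t-1)\neq 0$ (whose $j$‑invariant is the one in \eqref{eq:j5}), and a computation with this model gives $\Delta(E_t)=t^5(t^2-11t-1)$. Hence $\Delta(E_t)$ is a nonzero rational square exactly when $(t,s)$ is a noncuspidal rational point on the elliptic curve $C\colon s^2=t^3-11t^2-t$, and I would check in Magma that $C$ has Mordell--Weil rank $0$ with $C(\QQ)=\{\infty,(0,0)\}$, neither point giving a nonsingular $E_t$. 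So $\Delta(E_t)$ is never a nonzero square, a contradiction, and the lemma follows. The two steps that require genuine care are the elementary‑but‑fiddly matrix lemma in the second paragraph and the rank/rational‑points computation for $C$ in the third; everything else is a formal consequence of the cited results.
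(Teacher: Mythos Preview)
Your proof is correct. Both your Case~2 and the paper's argument end in the same place: the Tate normal form for curves with a rational point of order~$5$, the observation that $\Delta(E_t)=t^5(t^2-11t-1)$, and the check that $s^2=t(t^2-11t-1)$ (which the paper identifies as \texttt{20a4}) has only the two trivial rational points.

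The difference lies in how the $2$--isogeny case is handled. The paper's proof is a one--liner: it invokes Corollary~\ref{cor:2-parts} to pass directly from $E[4]\subseteq E(\QA)$ to ``$\Delta(E)$ is a square.'' Strictly speaking that corollary only applies when $E$ has no rational $2$--isogeny, so the paper is implicitly leaning on the Rouse--Zureick-Brown data behind Corollary~\ref{cor:RZBtable}: if $E$ \emph{does} have a $2$--isogeny and $E(\QA)\supseteq(\ZZ/4\ZZ)^2$, then $E$ lies on $X_{27}$, hence has a $4$--isogeny, hence (with the $5$--isogeny) a $20$--isogeny, which is impossible. You instead make this case explicit and prove it by hand: after twisting to get $E(\QQ)_\tor=\ZZ/10\ZZ$, you show that the mod--$4$ image is a $2$--group of exponent dividing~$2$ and, via the short matrix computation, is upper triangular, producing a rational cyclic $4$--subgroup and the same $20$--isogeny contradiction. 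Your route is a bit longer but more self--contained, since it avoids appealing to the RZB classification; the paper's route is shorter but relies on that classification to close the gap in the citation of Corollary~\ref{cor:2-parts}.
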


\begin{proof}
Let $E / \QQ$ be an elliptic curve such that $5 \mid \#E(\QA)_\tor$, so that by \cite[Appendix E]{L-Rbook} (see also Table \ref{tab:strong_cyclic_groups}) $E$ is isomorphic over $\QQ$ to a quadratic twist of
\[
E_t \colon y^2 + (1-t)xy -ty = x^3 - tx^2,
\]
which has discriminant
\[
\Delta(E_t) = t^5(t^2 - 11t - 1).
\]
By Corollary \ref{cor:2-parts}, if $E[4] \subseteq E(\QA)$, then $\Delta(E)$ must be a square. Up to squares $\Delta(E)$, $\Delta(E_t)$ and $t(t^2-11t-1)$ are the same, and hence the latter must be a square as well. As in the proof of Proposition \ref{prop:p13final}, we find that the curve defined by $s^2=t(t^2-11t-1)$ is \href{http://www.lmfdb.org/EllipticCurve/Q/50a1}{\tt 20a4}, which has only the point at infinity and $(0,0)$, and the result follows.
\end{proof}

\begin{lemma}\label{lem:z5-z2-z4}
Suppose $E / \QQ$ is an elliptic curve such that $E(\QA)(5) \simeq \ZZ / 5 \ZZ$. Then $E(\QA)(2)$ is either trivial or isomorphic to $\ZZ / 2 \ZZ \oplus \ZZ / 2 \ZZ$.
\end{lemma}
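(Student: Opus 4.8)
The plan is to first cut down the list of candidates for $E(\QA)(2)$ using the results already in hand, and then eliminate the survivors one at a time; the case $\ZZ/2\ZZ\oplus\ZZ/4\ZZ$ will be the real content.

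First I would set up the reduction. Since $5\mid\#E(\QA)_\tor$, Corollary \ref{cor:odd-p-parts} gives that $E$ admits a rational $5$-isogeny, and the proof of Proposition \ref{prop:case-p-5} shows that a point of order $5$ on $E$ is defined over a quadratic field; replacing $E$ by the corresponding quadratic twist (which is legitimate since all quadratic fields lie in $\QA$ by Lemma \ref{lem:quadratics-are-genA4}, and this replacement does not change $E(\QA)(2)$), we may assume $E$ itself has a rational point $R_5$ of order $5$. By Lemma \ref{lem:z5-full4} we know $E[4]\not\subseteq E(\QA)$, so $E[2]\not\subseteq E(\QA)$ or at any rate $E(\QA)(2)$ is not one of the groups on the list of Proposition \ref{prop:2-tors} containing $E[4]$; what remains is $E(\QA)(2)\in\{\{\mathcal O\},\ \ZZ/2\ZZ\oplus\ZZ/2\ZZ,\ \ZZ/2\ZZ\oplus\ZZ/4\ZZ,\ \ZZ/2\ZZ\oplus\ZZ/8\ZZ,\ \ZZ/2\ZZ\oplus\ZZ/16\ZZ\}$, so it suffices to rule out the last three.

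For $E(\QA)(2)\simeq\ZZ/2\ZZ\oplus\ZZ/8\ZZ$ or $\ZZ/2\ZZ\oplus\ZZ/16\ZZ$ the argument is quick: the subgroup $2\cdot E(\QA)(2)$ is cyclic of order at least $4$ and is characteristic in $E(\QA)$, hence $\Gal(\QA/\Q)$-stable, so $E$ has a rational cyclic $4$-isogeny; composing with the rational $5$-isogeny gives a rational cyclic $20$-isogeny, which is impossible by Theorem \ref{thm:n-isogeny-list}. I will call this the \emph{$20$-isogeny obstruction} and reuse it. The main case is $E(\QA)(2)\simeq\ZZ/2\ZZ\oplus\ZZ/4\ZZ$, which then equals $E(\QA)[4]$ and in particular contains $E[2]$. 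Here I would look at $L:=\Q(E(\QA)[4])$, a subfield of $\QA$; the action of $\Gal(L/\Q)$ on $E(\QA)[4]$ is faithful, so $\Gal(L/\Q)$ embeds into $\Aut(\ZZ/2\ZZ\oplus\ZZ/4\ZZ)$, a group of order $8$. Being of generalized $A_4$-type, $\Gal(L/\Q)$ has exponent dividing $6$ by Corollary \ref{cor:genA4-exponent-divides-six}, hence exponent dividing $2$, so it is an elementary abelian $2$-group. A direct enumeration of the elementary abelian subgroups of $\Aut(\ZZ/2\ZZ\oplus\ZZ/4\ZZ)\cong D_4$ (the trivial subgroup, the five subgroups of order $2$, and the two Klein four-subgroups) shows that each of them either stabilizes one of the two cyclic subgroups of order $4$ inside $\ZZ/2\ZZ\oplus\ZZ/4\ZZ$, or acts trivially on the $2$-torsion subgroup $E[2]\subseteq E(\QA)[4]$. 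In the first case $E$ acquires a rational cyclic $4$-isogeny and the $20$-isogeny obstruction applies. In the second case $E[2]\subseteq E(\Q)$, and since $R_5\in E(\Q)$ we get $\ZZ/2\ZZ\oplus\ZZ/10\ZZ\hookrightarrow E(\Q)_\tor$, contradicting Mazur's Theorem \ref{thm:mazur}.

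The hard part is the dichotomy in the last case. It holds because an elementary abelian subgroup of $\Aut(\ZZ/2\ZZ\oplus\ZZ/4\ZZ)$ cannot contain an automorphism that simultaneously swaps the two cyclic subgroups of order $4$ and moves a point of $E[2]$ — such an automorphism has order $4$ — so either no element swaps the two cyclic $4$-subgroups (and they are all stabilized), or every element fixes $E[2]$ pointwise. This is a finite group-theoretic verification that can be carried out by hand or, in the style of the earlier lemmas, confirmed with Magma. I would also remark in passing that no complex multiplication curve satisfies the hypothesis of the lemma (by the tables of Section \ref{sec:CM} no CM curve over $\Q$ has $5\mid\#E(\QA)_\tor$), so no issue arises from having twisted $E$ into the $X_1(5)$ family.
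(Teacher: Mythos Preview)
Your argument is correct, but it takes a different route from the paper's. The paper's proof is one line after the reduction: once Lemma~\ref{lem:z5-full4} and Proposition~\ref{prop:2-tors} cut the list down, the paper simply invokes Corollary~\ref{cor:RZBtable} to say that if $E(\QA)(2)$ contains $\ZZ/2\ZZ\oplus\ZZ/4\ZZ$ then $E$ lies on $X_{13}=X_0(4)$, hence already has a rational $4$-isogeny, and the $20$-isogeny obstruction finishes all three cases at once. By contrast, you avoid appealing to the Rouse--Zureick-Brown identification of $X_{13}$ and instead argue directly: the cases $\ZZ/2\ZZ\oplus\ZZ/8\ZZ$ and $\ZZ/2\ZZ\oplus\ZZ/16\ZZ$ are dispatched by doubling, while for $\ZZ/2\ZZ\oplus\ZZ/4\ZZ$ you analyze the faithful action of $\Gal(L/\QQ)$ inside $\Aut(\ZZ/2\ZZ\oplus\ZZ/4\ZZ)\cong D_4$, obtaining a dichotomy (rational $4$-isogeny, or $E[2]\subseteq E(\QQ)$) and killing the second branch with Mazur after the preliminary quadratic twist to a curve with a rational $5$-torsion point. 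Both approaches ultimately rest on RZB through Proposition~\ref{prop:2-tors}, but yours trades the black-box modular-curve identification for an explicit finite group computation; this is more hands-on, and it makes transparent exactly why a $4$-isogeny is forced (or why Mazur intervenes), at the cost of being longer and requiring the twist step. The paper's route is shorter and uniform across the three bad cases.
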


\begin{proof}
In light of Lemma \ref{lem:z5-full4} and Proposition \ref{prop:2-tors}, it suffices to show that $E(\QA)(2)$ cannot contain a subgroup isomorphic to $\ZZ / 2 \ZZ \oplus \ZZ / 4 \ZZ$. By Corollary \ref{cor:RZBtable}, if $E(\QA)(2)$ contains a subgroup isomorphic to $\ZZ / 2 \ZZ \oplus \ZZ / 4 \ZZ$, then $E$ corresponds to a rational point on the modular curve labelled \href{http://users.wfu.edu//rouseja/2adic/X13.html}{$X_{13}$} in the notation of \cite{RZB}, which is more familiarly known as $X_0(4)$, so $E$ admits a rational $4$-isogeny. Then by Corollary \ref{cor:odd-p-parts}, $E$ also admits a rational $5$-isogeny, hence it must be isogenous to a curve which admits a $20$-isogeny, but this is impossible by Theorem \ref{thm:n-isogeny-list}.
\end{proof}

\begin{remark}
The elliptic curve with Cremona label \href{ http://www.lmfdb.org/EllipticCurve/Q/66c1}{\tt 66c1} satisfies $E(\QA)_\tor \simeq \ZZ / 2\ZZ \oplus \ZZ / 10 \ZZ$ and is the curve of smallest conductor with this property.
\end{remark}

\subsection{The case when $3$ divides $\# E(\QA)_{\mathrm{tors}}$} Let us now suppose that $3$ divides $\# E(\QA)_{\mathrm{tors}}$, so by Lemma \ref{lem:3x9} we know that $E(\QA)(3)$ is isomorphic to a subgroup of $\ZZ / 3 \ZZ \oplus \ZZ / 9 \ZZ$. From the preceding sections, we see that the only other prime $p$ for which $E(\QA)(p)$ may be nontrivial is $p=2$.

Let us first clear the playing field a bit with the following lemmas.

\begin{lemma}\label{lem:3-no4x4}
If $E / \QQ$ is an elliptic curve such that $E(\QA)(3)$ is nontrivial, then $E(\QA)(2)$ cannot contain a subgroup isomorphic to $\ZZ / 2 \ZZ \oplus \ZZ / 8 \ZZ$.
\end{lemma}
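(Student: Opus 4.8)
The plan is a proof by contradiction. Assume $E(\QA)(3)$ is nontrivial and that $E(\QA)(2)$ contains a subgroup isomorphic to $\ZZ/2\ZZ\oplus\ZZ/8\ZZ$. From the first assumption, Corollary \ref{cor:odd-p-parts} immediately gives that $E$ admits a rational $3$-isogeny, say with kernel $C_3\subseteq E[3]$. The strategy is then to squeeze a rational cyclic $8$-isogeny out of the second assumption: once we have a $\Gal(\Qbar/\QQ)$-stable cyclic subgroup $C_8\subseteq E[8]$, the sum $C_3\oplus C_8$ is again $\Gal(\Qbar/\QQ)$-stable and cyclic of order $24$, so $E$ has a rational $24$-isogeny, which is impossible by Theorem \ref{thm:n-isogeny-list} since $24\notin\{1,\dots,19\}\cup\{21,25,27,37,43,67,163\}$.

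To produce the $8$-isogeny I would invoke Corollary \ref{cor:RZBtable}: the hypothesis $\ZZ/2\ZZ\oplus\ZZ/8\ZZ\subseteq E(\QA)(2)$ forces $E$ to be a rational point on the modular curve $X_{36}$ in the notation of \cite{RZB} (and likewise every $E$ whose larger $2$-primary component contains $\ZZ/2\ZZ\oplus\ZZ/8\ZZ$ lies on $X_{36}$). One then reads off the modular interpretation of $X_{36}$ from the RZB database — just as the identification $X_{13}=X_0(4)$ is used in the proof of Lemma \ref{lem:z5-z2-z4} — to see that the $2$-adic image of any such $E$ stabilizes a cyclic subgroup of $E[8]$ of order $8$, i.e.\ that such an $E$ admits a rational cyclic $8$-isogeny ($X_{36}$ dominates $X_0(8)$). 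This is the step that carries the content, and it is the one I expect to be the main obstacle: it is a matter of correctly bookkeeping the RZB level structures (which of their $X_n$ are $X_0(4)$, $X_0(8)$, and so on) together with a short Magma verification, rather than a slick a priori argument.

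It is worth noting why a softer tool does not suffice. Applying Lemma \ref{lem:torsion-implies-isogeny} with $F=\QA$ and $p=2$ yields only a rational $4$-isogeny when $E(\QA)(2)\cong\ZZ/2\ZZ\oplus\ZZ/8\ZZ$ (and even less in the larger cases), and a rational $12$-isogeny is perfectly compatible with Theorem \ref{thm:n-isogeny-list}; so the finer information about $X_{36}$ is genuinely needed. If one prefers to avoid the \cite{RZB} labels, the same conclusion can be reached by working directly with $G=\im\bar{\rho}_{E,8}$ (or mod $16$): a point of order $8$ together with the full $2$-torsion being defined over a generalized $A_4$-type field forces $G$ to act on the corresponding $\ZZ/2\ZZ\oplus\ZZ/8\ZZ$-submodule through a quotient of generalized $A_4$-type, and enumerating the possibilities (which is precisely the search through the RZB data) shows that in every case $G$ fixes a cyclic order-$8$ line, giving the rational $8$-isogeny and hence the contradiction with Theorem \ref{thm:n-isogeny-list}.
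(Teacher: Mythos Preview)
Your proposal is correct and follows essentially the same approach as the paper: invoke Corollary \ref{cor:RZBtable} to place $E$ on $X_{36}$, identify this with $X_0(8)$ to obtain a rational $8$-isogeny, combine with the rational $3$-isogeny from Corollary \ref{cor:odd-p-parts}, and contradict Theorem \ref{thm:n-isogeny-list} via the resulting rational $24$-isogeny. The paper's proof is terser (it simply states that $X_{36}$ \emph{is} $X_0(8)$), but the logical skeleton is identical to yours.
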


\begin{proof}
By Corollary \ref{cor:RZBtable}, if $E(\QA)(2)$ contains a subgroup isomorphic to $\ZZ / 2 \ZZ \oplus \ZZ / 8 \ZZ$, then $E$ corresponds to a rational point on the modular curve labelled \href{http://users.wfu.edu//rouseja/2adic/X36.html}{$X_{36}$} in the notation of \cite{RZB}, which is more familiarly known as $X_0(8)$, so $E$ admits a rational $8$-isogeny. Then by Corollary \ref{cor:odd-p-parts} $E$ also admits a rational $3$-isogeny, hence it would be isogenous to a curve which admits a $24$-isogeny, but this is impossible by Theorem \ref{thm:n-isogeny-list}.
\end{proof}

Thus, from Corollary \ref{cor:RZBtable}, the only possible nontrivial $2$-torsion structures in this case are
\[
\ZZ / 2 \ZZ \oplus \ZZ / 2 \ZZ, \quad \ZZ / 2 \ZZ \oplus \ZZ / 4 \ZZ, \quad \text{and} \ \ZZ / 4 \ZZ \oplus \ZZ / 4 \ZZ.
\]
In fact, we have the following refinement.

\begin{lemma}\label{lem:3-no4x4}
If $E / \QQ$ is an elliptic curve such that $E[3] \subseteq E(\QA)$, then $E(\QA)(2)$ cannot contain a subgroup isomorphic to $\ZZ / 2\ZZ \oplus \ZZ / 4 \ZZ$.
\end{lemma}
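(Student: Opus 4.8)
The plan is to argue by contradiction and funnel the two hypotheses into a rational cyclic isogeny of degree $36$, which Theorem \ref{thm:n-isogeny-list} forbids. Assume $E[3] \subseteq E(\QA)$ and that $E(\QA)(2)$ contains a subgroup isomorphic to $\ZZ/2\ZZ \oplus \ZZ/4\ZZ$. By Lemma \ref{lem:full-3-torsion}, the first assumption forces $E$ to admit two distinct rational $3$-isogenies $\phi_1 \colon E \to E_1$ and $\phi_2 \colon E \to E_2$, with Galois-stable kernels $C_1 \neq C_2$; since these are distinct subgroups of order $3$ they span $E[3]$. By Corollary \ref{cor:RZBtable}, the second assumption forces $E$ to correspond to a rational point on the modular curve $X_{13} = X_0(4)$, so $E$ also admits a rational cyclic $4$-isogeny, whose Galois-stable kernel I will call $D$.

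The main step is to manufacture a rational cyclic $9$-isogeny out of $\phi_1$ and $\phi_2$ on the $3$-neighbor $E_1 = E/C_1$ (this is just the standard relation between curves with mod-$3$ image in a split Cartan subgroup and curves carrying a cyclic $9$-isogeny). Using $\hat\phi_1 \circ \phi_1 = [3]$, one checks that $\hat\phi_1$ carries $E_1[3]$ onto $C_1$: any $Q \in E_1[3]$ equals $\phi_1(P)$ with $3P \in C_1$, and $\hat\phi_1(Q) = 3P$. Consequently the Galois-stable subgroup $K := \hat\phi_1^{-1}(C_2) \subseteq E_1$, which has order $9$, cannot be all of $E_1[3]$ (that would force $C_1 = \hat\phi_1(E_1[3]) \subseteq C_2$), so $K$ is cyclic, and $E_1 \to E_1/K$ is a rational cyclic $9$-isogeny. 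On the other hand, since $\gcd(3,4)=1$ the $4$-isogeny of $E$ descends along $\phi_1$: the image $\bar D := \phi_1(D)$ is a Galois-stable cyclic subgroup of $E_1$ of order $4$, so $E_1$ also admits a rational cyclic $4$-isogeny.

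Finally, $K$ and $\bar D$ have coprime orders $9$ and $4$, so $K + \bar D$ is a Galois-stable cyclic subgroup of $E_1$ of order $36$, and $E_1 \to E_1/(K + \bar D)$ is a rational cyclic $36$-isogeny. Since $36 \notin \{1,\dots,19\} \cup \{21,25,27,37,43,67,163\}$, this contradicts Theorem \ref{thm:n-isogeny-list}, which finishes the proof. The delicate point, and the one I expect to require the most care in the write-up, is the verification that $K$ is cyclic — i.e., that passing through $E$ really does convert the pair of independent $3$-isogenies into an honest cyclic $9$-isogeny on a $3$-neighbor of $E$; everything else is routine bookkeeping, using coprimality of the degrees to guarantee that the relevant kernels stay cyclic and are defined over $\QQ$. (This mirrors the strategy already used for Lemma \ref{lem:full-9-torsion}.)
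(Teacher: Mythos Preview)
Your proof is correct and follows essentially the same strategy as the paper's: use Lemma~\ref{lem:full-3-torsion} to get two independent rational $3$-isogenies, use Corollary~\ref{cor:RZBtable} (via $X_{13}=X_0(4)$) to get a rational $4$-isogeny, pass to a $3$-isogenous curve to obtain a rational cyclic $9$-isogeny, and combine these into a forbidden rational $36$-isogeny. The paper's write-up is terser, simply asserting that $E$ is isogenous to a curve with a $9$-isogeny and hence to one with a $36$-isogeny, whereas you spell out explicitly why $K=\hat\phi_1^{-1}(C_2)$ is cyclic and why the $4$-isogeny transports along $\phi_1$; but the underlying argument is the same.
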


\begin{proof}
As in the proof of Lemma \ref{lem:z5-z2-z4}, if $E(\QA)(2)$ were to contain a subgroup isomorphic to $\ZZ / 2\ZZ \oplus \ZZ / 4 \ZZ$, then $E$ would admit a rational $4$-isogeny. But if $E[3] \subseteq E(\QA)$, then Lemma \ref{lem:full-3-torsion} implies that $E$ must also admit two independent $3$-isogenies, hence $E$ is isogenous to a curve which admits a $9$-isogeny, and so $E$ would in fact be isogenous to a curve which admits a $36$-isogeny, which is impossible by Theorem \ref{thm:n-isogeny-list}.
\end{proof}

\begin{remark}\label{rmk:z4-z12}
We still need to consider the cases where $E(\QA)(3) \simeq \ZZ / 3 \ZZ$. For instance, the curve with Cremona label \href{http://www.lmfdb.org/EllipticCurve/Q/30a2}{\tt 30a2} has $E(\QA) \simeq \ZZ / 4 \ZZ \oplus \ZZ / 12 \ZZ$.
\end{remark}

\begin{lemma}\label{lem:2-no3x9}
If $E / \QQ$ is an elliptic curve such that $E(\QA)(2)$ is nontrivial, then $E(\QA)(3)$ cannot be isomorphic to $\ZZ / 3 \ZZ \oplus \ZZ / 9 \ZZ$.
\end{lemma}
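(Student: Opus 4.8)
The plan is to reduce everything to the already-completed analysis of CM curves. Suppose, for contradiction, that $E/\QQ$ is an elliptic curve with $E(\QA)(2)$ nontrivial and $E(\QA)(3)\simeq\ZZ/3\ZZ\oplus\ZZ/9\ZZ$. By Lemma~\ref{lem:z3-z9-example}, the second assumption forces $E$ to be isogenous to the curve with Cremona label \texttt{27a1}; in particular $E$ has complex multiplication by an order in $\QQ(\sqrt{-3})$, so $j(E)\in\{0,-12288000\}$, and $E$ is one of the curves whose $\QA$-torsion was pinned down in Section~\ref{sec:CM}.

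Next I would simply read off the possibilities for $E(\QA)_{\tor}$. If $j(E)=-12288000$ then, since $j(E)\neq 0,1728$, Proposition~\ref{prop:j-invariant} says $E(\QA)_{\tor}$ depends only on $j(E)$, and the representative curve \texttt{27a4} in Table~\ref{table:CM-not-special} has $E(\QA)_{\tor}\simeq\ZZ/9\ZZ$. If instead $j(E)=0$, then Proposition~\ref{prop:CMj0} gives that $E(\QA)_{\tor}$ is one of $\ZZ/3\ZZ\oplus\ZZ/9\ZZ$, $\ZZ/2\ZZ\oplus\ZZ/6\ZZ$, or $\ZZ/3\ZZ$. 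Among these four groups, the only one whose $3$-primary part is isomorphic to $\ZZ/3\ZZ\oplus\ZZ/9\ZZ$ is $\ZZ/3\ZZ\oplus\ZZ/9\ZZ$ itself, whose $2$-primary part is trivial. This contradicts the assumption that $E(\QA)(2)$ is nontrivial, and the lemma follows.

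There is no real obstacle in this argument: all of the substance has been carried out already in Lemma~\ref{lem:z3-z9-example} and in the CM classification of Section~\ref{sec:CM}. The only point that requires a little care is matching the Cremona labels occurring in the isogeny class of \texttt{27a1} with their $j$-invariants and their $\QA$-torsion subgroups; since that isogeny class is finite, this bookkeeping — and hence the whole lemma — can alternatively be confirmed by a direct Magma computation over the four curves isogenous to \texttt{27a1}.
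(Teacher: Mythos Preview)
Your argument is correct, but it takes a different route from the paper's. The paper argues via isogenies: from Lemma~\ref{lem:z3-z9-example} it extracts only that $E$ is isogenous to a curve admitting a rational $27$-isogeny, then claims that $E(\QA)(2)$ nontrivial forces $E$ to admit a rational $2$-isogeny, so that the isogeny class of $E$ contains a curve with a rational $54$-isogeny, contradicting Theorem~\ref{thm:n-isogeny-list}. Your approach instead uses the full strength of Lemma~\ref{lem:z3-z9-example} (that $E$ lies in the $27a$ isogeny class, hence $j(E)\in\{0,-12288000\}$) and then reads off $E(\QA)_\tor$ directly from the CM classification already carried out in Proposition~\ref{prop:CMj0} and Table~\ref{table:CM-not-special}.

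Your route is cleaner in that it reduces everything to work already done, with no new reasoning about $2$-isogenies needed. The paper's isogeny argument, by contrast, silently uses the fact that a $j=0$ curve with nontrivial $E(\QA)(2)$ must actually have a rational $2$-isogeny (i.e.\ the ``square discriminant'' alternative of Corollary~\ref{cor:2-parts} does not occur for $j=0$); this is true but not stated there. On the other hand, the paper's argument is self-contained in the sense that it does not need to invoke the explicit case analysis of Section~\ref{sec:CM}, only the isogeny classification of Theorem~\ref{thm:n-isogeny-list}.
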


\begin{proof}
As in the proof of Lemma \ref{lem:z3-z9-example}, if $E(\QA)(3) \simeq \ZZ / 3 \ZZ \oplus \ZZ / 9 \ZZ$ then $E$ is isogenous to a curve which admits a rational $27$-isogeny, and if $E(\QA)(2)$ is nontrivial then $E$ also admits a rational $2$-isogeny, hence it must be isogenous to a curve which admits at rational $54$-isogeny, which is impossible by Theorem \ref{thm:n-isogeny-list}.
\end{proof}




\begin{lemma}\label{lem:z2xz6}
If $E(\QA)(3) \simeq \ZZ / 3 \ZZ$, then $E$ is isogenous to an elliptic curve $E_t$ with $j$-invariant
\[
j(E_t) = \frac{(t+27)(t+3)^3}{t}
\]
for some $t \in \QQ$. Furthermore, if $E(\QA)(2)$ is nontrivial, then we have
\[
E(\QA)_\tor \simeq \begin{cases}
 \ZZ / 2 \ZZ \oplus \ZZ / 6 \ZZ \ \text{or} \ \ZZ / 2 \ZZ \oplus \ZZ / 12 \ZZ  & \text{if $t$ is not a square}\\
\ZZ / 4 \ZZ \oplus \ZZ / 12 \ZZ & \text{if $t$ is a nonzero rational square}.
\end{cases}
\]
\end{lemma}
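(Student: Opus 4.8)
The plan is to determine $j(E)$ from isogeny information and then control the $2$-primary part through the discriminant of an explicit model.

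\emph{The first assertion.} Since $E(\QA)(3)\simeq\ZZ/3\ZZ$ is nontrivial, Corollary~\ref{cor:odd-p-parts} shows that $E$ admits a rational $3$-isogeny, so $E$ corresponds to a non-cuspidal rational point of the genus $0$ curve $X_0(3)$, whose $j$-line is classically the rational function $j=(t+27)(t+3)^3/t$. Taking $E_t$ to be the standard one-parameter family over $X_0(3)$ with this $j$-invariant (e.g.\ from \cite[Table 3]{L-R13}), the curve $E$ has the same $j$-invariant as some $E_t$, hence is a quadratic twist of it and therefore isomorphic to $E_t$ over a quadratic field, which lies in $\QA$ by Lemma~\ref{lem:quadratics-are-genA4}; thus $E(\QA)_\tor\simeq E_t(\QA)_\tor$, and for the remaining claims we may work with $E_t$. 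Writing $\Delta=\Delta(E_t)$, an explicit computation with this model shows that $\Delta$ is a nonzero rational square exactly when $t$ is a nonzero rational square, and since quadratic twisting changes $\Delta$ by a perfect square this property depends only on the $\QQ$-isogeny class of $E$.

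\emph{Reducing to the $2$-primary part.} Now assume $E(\QA)(2)\ne 0$. By Corollary~\ref{cor:odd-p-parts} the only odd primes dividing $\#E(\QA)_\tor$ lie in $\{3,5,7,13\}$, and if $E(\QA)(5)$, $E(\QA)(7)$, or $E(\QA)(13)$ were nontrivial, then $E(\QA)_\tor$ would be $\ZZ/3\ZZ$ or $\ZZ/15\ZZ$ (Lemma~\ref{lem:15isog}), $\ZZ/21\ZZ$ (Lemma~\ref{lem:z21}), or $\ZZ/13\ZZ$ (Proposition~\ref{prop:p13final}) respectively, each incompatible with $E(\QA)(2)\ne 0$. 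Hence $E(\QA)_\tor\simeq E(\QA)(2)\oplus\ZZ/3\ZZ$. Moreover each of the groups $\ZZ/2\ZZ\oplus\ZZ/8\ZZ$, $\ZZ/2\ZZ\oplus\ZZ/16\ZZ$, $\ZZ/4\ZZ\oplus\ZZ/8\ZZ$, $\ZZ/4\ZZ\oplus\ZZ/16\ZZ$, $\ZZ/8\ZZ\oplus\ZZ/8\ZZ$ in Proposition~\ref{prop:2-tors} contains a subgroup isomorphic to $\ZZ/2\ZZ\oplus\ZZ/8\ZZ$, which is excluded by Lemma~\ref{lem:3-no4x4} since $E(\QA)(3)\ne 0$; so $E(\QA)(2)$ is one of $\ZZ/2\ZZ\oplus\ZZ/2\ZZ$, $\ZZ/2\ZZ\oplus\ZZ/4\ZZ$, $\ZZ/4\ZZ\oplus\ZZ/4\ZZ$, whence $E(\QA)_\tor$ is $\ZZ/2\ZZ\oplus\ZZ/6\ZZ$, $\ZZ/2\ZZ\oplus\ZZ/12\ZZ$, or $\ZZ/4\ZZ\oplus\ZZ/12\ZZ$.

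\emph{The final dichotomy.} It remains to show that $E(\QA)(2)\simeq\ZZ/4\ZZ\oplus\ZZ/4\ZZ$ if and only if $\Delta$ is a square, equivalently if and only if $t$ is a nonzero rational square. If $\Delta$ is not a square, then since $E(\QA)(2)\ne 0$ the contrapositive of Corollary~\ref{cor:2-parts} forces $E$ to admit a rational $2$-isogeny, and one checks from the modular interpretation of the Rouse--Zureick-Brown curves $X_2$ and $X_{27}$ (Corollary~\ref{cor:RZBtable}, Figure~\ref{fig:RZB}) that no rational point of $X_2$ or $X_{27}$ corresponds to a curve having both a rational $2$-isogeny and non-square discriminant, so $E(\QA)(2)\ne\ZZ/4\ZZ\oplus\ZZ/4\ZZ$. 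Conversely, suppose $\Delta$ is a square. If $E$ has no rational $2$-isogeny, Corollary~\ref{cor:2-parts} gives $\ZZ/4\ZZ\oplus\ZZ/4\ZZ\subseteq E(\QA)$, hence equality by the previous paragraph. If $E$ does have a rational $2$-isogeny, then a rational root of the $2$-division polynomial together with the square discriminant forces full rational $2$-torsion, so the mod-$2$ image is trivial and $\Gal(\QQ(E[4])/\QQ)$ is contained in the kernel of $\GL_2(\ZZ/4\ZZ)\to\GL_2(\ZZ/2\ZZ)$, which is an elementary abelian $2$-group and hence of generalized $A_4$-type by Proposition~\ref{prop:A4_Classification}; thus $\QQ(E[4])\subseteq\QA$ and $E(\QA)(2)=\ZZ/4\ZZ\oplus\ZZ/4\ZZ$. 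Combined with the trichotomy above this gives the statement; for the CM curves with a rational $3$-isogeny, which all have $j(E)=0$, the conclusion can instead be read off from Proposition~\ref{prop:CMj0} and the analysis of Section~\ref{sec:j0}.

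I expect the main obstacle to be this last dichotomy, and specifically the curves that \emph{do} possess a rational $2$-isogeny: for them, whether $E[4]\subseteq E(\QA)$ is not decided by group theory alone, because $\GL_2(\ZZ/4\ZZ)$ contains subgroups of generalized $A_4$-type whose reduction modulo $2$ is a nontrivial reducible subgroup. Ruling these configurations out (equivalently, showing that square discriminant is the exact condition for full $4$-torsion over $\QA$ in our setting) therefore requires invoking the Rouse--Zureick-Brown classification of mod-$2$-power images in tandem with the explicit computation of $\Delta(E_t)$.
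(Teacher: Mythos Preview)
Your overall strategy matches the paper's: identify $j(E)$ via $X_0(3)$, compute $\Delta(E_t)$ to see that $\Delta$ is a square precisely when $t$ is, and then invoke the $2$-primary classification. The paper's own proof is extremely terse at the final step (it simply writes ``The second statement now follows from Corollary~\ref{cor:2-parts}''), and your attempt to spell out the dichotomy goes well beyond it. In particular, your argument for the direction ``$\Delta$ square $\Rightarrow E[4]\subseteq E(\QA)$'' is correct and is not in the paper: the observation that a rational $2$-torsion point together with square discriminant forces full rational $2$-torsion, after which the mod-$4$ image lies in the kernel of reduction $\GL_2(\ZZ/4\ZZ)\to\GL_2(\ZZ/2\ZZ)\simeq(\ZZ/2\ZZ)^4$, is a clean way to handle that sub-case.

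There is, however, a genuine error in the other direction. You assert that ``no rational point of $X_2$ or $X_{27}$ corresponds to a curve having both a rational $2$-isogeny and non-square discriminant.'' This is false for $X_{27}$. By Figure~\ref{fig:RZB}, $X_{27}$ covers $X_{13}=X_0(4)$, so \emph{every} rational point on $X_{27}$ yields a curve with a rational $4$-isogeny and hence a rational $2$-isogeny; and substituting $t=1$ into the second $j$-map listed for $\ZZ/4\ZZ\oplus\ZZ/4\ZZ$ in Table~\ref{tab:TParams} gives $j-1728=-(918/25)^2<0$, so the corresponding discriminant is not a square. Thus your argument does not exclude the possibility that some $E$ with a rational $3$-isogeny lies on $X_{27}$ with non-square discriminant, which would place $E(\QA)(2)\simeq\ZZ/4\ZZ\oplus\ZZ/4\ZZ$ in the ``$t$ not a square'' branch and break the claimed dichotomy. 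Closing this gap requires analysing the fibre product $X_0(3)\times_{X(1)}X_{27}$ and showing that its rational points contribute nothing beyond those of $X_0(3)\times_{X(1)}X_2$; this is implicit in the computations underlying Table~\ref{tab:TParams} (where only a single $j$-map appears for $\ZZ/4\ZZ\oplus\ZZ/12\ZZ$) and in the code referenced at~\cite{A4Code}, but it is not a consequence of the modular interpretation of $X_2$ and $X_{27}$ alone, nor of Corollary~\ref{cor:2-parts}.
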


\begin{proof}
The $j$-invariant is that of $X_0(3)$ from \cite[Table 2]{L-R13}, since $E(\QA)(3) \simeq \ZZ / 3 \ZZ$ implies $E$ admits a rational $3$-isogeny. The discriminant of $E_t$ is given by
\[
\Delta(E_t) = \frac{t(t+3)^6(t+27)^2}{(t^2+18t-27)^6},
\]
hence $\Delta(E)$ is a nonzero rational square if and only if $t$ is a nonzero rational square. The second statement now follows from Corollary \ref{cor:2-parts}.
\end{proof}

\begin{remark}
Indeed, the minimal twists of the curves $E_1$ and $E_2$ have Cremona labels $196a1$ and $1682f1$, respectively, and Magma confirms that
\[
E_1(\QA)_\tor \simeq \ZZ / 4 \ZZ \oplus \ZZ / 12 \ZZ \quad \text{and} \quad
E_2(\QA)_\tor \simeq \ZZ / 2 \ZZ \oplus \ZZ / 6 \ZZ.
\]
\end{remark}

\begin{remark}
We point out that the above argument assumes $E(\QA)(3)$ is {\it precisely} $ \ZZ / 3 \ZZ$. Since all it uses is the existence of a $3$-isogeny, it is unable to distinguish such a curve from one with a larger $3$-primary component.
\end{remark}

\begin{lemma}\label{lem:z2z18}
Suppose $E / \QQ$ is an elliptic curve, then $E(\QA)_\tor \simeq \ZZ / 2 \ZZ \oplus \ZZ / 18 \ZZ$ if and only if its  $j$-invariant is of the form
\[
j(E_t) = \frac{(t^3-2)^3(t^9-6t^6-12t^3-8)^3}{t^9 (t^3-8)(t^3+1)^2} \in \QQ^\times.
\]

\end{lemma}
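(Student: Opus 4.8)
The plan is to prove the equivalence in two halves, reducing everything to the classification of rational isogenies (Theorem~\ref{thm:n-isogeny-list}) plus the identification of the displayed rational function with the $j$-map of $X_0(18)$. Since $\ZZ/2\ZZ\oplus\ZZ/18\ZZ\simeq\ZZ/2\ZZ\oplus\ZZ/2\ZZ\oplus\ZZ/9\ZZ$, the statement to be shown is that $E(\QA)_\tor\simeq\ZZ/2\ZZ\oplus\ZZ/18\ZZ$ holds exactly when $E$ admits a rational $18$-isogeny, equivalently when $j(E)$ is a non-cuspidal value of the $j$-map of $X_0(18)$.

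For the ``only if'' direction, I would start from $E(\QA)_\tor\simeq\ZZ/2\ZZ\oplus\ZZ/18\ZZ$, so $E(\QA)(3)\simeq\ZZ/9\ZZ$ and $E(\QA)(2)\simeq\ZZ/2\ZZ\oplus\ZZ/2\ZZ$. Since $E[3]\not\subseteq E(\QA)$, Lemma~\ref{lem:torsion-implies-isogeny} with $p=3$, $k=0$, $j=2$ produces a rational $9$-isogeny. Since $E(\QA)(2)$ is nontrivial and does not contain $\ZZ/4\ZZ\oplus\ZZ/4\ZZ$, the contrapositive of Corollary~\ref{cor:2-parts} produces a rational $2$-isogeny. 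The two kernels have coprime order, so their sum is a cyclic $\GQ$-stable subgroup of order $18$, giving a rational $18$-isogeny; hence $j(E)$ lies in the image of the $j$-map of $X_0(18)$.

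For the ``if'' direction, I would assume $j(E)=j(E_t)$ for some $t\in\QQ$ with value in $\QQ^\times$ (so $t$ is not a cusp), note that $j(E_t)\neq 1728$ (a curve with $j=1728$ has no rational $3$-isogeny, cf.\ Sections~\ref{sec:j0}--\ref{sec:j1728}), so Proposition~\ref{prop:j-invariant} lets me replace $E$ by the curve $E_t$ attached to the non-cuspidal point $t$ of $X_0(18)$, which carries a rational $18$-isogeny. The rational $2$-isogeny gives a rational $2$-torsion point whose companion $2$-torsion lies in a quadratic field, hence in $\QA$ by Lemma~\ref{lem:quadratics-are-genA4}, so $E[2]\subseteq E(\QA)$; the rational $9$-isogeny has a kernel generator whose field of definition is cyclic of degree dividing $6$ by Lemma~\ref{lem:isogeny-field-cyclic}, hence of generalized $A_4$-type, so $E(\QA)$ contains a point of order $9$. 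Thus $E(\QA)_\tor\supseteq\ZZ/2\ZZ\oplus\ZZ/2\ZZ\oplus\ZZ/9\ZZ$. For sharpness: a strictly larger $2$-part would contain $\ZZ/2\ZZ\oplus\ZZ/4\ZZ$ by Proposition~\ref{prop:2-tors}, forcing (Corollary~\ref{cor:RZBtable}) a rational point on $X_{13}=X_0(4)$ and hence a rational $4$-isogeny, which with the $9$-isogeny gives an impossible rational $36$-isogeny; the $3$-part is at most $\ZZ/3\ZZ\oplus\ZZ/9\ZZ$ by Lemma~\ref{lem:3x9}, and Lemma~\ref{lem:2-no3x9} excludes $\ZZ/3\ZZ\oplus\ZZ/9\ZZ$ since $E(\QA)(2)\neq 0$, leaving $\ZZ/9\ZZ$; and for $p\in\{5,7,13\}$, Corollary~\ref{cor:odd-p-parts} would yield a rational $p$-isogeny, hence a rational isogeny of degree $5\cdot 18$, $7\cdot 18$, or $13\cdot 18$, impossible by Theorem~\ref{thm:n-isogeny-list}.

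The step I expect to be the real work is verifying that the displayed degree-$36$ rational function is indeed the $j$-map of $X_0(18)$ — the degree is at least consistent, since $[\mathrm{SL}_2(\ZZ):\Gamma_0(18)]=36$, and the pole orders at $t=0$, at the roots of $t^3-8$ and $(t^3+1)^2$, and at $t=\infty$ match the cusp widths of $X_0(18)$. The cleanest route I would take is to realize $X_0(18)$ as the normalization of the fiber product $X_0(2)\times_{X(1)}X_0(9)$ of the two genus-zero modular curves, push forward their known $j$-maps, and simplify in Magma; alternatively one can simply quote an existing table of $j$-maps for $X_0(N)$. Everything else is the isogeny bookkeeping above, which is routine once it is organized around Theorem~\ref{thm:n-isogeny-list}.
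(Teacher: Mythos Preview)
Your proposal is correct and follows essentially the same approach as the paper: reduce the torsion condition to the existence of a rational $18$-isogeny, rule out any larger $2$-part via the impossibility of a rational $36$-isogeny, and identify the displayed function with the $j$-map of $X_0(18)$ (the paper simply cites \cite[Table~2]{L-R13} for this). Your write-up is considerably more detailed than the paper's terse proof---you explicitly verify both implications, check that the $9$-isogeny kernel is defined over $\QA$, and rule out the $p\in\{5,7,13\}$ contributions and the case $E(\QA)(3)\simeq\ZZ/3\ZZ\oplus\ZZ/9\ZZ$---whereas the paper leaves most of this implicit in the surrounding lemmas.
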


\begin{proof}
Our curve $E / \QQ$ has the desired $\QA$-torsion structure if and only if $E$ admits a rational $18$-isogeny. We note that when $E(\QA)(3) \simeq \ZZ / 9\ZZ$, Figure \ref{fig:RZB} demonstrates that $E(\QA)(2)$ cannot be any larger than $\ZZ / 2 \ZZ \oplus \ZZ / 2 \ZZ$, since then (via the modular curve $X_{13}$) it would possess a rational $4$-isogeny, but no elliptic curve over $\QQ$ admits a rational $36$-isogeny. The $j$-invariant in the statement is that of $X_0(18)$ from \cite[Table 2]{L-R13}.
\end{proof}

\section{Parameterizations for each possible torsion structure.}\label{sec:TParam}

We conclude the study of torsion of generalized $A_4$ extensions by parameterizing each of the possible torsion structures that occur for an elliptic curve $E/\QQ$ base extended to $\QQ(A_4^\infty)$. Since the torsion subgroup of $E(\QA)$ depends only on the $j$-invariant of $E$ (unless $j(E) =0$ or 1728, and these two cases have been dealt with in Section \ref{sec:CM}) it is enough to give a complete description of the sets
$$S_T = \{j(E) : E(\QA)_\tor \simeq T\}$$
for each of the 26 possible torsion structure given in Theorem \ref{thm:main}.

To describe each set $S_T$ we give sets $F_T$ of rational functions $j(t)$ such that $E(\QA)$ contains a subgroup isomorphic to $T$ if and only if there is a function $j(t)\in F_T$ and a rational number $r\in\QQ$ such that $j(E) = j(r)$. Following the notation laid out in \cite{Q(3)}, we will let $\mathcal{T}$ be the set of all the possible 26 torsion structures up to isomorphism for $E(\QA)$, and we put a partial order on $\mathcal{T}$ given by $T_1\leq T_2$ if $T_2$ has a subgroup isomorphic to $T_1$. Next, for any elliptic curve $E/\QQ$ with $j\neq 0$ or 1728 we define $\mathcal{T}(E)$ to be the set of $T\in\mathcal{T}$ such that $j(E)$ is in the image of one of the rational functions in $F_T$.

\begin{thm}\label{thm:main_param}
Let $E/\QQ$ be an elliptic curve with $j(E) \neq 0$ or $1728$. Then the set $\mathcal{T}(E)$ has a unique maximal element $T(E)$ with respect to the partial order on $\mathcal{T}$ and $E(\QA)_\tor\simeq T(E)$.
\end{thm}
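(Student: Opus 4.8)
The plan is to deduce the statement from two inputs that are (or will be) already established: first, that $E(\QA)_\tor$ is one of the twenty-six groups in $\mathcal{T}$ --- this is Theorem \ref{thm:main}, together with Proposition \ref{prop:j-invariant} which ensures that for $j(E)\neq 0,1728$ the isomorphism type of $E(\QA)_\tor$ depends only on $j(E)$; and second, the defining property of the function sets $F_T$, namely that for $j(E)\neq 0,1728$ the curve $E(\QA)$ contains a subgroup isomorphic to $T$ if and only if $j(E)$ lies in the image of one of the rational maps in $F_T$. Granting both, $\mathcal{T}(E)$ is literally the set of $T\in\mathcal{T}$ that embed into $E(\QA)$, and the theorem becomes elementary.

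Concretely, I would set $T_0:=E(\QA)_\tor$, which lies in $\mathcal{T}$ by Theorem \ref{thm:main}. Since $T_0$ is itself (isomorphic to) a subgroup of $E(\QA)$ --- its full torsion subgroup --- the defining property of $F_{T_0}$ forces $j(E)$ to lie in the image of some map of $F_{T_0}$, so $T_0\in\mathcal{T}(E)$. Conversely, if $T\in\mathcal{T}(E)$ then $E(\QA)$ contains a subgroup isomorphic to $T$; being finite, that subgroup lies inside $E(\QA)_\tor=T_0$, so $T$ is isomorphic to a subgroup of $T_0$, which is exactly the relation $T\leq T_0$ in the partial order on $\mathcal{T}$. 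Hence $T_0$ is an element of $\mathcal{T}(E)$ that dominates every element of $\mathcal{T}(E)$, i.e. a greatest element; a greatest element of a poset is automatically the unique maximal element, so $T(E)$ exists, equals $T_0$, and therefore $E(\QA)_\tor\simeq T(E)$.

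The substantive work --- and hence the main obstacle --- is not this final bookkeeping but the correctness of the two inputs it rests on: one must know that $E(\QA)_\tor$ always lands among the twenty-six listed groups (the content of the bulk of the paper, Sections \ref{sec:growth-of-torsion}--\ref{sec:Torsion} together with the CM analysis of Section \ref{sec:CM}), and that each $F_T$ is simultaneously sound and complete, i.e. its images capture exactly the $j$-invariants for which $T$ embeds into $E(\QA)$. For the sporadic torsion structures, completeness reduces to a finite check on the curves recorded in Table \ref{tab:TParams}; for the generic ones it follows from the known $j$-maps of $X_0(n)$, $X_1(n)$, and the Rouse--Zureick-Brown $2$-adic modular curves, combined with the isogeny and cyclotomic constraints assembled in Sections \ref{sec:gen-type} and \ref{sec:pPrimary}. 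Once those are in place, the argument above is purely formal.
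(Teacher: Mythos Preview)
Your proposal is correct and matches the paper's approach: the paper omits the proof entirely, stating only that it follows \emph{mutatis mutandis} from \cite[Theorem 7.1]{Q(3)}, and your formal argument---showing that $T_0:=E(\QA)_\tor$ lies in $\mathcal{T}(E)$ and dominates every other element---is precisely that deferred bookkeeping.
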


\begin{remark}
The set $\mathcal{T}(E)$ need not contain every $T\in\mathcal{T}$ such that $T\leq T(E)$. There are infinitely many examples of this since there are only finitely many elliptic curves $E/\QQ$ such that $T(E) \simeq \ZZ / 2 \ZZ \oplus \ZZ / 14 \ZZ$, while there are infinitely many elliptic curves $E'/\QQ$ such that $T(E')\simeq \ZZ / 4 \ZZ \oplus \ZZ / 28 \ZZ$. This occurs because in order for $E(\QA)(2) \simeq \ZZ/2\ZZ\oplus\ZZ/2\ZZ$, $E$ must have a 2-isogeny, while it is possible for $E'(\QA)(2) \simeq \ZZ/4\ZZ\oplus\ZZ/4\ZZ$ without having any isogenies. For more details about this situation see Corollary \ref{cor:RZBtable} and \cite{RZB}.
\end{remark}

The proof of Theorem \ref{thm:main_param} follows by the exact same argument (\textit{mutatis mutandis}) as in \cite[Theorem 7.1]{Q(3)} and so we omit it here for the sake of brevity. Instead we give a table of all of the sets $F_T$.

Justification for the maps in Table \ref{tab:TParams} are either given in Sections \ref{sec:pPrimary} and \ref{sec:Torsion} or they are constructed as the fiber product of the relevant $j$-maps. The construction of these curves can be found in \cite{A4Code}.

{
\begin{table}
\tabulinesep=1.1mm
\centering
\begin{tabu}{l|l}

$T$               & $j(t)$ \\ \hline
$\{\mathcal{O}\}$ & $t$    \\
$\ZZ / 3 \ZZ$            & $\frac{(t+27)(t+3)^3}{t}$ \\
$\ZZ / 5 \ZZ$ & $\frac{(t^4 - 12 t^3 + 14t^2 + 12t + 1)^3}{t^5 (t^2-11t -1)}$\\
$\ZZ / 7 \ZZ$ & $\frac{(t^2+13t+49)(t^2+5t+1)^3}{t}$ \\
$\ZZ / 9 \ZZ$ & $\frac{t^3(t^3-24)^3}{t^3-27}$ \\
$\ZZ / 13\ZZ$ & $\frac{(t^4 - t^3 + 5t^2 + t + 1)(t^8 - 5t^7 + 7t^6 - 5t^5 + 5t^3 + 7t^2 + 5t + 1)^3}{t^{13}(t^2-3t-1)}$ \\
$\ZZ / 15 \ZZ$ & $\{ -\frac{121945}{32} , \frac{46969655}{32768} \}$ \\
$\ZZ / 21 \ZZ$ & $\{ -\frac{140625}{8}, \frac{3375}{2}, -\frac{1159088625}{2097152}, -\frac{189613868625}{128}  \}$ \\
$\ZZ / 2 \ZZ \oplus \ZZ / 2 \ZZ$ & $\frac{t^3}{t + 16}$ \\
$\ZZ / 2 \ZZ \oplus \ZZ / 4 \ZZ$ & $\frac{(t^2-48)^3}{(t-8)(t+8)}$ \\
$\ZZ / 2 \ZZ \oplus \ZZ / 6 \ZZ$ & $\frac{(t+6)^3(t^3+18t^2+84t+24)^3}{t(t+8)^3(t+9)^2}$ \\
$\ZZ / 2 \ZZ \oplus \ZZ / 8 \ZZ$ & $\frac{(t^4-16t^2+16)^3}{t^2(t-4)(t+4)}$ \\
$\ZZ / 2 \ZZ \oplus \ZZ / 10 \ZZ$ &  $\frac{(u^6 + 4u^5 - 16u+16)^3}{u^5(u-1)^2(u+4)}$, \quad \text{where}\ $u=t-\frac{1}{t}$ \\
$\ZZ / 2 \ZZ \oplus \ZZ / 12 \ZZ$ & $\frac{(t^2-3)^3(t^6-9t^4+3t^2-3)^3}{(t-3)(t-1)^3t^4(t+1)^3(t+3)}$ \\
$\ZZ / 2 \ZZ \oplus \ZZ / 14 \ZZ$ & $\{ -3375, 16581375  \}$ \\
$\ZZ / 2 \ZZ \oplus \ZZ / 16 \ZZ$ & $ \frac{(t^{16} - 8t^{14} + 12t^{12} + 8t^{10} - 10t^8 + 8t^6 + 12t^4 - 8t^2 + 1)^3}{t^{16} (t-1)^4(t+1)^4(t^2+1)^2(t^2-2t-1)(t^2+2t-1)}$  \\
$\ZZ / 2 \ZZ \oplus \ZZ / 18 \ZZ$ & $\frac{(t^3-2)^3(t^9-6t^6-12t^3-8)^3}{t^9 (t^3-8)(t^3+1)^2}$ \\
$\ZZ / 3 \ZZ \oplus \ZZ / 3 \ZZ $ & $\frac{27(t+1)^3(t+3)^3(t^2+3)^3}{t^3 (t^2+3t+3)^3} $ \\
$\ZZ / 3 \ZZ \oplus \ZZ / 9 \ZZ$ & $\{ 0 \}$ \\
$\ZZ / 4 \ZZ \oplus \ZZ / 4 \ZZ$ & $t^2 + 1728, -\frac{4(4t^2-8t+1)^3(4t^2+8t+1)^3}{t^2(4t^2+1)^4}$ \\
$\ZZ / 4 \ZZ \oplus \ZZ / 8 \ZZ$ & $\frac{256(t^4-t^2+1)^3}{(t-1)^2t^4(t+1)^2}, \frac{-4 (t^4 - 8 t^3 + 2 t^2 + 8 t + 1)^3 (t^4 + 8 t^3 + 2 t^2 - 8 t + 1)^3}{t^2 (t - 1)^2 (t + 1)^2 (t^2 + 1)^8}$ \\
$\ZZ / 4 \ZZ \oplus \ZZ / 12 \ZZ$ & $\frac{729t^8 + 756t^6 + 270t^4 + 36t^2 + 1}{t^6}$ \\
$\ZZ / 4 \ZZ \oplus \ZZ / 16 \ZZ$ & $ \frac{2^8 (u^2-u+1)^3}{u^2(u-1)^2} $ , \quad \text{where}\ $u=\left(\frac{t-\frac{1}{t}}{2}\right)^4$ \\
$\ZZ / 4 \ZZ \oplus \ZZ / 28 \ZZ$ & $\frac{(t^4+13t^2+49)(t^4+5t^2+1)^3}{t^2}$ \\
$\ZZ / 6 \ZZ \oplus \ZZ / 6 \ZZ$ & $\frac{(t^3 - 57t^2 + 3t - 1)^3(53t^3 + 3t^2 - 3t + 1)^3(8587 t^6 - 8214t^5 + 2283t^4 + 304t^3 - 39t^2 - 6t + 1)^3}{729(t-1)^3(4t-1)^6t^6(5t+1)^3(43t^2 - 8t + 1)^3(7t^2 + t + 1)^6}$ \\
$\ZZ / 8 \ZZ \oplus \ZZ / 8 \ZZ$ & $\frac{16 (t^4 - 2 t^3 + 2 t^2 + 2 t + 1)^3 (t^4 + 2 t^3 + 2 t^2 - 2 t + 1)^3}{(t - 1)^4 t^4 (t + 1)^4 (t^2 + 1)^4}$
\end{tabu}
\caption{Parameterizations $j(t)$ of the $\bar{\QQ}$-isomorphism classes of elliptic curves $E / \QQ$ corresponding to the isomorphism type of $E(\QA)$.}\label{tab:TParams}
\end{table}
}

\section{Torsion over the compositum of all $A_4$-extensions of $\QQ$}\label{sec:compositum}

The last task is to determine what subgroups (up to isomorphism) occur as the torsion subgroup of an elliptic curve over $\QQ$ base extended to the compositum of all $A_4$ extensions of $\QQ$. For the rest of this section we will let $\QQ_{A_4}$ be the compositum of all $A_4$ extensions of $\QQ$.

Before attempting this classification we recall from the proof of Proposition \ref{prop:F_ne_A4} that $\QQ_{A_4}$ does not contain any quadratic extensions of $\QQ$, and because of this it is possible to have two elliptic curves $E$ and $E'$ with the same $j$-invariant but different torsion subgroups when base-extended to $\QQ_{A_4}$.

\begin{example}
Let $E$ be the elliptic curve with Cremona reference \href{http://www.lmfdb.org/EllipticCurve/Q/44a1}{\texttt{44a1}} and let $E'$ be the elliptic curve with Cremona reference \href{http://www.lmfdb.org/EllipticCurve/Q/176c1}{\texttt{176c1}}. In this case $j(E) = j(E')$ and $$E(\QA)_\tor \simeq E'(\QA)_\tor\simeq \ZZ/3\ZZ.$$
Using \cite{lmfdb}, we see that $E(\QQ)[3] \simeq \ZZ/3\ZZ$ while $E'(\QQ)[3]$ is trivial. The curves $E$ and $E'$ are quadratic twists of each other and both have a 3-isogeny. In the case of $E$, the kernel of its 3-isogeny is defined over $\QQ$ while in the case of $E'$ the kernel of its 3-isogeny is defined over $\QQ(i)$. Therefore, $E$ and $E'$ are $\overline{\QQ}$-isomorphic, but $E(\QQ_{A_4})_\tor\not\simeq E'(\QQ_{A_4})_\tor.$
\end{example}

While this also happens when considering base-extension to $\QQ(A_4^\infty)$ when $j(E)=0$ or 1728, it can happen in many more instances when considering base extension to $\QQ_{A_4}$, and because of this, replicating Table \ref{tab:TParams} in this context is not possible.

We also note that, while it is true that $E(\QQ_{A_4})_\tor \subseteq E(\QQ(A_4^\infty))_\tor$ for every elliptic curve $E/\QQ$ (since $\QQ_{A_4}\subseteq \QQ(A_4^\infty)$), it is not the case that every group which arises as the torsion subgroup of an elliptic curve base-extended to $\QQ_{A_4}$ also arises as the torsion subgroup of an elliptic curve base-extended to $\QQ(A_4^\infty)$, as shown in the following example.

\begin{example}
Let $E$ be the elliptic curve with Cremona reference \href{http://www.lmfdb.org/EllipticCurve/Q/46a1}{\texttt{46a1}}. From Table 1, we know that $E(\QA)_\tor \simeq \ZZ/2\ZZ\oplus\ZZ/2\ZZ$ and from \cite{lmfdb} we know that $\QQ(E[2]) = \QQ(\sqrt{-23})$. Since $\QQ(E[2])\cap\QQ_{A_4} = \QQ$ we know that $E(\QQ_{A_4})_\tor = E(\QQ)_\tor \simeq \ZZ/2\ZZ,$ but according to Theorem \ref{thm:main} this group does not occur over $\QA$.
\end{example}

\begin{lemma}\label{lem:is_strong_A4}
Let $F/\QQ$ be a number field and $\widetilde{F}$ the Galois closure of $F$. Then $F\subseteq \QQ_{A_4}$ if and only if $\Gal(\widetilde{F}/\QQ)$ is of strong $A_4$-type.
\end{lemma}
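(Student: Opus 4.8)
The plan is to reduce at once to the case where $F$ is Galois over $\QQ$ and then to read the equivalence off the Galois correspondence, using the description of strong $A_4$-type groups as subdirect products of powers of $A_4$. First I would note that $\QQ_{A_4}$, being a compositum of Galois extensions of $\QQ$, is itself Galois over $\QQ$; hence $F \subseteq \QQ_{A_4}$ if and only if the Galois closure $\widetilde{F}$ satisfies $\widetilde{F} \subseteq \QQ_{A_4}$, and so it suffices to treat the case $F = \widetilde{F}$, writing $H := \Gal(\widetilde{F}/\QQ)$.

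For the forward implication I would argue as follows, assuming $\widetilde{F} \subseteq \QQ_{A_4}$. Since $[\widetilde{F}:\QQ] < \infty$, there are finitely many $A_4$-extensions $L_1,\dots,L_n$ of $\QQ$ with $\widetilde{F} \subseteq L := L_1\cdots L_n$. The restriction maps embed $\Gal(L/\QQ)$ into $\prod_{i=1}^n \Gal(L_i/\QQ) \simeq A_4^n$, and the image is a subdirect product because each restriction $\Gal(L/\QQ)\to\Gal(L_i/\QQ)$ is surjective ($L_i/\QQ$ being Galois). Hence $H$, as a quotient of $\Gal(L/\QQ)$, is a quotient of a subdirect product of $A_4^n$, which is precisely the definition of strong $A_4$-type.

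For the converse, assume $H$ is of strong $A_4$-type. Since $A_4 \simeq D_{3,2}$, Corollary \ref{cor:strong-Dpq} lets me realize $H$ itself as a subdirect product $H \hookrightarrow A_4^k$ for some $k$. Writing $\pi_i\colon H \twoheadrightarrow A_4$ for the $i$-th coordinate projection and $N_i := \ker\pi_i$, each $N_i$ is normal in $H$ with $H/N_i \simeq A_4$, and $\bigcap_{i=1}^k N_i = \{e\}$ because $H$ embeds into $A_4^k$. By the Galois correspondence the fixed field $L_i := \widetilde{F}^{N_i}$ is then an $A_4$-extension of $\QQ$, so $L_i \subseteq \QQ_{A_4}$; and since $\bigcap_i N_i = \{e\}$, the compositum $L_1\cdots L_k$ is the fixed field of $\{e\}$, i.e.\ $\widetilde{F}$ itself. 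Therefore $\widetilde{F} = L_1\cdots L_k \subseteq \QQ_{A_4}$, and a fortiori $F \subseteq \QQ_{A_4}$.

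I expect the only genuine ingredient to be Corollary \ref{cor:strong-Dpq}: it is essential that a strong $A_4$-type group is not merely a quotient of a subdirect product of $A_4^k$ but is itself isomorphic to such a subdirect product, since this is exactly what makes the individual $A_4$-subextensions $L_i$ appear via the Galois correspondence, sidestepping any embedding problem over $\QQ$. The remaining points — that $\QQ_{A_4}/\QQ$ is Galois, that a finite subextension lies inside a finite compositum of $A_4$-fields, and that intersecting the $N_i$ corresponds to taking the compositum of the $L_i$ — are routine facts about the Galois correspondence.
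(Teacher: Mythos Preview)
Your argument is correct and is exactly the route the paper intends: its one-line proof merely says the lemma follows from the Galois correspondence together with \cref{cor:strong-Dpq}, and you have spelled out precisely those details, including the key point that \cref{cor:strong-Dpq} upgrades ``quotient of a subdirect product'' to ``subdirect product'' so that the $A_4$-subfields $L_i$ are visible inside $\widetilde{F}$.
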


\begin{proof}
By the Galois correspondence this just follows from its group theoretic counterpart \cref{cor:strong-Dpq}.
\end{proof}

\begin{cor}\label{cor:abel==>exp3}
If $F/\QQ$ is an abelian extension of $\QQ$ such that $F\subseteq \QQ_{A_4}$, then $\Gal(F/\QQ)\simeq (\ZZ/3\ZZ)^k$ for some $k$.
\end{cor}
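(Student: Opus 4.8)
The plan is to combine Lemma \ref{lem:is_strong_A4} with the structure theory of strong $A_4$-type groups already developed. First, since $F/\QQ$ is abelian it coincides with its own Galois closure $\widetilde{F}$, so Lemma \ref{lem:is_strong_A4} tells us that $G := \Gal(F/\QQ)$ is an abelian group of strong $A_4$-type.

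Next I would pin down the exponent of $G$. By Lemma \ref{lem:easy_list}(1), strong $A_4$-type implies generalized $A_4$-type, so Corollary \ref{cor:genA4-exponent-divides-six} gives $\exp(G) \mid 6$. Applying the structure theorem for finite abelian groups of exponent dividing $6$, we may write $G \simeq (\ZZ/2\ZZ)^a \times (\ZZ/3\ZZ)^b$ for some integers $a,b \geq 0$, and it remains only to show that $a = 0$.

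To rule out the $2$-part, I would use the identification $A_4 \simeq D_{3,2}$ together with Lemma \ref{lem:strong_Dpq}: under that identification the relevant primes are $p = 3$ and $q = 2$, and the equivalence (1)$\Leftrightarrow$(4) of that lemma says a group of strong $D_{3,2}$-type has no quotient isomorphic to $\ZZ/2\ZZ$. If $a > 0$ then $G$ visibly surjects onto $\ZZ/2\ZZ$, a contradiction; hence $a = 0$ and $G \simeq (\ZZ/3\ZZ)^b$, which is the assertion with $k = b$. (Equivalently, one can argue field-theoretically: if $a > 0$ then $F$, and hence $\QQ_{A_4}$, would contain a quadratic extension of $\QQ$, contradicting Corollary \ref{cor:no_quad_subfields}.)

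I do not expect a genuine obstacle here: the substantive content has already been carried out in establishing Lemma \ref{lem:strong_Dpq} (equivalently Corollary \ref{cor:no_quad_subfields}) and Corollary \ref{cor:genA4-exponent-divides-six}, and the corollary is a short deduction once those are available. The only point that deserves a moment's attention is bookkeeping with the primes: one must invoke Lemma \ref{lem:strong_Dpq} with $p=3$, $q=2$, so that it is the $\ZZ/2\ZZ$-quotient (not a $\ZZ/3\ZZ$-quotient) that is forbidden, which is exactly what forces the $3$-elementary conclusion rather than a $2$-elementary one.
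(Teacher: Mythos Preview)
Your proposal is correct and follows essentially the same route as the paper, which simply says the corollary ``is an immediate consequence of Lemmas \ref{lem:strong_Dpq} and \ref{lem:is_strong_A4}.'' The only cosmetic difference is that you detour through Corollary \ref{cor:genA4-exponent-divides-six} to first write $G \simeq (\ZZ/2\ZZ)^a \times (\ZZ/3\ZZ)^b$ and then kill the $2$-part via condition~(4) of Lemma \ref{lem:strong_Dpq}; one can instead apply condition~(5) of that same lemma directly (since $G$ abelian gives $G = G/[G,G] \simeq (\ZZ/3\ZZ)^m$), which makes the exponent-$6$ step unnecessary.
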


\begin{proof}
This is an immediate consequence of Lemmas \ref{lem:strong_Dpq} and  \ref{lem:is_strong_A4}.
\end{proof}

\begin{cor}\label{cor:strong_root_of_1}
The only roots of unity in $\QQ_{A_4}$ are $\pm1$.
\end{cor}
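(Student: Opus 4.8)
The plan is to deduce this immediately from the structural results on abelian subextensions of $\QQ_{A_4}$ established just above, namely Corollary \ref{cor:abel==>exp3}. Suppose $\zeta \in \QQ_{A_4}$ is a root of unity, and let $n$ be its order. First I would note that $\QQ(\zeta) = \QQ(\zeta_n)$ is then a subfield of $\QQ_{A_4}$, and it is a finite abelian (indeed cyclotomic) extension of $\QQ$. Hence Corollary \ref{cor:abel==>exp3} applies and forces $\Gal(\QQ(\zeta_n)/\QQ) \simeq (\ZZ/3\ZZ)^k$ for some $k \geq 0$; in particular this Galois group has odd order.

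Next I would use the standard identification $\Gal(\QQ(\zeta_n)/\QQ) \simeq (\ZZ/n\ZZ)^\times$, whose order is $\varphi(n)$. For every $n \geq 3$ the class of $-1$ is a non-trivial element of order $2$ in $(\ZZ/n\ZZ)^\times$, so $\varphi(n)$ is even; comparing with the previous paragraph, this is consistent only when $n \leq 2$. Therefore $\zeta \in \{1,-1\}$, and conversely $\pm 1 \in \QQ \subseteq \QQ_{A_4}$, which gives the claim.

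I do not anticipate any genuine obstacle here: all the work has been done in Lemma \ref{lem:strong_Dpq}, Lemma \ref{lem:is_strong_A4}, and Corollary \ref{cor:abel==>exp3}. The only points requiring a little care are to phrase the reduction "$\zeta \in \QQ_{A_4}$ implies $\QQ(\zeta_n) \subseteq \QQ_{A_4}$" correctly (which is trivial since $\QQ_{A_4}$ is a field), and to make sure Corollary \ref{cor:abel==>exp3} is being applied to a finite abelian extension. If one prefers to avoid the parity remark, the final step can instead be phrased purely numerically: $\varphi(n)$ is a power of $3$ only for $n \in \{1,2\}$, since for any prime power $p^a \parallel n$ the factor $p^{a-1}(p-1)$ would have to be a power of $3$, which fails for $p = 3$ (it is divisible by $2$) and forces $p = 2$, $a = 1$ otherwise. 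Either phrasing yields a short, self-contained argument.
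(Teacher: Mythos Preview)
Your argument is correct and follows exactly the route the paper intends: the corollary is stated without proof immediately after Corollary \ref{cor:abel==>exp3}, and your deduction from that result (cyclotomic fields are abelian, so their Galois groups would have to be $3$-elementary, which forces $n\le 2$) is precisely the implied reasoning. The parity observation that $-1$ gives an element of order $2$ in $(\ZZ/n\ZZ)^\times$ for $n\ge 3$ is a clean way to finish.
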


\subsection{Possible 2-power torsion growth when base-extended to $\QQ_{A_4}$}

Let $E/\QQ$ be an elliptic curve. Then $\Gal(\QQ(E[2])/\QQ)\simeq S_3, \ZZ/3\ZZ, \ZZ/2\ZZ$ or it is trivial. Using Corollary \ref{cor:no_quad_subfields}, we can see that the only way that $\QQ(E[2])\subseteq \Q_{A_4}$ is if $\Gal(\QQ(E[2])/\QQ)\simeq \ZZ/3\ZZ,$ or it is trivial. Thus the only way that the $2$-torsion can grow when base-extended to $\Q_{A_4}$ is to have $E(\QQ)[2]$ trivial and $\Gal(\QQ(E[2])/\QQ)\simeq \ZZ/3\ZZ$. We notice here that the second condition corresponds to $E$ having square discriminant. Since $\QQ(i)\not\subseteq \Q_{A_4}$, we know that $\QQ(E[4])\not\subseteq \QQ_{A_4}$ and we cannot gain full 4-torsion when base extended to $\QQ_{A_4}$.


Next, we notice that we in fact cannot gain a point of order 4 when base-extending to $\QQ_{A_4}$. To see this, suppose that $E$ does gain a point of order $4$ over $\QQ_{A_4}$. If $E(\QQ)[2]$ were not trivial, then this would have to happen over a quadratic extension of $\QQ$ since $E$ would have to have a $4$-isogeny, and $\Q_{A_4}$ contains no such subextensions. On the other hand, if $E(\QQ)[2]$ were trivial, then $E(\QQ_{A_4})(2)\simeq \ZZ/2\ZZ\oplus \ZZ/4\ZZ$ and so $E$ would possess a $2$-isogeny, but this would imply that $E(\QQ)[2]$ was nontrivial, giving a contradiction.

Before continuing we state proposition that will prove to be useful.

\begin{prop}{\rm \cite[Proposition 4.8]{GonNajman}}\label{prop:2_torsion_growth}
Let $E/F$ be an elliptic curve defined over a number field $F$ with fixed algebraic closure $\overline{F}$, $n$ a positive integer, $P\in E(\overline{F})$ be a point of order $2^{n+1}$ and let $\widehat{F(P)}$ be the Galois closure of $F(P)$ over $F(2P)$. Then $[F(P) : F(2P)]$ divides 4 and $\Gal(\widehat{F(P)}/F(2P))$ is either isomorphic to $\Z/2\Z$, $\Z/2\Z \times \Z/2\Z$, $D_4$, or it is trivial.
\end{prop}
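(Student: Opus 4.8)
The strategy is to realise $G:=\Gal(\widehat{F(P)}/F(2P))$ inside the affine group $E[2]\rtimes\GL_2(\F_2)$ and then to shrink it using the fact that $F(2P)$ already contains a nonzero $2$-torsion point. First I would record that, since $2P\in E(F(2P))$ and multiplication by $2^{n-1}$ is defined over the ground field, the point $Q:=2^{n-1}(2P)=2^nP$ lies in $E(F(2P))$ and has exact order $2$. Hence the cubic $4x^3+b_2x^2+2b_4x+b_6$ whose roots are the $x$-coordinates of $E[2]$ already has the rational root $x(Q)$, so it splits over $F(2P)$ as a linear times a quadratic factor, and this gives the uniform bound $[F(2P)(E[2]):F(2P)]\le 2$.

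Next I would set up the cocycle: for $\sigma\in\Gal(\overline F/F(2P))$ one has $2\,\sigma(P)=\sigma(2P)=2P$, so $\xi(\sigma):=\sigma(P)-P\in E[2]$, and together with the linear action $\rho\colon\Gal(\overline F/F(2P))\to\GL(E[2])=\GL_2(\F_2)$ the assignment $\sigma\mapsto(\rho(\sigma),\xi(\sigma))$ is a homomorphism into $\GL_2(\F_2)\ltimes E[2]$, the cocycle relation $\xi(\sigma\tau)=\xi(\sigma)+\rho(\sigma)\xi(\tau)$ being exactly the group law there. All $F(2P)$-conjugates of $P$ lie in the four-element coset $P+E[2]$, and $\widehat{F(P)}$ is generated over $F(2P)$ by these conjugates; thus $G$ acts faithfully by affine permutations of $P+E[2]$, and $[F(P):F(2P)]$ is the size of the $G$-orbit of $P$, so it is at most $4$. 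If the orbit has size $1$ or $2$ then $F(P)/F(2P)$ is already Galois and $G$ is trivial or $\ZZ/2\ZZ$; an orbit of size $3$ is impossible, since the one point $P+T$ of $P+E[2]$ outside it would then be $\Gal(\overline F/F(2P))$-fixed, hence lie in $E(F(2P))$, forcing $P\in E(F(2P)(T))$ with $[F(2P)(T):F(2P)]\le 2$ by the bound above, i.e.\ $[F(P):F(2P)]\le 2$. Hence $[F(P):F(2P)]\in\{1,2,4\}$, dividing $4$.

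In the remaining case the orbit has size $4$ and $G$ acts transitively on $P+E[2]\cong\F_2^2$. By the first step the image of $\rho$ in $\GL_2(\F_2)$ has order at most $2$: if it is trivial then $G\hookrightarrow E[2]\cong(\ZZ/2\ZZ)^2$; if it has order $2$ it is generated by a transvection $L$, and one checks directly that $E[2]\rtimes\langle L\rangle\cong D_4$, so in every case $G$ embeds in $D_4$. A transitive subgroup of $D_4$ is isomorphic to $(\ZZ/2\ZZ)^2$, $\ZZ/4\ZZ$, or $D_4$, which together with the previous paragraph yields the list of possibilities.

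The main obstacle is the last refinement: excluding $G\cong\ZZ/4\ZZ$. In such a case $\widehat{F(P)}=F(P)$ is Galois of degree $4$ and its unique quadratic subextension is forced to be $F(2P)(E[2])$; one must then extract a contradiction, for instance by producing a second, independent quadratic subfield of $\widehat{F(P)}$ or by playing off the rational $2$-torsion point $2^nP$ against the translation part $\xi(\sigma^2)$ of a generator. This is where the genuine work lies and where the hypotheses must be exploited carefully: the affine-group formalism by itself only bounds $G$ by $D_4$ and does not rule out a cyclic quotient, so I would expect the heart of the proof — and the role of the ambient field $F$ relative to $F(E[2^{n+1}])$ — to be concentrated precisely here.
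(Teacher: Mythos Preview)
The paper does not give its own proof of this proposition; it is simply quoted from \cite[Proposition~4.8]{GonNajman}. So there is nothing in the paper to compare your argument against.

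Your affine--cocycle setup is the right framework, and it already proves everything up to and including the embedding $G\hookrightarrow D_4$; the casework on orbit sizes is clean and correct. You are also right that excluding $\ZZ/4\ZZ$ is where any remaining content would have to lie --- but in fact this exclusion is \emph{impossible}: the list as printed here omits $\ZZ/4\ZZ$ by accident. A concrete witness is $E\colon y^2=x^3+4x^2+2x$ over $F=\QQ$ with $n=1$. Here $Q=(0,0)$ has order $2$; any $P$ with $2P=Q$ has $x(P)^2=2$ and $y(P)^2=4(2+\sqrt 2)$, so
\[
F(P)=\QQ\bigl(\sqrt{2+\sqrt 2}\,\bigr)=\QQ(\zeta_{16})^{+},
\]
the maximal totally real subfield of $\QQ(\zeta_{16})$, which is cyclic of degree $4$ over $\QQ=F(2P)$. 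In your notation this is the situation where $\rho$ has image of order $2$ and the translation part $\xi$ is not fixed by it; nothing in the hypotheses rules this out. The correct conclusion is therefore that $G$ is isomorphic to a subgroup of $D_4$, i.e.\ to one of $\{e\}$, $\ZZ/2\ZZ$, $\ZZ/4\ZZ$, $(\ZZ/2\ZZ)^2$, $D_4$ --- and your argument proves exactly that.

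This has no effect on how the paper uses the proposition: the only application is to observe that any growth of $2$-power torsion when passing from $\QQ$ to $\QQ_{A_4}$ would force a nontrivial $2$-group extension inside $\QQ_{A_4}$, which is impossible since $\QQ_{A_4}$ contains no quadratic subfields. Whether $\ZZ/4\ZZ$ appears on the list is immaterial for that step.
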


Proposition \ref{prop:2_torsion_growth} combined with Corollary \ref{cor:no_quad_subfields} and the previous discussion immediately yields to the following two lemmas.

\begin{lemma}
Let $E/\QQ$ be an elliptic curve. Then either $E(\QQ_{A_4})(2) = E(\QQ)(2)$ or $E(\QQ)[2]$ is trivial and $E(\QQ_{A_4})(2) = E[2]$. The $2$-torsion grows exactly when $E(\QQ)[2]$ is trivial and $E$ has square discriminant.
\end{lemma}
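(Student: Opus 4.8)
The plan is to combine the structural facts already established about $\QQ_{A_4}$ with the standard classification of the mod-$2$ Galois image and with Proposition \ref{prop:2_torsion_growth}. First I would recall that $\Gal(\QQ(E[2])/\QQ)$ embeds into $S_3$, so it is isomorphic to $S_3$, $\ZZ/3\ZZ$, $\ZZ/2\ZZ$, or is trivial; by Lemma \ref{lem:is_strong_A4} (or directly by Corollary \ref{cor:no_quad_subfields}), a subfield of $\QQ_{A_4}$ cannot have a quadratic subextension of $\QQ$, which forces $\QQ(E[2]) \subseteq \QQ_{A_4}$ only in the cases $\Gal(\QQ(E[2])/\QQ) \simeq \ZZ/3\ZZ$ (equivalently $\Delta(E)$ a nonzero square and $E(\QQ)[2]$ trivial) or trivial. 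This already pins down the possible values of $E(\QQ_{A_4})[2]$: either it equals $E(\QQ)[2]$, or $E(\QQ)[2]$ is trivial and $E(\QQ_{A_4})[2] = E[2]$, with the latter happening precisely when $\Delta(E)$ is a nonzero rational square (the condition that $E$ has square discriminant).

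Next I would rule out any growth \emph{beyond} full $2$-torsion, i.e.\ show $E(\QQ_{A_4})(2)$ can contain no point of order $4$. Since $\QQ(i) \not\subseteq \QQ_{A_4}$ (Corollary \ref{cor:strong_root_of_1}, or Lemma \ref{lem:cyclotomic-genA4}), Proposition \ref{prop:n-torsion-implies-nth-cyclotomic} shows $E[4] \not\subseteq E(\QQ_{A_4})$, so any point of order $4$ over $\QQ_{A_4}$ would force $E(\QQ_{A_4})(2) \simeq \ZZ/2\ZZ \oplus \ZZ/4\ZZ$. I would then split into two cases according to whether $E(\QQ)[2]$ is trivial. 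If $E(\QQ)[2]$ is nontrivial, a point of order $4$ over $\QQ_{A_4}$ would mean $E$ has a rational $4$-isogeny (the point of order $4$, together with the fact that its double is already rational and $x$-coordinate rational arguments as in Lemma \ref{lem:z5-z2-z4}), hence the order-$4$ point is defined over a quadratic extension of $\QQ$ --- impossible since $\QQ_{A_4}$ contains no quadratic subfields. If $E(\QQ)[2]$ is trivial, then having $E(\QQ_{A_4})(2) \simeq \ZZ/2\ZZ \oplus \ZZ/4\ZZ$ still forces $E$ to admit a rational $2$-isogeny (the subgroup of order $2$ fixed by Galois), contradicting the triviality of $E(\QQ)[2]$. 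Proposition \ref{prop:2_torsion_growth} handles the general step of controlling $\QQ(P)/\QQ(2P)$ for $P$ of $2$-power order: its Galois group is $\ZZ/2\ZZ$, $\ZZ/2\ZZ \times \ZZ/2\ZZ$, $D_4$, or trivial, none of which can have image inside $\QQ_{A_4}$ over a field for which $2P$ is already defined, unless the relevant subextensions are trivial --- so no new $2$-power point can appear once we are past the $E[2]$ stage.

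Finally I would assemble these observations into the statement: $E(\QQ_{A_4})(2)$ equals $E(\QQ)(2)$ unless $E(\QQ)[2]$ is trivial and $\QQ(E[2]) \subseteq \QQ_{A_4}$, in which case $E(\QQ_{A_4})(2) = E[2] \simeq \ZZ/2\ZZ \oplus \ZZ/2\ZZ$; and the latter occurs exactly when $E(\QQ)[2]$ is trivial and $\Delta(E)$ is a nonzero rational square (this being the concrete translation of $\Gal(\QQ(E[2])/\QQ) \simeq \ZZ/3\ZZ$).

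I expect the main obstacle to be the careful case analysis ruling out a point of order $4$: one has to make sure that in \emph{every} configuration (trivial vs.\ nontrivial $E(\QQ)[2]$, and within the nontrivial case, which order-$2$ point is rational) the appearance of an order-$4$ point genuinely forces either a rational isogeny whose kernel field is quadratic, or a cyclotomic obstruction --- and to invoke Proposition \ref{prop:2_torsion_growth} correctly to see that the field generated by such a point over the field of its double cannot be a subfield of $\QQ_{A_4}$ unless it is trivial, since $\QQ_{A_4}$ has no quadratic (and a fortiori no $D_4$- or $V_4$-) subextensions of $\QQ$. Once that is pinned down, the rest is bookkeeping with the mod-$2$ image and the square-discriminant criterion.
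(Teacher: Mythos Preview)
Your proposal is correct and follows essentially the same approach as the paper, which simply cites Proposition \ref{prop:2_torsion_growth}, Corollary \ref{cor:no_quad_subfields}, and the preceding discussion (containing exactly your mod-$2$ Galois analysis and the case split on whether $E(\QQ)[2]$ is trivial for ruling out a point of order $4$). If anything, your inductive use of Proposition \ref{prop:2_torsion_growth} to handle all higher $2$-power torsion is more explicit than the paper's terse treatment, which only spells out the order-$4$ case.
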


\begin{lemma}\label{lem:strong_full_2}
If $E/\QQ$ is an elliptic curve, then $\QQ(E[2])\subseteq \QQ_{A_4}$ if and only if ${\Delta(E)}$ is a rational square.
\end{lemma}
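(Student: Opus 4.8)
The plan is to identify $\Gal(\QQ(E[2])/\QQ)$ explicitly as a subgroup of $S_3$ and then, using the classical link between this group and the discriminant together with the characterization of subfields of $\QQ_{A_4}$ provided by Lemma~\ref{lem:is_strong_A4} and Corollary~\ref{cor:no_quad_subfields}, decide in each case whether $\QQ(E[2])\subseteq\QQ_{A_4}$.

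First I would recall that $\QQ(E[2])$ is the splitting field over $\QQ$ of the cubic $f$ whose roots are the $x$-coordinates of the three nonzero points of $E[2]$ (for a short model $y^2=x^3+Ax+B$ one has $f(x)=x^3+Ax+B$); since $\Delta(E)\neq 0$ this cubic is separable, so $\QQ(E[2])/\QQ$ is Galois with group isomorphic to one of $\{e\}$, $\ZZ/2\ZZ$, $\ZZ/3\ZZ$, $S_3$. The key classical input is that $\operatorname{disc}(f)$ differs from $\Delta(E)$ only by a nonzero rational square (indeed $\Delta(E)=16\operatorname{disc}(f)$ for the model above, and replacing $E$ by a short Weierstrass model over $\QQ$ scales $\Delta$ by a twelfth power); hence $\QQ(\sqrt{\Delta(E)})\subseteq\QQ(E[2])$, and $\Gal(\QQ(E[2])/\QQ)$ is contained in $A_3\simeq\ZZ/3\ZZ$ exactly when $\Delta(E)$ is a square in $\QQ^\times$.

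The two implications then fall out quickly. If $\Delta(E)$ is a nonzero square, $\Gal(\QQ(E[2])/\QQ)$ is trivial or $\ZZ/3\ZZ$; the trivial group is vacuously of strong $A_4$-type, and $\ZZ/3\ZZ$ is of strong $A_4$-type because it is a quotient of $A_4$ (its abelianization), hence a quotient of the subdirect product $A_4\subseteq A_4^1$ (one could also cite Lemma~\ref{lem:strong_Dpq} via $A_4\simeq D_{3,2}$). As $\QQ(E[2])$ is already Galois over $\QQ$ it is its own Galois closure, so Lemma~\ref{lem:is_strong_A4} gives $\QQ(E[2])\subseteq\QQ_{A_4}$. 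Conversely, if $\Delta(E)$ is not a square, then $\QQ(\sqrt{\Delta(E)})$ is a genuine quadratic extension of $\QQ$ contained in $\QQ(E[2])$, and this cannot lie in $\QQ_{A_4}$ by Corollary~\ref{cor:no_quad_subfields}; hence neither can $\QQ(E[2])$.

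I do not anticipate a real obstacle here: the statement is essentially a restatement, in terms of containment in $\QQ_{A_4}$, of the observation (already made just before the lemma) that enlarging the $2$-torsion forces square discriminant. The only point calling for a little care is the bookkeeping in the discriminant step --- tracking that $\operatorname{disc}(f)$ and $\Delta(E)$ agree only up to a square --- but this is harmless, since multiplying by a square changes neither the splitting field nor whether the quantity is a rational square.
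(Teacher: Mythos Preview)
Your proposal is correct and follows essentially the same route as the paper: the paper derives this lemma from the preceding discussion, which lists the four possibilities for $\Gal(\QQ(E[2])/\QQ)$, uses Corollary~\ref{cor:no_quad_subfields} to rule out $S_3$ and $\ZZ/2\ZZ$, and observes that the remaining cases (trivial or $\ZZ/3\ZZ$) are exactly those where the discriminant is a square. Your write-up is a bit more explicit in invoking Lemma~\ref{lem:is_strong_A4} for the forward direction and in tracking that $\Delta(E)$ and $\operatorname{disc}(f)$ agree up to a square, but the underlying argument is the same.
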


\subsection{The possible prime-to-2 torsion}

From Corollary \ref{cor:strong_root_of_1} and the Weil pairing, if we let $E/\QQ$ be an elliptic curve and $G$ be the maximal subgroup of $E(\QQ_{A_4})$ of odd order, we get that $G$ must be cyclic and isomorphic to $\ZZ/N\ZZ$ for some odd $N$. By Theorem \ref{thm:main}, the only possibilities are $N = 1,3,5,7,9,13,15,$ or $21$.

We start by noticing that the only way $E$ can have a point of order 3 or 5 defined over $\QQ_{A_4}$ is if it already had a point of order 3 or 5 defined over $\QQ$. Therefore, since there are no elliptic curves $E/\QQ$ with a point of order 15 defined over $\QQ$, N cannot be 15. Further, if $N= 21$ then $E$ must have a rational point of order $3$ and a $7$-isogeny whose kernel has a generator defined over a cubic extension of $\QQ$. From \cite[Theorem 1]{najman-cubic}, we know that there is exactly one such elliptic curve up to $\QQ$-isomorphism and that is the curve with Cremona reference \href{http://www.lmfdb.org/EllipticCurve/Q/162b1}{\texttt{162b1}}. Therefore, all that is left to do is classify when the other possible $N>1$ occur, and the results are summarized in Table \ref{tab:strong_cyclic_groups}. The models in this table for $N=3,5$ can be found in \cite[Appendix E]{L-Rbook}, while the models for $N=7,13$ can be obtained from \cite{Zywina}, and the model for $N=9$ is the model for a curve with a $9$-isogeny twisted so that the associated $3$-isogeny has a $\QQ$-rational kernel.

\begin{table}
\tabulinesep=1.3mm
\centering
\begin{tabu}{l|l}
N & model for the generic curve with a cyclic group of order $N$ in $E(\QQ_{A_4})_\tor$\\\hline
3  &  $y^2+axy+by = x^3$  (can take $a=1$ if $j(E)\neq 0$) \\\hline
5  &  $y^2+(1-t)xy-ty=x^3-tx^2$   \\\hline
7  & \footnotesize  $y^2= x^3-27(t^2 + 5t + 1)(t^2 + 13t + 49)^3x $\\
& \footnotesize \phantom{$y^2 = x^3 -$}$+54(t^2 + 13t + 49)^4(t^4 + 14t^3 + 63t^2 + 70t - 7)$  \\\hline
9  &\footnotesize $y^2 + txy + y = x^3$    \\\hline
13  &  \footnotesize $y^2 = x^3 - 27(t^4 -t^3 +5t^2 +t+1)^3 (t^8 -5t^7 +7t^6 -5t^5 +5t^3 +7t^2 +5t+1) x$ \\
 & \footnotesize \phantom{$y^2 = x^3$} $+54(t^2 +1)(t^4 -t^3 +5t^2 +t+1)^4 (t^{12} -8t^{11} +25t^{10} -44t^9 +40t^8 +18t^7 $\\
 & \footnotesize \phantom{$y^2 = x^3$} $-40t^6 -18t^5 +40t^4 +44t^3 +25t^2 +8t+1)$   \\
\end{tabu}
\caption{}\label{tab:strong_cyclic_groups}
\end{table}

\subsection{Torsion subgroups that occur over $\QQ_{A_4}$} All that is left now is to determine what combinations of $2$-powered torsion can occur with the prime-to-$2$ options. Before doing this we give the following example.

\begin{example}\label{ex-Strong_A4_14}
Let $E$ be the elliptic curve with Cremona reference \href{http://www.lmfdb.org/EllipticCurve/Q/49a4}{\texttt{49a4}} and let $F$ be the splitting field of $f(x) = x^3-x^2-2x+1$. Then $F\subseteq\QQ_{A_4}$ since $\Gal(F/\QQ) \simeq \ZZ/3\ZZ$ and $E(F)_\tor \simeq \ZZ/14\ZZ$, and therefore $E(\QQ_{A_4})_\tor \simeq \ZZ/14\ZZ$ as well. Similarly if $E'$ is the elliptic curve with Cremona reference \href{http://www.lmfdb.org/EllipticCurve/Q/49a3}{\texttt{49a3}}, then $E(\QQ_{A_4})_\tor \simeq \ZZ/14\ZZ$ as well. These are the only elliptic curves with a point of order 14 over $\QQ_{A_4}$ without full 2-torsion defined over $\QQ_{A_4}$.
\end{example}

\begin{thm}\label{thm:main_strong}
Let $E/\QQ$ be an elliptic curve. The torsion subgroup $E(\QQ_{A_4})_\tor$ is finite and
\[
E(\QQ_{A_4})_\tor\simeq
\begin{cases}
\ZZ/M\ZZ & \hbox{with } 1\leq M \leq 10 \hbox{ or } M = 12, 13, 14, 18, 21 \hbox{ or}\\
\ZZ/2\ZZ \oplus \ZZ/2M\ZZ & \hbox{with }1\leq M \leq 4 \hbox{ or } M=7.\\
\end{cases}
\]
\end{thm}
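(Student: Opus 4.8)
The plan is to deduce the classification over $\QQ_{A_4}$ from the one over the larger field $\QQ(A_4^\infty)$ (Theorem \ref{thm:main}), exploiting the two structural constraints peculiar to $\QQ_{A_4}$: it contains no quadratic subfield (Corollary \ref{cor:no_quad_subfields}) and only the roots of unity $\pm 1$ (Corollary \ref{cor:strong_root_of_1}). Finiteness is immediate, since $E(\QQ_{A_4})_\tor \subseteq E(\QQ(A_4^\infty))_\tor$, which is finite by Theorem \ref{thm:main}. Writing $E(\QQ_{A_4})_\tor \simeq \ZZ/A\ZZ \oplus \ZZ/B\ZZ$ with $A \mid B$, the inclusion $E[A] \subseteq E(\QQ_{A_4})$ forces $\QQ(\zeta_A) \subseteq \QQ_{A_4}$ by Proposition \ref{prop:n-torsion-implies-nth-cyclotomic}, so $A \in \{1,2\}$; in particular the odd part is cyclic. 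It therefore suffices to pin down the $2$-primary part and the $\ell$-primary part for each odd prime $\ell$.

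For the $2$-primary part I would assemble the analysis already carried out in this section: Proposition \ref{prop:2_torsion_growth}, Corollary \ref{cor:no_quad_subfields} and Lemma \ref{lem:strong_full_2} show that $E(\QQ_{A_4})(2)$ equals $E(\QQ)(2)$ unless $E(\QQ)[2]$ is trivial and $\Delta(E)$ is a rational square, in which case it is $E[2] \simeq \ZZ/2\ZZ \oplus \ZZ/2\ZZ$; in no case is a point of order $4$ or a copy of $E[4]$ gained. Hence $E(\QQ_{A_4})(2)$ is one of the seven $2$-groups that occur in Mazur's theorem, namely $\{\mathcal{O}\}$, $\ZZ/2\ZZ$, $\ZZ/4\ZZ$, $\ZZ/8\ZZ$, $\ZZ/2\ZZ\oplus\ZZ/2\ZZ$, $\ZZ/2\ZZ\oplus\ZZ/4\ZZ$, $\ZZ/2\ZZ\oplus\ZZ/8\ZZ$. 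For odd $\ell$, Corollary \ref{cor:odd-p-parts} restricts $\ell$ to $\{3,5,7,13\}$ and forces a rational $\ell$-isogeny; by Lemma \ref{lem:isogeny-field-cyclic} a generator of the relevant cyclic isogeny kernel has field of definition cyclic of order dividing $\ell-1$, and once contained in $\QQ_{A_4}$ this Galois group is of strong $A_4$-type, hence trivial or $\ZZ/3\ZZ$ by Lemma \ref{lem:strong_Dpq}. For $\ell=5$ the degree also divides $4$, so a point of order $5$ over $\QQ_{A_4}$ is already rational; for $\ell = 3,7,13$ (and for a point of order $9$, where the ambient group is $(\ZZ/9\ZZ)^\times$) such a point is defined over $\QQ$ or over a cyclic cubic field. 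Together with the bounds $E(\QQ(A_4^\infty))(3) \subseteq \ZZ/3\ZZ\oplus\ZZ/9\ZZ$ and $E(\QQ(A_4^\infty))(\ell)\subseteq \ZZ/\ell\ZZ$ for $\ell = 5,7,13$ from Section \ref{sec:pPrimary}, the odd part is $\ZZ/N\ZZ$ with $N \in \{1,3,5,7,9,13,15,21\}$.

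The borderline values of $N$ are then handled directly. If $N=15$, points of order $3$ and $5$ over $\QQ_{A_4}$ are both rational, giving $\ZZ/15\ZZ \subseteq E(\QQ)$, which is impossible by Mazur's theorem. If $N=21$, $E$ has a rational point of order $3$ and a $7$-isogeny whose kernel is generated by a point defined over a cyclic cubic field $K$, so $\ZZ/21\ZZ \subseteq E(K)$; by \cite[Theorem 1]{najman-cubic} the unique such curve up to $\QQ$-isomorphism is \texttt{162b1}, and a Magma computation confirms $E(\QQ_{A_4})_\tor \simeq \ZZ/21\ZZ$. For the remaining $N \in \{1,3,5,7,9,13\}$, I would form every product of an admissible $2$-part with $\ZZ/N\ZZ$ and keep it precisely when it embeds into one of the $26$ groups of Theorem \ref{thm:main} and is not obstructed: the presence of $\ell$-torsion over $\QQ_{A_4}$ forces the corresponding rational point or isogeny, which combined with the recorded $2$-part either contradicts Mazur's theorem or produces an isogeny degree forbidden by Theorem \ref{thm:n-isogeny-list} --- this is what rules out, for instance, $\ZZ/2\ZZ\oplus\ZZ/10\ZZ$, $\ZZ/2\ZZ\oplus\ZZ/12\ZZ$, $\ZZ/2\ZZ\oplus\ZZ/18\ZZ$, and the various diagonal $\ZZ/4\ZZ$- and $\ZZ/8\ZZ$-groups. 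For each structure that survives, realizability is exhibited by the generic models of Table \ref{tab:strong_cyclic_groups} for the cyclic odd part, by imposing the square-discriminant condition of Lemma \ref{lem:strong_full_2} when full $2$-torsion is required, and by Example \ref{ex-Strong_A4_14} together with the explicit Cremona curves, all verified in Magma. I expect this final bookkeeping to be the main obstacle: since over $\QQ_{A_4}$ the $2$-part can only grow via the square-discriminant mechanism and the odd part only through cyclic cubic subfields, one must re-examine every torsion structure from Theorem \ref{thm:main} and decide, case by case, whether its constituent pieces can be realized simultaneously over $\QQ_{A_4}$.
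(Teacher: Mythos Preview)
Your overall strategy mirrors the paper's: constrain the $2$-primary part via the square-discriminant mechanism, constrain the odd part to $\ZZ/N\ZZ$ with $N\in\{1,3,5,7,9,13,15,21\}$ via isogeny kernels in cyclic cubic fields, eliminate $N=15$ by Mazur and pin down $N=21$ to \texttt{162b1} via \cite{najman-cubic}, then sort through the combinations. The realizability side and most of the exclusions are handled the same way in both arguments.

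There is, however, a genuine gap in your exclusion step. You assert that the non-occurring products are ruled out because ``the presence of $\ell$-torsion over $\QQ_{A_4}$ forces the corresponding rational point or isogeny, which combined with the recorded $2$-part either contradicts Mazur's theorem or produces an isogeny degree forbidden by Theorem \ref{thm:n-isogeny-list}''. This does not dispose of $\ZZ/2\ZZ\oplus\ZZ/10\ZZ$ or $\ZZ/2\ZZ\oplus\ZZ/18\ZZ$ (both of which embed into groups from Theorem \ref{thm:main}, so your filter does not catch them) in the case where the full $2$-torsion is acquired via the square-discriminant mechanism. Concretely: if $E(\QQ)[2]$ is trivial, $\Delta(E)\in\QQ^{\times 2}$, and $E$ has a rational point of order $5$ (resp.\ a $9$-isogeny with kernel defined over a cyclic cubic), then $E$ has no rational $2$-isogeny, so no forbidden isogeny degree is forced, and $E(\QQ)_\tor$ is $\ZZ/5\ZZ$ or $\ZZ/10\ZZ$ (resp.\ a subgroup of $\ZZ/9\ZZ$), which does not violate Mazur. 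The paper closes exactly this gap by taking the generic models in Table \ref{tab:strong_cyclic_groups} for $N=5,9,13$ and checking that their discriminants are never rational squares (equivalently, that the auxiliary curves $s^2=\Delta_N(t)$ have no admissible rational points), and by checking directly that \texttt{162b1} has non-square discriminant for $N=21$. You need this discriminant computation (or an appeal to \cite{najman-cubic} over the cyclic cubic $\QQ(E[2])$, which would also work) to finish.
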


\begin{proof}
Notice that the above list is the same as the list of torsion structures that occur already over $\QQ$ (c.f. Theorem \ref{thm:mazur}) with the exception of $\Z/M\Z$ for $M=13, 14, 18,$ and $21$ and $\Z/2\Z\oplus \Z/14\Z$. The fact that $7$ and $13$ torsion occurs follows immediately from \cref{tab:strong_cyclic_groups} where we give formulas for the generic elliptic curves with a 7- or 13-isogeny whose kernel is defined over a degree 3 extension of $\Q$. These formulas are taken from \cite{Zywina}. Next, the fact that $18$ occurs follows from the same table with the additional observation that the curve $y^2 = f(x,t)$ has rational 2 torsion if and only if $f(x,t)$ has a zero, and the curve $f(x,t)=0$ in this case is birational to $\PP^1$ (See Example \ref{ex-Strong_A4_18}). The case when $T = \Z/14\Z$ follows from Example \ref{ex-Strong_A4_14}, while the case when $T = \Z/2\Z\oplus \Z/14\Z$ follows from observing that the fiber product of the modular curves whose rational points parameterize elliptic curves with square discriminant ($X_2$ in the notation of \cite{RZB}) and the modular curve whose rational points correspond to parameterizing elliptic curves with a point of order 7 defined over a cubic field is isomorphic to $\PP^1$. The $\QQ$-points on this curve correspond to the elliptic curves over $\QQ$ whose torsion subgroup contains a subgroup isomorphic to $\Z/2\Z\oplus\Z/14\Z$ when base extended to $\Q_{A_4}$.


So what remains to show is that, except for $M =  7, 13, 14$, and $18$ as explained above, the combinations of full $2$-torsion and odd torsion that do not occur over $\QQ$ also do not occur over $\QQ_{A_4}$. We start by ruling out the case where $E$ has full 2-torsion over $\QQ_{A_4}$  and a point of order $5, 9, 13,$ or $21.$ In the first four cases, using the information in Table \ref{tab:strong_cyclic_groups} we can compute the discriminant of the generic curve with a point of this order and see that there are no rational numbers that make it a square. If $E$ has a point of order 21 over $\QQ_{A_4}$, then by \cite[Theorem 1]{najman-cubic}, $E$ is isomorphic to the curve with Cremona reference \href{http://www.lmfdb.org/EllipticCurve/Q/162b1}{\texttt{162b1}}. The curve \href{http://www.lmfdb.org/EllipticCurve/Q/162b1}{\texttt{162b1}} has discriminant $-2^3 3^4$, and since the discriminant is invariant modulo squares under $\Qbar$-isomorphism, this case is excluded by Lemma \ref{lem:strong_full_2}.

It only remains to show that there are no curves $E/\QQ$ with $E(\QQ_{A_4})_\tor\simeq \ZZ/M\ZZ$ for $M = 26$ or $42$, but this follows from the fact that there are no elliptic curves with $26$ or $42$ isogenies.
\end{proof}

\begin{example}\label{ex-Strong_A4_2,14}
An elliptic curve $E/\QQ$ has  $E(\QQ_{A_4})_\tor \simeq \ZZ/2\ZZ \oplus \ZZ/14\ZZ$ if and only if it is $\QQ$-isomorphic to
\begin{align*}
y^2 = x^3  &-27(t^2 - t + 7)^3(t^2 + t + 7)^3(t^4 + 5t^2 + 1)x \\
\phantom{y^2 =}& +54(t^2 - t + 7)^4(t^2 + t + 7)^4(t^8 + 14t^6 + 63t^4 + 70t^2 - 7)
\end{align*}
for some $t\in\QQ$.
\end{example}

\begin{example}
An elliptic curve $E/\QQ$ has  $E(\QQ_{A_4})_\tor \simeq \ZZ/2\ZZ \oplus \ZZ/6\ZZ$ if and only if it is $\QQ$-isomorphic to
\[
y^2 = x^3  -27(t^2+3)(t^2+27)^3 x + 54(t^2 + 27)^4(t^4 + 18t^2 - 27)
\]
for some $t\in\QQ$.
\end{example}

\begin{example}\label{ex-Strong_A4_18}
The generic elliptic curve with a point of order 18 over $\QQ_{A_4}$ is given by
\begin{equation*}
y^2+(t^3-2)xy+t^3y = x^3
\end{equation*}
For a particular value of $t$, the point of order 18 is defined over the splitting field of the polynomial $f(x) = x^3-t(t^3 + 3t - 2)x^2 + t^3(t^3 - 2)x + t^6$ which outside of a thin set defines a degree 3 cyclic extension of $\QQ$.
\end{example}

\subsection{Torsion over $\QQ(C_3^\infty)$}\label{sec:torsion-cubic} Comparing Theorem \ref{thm:main_strong} with \cite[Theorem 1]{najman-cubic}, we see that all of the groups that occur when base extending an elliptic curve to $\QQ_{A_4}$ also occur when base extending elliptic curves to a single cubic extension.

Inspecting the proof of Theorem \ref{thm:main_strong} we see that whenever we have growth in the torsion subgroup when extending to $\QQ_{A_4}$ it occurs when extending to at most 2 cyclic cubic extensions of $\QQ$. That is to say, for any elliptic curve $E/\QQ$, we have $E(\QQ_{A_4})_\tor = E(\QQ(C_3^\infty))_\tor$. With this observation we get the following corollary.

\begin{cor}\label{cor:final-corollary-cubic}
Let $E/\QQ$ be an elliptic curve. The torsion subgroup $E(\QQ(C_3^\infty))_\tor$ is finite and
\[
E(\QQ(C_3^\infty))_\tor\simeq
\begin{cases}
\ZZ/M\ZZ & \hbox{with } 1\leq M \leq 10 \hbox{ or } M = 12, 13, 14, 18, 21 \hbox{ or}\\
\ZZ/2\ZZ \oplus \ZZ/2M\ZZ & \hbox{with }1\leq M \leq 4 \hbox{ or } M=7.\\
\end{cases}
\]

\end{cor}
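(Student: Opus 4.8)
The plan is to prove the sharper statement that $E(\QQ(C_3^\infty))_\tor = E(\QQ_{A_4})_\tor$ for every elliptic curve $E/\QQ$; the corollary is then immediate from Theorem \ref{thm:main_strong}, and in particular every group in the displayed list occurs over $\QQ(C_3^\infty)$, since a curve realizing a given $T$ over $\QQ_{A_4}$ realizes the same $T$ over $\QQ(C_3^\infty)$. One inclusion is easy: a cyclic cubic field is Galois over $\QQ$ with group $\ZZ/3\ZZ$, which is a quotient of $A_4$ and hence of strong $A_4$-type, so it lies in $\QQ_{A_4}$ by Lemma \ref{lem:is_strong_A4}; taking the compositum gives $\QQ(C_3^\infty)\subseteq\QQ_{A_4}$, whence $E(\QQ(C_3^\infty))_\tor\subseteq E(\QQ_{A_4})_\tor$, which is finite by Theorem \ref{thm:main_strong}. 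For the reverse inclusion I would show that every $P\in E(\QQ_{A_4})_\tor$ already lies in $E(\QQ(C_3^\infty))$, handling the $2$-primary and odd parts separately.

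For the $2$-primary part, the analysis in Section \ref{sec:compositum} shows that $E(\QQ_{A_4})(2)$ is either $E(\QQ)(2)$ or the full group $E[2]$, the latter occurring precisely when $E(\QQ)[2]$ is trivial and $\Delta(E)$ is a nonzero rational square (Lemma \ref{lem:strong_full_2}). In the first case the $2$-part is already defined over $\QQ$; in the second, $\Gal(\QQ(E[2])/\QQ)$ is a nontrivial subgroup of $S_3$ of strong $A_4$-type, and such a group has no quotient $\ZZ/2\ZZ$ (Lemma \ref{lem:strong_Dpq}, using $A_4\simeq D_{3,2}$), hence is cyclic of order $3$; so $\QQ(E[2])$ is a cyclic cubic field, contained in $\QQ(C_3^\infty)$.

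For the odd part, Corollary \ref{cor:strong_root_of_1} together with the Weil pairing rules out $E[p]\subseteq E(\QQ_{A_4})$ for every odd prime $p$ (this would force $\zeta_p\in\QQ_{A_4}$), so the odd part of $E(\QQ_{A_4})_\tor$ is cyclic, say $\ZZ/N\ZZ$ with $N\in\{1,3,5,7,9,13,15,21\}$ by Theorem \ref{thm:main}. Writing $R$ for a generator, every $\GQ$-conjugate of $R$ again has odd order $N$ and lies in $E(\QQ_{A_4})_\tor$, hence in $\langle R\rangle$, so $\langle R\rangle$ is the kernel of a rational cyclic $N$-isogeny. By Lemma \ref{lem:isogeny-field-cyclic}, $\QQ(R)/\QQ$ is abelian with Galois group embedding in $(\ZZ/N\ZZ)^\times$, and since $\QQ(R)\subseteq\QQ_{A_4}$ this group is abelian of strong $A_4$-type, hence elementary abelian of exponent dividing $3$ (Lemma \ref{lem:strong_Dpq} again); inspecting $(\ZZ/N\ZZ)^\times$ for the values of $N$ above, it therefore has order $1$ or $3$, so $R$ is defined over $\QQ$ or over a cyclic cubic field. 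The value $N=15$ does not occur (a point of order $3$ or $5$ over $\QQ_{A_4}$ is already rational by the same argument, so $\ZZ/15\ZZ\subseteq E(\QQ_{A_4})$ would force $\ZZ/15\ZZ\subseteq E(\QQ)$, impossible by Theorem \ref{thm:mazur}); for $N=21$ the curve is $\Qbar$-isomorphic to \texttt{162b1} by \cite{najman-cubic}, its order-$3$ part is rational and its order-$7$ part is defined over a cyclic cubic.

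Combining the two parts, $E(\QQ_{A_4})_\tor$ is generated by $E(\QQ)_\tor$ together with torsion defined over the compositum of at most two cyclic cubic extensions of $\QQ$ — one carrying the possible full $2$-torsion, one carrying the odd part — and this compositum lies in $\QQ(C_3^\infty)$; hence $E(\QQ_{A_4})_\tor\subseteq E(\QQ(C_3^\infty))_\tor$ and equality follows. The step requiring the most care is the treatment of the odd part: one first needs to know that the odd torsion over $\QQ_{A_4}$ is cyclic and that no full $p$-torsion appears, so that $\langle R\rangle$ is $\GQ$-stable and Lemma \ref{lem:isogeny-field-cyclic} applies; granting this, the identification of abelian strong-$A_4$-type groups with elementary abelian $3$-groups forces the degree of the relevant field of definition down to $1$ or $3$, and the remaining verification is exactly the case analysis already carried out in the proof of Theorem \ref{thm:main_strong}.
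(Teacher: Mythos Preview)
Your argument is correct and follows essentially the same approach as the paper: both establish the equality $E(\QQ_{A_4})_\tor = E(\QQ(C_3^\infty))_\tor$ by showing that any growth in torsion from $\QQ$ to $\QQ_{A_4}$ already takes place over (a compositum of at most two) cyclic cubic fields, and then invoke Theorem~\ref{thm:main_strong}. The paper dispatches this step tersely by saying one should ``inspect the proof of Theorem~\ref{thm:main_strong}'', whereas you give a clean self-contained argument --- cyclicity of the odd part via the Weil pairing and Corollary~\ref{cor:strong_root_of_1}, Galois-stability of $\langle R\rangle$, and the identification of abelian strong-$A_4$-type groups as $3$-elementary --- which is a nice sharpening but not a different route.
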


\begin{remark}It is also interesting to compare this to \cite[Theorem 4.1]{derickx-najman}, which classifies $K$-rational torsion structures of elliptic curves $E/K$ for $K$ a cyclic cubic field. The present question is similar to but essentially different from the one studied in \cite{najman-cubic}. The main difference is that the condition that $E$ has to be defined over $\QQ$ is dropped in \cite{derickx-najman}. Indeed, if $E$ is allowed to be defined over a cyclic cubic field $K$ instead of over $\QQ$ then there are the extra possibilities of $\ZZ/16\ZZ$, $\ZZ/2\ZZ \oplus \ZZ/10\ZZ$ and $\ZZ/2\ZZ \oplus \ZZ/12\ZZ$ for $E(K)_{\tor}$. The analogous question for $K =\Q(C_3^\infty)$ is one we do not know how to solve using our techniques, since this question cannot be solved solely by studying the action of $\mathrm{Gal}(\overline \QQ/\QQ)$ on $E(\overline \QQ)_{\tor}$. Indeed, when $E$ is defined over $\QQ(C_3^\infty)$ then there is only an action of $\mathrm{Gal}(\overline \QQ/\QQ(C_3^\infty))$.\end{remark}

\section*{Acknowledgments}
The authors would like to thank \'Alvaro Lozano-Robledo, Keith Conrad, and Filip Najman for useful conversations during the writing of this paper, as well as Enrique Gonz\'{a}lez-Jim\'{e}nez for helping with the writing in an early version of this paper. We are also deeply indebted to the referee and editor for catching typos and errors; their suggestions have greatly improved the quality of the paper.

\bibliographystyle{amsalpha}
\bibliography{references}

\end{document}